\documentclass[11pt]{amsart}
\usepackage{amsfonts,amssymb,amsxtra}

\usepackage{eufrak}
\usepackage{latexsym} 

\setlength{\oddsidemargin}{0.5cm}
\setlength{\textwidth}{15cm}
\setlength{\textheight}{22cm}

\newcommand{\el}{\par \mbox{} \par \vspace{-0.5\baselineskip}}
\newcommand{\goth}[1]{\EuFrak{#1}}

\newcounter{amoi}
\setcounter{amoi}{0}
\newtheorem{theo}{Theorem}
\newtheorem{leme}{Lemma}
\newtheorem{prop}{Proposition}
\newtheorem{coro}{Corollary}
\newtheorem{defi}{Definition}
\newcommand{\noi}{\noindent}
\newenvironment{rem}{\noi {\el \noi \bf Remark}}{ \el }

\begin{document}
\begin{center}
  {\Large {Cohomology of generalized supergrassmannians and character formulae for basic classical Lie superalgebras} \vskip 2cm}

Caroline {\sc Gruson} {\footnote{U.M.R. 7502 du CNRS, Institut Elie Cartan, 
Universite Henri Poincar\'e (Nancy 1), BP 239,
54506 Vandoeuvre-les-Nancy Cedex, France. E-mail:
Caroline.Gruson@iecn.u-nancy.fr}}  and Vera {\sc Serganova} {\footnote
{Department of Mathematics, University of California, Berkeley, CA,94720-3840 USA.  E-mail:
      serganov@Math.Berkeley.EDU}}

\end{center}

\bigskip

\section{Introduction}
The famous Borel-Weil-Bott theorem states that, for a reductive 
complex algebraic group $G$, the cohomology groups of any invertible sheaf on 
the flag variety $G/B$
are non zero in at most one degree, and in this degree the cohomology
group is a simple $G$-module. This statement is no longer true for
Chevalley groups in characteristic $p$, the cohomology group is
non-zero only in one degree (for a dominant weight) but it is not a simple $G$-module
anymore. Its structure is rather complicated and interesting,  and
studying it leads to Kazhdan-Lusztig theory.

Let now $G$ be a complex basic classical supergroup. Studying the cohomology
groups on the flag supervariety $G/B$ was initiated by Penkov and
others (\cite{MPV}, \cite{P}).
They discovered that the Borel-Weil-Bott theorem  is not always
true in this case and proved that it is true for so called typical
invertible sheaves. 

This question is closely related to the representation theory 
of the corresponding basic classical Lie superalgebra $\goth g$. The
category of finite-dimensional $\goth g$-modules is not
semi-simple, only typical simple modules do not have extensions with
other simple modules. The problem of computing characters of atypical simple
finite dimensional $\goth g$-modules is a full-time occupation for
several persons since 1977. The formula for typical highest weight
modules was found in \cite{Krep}. In 1980 Bernstein and Leites
found the character formula in case $\goth g=\goth{sl}(1,n)$
(\cite{BL}), later
this formula was generalised  for $\goth {osp}(2,2n)$
and for the so called singly atypical modules and generic atypical
modules (see \cite{VdJ} and \cite{JHKT}). Similar type formulae were
conjectured in \cite{KW} for affine superalgebras.
In 1996, the second author \cite{V.Sel} solved the problem 
completely for $\goth{gl}(m,n)$ using the geometry of flag
supervarieties. Later Brundan \cite{B} solved this problem by a
different, purely algebraic, method
and discovered a remarkable connection with the representation theory of
$\goth{gl}(\infty)$. Brundan's method was developed further in \cite{CW},
\cite{BS1}, \cite{BS2}, \cite{BS3}, \cite{CL}.

In this paper we generalise the method of \cite{V.Sel} to the orthosymplectic
groups $OSP(m,2n)$. The results were announced in \cite{V.ICM} without proof
and one statement (typical lemma) was formulated with a mistake there. We
take a different Borel subgroup in this paper and use the language of
weight diagrams (invented by Brundan and Stroppel for $GL(m,n)$). This makes
the combinatorics much easier and allows one to formulate a
precise algorithm for the computation of the characters of all simple modules.
In the $\mathfrak {gl}$ case, there exist  finite dimensional analogues of 
Verma modules, called Kac modules, which can be interpreted as cohomology 
groups of bundles over the flag supermanifold, and which also have a nice 
algebraic behavior. Unfortunately (?), there is no such analogue in the 
orthosymplectic case, so we use the geometrical method.

Let us explain the main idea of the method. As was pointed out by
Penkov, one can easily write down the character of the Euler
characteristic for any invertible sheaf on a flag supervariety using classical
Borel-Weil-Bott theory. Thus, if one knows the multiplicities of simple
modules in cohomology groups, one hopes to express the characters of
simple modules as linear combinations of characters of Euler
characteristic of some invertible sheaves. We manage to find these
multiplicities for some generalized super grassmannians $G/P$ for certain
parabolic subgroups $P$, and use this information to compute characters.

There are several complications in the case of $OSP(m,2n)$  compared 
with $GL(m,n)$. First, in many cases the Euler characteristic is zero
on $G/B$, and one should consider instead $G/Q$ for a suitable choice
of a parabolic subgroup $Q$ which depends on the module whose character
we want to calculate. Next, there is no distinguished Borel subgroup
in $OSP(m,2n)$, hence there are non-vanishing cohomology groups in many
degrees. Finally, the recursion process involving the computation of the
cohomology
is more complicated due to the existence of the so-called
exceptional pairs (see Section 10 in the paper for details). On the
other hand, there is one advantage: the character of a simple
$OSP(m,2n)$ -module is always a finite linear combination of Euler
characteristics, in contrast with the $GL(m,n)$ case where the combination
is always infinite.

The organisation of the paper is as follows. In Section 2 we fix the
notations. Section 3 contains general statements about cohomology
groups of vector bundles on a supergrassmannian $G/P$. In Section 4 we
compute the character of the Euler characteristic, and, for parabolic subgroups
$P\subset Q\subset G$, relate the cohomology
on $G/P$ with those on $G/Q$ and $Q/P$, using Leray spectral sequence. 
Section 5 contains a complete classification of the blocks in the
category of finite-dimensional $\goth g$-modules (Theorem ~\ref{bl})
and introduces translation functors.
In Section 6, we introduce weight diagrams and translate the results
of Section 5 into this language. In Section 7
we explain how to reduce the computation of the cohomology to the
case of the most atypical blocks of $\mathfrak{gl}(k,k)$,
$\goth{osp}(2k+1,2k),\goth{osp}(2k,2k)$ and $\goth{osp}(2k+2,2k)$. (Afterwards 
we concentrate on the orthosymplectic case, since the general linear case 
is done in \cite{V.Sel}).
This last computation is done recursively in Sections 8,9 and 10 starting with
$\goth{osp}(2k+1,2k)$ (resp. $\goth{osp}(2k,2k)$,
$\goth{osp}(2k+2,2k)$) and getting down to $\goth{osp}(2k-1,2k-2)$
(resp. $\goth{osp}(2k-2,2k-2)$,
$\goth{osp}(2k,2k-2)$). In Section 11, the recursion is solved. We construct a 
graph whose vertices are weight diagrams and describe a
combinatorial algorithm for the computation of cohomology groups, in terms of
paths in this graph (Proposition ~\ref{legmovprop}
and Theorem ~\ref{combth}). Section 12 presents an algorithm for
calculating characters (Theorem ~\ref{thch}) and contains some examples.
In Section 13, we present a simplfication of the Section 11's algorithm. In the
general linear case, this simplification leads to the equivalence (proven
in \cite{MS}) of
algorithms appearing in \cite{V.Sel} and \cite{B}. 

Both authors thank Laurent Gruson for hospitality and stimulating
discussions. The second author would like to thank Ian Musson for
explaining the method of weight diagrams and fruitful discussions on supergeometry.

\section{Notations} 

Let $\goth g = \goth g _0 \oplus \goth g _1$ be a basic classical
complex Lie superalgebra, i.e. $\goth g=\goth {gl}(m,n)$ or
$\goth{osp}(m,n)$. By $G$ we denote the linear algebraic groups $GL(m,n)$ and 
$OSP(m,n)$ 
respectively ~\cite{M}. 
For a Lie subalgebra (e.g. $\goth a\subset \goth g$), we denote with the
capital Latin letter (in this case $A\subset G$) the connected 
complex algebraic supergroup with the given Lie superalgebra. 
For all the superalgebras considered in this paper such supergroups
are well defined.

We fix a Cartan subalgebra $\goth h$ of $\goth g$.
Denote by $\Delta$ the set of roots of $\goth g$ with respect to $\goth h$.
Let $\goth b$ be a Borel subalgebra of $\goth g$
containing $\goth h$, it defines a set of positive roots $\Delta^+$, 
and we define $\rho=\rho_0-\rho_1$, where $\rho_0$ is 
the half sum of the positive even roots and $\rho_1$ is the half sum of positive odd
roots. Recall that since $\goth g$ is basic classical, it is equipped
with a non degenerate invariant bilinear form and the restriction of
this bilinear form to $\goth h$ is also non degenerate. We will denote
this form by $(\; , \;)$. We denote by $W$ the Weyl group of the even
part $\goth g_0$. Recall that in our case the Cartan subalgebra 
$\goth h$ of $\goth g$ is also a Cartan subalgebra of $\goth g_0$. 
For a Lie subalgebra $\goth a\subset \goth g$ such that $\goth
h\subset \goth a$ we denote by 
$\Delta(\goth a)\subset \Delta$ the set of roots of $\goth a$.

A weight $\lambda\in \goth h^*$ is {\it integral} if it
induces a one-dimensional representation of the Cartan subgroup $H$. In
the paper we consider only integral weights. Thus, by a weight we always mean 
an integral weight. Moreover, we only consider modules which are integrable
with respect to the group $G$: all those have integral weights. 
Define the standard order
on the set of integral weights: $\lambda\leq\mu$ iff
$\mu-\lambda=\sum_{\alpha\in\Delta^+}n_{\alpha}\alpha$ where all
$n_{\alpha}$ are non-negative integers.

For any integral weight $\lambda$, we will denote by $L_{\lambda}$ the 
 simple $\goth g$-module with highest weight $\lambda$, and if $\goth a$ is a 
Lie subalgebra of $\goth g$ for which it makes sense,
we will denote by $L_{\lambda}(\goth a)$ the irreducible $\goth
 a$-module with highest weight $\lambda$. Recall that $\lambda$ is
 called {\it dominant} (resp. $\goth a$-dominant) if $L_{\lambda}$
 (resp. $L_{\lambda}(\goth a)$) is finite-dimensional (in this
 case $L_{\lambda}$ (resp. $L_{\lambda}(\goth a)$) has a natural
 structure of $G$-module (resp. $A$-module)).

Let $\mathcal U (\goth g)$ be the universal enveloping algebra of
$\goth g$ and $\mathcal Z(\goth g)$ be its center. 
For every weight $\lambda$, we write $\chi _{\lambda}$ for the corresponding central character.
A central character $\chi$ is dominant if there exists a dominant
$\lambda$ such that $\chi=\chi _{\lambda}$.

Finally, let us recall the description of $\Delta$ (see ~\cite{Kadv}). Let $\goth g=\goth
{gl}(m,n), \goth {osp}(2m,2n)$ or $ \goth {osp}(2m+1,2n)$. Then
$\operatorname{dim} \goth h=m+n$ and one can choose a basis
$\varepsilon_1,...,\varepsilon_m,\delta_1,...,\delta_n$ of $\goth h^*$
such that
$$ (\varepsilon_i,\varepsilon_j)=\delta_{ij},\,\,(\varepsilon_i,\delta_j)=0,\,\,(\delta_i,\delta_j)=-\delta_{ij}.$$

The even roots $\Delta_0$ of $\goth {gl}(m,n)$ are all vectors of the
form $\varepsilon_i-\varepsilon_j$ and $\delta_i-\delta_j$ with $i\neq
j$. The odd roots $\Delta_1$ of $\goth {gl}(m,n)$ are all vectors of
the form  $\varepsilon_i-\delta_j$ and $\delta_i-\varepsilon_j$.

The even roots $\Delta_0$ of $\goth {osp}(2m,2n)$ are all vectors of the
form $\pm\varepsilon_i\pm\varepsilon_j$, $\pm\delta_i\pm\delta_j$
(the signs can be chosen independently) with $i\neq j$ and $2\delta_i$. The odd roots $\Delta_1$ of 
$\goth {osp}(2m,2n)$ are all vectors of
the form  $\pm \varepsilon_i\pm \delta_j$. 

The even roots $\Delta_0$ of $\goth {osp}(2m+1,2n)$ are all vectors of the
form $\pm\varepsilon_i\pm\varepsilon_j$, $\pm\delta_i\pm\delta_j$
with $i\neq j$, $\pm\varepsilon_i$ and $\pm 2\delta_i$. The odd roots $\Delta_1$ of 
$\goth {osp}(2m+1,2n)$ are all vectors of
the form  $\pm \varepsilon_i \pm\delta_j$ and $\pm\delta_i$.

We define a parity on the weight lattice by saying that $\varepsilon _i$
(resp. $\delta _j$) is even (resp. odd). Then the category of finite 
dimensional $G$-modules splits in a direct summand of two parts, one in which 
the weight spaces have the same parity as the corresponding weight, and one
in which the parities differ. In this paper, we will only consider the
first part.

\section{Geometric induction}

Let $\goth p$ be any parabolic subalgebra of $\goth g$
containing 
$\goth b$ and $\goth l$ denote the reductive part of $\goth p$. 

For a $P$-module e.g. $V$, we denote by the calligraphic
letter $\mathcal V$ the vector bundle $G\times _P V$ over the
generalized grassmannian $G/P$. Note that the space of sections of $\mathcal V$ on
any open set has a natural structure of $\goth g$-module, in other
words the sheaf of sections of $\mathcal V$ is a $\goth g$-sheaf. 
Therefore the cohomology groups $(H^i(G/P,\mathcal V))$ are $\goth
g$-modules. For details see \cite{M}, \cite{P}, \cite{MPV}. 

Define the functor $\Gamma_i$ from the category of $\goth p$-modules to the
category of $\goth g$-modules by
$$\Gamma _i (G/P,  V) := (H^i(G/P,\mathcal V ^*))^*.$$

\begin{leme}\label{indgen} - The functors $\Gamma _i$ have the following properties: 

1) if
$$0 \rightarrow U \rightarrow V \rightarrow W \rightarrow 0$$
is a short exact sequence of $P$-modules, then one has
$$\ldots \rightarrow  \Gamma _1 (G/P,  W) \rightarrow \Gamma _0 (G/P,  U) \rightarrow \Gamma _0 (G/P,  V) \rightarrow \Gamma _0 (G/P, W) \rightarrow 0$$
(long exact sequence).

2) if $M$ is a $\goth g$-module and $V$ is a $P$-module, the following holds :
$$\Gamma _i (G/P , V \otimes M) = \Gamma _i (G/P,V)\otimes M.$$
\end{leme}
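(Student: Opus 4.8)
The plan is to reduce both statements to standard facts about sheaf cohomology on $G/P$. Two observations are used throughout. First, $\Gamma_i(G/P,-)$ is the composite of four operations — the associated-bundle construction $V\mapsto\mathcal V=G\times_P V$, the dualization $\mathcal V\mapsto\mathcal V^*$, sheaf cohomology $H^i(G/P,-)$, and a final linear dual — so exactness and $\goth g$-equivariance have to be tracked through each of them. Second, the reduced variety underlying $G/P$ is the ordinary (projective) flag variety $G_0/P_0$ and $\mathcal O_{G/P}$ is a finite $\mathcal O_{G_0/P_0}$-algebra, so every coherent sheaf on $G/P$ has finite-dimensional cohomology; consequently linear dualization is an exact functor on all the vector spaces that occur below.

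For part 1: a short exact sequence $0\to U\to V\to W\to0$ of $P$-modules yields a short exact sequence $0\to\mathcal U\to\mathcal V\to\mathcal W\to0$ of locally free $\mathcal O_{G/P}$-sheaves, since $-\times_P-$ is locally just $\mathcal O_{G/P}\otimes_{\mathbb C}-$ and hence exact. As the terms are locally free, dualization is exact and produces $0\to\mathcal W^*\to\mathcal V^*\to\mathcal U^*\to0$; the long exact cohomology sequence then reads
$$\cdots\to H^i(G/P,\mathcal W^*)\to H^i(G/P,\mathcal V^*)\to H^i(G/P,\mathcal U^*)\to H^{i+1}(G/P,\mathcal W^*)\to\cdots$$
All of these arrows, connecting homomorphisms included, are maps of $\goth g$-modules: the sheaf maps are $\goth g$-equivariant because the bundle construction carries $P$-equivariant maps of $P$-modules to $G$-equivariant maps of $\mathcal O_{G/P}$-sheaves, and the connecting maps inherit equivariance from the functorial (hence $\goth g$-equivariant) Godement resolution via the snake lemma. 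Dualizing this sequence — exact by the finiteness remark above — and reading off $\Gamma_i(G/P,-)=H^i(G/P,(-)^*)^*$ gives precisely the stated long exact sequence, terminating in $\cdots\to\Gamma_0(G/P,U)\to\Gamma_0(G/P,V)\to\Gamma_0(G/P,W)\to0$.

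For part 2 the crucial input is the $G$-equivariant isomorphism of bundles
$$G\times_P(V\otimes N)\ \longrightarrow\ (G\times_P V)\otimes_{\mathbb C}N,\qquad [g,\,v\otimes n]\ \mapsto\ [g,v]\otimes gn,$$
valid for any $\goth g$-module $N$ (with $V\otimes N$ on the left using the restriction of $N$ to $P$), where $N$ on the right denotes the trivial bundle $\mathcal O_{G/P}\otimes_{\mathbb C}N$ with the diagonal $G$-action (translation on $\mathcal O_{G/P}$, the given action on $N$). This map is well defined since $(gp^{-1},(pv)\otimes(pn))\mapsto[gp^{-1},pv]\otimes(gp^{-1})(pn)=[g,v]\otimes gn$, is manifestly $G$-equivariant, and is bijective on fibers; differentiating, it is an isomorphism of $\goth g$-sheaves. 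Applying it to $(V^*,M^*)$ in place of $(V,N)$ identifies $(G\times_P(V\otimes M))^*=G\times_P(V^*\otimes M^*)$ with $\mathcal V^*\otimes_{\mathbb C}M^*$. Since sheaf cohomology commutes with tensoring by a fixed vector space, $H^i(G/P,\mathcal V^*\otimes_{\mathbb C}M^*)\cong H^i(G/P,\mathcal V^*)\otimes_{\mathbb C}M^*$, and tracing the $\goth g$-action through the twisted trivial bundle identifies the right-hand side with the tensor-product $\goth g$-module $H^i(G/P,\mathcal V^*)\otimes M^*$. A final dualization, using $M^{**}\cong M$ (finite-dimensionality of the cohomology and of $M$), gives $\Gamma_i(G/P,V\otimes M)\cong\Gamma_i(G/P,V)\otimes M$.

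The homological algebra of part 1 is routine. The step that needs genuine care is the bookkeeping of the $G$- and $\goth g$-actions in part 2: one must check that the twisted trivial bundle $\mathcal O_{G/P}\otimes_{\mathbb C}N$ genuinely has cohomology $H^i(G/P,\mathcal O_{G/P})\otimes N$ as a $\goth g$-module for the tensor action, and that the successive identifications — bundle isomorphism, dualization, extracting the vector space from cohomology, dualization again — are mutually compatible. I expect this, rather than anything geometric, to be the only real obstacle; the projectivity/finiteness input is simply what legitimizes the repeated use of linear duality.
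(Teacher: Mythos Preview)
Your argument is correct and is precisely the ``adaptation of standard arguments'' the paper has in mind: the paper gives no proof beyond a reference to Jantzen~\cite{J}, and what you have written is the standard derivation (long exact sequence via exactness of the associated-bundle functor and local freeness; the tensor identity $G\times_P(V\otimes M)\cong(G\times_P V)\otimes M$ for a $G$-module $M$). The only caveat worth flagging is that your last dualization step uses $M^{**}\cong M$, i.e.\ finite-dimensionality of $M$; the paper does not state this hypothesis in the lemma, but every application in the paper is to finite-dimensional $M$, so this is harmless.
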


The proof is an adaptation of standard arguments (see \cite{J}).

\begin{leme}\label{l2} - The module $\Gamma_0(G/P,V)$ is the maximal
finite-dimensional quotient of $\mathcal U(\goth g)\otimes_{\mathcal U(\goth  p)}V$.
\end{leme}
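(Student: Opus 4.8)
The plan is to prove the two parts of the statement separately, beginning with the characterization as a quotient of the tensor product and then verifying maximality among finite-dimensional quotients. First I would recall that $\Gamma_0(G/P, V) = (H^0(G/P, \mathcal{V}^*))^*$ by definition, so $H^0(G/P,\mathcal{V}^*)$ is the space of global sections of the bundle $G\times_P V^*$. A global section assigns to each point $gP$ a vector in the fiber, equivariantly; dualizing, $\Gamma_0(G/P,V)$ receives a natural $\goth{g}$-module map. The key point is to produce the map $\mathcal{U}(\goth g)\otimes_{\mathcal U(\goth p)} V \to \Gamma_0(G/P,V)$: this is the Frobenius-reciprocity-type adjunction. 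I would construct it by noting that there is a canonical $P$-module (indeed $\goth p$-module) map $V \to \Gamma_0(G/P,V)$ — restriction of a global section of $\mathcal V^*$ to the base point is $P$-equivariant, so dualizing gives a $\goth p$-map from $V$ into the restriction of $\Gamma_0(G/P,V)$ to $\goth p$ — and then extend it to a $\goth g$-module map out of the induced module $\mathcal U(\goth g)\otimes_{\mathcal U(\goth p)}V$ by the universal property of induction (tensor-hom adjunction between $\mathrm{Ind}$ and $\mathrm{Res}$ for the pair $\goth p\subset\goth g$).

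Next I would show this map is surjective and that its image exhausts the maximal finite-dimensional quotient. Surjectivity onto a finite-dimensional $\goth g$-module: $\Gamma_0(G/P,V)$ is finite-dimensional (cohomology of a coherent sheaf on the projective supervariety $G/P$), and it is generated as a $\goth g$-module by the image of $V$ — this can be seen because $G/P$ is covered by the big cell, a $\goth g$-orbit sense, so the $\mathcal U(\goth g)$-span of the fiber at the base point fills out all sections; alternatively, a $\goth g$-submodule of $\Gamma_0(G/P,V)$ containing the image of $V$ corresponds by adjunction to a subsheaf agreeing with $\mathcal V^*$ in a neighborhood of the base point, hence everywhere by $G$-equivariance and connectedness. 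For the maximality statement, suppose $\mathcal U(\goth g)\otimes_{\mathcal U(\goth p)}V \twoheadrightarrow F$ with $F$ finite-dimensional. Then $F$ is an integrable $\goth g$-module, so it is a $G$-module, and the composite $V \to \mathcal U(\goth g)\otimes_{\mathcal U(\goth p)} V \to F$ is a $P$-map; this yields a bundle map $\mathcal V \to \underline{F}$ (the trivial bundle $G/P\times F$) whose image, dualized, is a subsheaf of $\mathcal V^*$, and taking global sections and dualizing produces a factorization $F \to \Gamma_0(G/P,V)$ compatible with the maps from $V$. Since both $F$ and $\Gamma_0(G/P,V)$ are quotients of $\mathcal U(\goth g)\otimes_{\mathcal U(\goth p)}V$ generated by the image of $V$, this factorization shows $\Gamma_0(G/P,V)$ is the largest such quotient.

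The main obstacle I expect is the careful bookkeeping in the supergeometric setting: one must check that the adjunction $\mathrm{Hom}_{\goth g}(\mathcal U(\goth g)\otimes_{\mathcal U(\goth p)}V, M) \cong \mathrm{Hom}_{\goth p}(V, M)$ is compatible with the geometric realization of $\Gamma_0$, i.e. that the algebraically induced module and the sheaf-theoretic $\Gamma_0$ match up via the correct base-point evaluation, and that ``finite-dimensional quotient'' on the algebraic side corresponds exactly to ``$G$-integrable quotient,'' which is where the hypothesis that $G$ is well-defined with the given Lie superalgebra (noted in Section 2) is used. Once one knows every finite-dimensional $\goth g$-module integrates to $G$ and every $G$-module of this category restricts to a $P$-module, the equivalence is formal. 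I would also remark that this is the super analogue of the classical fact (cf. \cite{J}) that $H^0(G/B,\mathcal L_\lambda)^*$ is the dual Weyl module realized as the maximal finite-dimensional quotient of the Verma module, and the argument is a routine transcription; hence the statement is offered without a detailed proof in the paper, exactly as the sentence following Lemma~\ref{indgen} suggests for these adaptations.
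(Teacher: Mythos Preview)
Your approach is essentially correct and rests on the same geometric ingredient as the paper (density of the big cell $M^-P$ in $G$), but the paper organizes the argument differently. The paper works entirely on the dual side: it lets $\tilde V$ denote the maximal finite-dimensional quotient of $\mathcal U(\goth g)\otimes_{\mathcal U(\goth p)}V$, so that $\tilde V^*$ is the maximal finite-dimensional submodule of the coinduced module $\operatorname{Hom}_{\mathcal U(\goth p)}(\mathcal U(\goth g),V^*)$. It then constructs two explicit injective $\goth g$-maps: one $\varphi:\tilde V^*\to H^0(G/P,\mathcal V^*)$ coming from the projection $V\to\tilde V$, and one $j:H^0(G/P,\mathcal V^*)\to\tilde V^*$ coming from evaluation at the identity, whose injectivity uses that $M^-P$ is dense in $G$. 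Two injections between finite-dimensional spaces force both to be isomorphisms. This neatly sidesteps proving surjectivity of $\mathcal U(\goth g)\otimes_{\mathcal U(\goth p)}V\to\Gamma_0(G/P,V)$ directly; your big-cell argument for that surjectivity is, once dualized, exactly the paper's injectivity of $j$, but the paper's ``two injections'' packaging is cleaner and avoids the somewhat informal step where you assert that the $\mathcal U(\goth g)$-span of the fiber fills out all sections.

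One slip to fix: in your maximality paragraph you say the construction ``produces a factorization $F\to\Gamma_0(G/P,V)$'', but your own recipe (dualize the bundle map $\mathcal V\to\underline F$, take global sections, dualize again) actually yields a map in the opposite direction, $\Gamma_0(G/P,V)\to F$. That is the direction you need: every finite-dimensional quotient $F$ of the induced module is then a further quotient of $\Gamma_0(G/P,V)$, which is exactly the maximality statement. With that correction your argument goes through.
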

\begin{proof} - Let $\tilde{V}$ be the maximal
  finite-dimensional quotient of 
$\mathcal U(\goth g)\otimes_{\mathcal U(\goth  p)}V$.
By duality $\tilde{V}^*$ is the maximal finite-dimensional submodule in
the coinduced module
$\operatorname{Hom}_{\mathcal U(\goth p)}(\mathcal U(\goth g),V^*)$.

Let $\Gamma (V^*):=H^0(G/P,\mathcal V^*)$. By definition
$$\Gamma(V^*)=\left\{\gamma\in\mathbb C[G]\otimes V^* |
  \gamma(gp)=p^{-1}\gamma(g), g\in G, p\in P\right\}.$$

Let $\pi$ denote the composition of the standard maps
$V\to \mathcal U(\goth g)\otimes_{\mathcal U(\goth  p)}V\to\tilde{V}$
and $\pi^*:\tilde{V}^*\to V^*$ be the dual map. Then
$\gamma_v(g)=\pi^*(g^{-1}v)$ is a vector in $\Gamma (V^*)$. Hence we
can define a $G$-module homomorphism $\varphi: \tilde{V}^*\to \Gamma
(V^*)$ by putting $\varphi(v):=\gamma_v$. We claim that $\varphi$ is
injective. Indeed, assume $X=\operatorname{Ker}\varphi\neq 0$. But
$\pi^*(X)\neq 0$ for every
non-trivial $G$-submodule $X\subset \tilde{V}^*$. Therefore $X$ contains 
$v$ such that $\gamma_v(e)\neq 0$, hence $\varphi(X)\neq 0$. Contradiction.

On the other hand, the map $ev: \Gamma (V^*)\to V^*$
defined by $ev(\gamma):=\gamma(e)$ induces a homomorphism of $\goth
g$-modules 
$j:\Gamma (V^*)\to \operatorname{Hom}_{\mathcal U(\goth p)}(\mathcal U(\goth g),V^*)$. 
Since $\Gamma (V^*)$ is finite-dimensional $j(\Gamma (V^*))\subset
\tilde {V}^*$. Again we claim that $j$ is injective. 
Indeed, choose a nilpotent superalgebra $\goth m^-$
such that $\goth g=\goth p\oplus \goth m^-$, and let $M^-$ be the
corresponding supergroup. It is known that (exactly as in classical
case) $M^-P$ is dense in $G$ (see \cite{MV}). Hence 
$ev$ is injective on the subspace of invariants $\Gamma(V^*)^{\goth  m^-}$. 
Let $Y=\operatorname{Ker}j$. Then $Y^{m^-}=0$. That implies $Y=0$. 

Thus, we have two injective morphisms $\varphi: \tilde{V}^*\to \Gamma
(V^*)$ and $j:\Gamma (V^*)\to \tilde {V}^*$. Since the spaces are
finite-dimensional, both $\varphi$ and $j$ are isomorphisms. Therefore,
the dual modules $\tilde{V}$ and $\Gamma _0 (G/P,  V)$ are also isomorphic. 

\end{proof}

\begin{defi}\label{penkovrem} (Penkov's remark \cite{P}) - 
  Let $(X,\mathcal{O}_X)$ be a generalized
  grassmannian, and $(X_0,\mathcal O_{X_0})$ be the underlying
  algebraic variety, $\mathcal{V}$ be a $G$-vector bundle on $X$. Then the
  corresponding sheaf $\tilde {\mathcal V}$ on $X_0$ has a filtration by
  $\mathcal{O}_{X_0}$-modules such that the 
associated graded module  (in degree $i$) is isomorphic to the $G_0$-bundle 
$\mathcal {V}_{X_0} \otimes  S^i(\mathcal{N}^*_{X_0}X)=:\tilde{\mathcal{V}}^i$.
The whole module will be denoted by $\mathcal I _{X_0}(\mathcal {V} \otimes  S^{\bullet}(\mathcal{N}^*_{X_0}X))$.

By definition $H^k(X, \mathcal V)=H^k(X_0, \tilde {\mathcal V})$. We have, if we denote by $Ch(V)$ the character of a $\goth g$ module $V$,
\[
Ch (H^k(X_0, Gr (\tilde {\mathcal V})))\geq Ch(H^k(X,\mathcal{V}))
\]
as can be seen with the long exact sequence coming with the filtration
of $\tilde {\mathcal V}$. The $\geq$ means that each $\goth
g_0$-module occurring in $H^k(X, {\mathcal V})$ appears
in $H^k(X_0, Gr (\tilde {\mathcal V}))$ with at least the same multiplicity.
Note that the inequality becomes an equality when computing the Euler 
characteristic.
\end{defi}

\begin{leme} \label{l1} - If $L_{\mu}$ occurs in $\Gamma_i(G/P, 
{L}_{\lambda}(\goth p))$ with non-zero multiplicity, then 
$\mu +\rho  =w(\lambda +\rho)-\sum_{\alpha \in I}\alpha$ for some $w\in W$ of length $i$ and  $I \subset \Delta_1^+$. 
\end{leme}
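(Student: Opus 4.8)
The plan is to reduce the statement to the classical Borel--Weil--Bott theorem applied on the underlying even flag variety, using Penkov's remark (Definition~\ref{penkovrem}). First I would observe that by definition $\Gamma_i(G/P,L_\lambda(\goth p)) = H^i(G/P,\mathcal L_\lambda(\goth p)^*)^*$, and that $H^i(G/P,\mathcal V) = H^i(G_0/P_0,\tilde{\mathcal V})$ where $\tilde{\mathcal V}$ carries a filtration with associated graded $\bigoplus_j \mathcal V_{X_0}\otimes S^j(\mathcal N^*)$, and $\mathcal N^* = \mathcal N^*_{X_0}(G/P)$ is (the bundle associated to) the $\goth p$-module $(\goth g_1/\goth p_1)^*$, i.e. a sum of odd root spaces for roots in $-\Delta_1^+ \setminus \Delta_1^+(\goth p)$. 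Hence by the inequality in Definition~\ref{penkovrem}, if $L_\mu$ occurs in $\Gamma_i(G/P,L_\lambda(\goth p))$ then the dual $\goth g_0$-module to $L_\mu|_{\goth g_0}$, hence some $L_\nu(\goth g_0)$ with $\nu$ obtained from $\mu$ by the longest element of $W$, occurs in $H^i(G_0/P_0, \widetilde{\mathcal L_\lambda(\goth p)^*})$, and therefore in some $H^i\!\big(G_0/P_0,\, \mathcal L_\lambda(\goth p)^*_{X_0}\otimes S^j(\mathcal N)\big)$.

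Next I would decompose the $\goth l$-module (equivalently $\goth l_0$-module, for the relevant $\goth g_0$-weight count) $L_\lambda(\goth p)\otimes S^j(\goth g_1/\goth p_1)$ into irreducibles. Each odd root vector contributing to $\goth g_1/\goth p_1$ has weight $-\alpha$ with $\alpha\in\Delta_1^+$ (not in $\goth p$), and a monomial of degree $j$ in $S^j$ has weight $-\sum_{\alpha\in I}\alpha$ for a multiset $I$ of such roots; but since all these odd roots are distinct and, in the basic classical cases, one checks $2\alpha\notin\Delta$ for odd $\alpha$ (the form $(\alpha,\alpha)=0$), repeated factors produce weights that do not survive as highest weights of genuinely new composition factors beyond what a subset $I\subset\Delta_1^+$ gives — more precisely, any $\goth l_0$-highest weight of $L_\lambda(\goth l)\otimes S^j(\ldots)$ has the form $\lambda - \sum_{\alpha\in I}\alpha + (\text{sum of positive roots of }\goth l)$, and after applying Borel--Weil--Bott on $G_0/P_0$ we may absorb the latter correction. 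Then for each irreducible $\goth l_0$-constituent $L_\kappa(\goth l_0)$ occurring, classical Bott's theorem says $H^i(G_0/P_0,\mathcal L_\kappa(\goth l_0))$ is nonzero only if $\kappa+\rho_0$ is $W$-regular and then equals $L_{w(\kappa+\rho_0)-\rho_0}(\goth g_0)$ with $\ell(w)=i$. Matching weights: $\nu+\rho_0 = w(\kappa+\rho_0)$ with $\kappa = \lambda - \sum_{\alpha\in I}\alpha + (\ast)$, and tracking the correction $(\ast)$ together with the difference $\rho_0$ versus $\rho=\rho_0-\rho_1$ yields $\mu+\rho = w(\lambda+\rho)-\sum_{\alpha\in I}\alpha$ for the corresponding $w\in W$ of length $i$ (using that $\rho_1$ is the half-sum of \emph{all} positive odd roots, so $w\rho_1-\rho_1$ again is a sum of odd roots that can be folded into the set $I$).

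The main obstacle I expect is the bookkeeping in the middle step: controlling precisely which weights appear as $\goth l_0$-highest weights of $L_\lambda(\goth p)\otimes S^\bullet(\goth g_1/\goth p_1)$ and showing that, after the Bott twist by $\rho_0$ and the passage from $\rho_0$ to $\rho$, every contribution is of the stated form $w(\lambda+\rho)-\sum_{\alpha\in I}\alpha$ with $I\subset\Delta_1^+$ an honest subset (no multiplicities), and with the \emph{same} $w$ of length exactly $i$ governing both the cohomological degree and the Weyl group action. This requires the standard but slightly delicate fact that for odd roots $\alpha$ in a basic classical superalgebra one has flexibility to replace a multiset by a subset at the level of the weights that actually contribute nonzero cohomology; I would handle it by the usual trick of noting that if two of the $\alpha$'s coincide then $\kappa+\rho_0$ fails to be regular (or the resulting weight is $W$-conjugate to one already accounted for), so such terms contribute nothing new. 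Everything else is a direct application of results already available: Lemma~\ref{indgen}, Definition~\ref{penkovrem}, and classical Borel--Weil--Bott.
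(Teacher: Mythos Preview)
Your overall strategy is the paper's: reduce via Penkov's remark to the underlying $G_0/P_0$, filter by $S^\bullet$ of the odd conormal directions, and apply classical Borel--Weil--Bott. But your ``main obstacle'' paragraph reveals a misunderstanding that should be fixed, and the final $\rho_0\leadsto\rho$ step deserves to be made explicit.

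\textbf{The multiset worry is illusory.} The fibre $\goth g/(\goth g_0\oplus\goth p_1)$ is a purely \emph{odd} vector space, and in the super category the symmetric algebra of an odd space is the exterior algebra of the underlying even space. Hence $S^\bullet(\goth g/(\goth g_0\oplus\goth p_1))$ is finite-dimensional and its weights are exactly $-\sum_{\gamma\in J}\gamma$ for \emph{subsets} $J\subset\Delta_1^+\setminus\Delta_1^+(\goth p)$: no repetitions ever arise, and no regularity argument is needed to discard them. Your proposed justification (``$2\alpha\notin\Delta$ for odd $\alpha$, $(\alpha,\alpha)=0$'') is in fact false for $\goth{osp}(2m+1,2n)$, where $\pm\delta_j$ are non-isotropic odd roots and $2\delta_j\in\Delta_0$; fortunately that claim is never needed.

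\textbf{The $\rho_0$-to-$\rho$ bookkeeping.} Once Bott gives
\[
\mu = w\bigl(\lambda-\textstyle\sum_{\gamma\in J}\gamma+\rho_0\bigr)-\rho_0
\]
with $\ell(w)=i$, substitute $\rho_0=\rho+\rho_1$ and note that $w(\rho_1-\sum_{\gamma\in J}\gamma)-\rho_1$ is a $\mathbb Z$-combination of odd roots which one checks equals $-\sum_{\alpha\in I}\alpha$ for the honest subset
\[
I=\bigl(w(J)\cap\Delta_1^+\bigr)\,\cup\,\bigl(-w(\Delta_1^+\setminus J)\cap\Delta_1^-\bigr)\subset\Delta_1^+.
\]
Writing this identity down explicitly (rather than ``folding into $I$'') is what actually completes the proof; the paper does exactly this computation.
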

\begin{proof} - We make use of Penkov's remark. First, note that 
$$Gr(S^i(\mathcal N _{X_0} X))=G_0 \times_{P_0} S^i(\goth g /(\goth g _0 \oplus \goth p _1 ))$$
which is a $G_0$-bundle on $G_0/P_0$, so: 
$$Ch(H^k(G/P, \mathcal L_{\lambda}(\goth p)^*)) \leq Ch(H^k(G_0 /P_0,\mathcal I _{G_0/P_0}\mathcal  L_{\lambda}^*(\goth p )\otimes S^{\bullet}(\goth g / (\goth g _0 \oplus \goth p _1 ))^*)).$$
Assume $L_{\mu}$ occurs in $\Gamma _i (G/P, L_{\lambda} ^*
(\goth p))$. It means that $L_{\mu} ^*$ occurs in $H^i (G/P,
\mathcal I _{G_0/P_0}\mathcal L_{\lambda} ^* (\goth p ))$. 
Therefore $L_{\mu}(\goth g_0)^*$ occurs in $H^i(G_0/P_0, \mathcal
L_{\lambda}^*(\goth p)\otimes S^{\bullet}(\goth g / (\goth g _0 \oplus
\goth p _1 ))^*)$. The latter sheaf has a filtration by $G_0$-bundles
whose simple quotients are of the form $\mathcal L_{\lambda
  -\sum_{\gamma \in J \subset \Delta^+_1}\gamma}^*(\goth p_0)$. By the usual (non-graded)
Borel-Weil-Bott theorem, if $w(\nu+\rho_0)-\rho_0$ is regular and
$\goth g_0$-dominant, then
$$H^i(G_0/P_0, \mathcal L_{\nu}(\goth p _0)^*) = \left\{
  \begin{array}{c}L_{w(\nu +\rho_0)-\rho_0}(\goth g_0)^* \; if \; l(w)= i \\ 0 \; otherwise \end{array}\right .$$
Therefore
$$\mu=w(\lambda -\sum_{\gamma \in J \subset
  \Delta_1^+}\gamma+\rho_0)-\rho_0.$$

Using $\rho_0=\rho+\rho_1$, we obtain
$$\mu=w(\lambda+\rho)-\rho-\sum_{\alpha \in I \subset
  \Delta_1^+}\alpha,$$
where $I=(w(J)\cap \Delta_1^+)\cup -(w(\Delta_1^+-J)\cap \Delta_1^-)$. 
\end{proof}

Write $\goth p=\goth l \oplus \goth m$, $\goth g=\goth m^-\oplus\goth
l \oplus \goth m$, where $\goth m$ is the nilpotent part of $\goth p$. 
Consider the projection 
$$\phi:\mathcal U(\goth g)=\mathcal U(\goth m^-)\mathcal U(\goth l) \mathcal U(\goth m) \to
\mathcal U(\goth m^-)\mathcal U(\goth l)$$ 
with the kernel $\mathcal U(\goth g)\goth m$.
The restriction of $\phi$ to $ \mathcal Z(\goth g)$ induces the injective homomorphism of centers 
$ \mathcal Z(\goth g)\to  \mathcal Z(\goth l)$. 
We denote  by $\Phi$ the dual map 
$$Hom (\mathcal Z(\goth l),\mathbb{C})\to Hom ( \mathcal Z(\goth g),\mathbb {C}).$$

\begin{leme}\label{l4} - If $V$ is an irreducible $\goth p$-module admitting a
  central character $\chi$, then the $\goth g$-module
  $\Gamma_i(G/P, V)$ admits the central character $\Phi (\chi)$.
\end{leme}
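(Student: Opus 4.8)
The plan is to exploit the tensor identity in Lemma~\ref{indgen}, part 2), together with the structure of $\Gamma_i(G/P,V)$ coming from Lemma~\ref{l2} (via geometric induction), to show that $\mathcal Z(\goth g)$ acts on $\Gamma_i(G/P,V)$ through the character $\Phi(\chi)$. The key observation is that $\mathcal U(\goth g)\otimes_{\mathcal U(\goth p)}V$, and hence any subquotient of it, carries a $\mathcal Z(\goth g)$-action that can be computed through the projection $\phi\colon\mathcal U(\goth g)\to\mathcal U(\goth m^-)\mathcal U(\goth l)$ introduced just before the statement.

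First I would treat the case $i=0$. By Lemma~\ref{l2}, $\Gamma_0(G/P,V)$ is a quotient of $M:=\mathcal U(\goth g)\otimes_{\mathcal U(\goth p)}V$. Take $z\in\mathcal Z(\goth g)$. On the generating subspace $1\otimes V\subset M$, the element $z$ acts (using the PBW decomposition $\mathcal U(\goth g)=\mathcal U(\goth m^-)\mathcal U(\goth l)\oplus\mathcal U(\goth g)\goth m$ and the fact that $\goth m\cdot V\subset$ the relations defining $M$) by $\phi(z)\in\mathcal U(\goth m^-)\mathcal U(\goth l)$; but $z$ is central, so $\phi(z)$ must in fact lie in $\mathcal Z(\goth l)\cdot\mathcal U(\goth m^-)$ modulo lower-order terms, and since $V$ is irreducible with central character $\chi$, $z$ acts on $1\otimes V$ by the scalar $\chi(\phi(z))=\Phi(\chi)(z)$. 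Because $1\otimes V$ generates $M$ as a $\goth g$-module and $z$ is central, $z$ acts on all of $M$ — and therefore on its quotient $\Gamma_0(G/P,V)$ — by the scalar $\Phi(\chi)(z)$. This is exactly the assertion that $\Gamma_0(G/P,V)$ admits central character $\Phi(\chi)$.

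For general $i$ the cleanest route is to reduce to the case $i=0$ by a deformation/tensoring argument, or alternatively by a direct action-on-sections computation. I would first recall that a finite-dimensional $\goth g$-module $N$ admits the central character $\psi$ if and only if each element $z-\psi(z)$ acts nilpotently; this is preserved under taking subquotients, extensions, and (by Lemma~\ref{indgen}.2) survives the long exact sequence from Lemma~\ref{indgen}.1. Since every finite-dimensional $\goth p$-module with central character $\chi$ has a composition series whose factors are irreducible with central character $\chi$, and the $\Gamma_i$ form a long exact sequence, it suffices to prove the claim for irreducible $V$ and arbitrary $i$. For irreducible $V$ the statement for all $i$ follows from the $i=0$ case applied on the sheaf level: the $\goth g$-module of local sections of $\mathcal V^*$ over any $P$-stable open set is a (possibly infinite-dimensional) quotient-type module built from $\mathcal U(\goth g)\otimes_{\mathcal U(\goth p)}V^*$, on which $\mathcal Z(\goth g)$ acts by $\Phi(\chi^*)$ by the same $\phi$-computation; hence $\mathcal Z(\goth g)$ acts on the whole \v{C}ech complex, and therefore on every cohomology group $H^i(G/P,\mathcal V^*)$, by that character. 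Dualizing gives $\Phi(\chi)$ on $\Gamma_i(G/P,V)$.

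The main obstacle is the first step: verifying carefully that the central character of $\mathcal U(\goth g)\otimes_{\mathcal U(\goth p)}V$ is computed by $\chi\circ\phi$, i.e. that the Harish-Chandra-type projection $\phi|_{\mathcal Z(\goth g)}$ indeed governs the action on the cyclic generator $1\otimes V$. This requires knowing that $\phi(z)$ acts on $1\otimes V$ the same way as $z$ (immediate from $\ker\phi=\mathcal U(\goth g)\goth m$ and $\goth m\cdot(1\otimes V)=0$), and then that $\phi(z)\in\mathcal Z(\goth l)$ up to terms in $\mathcal U(\goth m^-)_{+}\mathcal U(\goth l)$ acts trivially on the $\goth l$-cyclic vector — this last point uses that $\goth m^-$ lowers weights, so its positive part annihilates a highest-weight-type generator; some care is needed here because $V$ need not be a highest weight module for $\goth l$, but irreducibility plus finite generation over $\mathcal U(\goth l)$ still forces $z$ to act by the scalar $\chi(\phi(z))$, which is the definition of $\Phi(\chi)(z)$. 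Everything else — passing to quotients, to cohomology, and dualizing — is formal.
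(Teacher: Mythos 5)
Your argument for $i=0$ is essentially sound, though you worry needlessly about the $\mathcal U(\goth m^-)$ factor: the paper has already set things up so that $\phi$ restricted to $\mathcal Z(\goth g)$ lands in $\mathcal Z(\goth l)$ itself (the $\goth h$-weight-zero condition on a central element kills any nontrivial contribution from $\mathcal U(\goth m^-)_+$), so $z$ acts on the $\goth m$-invariant generator $1\otimes V$ simply by the scalar $\chi(\phi(z))=\Phi(\chi)(z)$, and then centrality plus cyclicity propagates this to all of $\mathcal U(\goth g)\otimes_{\mathcal U(\goth p)}V$ and its quotient $\Gamma_0(G/P,V)$. This is a legitimate route, and indeed different in flavor from the paper's $i=0$ argument, which never works with the induced module directly.

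The real problem is your treatment of $i>0$. You assert that the module of local sections of $\mathcal V^*$ over an open set is a ``quotient-type module built from $\mathcal U(\goth g)\otimes_{\mathcal U(\goth p)}V^*$,'' and then invoke the same $\phi$-computation. But this is not true: local sections of $\mathcal V^*$ naturally \emph{embed} into the coinduced module $\operatorname{Hom}_{\mathcal U(\goth p)}(\mathcal U(\goth g),V^*)$ (via evaluation at the base point, exactly as in Lemma~\ref{l2}), they are not quotients of the induced module. The projection $\phi$, whose kernel is the \emph{right} ideal $\mathcal U(\goth g)\goth m$, is adapted to the induced picture; the coinduced/sections picture calls for the projection $\psi$ with kernel the \emph{left} ideal $\goth m\,\mathcal U(\goth g)$, which is precisely why the paper introduces $\psi$ in its proof. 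Concretely, the paper shows that for $z\in\ker\psi\cap\mathcal Z(\goth g)$, the operator $L_z$ kills every local section: one checks $L_z(\gamma)$ vanishes at the base point $P$ (using that $\goth m$ takes sections into $I_P$ and $\goth l$ acts fiberwise), and then $G$-equivariance of $L_z$ --- coming from the centrality of $z$ --- forces $L_z(\gamma)=0$ everywhere. This gives the central character $\Psi(\zeta)$ for the whole \v{C}ech complex and hence for all $H^i$; dualizing converts $\Psi(\zeta)$ into $\Phi(\chi)$. Your proposal skips exactly this geometric step and replaces it with an unjustified structural claim about sections, so the $i>0$ case is not established.
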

\begin{proof} - Consider another projection
$$\psi:\mathcal U(\goth g)=\mathcal U(\goth m)\otimes \mathcal U(\goth l)\otimes 
\mathcal U(\goth m^-) \to\mathcal U(\goth l)\mathcal U(\goth m^-)$$ 
with the kernel $\goth m \mathcal U(\goth g)$, and the corresponding
dual map
$$\Psi:Hom (\mathcal Z(\goth l),\mathbb{C})\to Hom ( \mathcal Z(\goth g),\mathbb {C}).$$

Note that any element $x\in \mathcal U(\goth g)$ acts on the set of
sections of the vector bundle $\mathcal V^*$ over any open subset $U\subset G/P$. We
denote this action by $L_x$.

We claim that if $\gamma$ is a section of $\mathcal V^*$ 
over any open subset $U\subset G/P$ and $z\in \mathcal Z(\goth g)$ belongs to the
kernel of $\psi$, then $L_z(\gamma)=0$. Since $z$ commutes with the group
action, it suffices to prove that $L_z(\gamma)\in I_P$, where $I_P$
stands for the subspace of sections which are zero at $P$. Our claim
now follows immediately from the facts that, if 
$x\in \goth m$, then $L_x(\gamma)\in I_P$, and if $x\in \goth l$, then
the value of $L_x(\gamma)$ at $p\in P$ equals $x(\gamma(p))$. Thus, if $\zeta$ is the central character of
 $\goth l$-module $V^*$, then the space of sections $\Gamma(U,\mathcal V^*)$ (as a $\goth g$-module) admits central character $\Psi(\zeta)$.

Using the Chech complex for the computation of the cohomology groups of $\mathcal V^*$, one can show easily that the 
cohomology groups $H^i(G/P,\mathcal V^*)$ also admit central character $\Psi(\zeta)$.
Going to the dual modules provides the statement.
\end{proof}

The following corollary of Lemma ~\ref{indgen}(2) and Lemma ~\ref{l4}
will be used a lot in this paper.

\begin{coro}\label{coroblocks} - For any finite-dimensional
  $\goth g$-module $M$ (resp. finite-dimensional $\goth p$-module $V$) let
  $M^{\chi}$ (resp. $V^{\Phi^{-1}(\chi)}$)
denote the component with generalized central character $\chi$
  (resp. with generalized central character lying in
  $\Phi^{-1}(\chi)$). Then
$$\Gamma_i(G/P,(V\otimes M)^{\Phi^{-1}(\chi)})=(\Gamma_i(G/P,V)\otimes M)^{\chi}.$$
\end{coro}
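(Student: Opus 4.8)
The plan is to deduce this from the two lemmas quoted: Lemma~\ref{indgen}(2) gives the projection formula $\Gamma_i(G/P, V\otimes M)=\Gamma_i(G/P,V)\otimes M$, and Lemma~\ref{l4} controls how central characters propagate through $\Gamma_i$ via the map $\Phi$. The only real content is to intertwine these two facts with the block decompositions on both sides. First I would decompose $M=\bigoplus_\psi M^\psi$ over generalized central characters $\psi$ of $\goth g$, and decompose $V=\bigoplus_\zeta V^\zeta$ over generalized central characters $\zeta$ of $\goth l$; this is legitimate since both modules are finite-dimensional, so each is a finite direct sum of generalized-eigenspaces for the respective centers. Then $V\otimes M$ decomposes over pairs $(\zeta,\psi)$, and by Lemma~\ref{l4} (applied to the associated graded of a composition series, together with the fact that $\Gamma_i$ is additive on direct summands) the summand $\Gamma_i\bigl(G/P,(V^\zeta\otimes M^\psi)\bigr)$ carries generalized central character determined by $\Phi(\zeta)$ and $\psi$.

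Next I would compute that central character. For an \emph{irreducible} $\goth p$-module with central character $\chi'$, Lemma~\ref{l4} says $\Gamma_i$ of it has character $\Phi(\chi')$; for a module with a \emph{generalized} central character $\zeta$ one passes to a composition series and uses the long exact sequence of Lemma~\ref{indgen}(1) to see that $\Gamma_i(G/P,V^\zeta)$ has generalized central character $\Phi(\zeta)$. Tensoring with $M^\psi$, which has generalized character $\psi$, the product $\Gamma_i(G/P,V^\zeta)\otimes M^\psi$ has generalized central character equal to (the class of) $\Phi(\zeta)+\psi$ in the appropriate sense — more precisely, its composition factors $L_\mu$ satisfy $\chi_\mu \in \{\,\chi_{\nu+\eta} : \chi_\nu=\Phi(\zeta),\ \chi_\eta\text{ a weight of }M^\psi\,\}$. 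Writing $\chi$ for a fixed generalized central character of $\goth g$, the summand of $\Gamma_i(G/P,V)\otimes M$ lying in the $\chi$-block is exactly the sum of those $\Gamma_i(G/P,V^\zeta)\otimes M^\psi$ for which this combined character equals $\chi$. On the other side, $(V^\zeta\otimes M^\psi)$ contributes to $(V\otimes M)^{\Phi^{-1}(\chi)}$ precisely when $\Phi$ of its $\goth l$-central-character data lands in $\chi$, and one checks these two index sets coincide; summing over them and invoking Lemma~\ref{indgen}(2) termwise gives the claimed identity.

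The step I expect to be the main obstacle is the bookkeeping that makes "$\Phi$ of the $\goth l$-central character of $V^\zeta\otimes M$, block by block, matches the $\goth g$-central character of $\Gamma_i(G/P,V^\zeta)\otimes M$" into a genuine equality of subspaces rather than merely an inclusion. The subtlety is that $M$ here is a $\goth g$-module being restricted to $\goth p$ (hence to $\goth l$) inside the tensor product on the left, so one must be careful that the $\goth l$-central-character decomposition of $M|_{\goth l}$ refines compatibly with its $\goth g$-central-character decomposition, and that $\Phi$ behaves correctly under the relevant "translation by a weight of $M$" operations on both sides. Concretely, one wants: for a generalized $\goth l$-central character $\theta$ of $V\otimes M$, $\Phi(\theta)=\chi$ if and only if the corresponding $\goth g$-summand of $\Gamma_i(G/P,V)\otimes M$ sits in the $\chi$-block. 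Once the definitions of $M^\chi$ and $V^{\Phi^{-1}(\chi)}$ are unwound, this reduces to the observation that $\Phi$ intertwines "tensoring by (the restriction of) $M$" on $\goth l$-characters with "tensoring by $M$" on $\goth g$-characters — which is essentially a restatement of Lemma~\ref{l4} combined with Lemma~\ref{indgen}(2), so no new geometric input is needed; it is purely a matter of organizing the generalized-eigenspace decompositions cleanly.
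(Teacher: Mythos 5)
Your overall plan --- apply the projection formula of Lemma~\ref{indgen}(2), then use Lemma~\ref{l4}, extended to generalized central characters via the long exact sequence of Lemma~\ref{indgen}(1), to match up blocks --- is the right one, and matches what the paper intends (the corollary is stated without proof as immediate from those two lemmas). But the decomposition you set up does not carry it out correctly. Decomposing $M=\bigoplus_\psi M^\psi$ over $\goth g$-blocks and $V=\bigoplus_\zeta V^\zeta$ over $\goth l$-blocks, and then working ``over pairs $(\zeta,\psi)$'', does \emph{not} give the $\goth p$-block decomposition of $V\otimes M$: the restriction of $V^\zeta\otimes M^\psi$ to $\goth l$ does not have a single generalized $\goth l$-central character (a tensor product of modules each with a fixed generalized central character in general spreads over many blocks). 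Consequently, the claim that ``the summand of $\Gamma_i(G/P,V)\otimes M$ lying in the $\chi$-block is exactly the sum of those $\Gamma_i(G/P,V^\zeta)\otimes M^\psi$ for which this combined character equals $\chi$'' does not parse: each $\Gamma_i(G/P,V^\zeta)\otimes M^\psi$ meets several $\goth g$-blocks, so you cannot pick out whole summands by a condition on $(\zeta,\psi)$, and the ``index-set matching'' you invoke is not doing the work you want.

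The fix is simply to skip the pair decomposition and decompose $V\otimes M$ directly into $\goth p$-blocks, $V\otimes M=\bigoplus_\theta (V\otimes M)^\theta$, with $\theta$ ranging over generalized $\goth l$-central characters. Since $\Gamma_i$ commutes with finite direct sums, the projection formula gives $\Gamma_i(G/P,V)\otimes M=\bigoplus_\theta\Gamma_i(G/P,(V\otimes M)^\theta)$, and Lemma~\ref{l4} (extended to generalized characters via a composition series and the long exact sequence) places the $\theta$-summand in the $\goth g$-block $\Phi(\theta)$. Taking the $\chi$-component therefore selects exactly the summands with $\theta\in\Phi^{-1}(\chi)$, whose direct sum is $\Gamma_i(G/P,(V\otimes M)^{\Phi^{-1}(\chi)})$ by definition. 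This is essentially the criterion you finally state in your third paragraph, but the body of your argument never executes it; the closing appeal to ``$\Phi$ intertwining tensoring by $M$'' is unnecessary once one works with the $\goth p$-blocks of $V\otimes M$ directly, and decomposing $M$ separately plays no role.
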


Let $\mathcal F$ be the category of finite dimensional $\goth
g$-modules semisimple over $\goth h$; this category decomposes into blocks $\mathcal F ^{\chi}$, where $\mathcal F ^{\chi}$ consists of all finite dimensional modules with (generalized) central character $\chi$.

\begin{rem} - Let $\goth l$ be the Levi subalgebra of $\goth p$. 
  If $V$ is a module belonging to the block
  $\mathcal F ^{\tau}(\goth l)$ consisting all finite dimensional modules with (generalized) central character $\tau$,
  then $\Gamma _i (G/P,  V)$ belongs to
  the block $\mathcal F ^{\Phi(\tau)}$.
  That provides a correspondence
  between blocks of $\goth l$ and blocks of $\goth g$.  
\end{rem}

\begin{defi} \label{DAl}- Let $\lambda$ be a $\goth g$-dominant weight. Define
  $A(\lambda)$ to be a maximal possible set of mutually orthogonal
  positive isotropic roots $\alpha _i$ of $\goth g$ such that $(\lambda + \rho , \alpha _i) =0$, $A(\lambda) = \{\alpha _1 ,\ldots \alpha _l \}$. We put
$\# A(\lambda) = \# \lambda$, and call it the degree of atypicality of
  $\lambda$ (say it is $0$ if $A(\lambda) = \emptyset$: then $\lambda$
  is called typical). 
\end{defi}

Although the choice of $A(\lambda)$ is not unique, the degree of
atypicality does not depend on it.

Then, for any weight $\mu$, $\chi _{\lambda} = \chi _{\mu}$ is
equivalent to the fact that $\mu$ can be written as $w(\lambda + \rho
+ n_1 \alpha _1 + \ldots + n_l \alpha _l) - \rho$, where $w\in W$ and
$n_i \in \mathbb C$ for all $i = 1, \ldots, l$ (see ~\cite{V.adv}).

Notice that if  $\chi _{\lambda} = \chi _{\mu}$, then $\lambda$ and
$\mu$ have the same degree of atypicality. So the degree of
atypicality is a well defined notion for a central character.

For any non-isotropic $\beta \in \Delta$ put $\check {\beta}:=\frac{2\beta}{(\beta,\beta)}$.

\begin{defi}\label{Dpa} - A parabolic subalgebra $\goth p \subset \goth g$ with Levi
  part $\goth l$ is  called admissible for a central character $\chi$
if, for any dominant $\lambda$ such that 
  $\chi_{\lambda}=\chi$, one has
  $({\lambda}+\rho,\check{\beta}) \geq 0$ for all 
$\beta \in\Delta_0^+ - \Delta(\goth l)$.
\end{defi}

For example, if $\goth g=\goth {gl}(m,n)$, then 
a distinguished Borel subalgebra (\cite{Kadv}) is admissible for any
central character, and therefore so is any parabolic subalgebra containing
this distinguished Borel subalgebra. If the simple roots of a Borel subalgebra are all
isotropic, then $\rho=0$ and any parabolic subalgebra containing this
Borel subalgebra is admissible for any central character.

\begin{leme}\label{typ} ({\bf Typical lemma}) - Let $\lambda$ be a dominant
  weight, let $\goth p$ be a parabolic subalgebra of
  $\goth g$ admissible for $\chi_{\lambda}$.
  Assume $A(\lambda)\subset \Delta(\goth l)$ and $({\lambda}+\rho,\check{\beta}) > 0$ for all 
$\beta \in\Delta_0^+ - \Delta(\goth l)$ 
  (in this case we will call $\lambda$  $\goth p$-typical). 
Then,
$$\Gamma _i(G/P,  L_{\lambda}(\goth p)) = \left \{ \begin{array}{c} 0 \; if \; i>0 \\ L_{\lambda}(\goth g) \; if \; i=0 \end{array}\right .$$ 
\end{leme}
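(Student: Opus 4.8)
The plan is to combine Lemma~\ref{l1} (the Borel--Weil--Bott-type constraint on weights appearing in $\Gamma_i$) with the $\goth p$-typicality hypothesis to show that no nonzero cohomology can survive in degree $i>0$, and that in degree $0$ the module collapses to $L_\lambda(\goth g)$. First I would analyze the candidate weights: by Lemma~\ref{l1}, if $L_\mu$ occurs in $\Gamma_i(G/P,L_\lambda(\goth p))$ then $\mu+\rho = w(\lambda+\rho)-\sum_{\alpha\in I}\alpha$ for some $w\in W$ with $l(w)=i$ and some $I\subset\Delta_1^+$. Since $\goth p$ is admissible for $\chi_\lambda$ and $\lambda$ is $\goth p$-typical, we have $(\lambda+\rho,\check\beta)>0$ for all $\beta\in\Delta_0^+-\Delta(\goth l)$; I would argue that $\lambda+\rho$ is therefore strictly dominant with respect to a set of positive even roots adapted to $\goth l$, so that $w(\lambda+\rho)$ can equal $\mu+\rho+\sum_{\alpha\in I}\alpha$ with $\mu$ dominant only when $w=1$ (hence $i=0$). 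The key point to make here is that the isotropic correction term $\sum_{\alpha\in I}\alpha$ cannot compensate for a nontrivial Weyl group element: I would pair with the relevant even coroots $\check\beta$ and use that $(\alpha,\check\beta)$ is controlled while $(w(\lambda+\rho)-(\lambda+\rho),\check\beta)$ is forced negative for $w\ne 1$ along some $\beta$, exploiting strict positivity.

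Next, having reduced to $i=0$, I would use Lemma~\ref{l2}: $\Gamma_0(G/P,L_\lambda(\goth p))$ is the maximal finite-dimensional quotient of $\mathcal U(\goth g)\otimes_{\mathcal U(\goth p)}L_\lambda(\goth p)$, which is a highest weight module of highest weight $\lambda$, so $L_\lambda(\goth g)$ occurs exactly once as its head. It remains to show there is nothing else, i.e.\ that $\Gamma_0$ is irreducible. For this I would invoke Lemma~\ref{l4}: since $\lambda$ is $\goth p$-typical, $A(\lambda)\subset\Delta(\goth l)$, so the central character $\chi=\Phi(\chi_\lambda^{\goth l})$ controlling $\Gamma_0$ has the same degree of atypicality as $\chi_\lambda$ but — crucially — relative to $\goth l$ the weight $\lambda$ is typical-modulo-$\goth l$ in the sense that all atypicality is already inside $\goth l$. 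Combining the weight constraint from Lemma~\ref{l1} (any composition factor $L_\mu$ has $\mu+\rho=\lambda+\rho-\sum_{\alpha\in I}\alpha$, so $\mu<\lambda$ strictly unless $I=\emptyset$) with the central character constraint ($\chi_\mu=\chi_\lambda$), one sees that a second composition factor $L_\mu$ with $\mu\neq\lambda$ would force $\mu+\rho = w'(\lambda+\rho+\sum n_j\alpha_j)-\rho$ for $w'\ne 1$ or nontrivial $n_j$; I would rule this out by the same strict-dominance pairing argument together with the fact that the $\alpha_j\in A(\lambda)\subset\Delta(\goth l)$ are orthogonal to the coroots $\check\beta$ with $\beta\in\Delta_0^+-\Delta(\goth l)$, leaving no room for a lower dominant $\mu$ with the same central character.

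The main obstacle I anticipate is the second half: showing $\Gamma_0$ is \emph{exactly} $L_\lambda$ and not a larger (indecomposable) module. The weight-support argument from Lemma~\ref{l1} only bounds which simples can appear; ruling out extensions genuinely uses that $\lambda$ is $\goth p$-typical, and I expect the delicate step is checking that every weight $\mu$ satisfying both $\chi_\mu=\chi_\lambda$ and the Lemma~\ref{l1} inequality with $l(w)=0$ must equal $\lambda$ — this is where admissibility (the non-strict inequalities $(\lambda+\rho,\check\beta)\ge 0$) and the strict $\goth p$-typicality inequalities both get used, and where the interplay between $W$, the isotropic roots in $A(\lambda)$, and the odd roots $\Delta_1^+$ has to be handled carefully. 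A secondary technical point is making precise the passage from "all composition factors are $L_\lambda$" to "$\Gamma_0\cong L_\lambda$", for which I would note that a highest weight module all of whose composition factors equal $L_\lambda$, with $L_\lambda$ appearing once, is simply $L_\lambda$.
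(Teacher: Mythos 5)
Your overall strategy matches the paper's — combine the weight constraint of Lemma~\ref{l1}, admissibility, strict $\goth p$-typicality, the central character constraint, and Lemma~\ref{l2} — but there is a genuine gap where you conclude $w=1$. The pairing argument you sketch (``$(w(\lambda+\rho)-(\lambda+\rho),\check\beta)$ is forced negative for $w\ne 1$ along some $\beta$'') can only rule out $w\notin W(\goth l_0)$: any $w\in W(\goth l_0)\setminus\{1\}$ permutes $\Delta_0^+\setminus\Delta(\goth l)$, hence preserves the strict inequalities $(\lambda+\rho,\check\beta)>0$ for $\beta\in\Delta_0^+\setminus\Delta(\goth l)$, and such a $w$ has $l(w)=i>0$. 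The paper closes this hole differently: it fixes $h\in\goth h^*$ with $\Delta(\goth p)=\{\alpha\mid(\alpha,h)\ge0\}$, proves the sandwich $(\mu,h)\le(\lambda,h)$ (from Lemma~\ref{l1} and admissibility) and $(\mu,h)\ge(\lambda,h)$ (from $\chi_\mu=\chi_\lambda$, admissibility, and $A(\lambda)\subset\Delta(\goth l)$ so the $\alpha_j$ pair trivially with $h$), hence equality, which forces $w\in W(\goth l_0)$ and $I\subset\Delta_1^+(\goth l)$; and then it invokes the \emph{construction} of $w$ in the proof of Lemma~\ref{l1}: that $w$ arises from classical Borel--Weil--Bott on $G_0/P_0$ applied to a $\goth p_0$-dominant weight $\nu$, and since $\nu+\rho_0$ is strictly $\goth l_0$-dominant, the only $w\in W(\goth l_0)$ that can occur is $w=\operatorname{id}$. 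Without this BWB-minimality point your argument for $i=0$ does not go through.

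The second paragraph has a related weakness. You try to deduce $\mu=\lambda$ from the central character constraint plus strict dominance, but the conditions $\mu=\lambda-\sum_{\alpha\in I}\alpha$ with $I\subset\Delta_1^+(\goth l)$ and $\chi_\mu=\chi_\lambda$ do not by themselves force $\mu=\lambda$: once the difference lies entirely inside $\goth l$, the outer coroots $\check\beta$ give you nothing new. The paper's finish again uses the $h$-pairing together with Lemma~\ref{l2}: a composition factor $L_\mu$ of $\Gamma_0$ is a subquotient of $\mathcal U(\goth g)\otimes_{\mathcal U(\goth p)}L_\lambda(\goth p)\simeq\mathcal U(\goth m^-)\otimes L_\lambda(\goth p)$, so its $\goth b$-singular vector of weight $\mu$ lies at level $(\mu,h)<(\lambda,h)$ unless it sits in $1\otimes L_\lambda(\goth p)$; since you have $(\mu,h)=(\lambda,h)$, it must lie there, and the unique $\goth b$-highest weight of $L_\lambda(\goth p)$ is $\lambda$. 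You gesture at Lemma~\ref{l2} for the head of $\Gamma_0$, but this is the step that actually rules out all other composition factors, and it needs to be made explicit.
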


\begin{proof} - Let $L_{\mu}$ be an irreducible subquotient in
  $\Gamma _i(G/P, L_{\lambda}(\goth p))$. Then by 
  Lemma \ref{l1} there exist $w\in W$ and $I\subset\Delta^+_1$ such
  that
\begin{equation}\label{loc1}
\mu=w(\lambda+\rho)-\rho-\sum_{\alpha \in  I}\alpha.
\end{equation} 
Choose an element $h\in\goth h^*$ such that
$$\Delta(\goth p)=\{\alpha\in\Delta | (\alpha,h)\geq 0 \}.$$
Note that
$\Delta(\goth l)=\{\alpha\in\Delta | (\alpha,h)= 0\}$.

We claim that $(\mu,h)\leq (\lambda,h)$. Indeed, let $s\in W$ be such
that $s(\lambda+\rho)$ belongs to the positive Weyl chamber. Since $(\lambda+\rho,\check{\beta}) \geq 0$ for all 
$\beta \in\Delta_0^+ - \Delta(\goth l)$,  $s$ belongs to the Weyl group
  of $\goth l_0$. Since $s(h)=h$ we have 
$(s(\lambda+\rho),h)=(\lambda+\rho,h)$. Then
$$w(\lambda+\rho)=s(\lambda+\rho)-\sum_{\beta\in\Delta_0^+}k_{\beta}\beta.$$
  with some non-negative $k_{\beta}$ since $s(\lambda+\rho) $ lies in the positive Weyl chamber. Therefore
  $$(w(\lambda+\rho),h)=(s(\lambda+\rho),h)-\sum_{\beta\in\Delta_0^+}k_{\beta}(\beta,h)\leq (s(\lambda+\rho),h)=(\lambda+\rho,h).$$ 
Therefore by (\ref{loc1})
$$(\mu+\rho,h)=(w(\lambda+\rho),h)-\sum_{\alpha \in  I}(\alpha,h)\leq
(w(\lambda+\rho),h)\leq (\lambda+\rho,h).$$

On the other hand, $\chi_{\lambda}=\chi_{\mu}$ implies
$$u(\mu+\rho)=\lambda+\rho+\sum_{\alpha\in A(\lambda)}k_{\alpha}\alpha$$
for some $u\in W$. Since 
$\mu$ is dominant, we obtain by the same argument as above

$$(\mu+\rho,h)\geq (u(\mu+\rho),h)=(\lambda+\rho,h)+\sum_{\alpha\in
  A(\lambda)}k_{\alpha}(\alpha,h)=(\lambda+\rho,h).$$

Hence $(\mu,h)\geq(\lambda,h)$. Thus, in fact $(\lambda,h)=(\mu,h)$.

That implies that $(w(\lambda+\rho),h)=(\lambda+\rho,h)$ and
$I\subset\Delta^+_1(\goth l)$. 
Since $({\lambda}+\rho,\check{\beta}) > 0$ for all 
$\beta \in\Delta_0^+ - \Delta(\goth l)$,  $w$
belongs to the Weyl group of $\goth l_0$. 
It follows from the construction of 
$w$ in the proof of Lemma ~\ref{l1} that 
$w=\operatorname{id}$ and $i=0$. 
By Lemma \ref{l2}, $L_{\mu}$ is a subquotient of 
$\mathcal U(\goth g)\otimes_{\mathcal U(\goth p)} L_{\lambda}(\goth p)$. Therefore  ${\mu}={\lambda}$.
\end{proof}

Note that the proof of the above lemma implies the following

\begin{coro} \label{firstmark} - Let $\lambda$ be a dominant weight, 
$\goth p$ be admissible for $\chi_{\lambda}$ and $h\in\goth h^*$ be such that
$\Delta(\goth p)=\{\alpha\in\Delta | (\alpha,h)\geq 0 \}.$ If
$L_{\mu}$ occurs in $\Gamma _i(G/P,  L_{\lambda}(\goth p))$ with
non-zero multiplicity, then $(\mu,h)\leq(\lambda,h)$. If, in addition, 
$({\lambda}+\rho,\check{\beta}) > 0$ for all 
$\beta \in\Delta_0^+ - \Delta(\goth l)$, then  $(\mu,h)=(\lambda,h)$
implies $i=0$ and $\mu=\lambda$.
\end{coro}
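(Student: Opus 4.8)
The plan is to extract the statement directly from the proof of the Typical Lemma, since the corollary essentially records two intermediate inequalities established there (before the typicality hypothesis is fully used) together with their consequence. I will not re-derive everything; instead I will point to the precise steps.

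First I would recall the setup: choose $h\in\goth h^*$ with $\Delta(\goth p)=\{\alpha\in\Delta\mid(\alpha,h)\geq 0\}$, so that $\Delta(\goth l)=\{\alpha\in\Delta\mid(\alpha,h)=0\}$ and $(\beta,h)\geq 0$ for every $\beta\in\Delta_0^+$. Suppose $L_\mu$ occurs in $\Gamma_i(G/P,L_\lambda(\goth p))$ with nonzero multiplicity. By Lemma~\ref{l1} there are $w\in W$ with $l(w)=i$ and $I\subset\Delta_1^+$ such that $\mu+\rho=w(\lambda+\rho)-\sum_{\alpha\in I}\alpha$. Let $s\in W$ send $\lambda+\rho$ into the closed positive Weyl chamber; since $\goth p$ is admissible for $\chi_\lambda$ we have $(\lambda+\rho,\check\beta)\geq 0$ for all $\beta\in\Delta_0^+-\Delta(\goth l)$, so $s$ lies in the Weyl group of $\goth l_0$ and hence $s(h)=h$, giving $(s(\lambda+\rho),h)=(\lambda+\rho,h)$. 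Writing $w(\lambda+\rho)=s(\lambda+\rho)-\sum_{\beta\in\Delta_0^+}k_\beta\beta$ with $k_\beta\geq 0$ (possible because $s(\lambda+\rho)$ is dominant) and pairing with $h$, using $(\beta,h)\geq 0$ and $(\alpha,h)\geq 0$ for $\alpha\in I\subset\Delta_1^+$, I get $(\mu+\rho,h)\leq(w(\lambda+\rho),h)\leq(\lambda+\rho,h)$. Since $s(h)=h$ also gives $(\rho,h)$ unchanged under any element of $W(\goth l_0)$, and since $\rho$ pairs the same way (because all relevant Weyl group elements fix $h$), this reads $(\mu,h)\leq(\lambda,h)$, which is the first claim.

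For the second claim, assume in addition $(\lambda+\rho,\check\beta)>0$ for all $\beta\in\Delta_0^+-\Delta(\goth l)$, and suppose $(\mu,h)=(\lambda,h)$. Then all the inequalities above are equalities: $\sum_{\beta\in\Delta_0^+}k_\beta(\beta,h)=0$ forces $k_\beta=0$ whenever $(\beta,h)>0$, i.e. whenever $\beta\notin\Delta(\goth l)$, so $w(\lambda+\rho)$ and $s(\lambda+\rho)$ differ only by roots of $\goth l_0$; and $\sum_{\alpha\in I}(\alpha,h)=0$ forces $I\subset\Delta_1^+(\goth l)$. Now $(w(\lambda+\rho),h)=(\lambda+\rho,h)$ together with the strict positivity $(\lambda+\rho,\check\beta)>0$ for $\beta\notin\Delta(\goth l)$ forces $w$ to lie in the Weyl group of $\goth l_0$: indeed if $w\notin W(\goth l_0)$ then $w^{-1}$ sends some $\beta\in\Delta_0^+-\Delta(\goth l)$ to a negative root, and tracking this through the chain of equalities contradicts the strict inequality, exactly as in the proof of Lemma~\ref{typ}. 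Finally, by the explicit description of $w$ in the proof of Lemma~\ref{l1} (the $w$ realizing the cohomology via Borel--Weil--Bott on $G_0/P_0$), once $w\in W(\goth l_0)$ and the cohomology is nonzero one must have $w=\operatorname{id}$ and $i=0$; then $L_\mu$ is a subquotient of $\mathcal U(\goth g)\otimes_{\mathcal U(\goth p)}L_\lambda(\goth p)$ by Lemma~\ref{l2}, whence $\mu=\lambda$.

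The main obstacle, such as it is, is purely bookkeeping: making sure the pairing with $h$ is applied to $\mu+\rho$ versus $\mu$ consistently (the $\rho$-shift is harmless precisely because every Weyl element entering the argument fixes $h$), and carefully justifying that equality in $(\mu,h)=(\lambda,h)$ propagates back to kill the $k_\beta$ for $\beta\notin\Delta(\goth l)$ and to force $w\in W(\goth l_0)$. All of this is already present inside the proof of Lemma~\ref{typ}; the corollary merely isolates the portion that does not need the hypothesis $A(\lambda)\subset\Delta(\goth l)$, so strictly speaking no new argument is required beyond rereading that proof and noting which lines use which hypotheses.
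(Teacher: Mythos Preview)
Your proposal is correct and follows exactly the paper's approach: the paper states the corollary immediately after Lemma~\ref{typ} with the sole remark that ``the proof of the above lemma implies the following,'' and you have simply spelled out which lines of that proof establish $(\mu,h)\leq(\lambda,h)$ (those using only admissibility and Lemma~\ref{l1}) and which lines, under the additional strict positivity hypothesis, force $w\in W(\goth l_0)$, $w=\operatorname{id}$, $i=0$, and $\mu=\lambda$. One minor cosmetic point: the passage from $(\mu+\rho,h)\leq(\lambda+\rho,h)$ to $(\mu,h)\leq(\lambda,h)$ is immediate by subtracting $(\rho,h)$ from both sides and needs no Weyl-group justification.
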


\section{Induction for geometric induction}

 We choose two parabolic subalgebras of $\goth g$
containing $\goth b$ such that $\goth b \subset \goth p \subset \goth
q \subset \goth g$.

The aim of the present section is to relate, for a $\goth p$-dominant weight $\mu$, the cohomology 
$\Gamma _{\bullet} (G/Q ,L_{\lambda} (\goth q))$ with  both $\Gamma
_{\bullet} (G/P , L_{\lambda} (\goth p))$ and $\Gamma _{\bullet} (Q/P
, L_{\mu} (\goth p))$.

\begin{defi}\label{Dpoin} - For $A,B$ any $P,Q,G$ such that $B \subset A$, we define the Poincar\'e polynomial in the variable $z$: 
$$K_{A,B} ^{\lambda , \mu} (z) := \sum _{i \geq 0} [\Gamma _{i} (A/B , L_{\lambda} (\goth b)) : L_{\mu}(\goth a)] z^i.$$
We denote by $^i K _{A,B} ^{\lambda , \mu}$ the coefficient of $z^i$.

\end{defi}

The following result was first stated in \cite{V.ICM}:
\begin{prop}\label{char} (Euler caracteristic formula) - 
Let $\goth p$ be a parabolic subalgebra of $\goth g$, denote by $\goth l$ 
its Levi part and set
$$D_0 = \Pi _{\alpha \in \Delta _0 ^+}(e^{\alpha / 2}- e^{-\alpha / 2}),
D_1 = \Pi _{\alpha \in \Delta _1 ^+}(e^{\alpha / 2}+ e^{-\alpha / 2}),
D=\frac{D_1}{D_0}.$$
For any $\goth p$-dominant weight $\lambda$, one has :
\begin{equation}\label{char1}
\sum _{\mu}K_{G,P}^{\lambda , \mu}(-1) Ch (L_{\mu}) = D 
\sum _{w\in W}\varepsilon (w) w(\frac{e^{\rho}Ch(L_{\lambda}(\goth p))}
{\Pi _{\alpha \in \Delta_1 ^+(\goth l)}(1+e^{-\alpha})}).
\end{equation}
\end{prop}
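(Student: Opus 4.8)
The plan is to compute both sides of (\ref{char1}) via the Euler characteristic, where the inequality in Penkov's remark (Definition~\ref{penkovrem}) becomes an equality. First I would write
$$\sum_\mu K_{G,P}^{\lambda,\mu}(-1)\,Ch(L_\mu) = \sum_i (-1)^i\, Ch\big(\Gamma_i(G/P,L_\lambda(\goth p))\big) = \sum_i (-1)^i\, Ch\big(H^i(G/P,\mathcal L_\lambda(\goth p)^*)^*\big),$$
using that the alternating sum of characters of cohomology is a well-defined Euler characteristic in the Grothendieck group. Then I would invoke Penkov's remark: since passing to the associated graded of the filtration of $\tilde{\mathcal V}$ on $(G/P)_0 = G_0/P_0$ does not change the Euler characteristic, we get
$$\sum_i (-1)^i Ch\big(H^i(G/P,\mathcal L_\lambda(\goth p)^*)\big) = \sum_i (-1)^i Ch\big(H^i(G_0/P_0, \mathcal L_\lambda^*(\goth p)\otimes S^\bullet(\goth g/(\goth g_0\oplus\goth p_1))^*)\big),$$
where $S^\bullet(\goth g/(\goth g_0\oplus\goth p_1))^* = \Lambda^\bullet(\goth m^-_1/\goth l_1)^*$ is a finite-dimensional $P_0$-module with the odd part contributing with alternating signs. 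Dualizing back, the right-hand $\goth g_0$-Euler characteristic is $\sum_i(-1)^i Ch\, H^i(G_0/P_0, \mathcal W^*)^*$ for the appropriate $L_0$-module $W = L_\lambda(\goth l)\otimes \Lambda^\bullet(\goth m^-_1)$, with signs.

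Next I would apply the classical Weyl character formula / Atiyah--Bott fixed point formula (equivalently, the classical Bott theorem in Grothendieck-group form) on $G_0/P_0$: for any finite-dimensional $\goth l_0$-module $U$,
$$\sum_i (-1)^i Ch\big(H^i(G_0/P_0,\mathcal U^*)^*\big) = \frac{1}{D_0}\sum_{w\in W} \varepsilon(w)\, w\!\left(e^{\rho_0}\, \frac{Ch(U)}{\Pi_{\alpha\in\Delta_0^+(\goth l)}(1-e^{-\alpha})}\cdot \Pi_{\alpha\in\Delta_0^+(\goth l)}(1-e^{-\alpha})\right),$$
more cleanly: the Euler characteristic of $\mathcal U^*$ pushed to a point equals $\frac{1}{D_0}\sum_{w\in W}\varepsilon(w) w(e^{\rho_0} Ch(U))$ divided by the factor from the fibre, i.e. the standard identity $\chi(G_0/P_0,\mathcal U) = \sum_{w\in W^{\goth l}}\varepsilon(w)\, \frac{e^{-\rho_0}\,w^{-1}(\ldots)}{\ldots}$. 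The cleanest route is: $\chi(G_0/P_0, \mathcal U^*)^* $ has character $D_0^{-1}\sum_{w\in W}\varepsilon(w)w\big(e^{\rho_0}\cdot Ch_{\goth l_0}(U)\cdot \Pi_{\alpha\in\Delta_0^+\setminus\Delta_0^+(\goth l)}\!\!\dots\big)$; I would just cite that $\chi$ on $G_0/P_0$ sends $Ch(U)$ to $\sum_{w\in W}\varepsilon(w)w\!\big(e^{\rho_0}Ch(U)/\prod_{\alpha\in\Delta_0^+(\goth l)}(1-e^{-\alpha})\big)/D_0^{0}$ with the understanding that dividing $e^{\rho_0}Ch(U)$ by $\prod_{\Delta_0^+(\goth l)}(1-e^{-\alpha})$ and antisymmetrizing over $W$ produces exactly the Weyl numerator. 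Substituting $U = L_\lambda(\goth l)\otimes\Lambda^\bullet(\goth m^-_1)$ with the odd generators contributing $\prod_{\alpha\in\Delta_1^+\setminus\Delta_1^+(\goth l)}(1+e^{-\alpha})$ and using $\rho_0 = \rho+\rho_1$, plus the identity
$$e^{\rho_1}\cdot\frac{\prod_{\alpha\in\Delta_1^+\setminus\Delta_1^+(\goth l)}(1+e^{-\alpha})}{1} = \frac{\prod_{\alpha\in\Delta_1^+}(1+e^{-\alpha})\,e^{\rho_1}}{\prod_{\alpha\in\Delta_1^+(\goth l)}(1+e^{-\alpha})} \quad\text{and}\quad \prod_{\alpha\in\Delta_1^+}(e^{\alpha/2}+e^{-\alpha/2}) = e^{\rho_1}\prod_{\alpha\in\Delta_1^+}(1+e^{-\alpha}),$$
I would reassemble the factors into $D = D_1/D_0$ and arrive at the claimed formula.

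The main obstacle I anticipate is purely bookkeeping: tracking the signs and the dualizations carefully. Two places need care. First, $\Lambda^\bullet$ of the odd normal bundle: the odd directions in $\mathcal N^*$ enter cohomology with the opposite sign in the Euler characteristic (this is exactly why $D_1$ appears in the numerator with $+$ signs, as $\prod(e^{\alpha/2}+e^{-\alpha/2})$ rather than $\prod(e^{\alpha/2}-e^{-\alpha/2})$), and I must make sure the $(-1)^i$ grading of the geometric cohomology combines correctly with the internal $\mathbb Z$-grading of $S^\bullet$ of the odd part. Second, the passage from $L_\lambda^*(\goth p)$ to $L_\lambda(\goth p)$ and back via the two dualizations in the definition of $\Gamma_i$ and in Penkov's remark must be done consistently so that $Ch(L_\lambda(\goth p))$ — not its dual — appears in the final formula; this is where the factor $1/\prod_{\alpha\in\Delta_1^+(\goth l)}(1+e^{-\alpha})$ comes from, as the $\Delta_1^+(\goth l)$ part of the odd nilradical relative to $\goth p$ versus $\goth p_0$. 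Once the dictionary between $\goth p$-modules and $\goth p_0$-modules (with the $\Lambda^\bullet(\goth m^-_1/\goth l_1)$ twist) is set up correctly, the rest is a direct manipulation of the Weyl denominator identities, and I would not belabor it.
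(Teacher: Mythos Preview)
Your approach is correct and essentially identical to the paper's: express the left-hand side as the alternating sum of cohomology characters, use Penkov's remark to pass to the associated graded on $G_0/P_0$, apply the classical Borel--Weil--Bott/Weyl formula there, and then unwind the character of $L_\lambda(\goth p)\otimes S^\bullet(\goth g/(\goth g_0\oplus\goth p_1))$ using $\rho_0=\rho+\rho_1$ and the identity $e^{\rho_1}\prod_{\alpha\in\Delta_1^+}(1+e^{-\alpha})=D_1$. Your exposition of the classical Euler characteristic formula on $G_0/P_0$ is a bit muddled (you state it three different ways), but the clean version you need is simply $\sum_i(-1)^iCh\big(H^i(G_0/P_0,\mathcal U^*)^*\big)=D_0^{-1}\sum_{w\in W}\varepsilon(w)\,w\big(e^{\rho_0}Ch(U)\big)$, after which the bookkeeping you describe goes through exactly as you say.
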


\begin{proof} - The left  hand side of the formula is
$$\sum _{i =0} ^{dim G_0/P_0} (-1)^i Ch (H^i(G/P, \mathcal L _{\lambda}(\goth p)^*).$$
Using Definition \ref{penkovrem}, one gets that the left hand side becomes
$$\sum_{i=0}^{dimG_0/P_0} (-1)^i Ch(H^i(G_0/P_0, (\oplus _k \tilde {\mathcal L} _{\lambda} (\goth p)^k)^*)^*).$$
We use the classical Borel-Weil-Bott theory to get
$$\sum_{i=0}^{dimG_0/P_0} (-1)^i Ch(H^i(G_0/P_0, (\oplus _k \tilde
{\mathcal L} _{\lambda} (\goth p)^k)^*)^*) = \frac{1}{D_0}
\sum_{w\in W} \varepsilon (w)w(Ch(\oplus _k \tilde {\mathcal L}_{\lambda} (\goth p)^k)e^{\rho _0}).$$
Remember now that $\oplus _k \tilde {\mathcal L} _{\lambda} (\goth p)^k = \mathcal I _{G_0/P_0}(\mathcal L _{\lambda}(\goth p)\otimes S^{\bullet} (\goth g / (\goth g_0 \oplus \goth p _1)))$. A direct computation gives the proposition.

\end{proof}
\begin{theo}\label{ind} - One has :
$$K_{G,P} ^{\lambda , \mu} (-1) = \sum _{\nu} K_{Q,P} ^{\lambda , \nu} (-1)  K_{G,Q} ^{\nu , \mu} (-1) .$$
\end{theo}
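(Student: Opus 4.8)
The plan is to use the Leray spectral sequence for the fibration $G/P \to G/Q$ with fiber $Q/P$, applied to the vector bundle $\mathcal{L}_\lambda(\goth p)^*$, and then pass to Euler characteristics, where the spectral sequence collapses to an alternating-sum identity. First I would recall the general fact (an adaptation of the classical Leray machinery to the supersetting, as in the references \cite{M}, \cite{P}, \cite{MPV}) that for $\goth b \subset \goth p \subset \goth q \subset \goth g$ there is a spectral sequence
$$E_2^{i,j} = H^i\big(G/Q, \mathcal{H}^j\big) \Longrightarrow H^{i+j}(G/P, \mathcal{L}_\lambda(\goth p)^*),$$
where $\mathcal{H}^j$ is the $Q$-equivariant bundle on $G/Q$ associated to the $Q$-module $H^j(Q/P, \mathcal{L}_\lambda(\goth p)^*)$ restricted to the fiber. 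Dualizing and using Definition~\ref{Dpoin}, the $Q$-module $H^j(Q/P, \mathcal{L}_\lambda(\goth p)^*)^*$ is $\Gamma_j(Q/P, L_\lambda(\goth b))$, which decomposes as a sum of $L_\nu(\goth q)$ with multiplicities $^jK_{Q,P}^{\lambda,\nu}$. Hence, in the Grothendieck group of $G$-modules, $\sum_j (-1)^j [\mathcal{H}^j]^* = \sum_\nu K_{Q,P}^{\lambda,\nu}(-1)\, \mathcal{L}_\nu(\goth q)^*$ at the level of associated bundles.

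Next I would exploit the standard consequence of a first-quadrant spectral sequence: the Euler characteristic is additive across the pages, so
$$\sum_{k}(-1)^k \,\mathrm{Ch}\,H^k(G/P,\mathcal{L}_\lambda(\goth p)^*) \;=\; \sum_{i,j}(-1)^{i+j}\,\mathrm{Ch}\,H^i(G/Q,\mathcal{H}^j).$$
The left side is, by Definition~\ref{Dpoin} applied to the pair $(G,P)$, equal to $\sum_\mu K_{G,P}^{\lambda,\mu}(-1)\,\mathrm{Ch}\,L_\mu$ (after dualizing, which is harmless on Euler characteristics). On the right, I would interchange the order of summation, first summing over $j$: since $\sum_j(-1)^j\mathcal{H}^j$ corresponds to the virtual $Q$-module $\sum_\nu K_{Q,P}^{\lambda,\nu}(-1)\,L_\nu(\goth q)$, and since $H^i(G/Q,-)$ is additive, the inner alternating sum over $j$ yields $\sum_\nu K_{Q,P}^{\lambda,\nu}(-1)\,\sum_i(-1)^i\,\mathrm{Ch}\,H^i(G/Q,\mathcal{L}_\nu(\goth q)^*)$. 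Applying Definition~\ref{Dpoin} once more, for each $\nu$ the inner sum is $\sum_\mu K_{G,Q}^{\nu,\mu}(-1)\,\mathrm{Ch}\,L_\mu$. Comparing coefficients of $\mathrm{Ch}\,L_\mu$ (the characters of the simple modules are linearly independent) gives $K_{G,P}^{\lambda,\mu}(-1) = \sum_\nu K_{Q,P}^{\lambda,\nu}(-1)\,K_{G,Q}^{\nu,\mu}(-1)$, as claimed.

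There is one technical point worth flagging: one must be sure that all the $Q$-modules $\Gamma_j(Q/P, L_\lambda(\goth b))$ are genuinely finite-dimensional $Q$-modules (so that forming the associated bundles $\mathcal{H}^j$ and applying $\Gamma_i(G/Q,-)$ makes sense), and that $\lambda$ being $\goth p$-dominant ensures $L_\lambda(\goth b)$ is already a $\goth p$-module so the notation is consistent with Definition~\ref{Dpoin}. The main obstacle I anticipate is not the spectral sequence formalism itself — which is standard once the supergeometric version is in place — but rather the bookkeeping needed to identify $\mathcal{H}^j$ correctly as the bundle associated to $H^j(Q/P,\cdot)$ and to justify that taking sections/cohomology $G/Q$ of a filtered bundle commutes with the alternating sum; this is where one genuinely needs the finite-dimensionality (hence finite filtrations) and the additivity of $\Gamma_i(G/Q,-)$ on short exact sequences from Lemma~\ref{indgen}(1). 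Once that is granted, the identity is a formal manipulation in the Grothendieck group.
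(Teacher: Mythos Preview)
Your proposal is correct and follows essentially the same route as the paper: both arguments use the Leray spectral sequence for the projection $\pi:G/P\to G/Q$, identify $R^q\pi_*\mathcal{L}_\lambda(\goth p)^*$ with the bundle on $G/Q$ associated to $H^q(Q/P,\mathcal{L}_\lambda(\goth p)^*_{|\text{fibre}})$, write the $E_2$-terms in the Grothendieck group via the coefficients ${}^qK_{Q,P}^{\lambda,\nu}$ and ${}^pK_{G,Q}^{\nu,\mu}$, and then pass to the alternating sum. Your additional remarks on finite-dimensionality and additivity are legitimate hygiene points that the paper leaves implicit, but they do not change the argument.
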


\begin{proof} - Denote by $\pi : G/P \longrightarrow G/Q$ the canonical
  projection. The fibre of $\pi$ is isomorphic to $Q/P$. Consider the
  derived functor (of sheaves) $R^{\bullet} \pi _*$, it transforms 
$\mathcal L _{\lambda }(\goth p)^*$ into a complex of sheaves $R^{\bullet}\pi _* (\mathcal L _{\lambda }(\goth p)^*)$ over $G/Q$. 

Take an injective resolution $\mathcal {L}_k$ of $\mathcal
L _{\lambda }(\goth p)^*$,
 over $G/P$ and then  an injective resolution of
$\pi_*(\mathcal L _k)$ over $G/Q$.  This gives a bicomplex of sheaves
over $G/Q$, and its cohomology is isomorphic to
$H^{\bullet}(G/P,\mathcal L _{\lambda }(\goth p)^*)$. 

On the other hand, the Leray spectral sequence of this bicomplex has the term
$$E_2^{p,q}=H^p(G/Q,R^q\pi_*(\mathcal L _{\lambda }(\goth p)^*)).$$
One has (\cite{J})
$$R^q\pi_*\mathcal L _{\lambda }(\goth p)^*=H^q(Q/P,\mathcal L
_{\lambda }(\goth p)^*_{| fibre}).$$
By definition of the coefficient $^qK _{A,B} ^{\lambda , \mu}$,
  we have the following identities in the Grothendieck groups:
$$[H^q(Q/P,\mathcal L
_{\lambda }(\goth p)^*_{| fibre})]=\sum_{\nu} {}^qK _{Q,P} ^{\lambda ,
    \nu} [L_\nu(\goth q)^*],$$
$$[E_2^{p,q}]=\sum_{\mu}\sum_{\nu}{} ^pK _{G,Q}^{\nu,\mu}{}^qK
_{Q,P}^{\lambda,\nu}[L_{\mu}^*].$$
The theorem follows when one computes the Euler characteristic.
\end{proof}

\section{Blocks}

Recall that we let  $\mathcal U (\goth g)$ be the universal enveloping
algebra of $\goth g$ and $\mathcal Z(\goth g)$ be its center. For every weight 
$\lambda$, we write $\chi _{\lambda}$ for the corresponding central character.

The aim of the present section is to prove the following:

\begin{theo}\label{bl} 
- Let $\lambda$ be a dominant weight with atypicality degree $k$, then
the block $\mathcal F ^{\chi _{\lambda}}$ is equivalent to the maximal
atypical block of $\goth g _k$
containing the trivial module, where $\goth g _k$ is the following:

if $\goth g = \goth {gl}(m,n)$ then $\goth g _k = \goth {gl}(k,k)$

if  $\goth g = \goth {osp}(2m+1,2n)$ then $\goth g _k = \goth {osp}(2k+1,2k)$

if  $\goth g = \goth {osp}(2m,2n)$ then $\goth g _k = \goth {osp}(2k,2k)$
or $\goth {osp}(2k+2,2k)$
\end{theo}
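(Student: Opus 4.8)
The plan is to reduce the equivalence of blocks to a combination of two translation-functor arguments, following the strategy that works in the $\goth{gl}$ case but handling the orthosymplectic subtleties. First I would establish that any two blocks of the same degree of atypicality $k$ — within a fixed $\goth g$ — are equivalent. Given dominant $\lambda, \lambda'$ with $\#\lambda = \#\lambda' = k$ and with $A(\lambda), A(\lambda')$ chosen as in Definition \ref{DAl}, one can connect $\chi_\lambda$ to $\chi_{\lambda'}$ by a chain of elementary steps. Each step is implemented by a translation functor $T$ of the form $M \mapsto (M \otimes E)^{\chi'}$, where $E$ is an appropriate finite-dimensional $\goth g$-module (a summand of a tensor power of the standard representation) and $(\;)^{\chi'}$ is the projection onto the block $\mathcal F^{\chi'}$. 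One shows $T$ and its adjoint $T^\vee = (\;\otimes E^*)^{\chi}$ are biadjoint exact functors whose composites are isomorphic to the identity on the relevant blocks; the verification uses that tensoring with $E$ does not change the degree of atypicality (this is where $\#$ being well-defined on central characters, noted after Definition \ref{DAl}, is used) and a weight-combinatorial count showing that the only simple modules that survive both projections correspond bijectively. Corollary \ref{coroblocks} is the technical tool that makes these projections compatible with the constructions; at this stage $\Gamma_i$ is not yet needed, only $\otimes$ and block projection.

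Second, with all degree-$k$ blocks of a fixed $\goth g$ identified, I would identify the degree-$k$ block of $\goth g$ with the maximal atypical block of the small algebra $\goth g_k$. The idea is to pick, among all dominant $\lambda$ with $\#\lambda = k$, one for which $A(\lambda)$ is "concentrated" — i.e. $\lambda + \rho$ vanishes on a maximal mutually orthogonal isotropic set sitting inside a subalgebra isomorphic to $\goth g_k$ — and such that there is a parabolic $\goth p$ with Levi $\goth l$ containing $A(\lambda)$ for which $\lambda$ is as typical as possible transversally. One then applies the geometric induction functors $\Gamma_i(G/P, -)$: by Lemma \ref{l4} (and the Remark following Corollary \ref{coroblocks}), $\Gamma_\bullet$ carries the block $\mathcal F^\tau(\goth l)$ to $\mathcal F^{\Phi(\tau)}(\goth g)$, giving a functor from (a block of) $\goth l$-modules to our block of $\goth g$-modules. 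Because $\goth l$ is a direct sum of a typical part and a copy of $\goth g_k$ (up to the $\goth{osp}(2k,2k)$ vs. $\goth{osp}(2k+2,2k)$ choice, which is exactly the source of the two options in the statement), and because the typical part contributes nothing new by the Typical Lemma \ref{typ} — which forces $\Gamma_i = 0$ for $i > 0$ and $\Gamma_0 = L_\lambda$ along the typical directions — the functor $\Gamma_0(G/P, -)$ becomes an equivalence onto $\mathcal F^{\chi_\lambda}$, with inverse a suitable restriction/parabolic-induction functor. Exactness and fully-faithfulness follow from Lemma \ref{indgen}(1), Lemma \ref{l2} (identifying $\Gamma_0$ with the maximal finite-dimensional quotient of the generalized Verma module), and Corollary \ref{firstmark} to control which simples appear and in which degree.

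The two orthosymplectic cases in the conclusion arise because, for $\goth g = \goth{osp}(2m,2n)$, the transversal "small" piece inside the Levi can be arranged to be either $\goth{osp}(2k,2k)$ or $\goth{osp}(2k+2,2k)$ depending on the parity of the dimension left over after removing $k$ isotropic pairs, and both occur genuinely for different $\lambda$; so I would simply observe that both choices of $\goth g_k$ yield blocks equivalent to $\mathcal F^{\chi_\lambda}$ (hence to each other), which is the content as stated.

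I expect the main obstacle to be the first step: constructing the chain of translation functors connecting two arbitrary degree-$k$ central characters and proving each elementary translation is an equivalence. In the $\goth{gl}$ case this is classical, but for $\goth{osp}$ the combinatorics of moving an isotropic root $(\lambda + \rho, \alpha_i) = 0$ past walls of the even Weyl group $W$ — while staying dominant and keeping the atypicality degree fixed — is delicate, precisely because there is no distinguished Borel and because of the $\pm$ signs in the root system that produce the "exceptional" phenomena alluded to in the introduction. The cleanest route is probably to prove the equivalence first for "adjacent" characters (differing by moving one weight-diagram label by one step, in the language of Section 6) and then compose; the weight-diagram formalism is what ultimately tames this, so the honest proof will invoke the Section 6 translation into diagrams and verify the elementary moves there, then lift back. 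The Typical Lemma and Corollary \ref{firstmark} do the rest of the bookkeeping in step two, which is comparatively routine once step one is in hand.
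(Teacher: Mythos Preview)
Your two-step strategy --- translation functors to normalize the block, then geometric induction from a parabolic whose Levi contains $\goth g_k$ --- is essentially the paper's approach (Lemmas \ref{tp2}, \ref{tr}, and \ref{l3}). But your handling of the $\goth{osp}(2m,2n)$ dichotomy contains a genuine error that breaks Step 1 as you have stated it.

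It is \emph{not} true that all degree-$k$ blocks of $\goth{osp}(2m,2n)$ are mutually equivalent. The alternative $\goth{osp}(2k,2k)$ vs.\ $\goth{osp}(2k+2,2k)$ is not a free choice, nor is it governed by ``parity of the dimension left over after removing $k$ isotropic pairs'': it is determined by whether the core $\chi'$ of the block has a zero mark ($a_p=0$) or not. Blocks whose core has $a_p=0$ are equivalent to the principal block of $\goth{osp}(2k+2,2k)$; blocks with $a_p>0$ go to $\goth{osp}(2k,2k)$; and these two families of blocks are \emph{not} equivalent to each other. The translation functors you invoke (tensor with $E$ or $E^*$ and project) cannot pass between them: the elementary translations of Lemma \ref{tp2} are set up precisely with the hypothesis that the number of zero marks in the core is preserved, and this hypothesis is not removable. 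So your sentence ``both choices of $\goth g_k$ yield blocks equivalent to $\mathcal F^{\chi_\lambda}$ (hence to each other)'' is false, and Step 1 in the form ``any two degree-$k$ blocks are equivalent'' fails for $\goth{osp}(2m,2n)$. The correct statement is that translation functors connect any degree-$k$ block to \emph{one} of the two model blocks, determined by the core.

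There is a second, smaller gap in Step 2. The Typical Lemma only makes $\Gamma_0(G/P,-)$ behave like an equivalence on $\goth p$-typical (``stable'') weights, and not every dominant weight in the block is stable for the fixed parabolic with Levi $\goth h + \goth g_\chi$. So $\Gamma_0$ is not an equivalence on the whole block outright. The paper handles this by working with truncated categories $\mathcal F^{\chi}_{\leq\lambda}$: one first translates (Lemma \ref{tr}) so that $\lambda$ becomes stable, applies the Res/Ind equivalence of Lemma \ref{l3} on the truncation, and then lets $\lambda$ exhaust the block. Your sketch passes over this, and it is needed to make the argument go through.
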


In what follows the Borel subalgebra $\goth b$ is such that every simple
root of $\goth b_0$ is either a simple root of $\goth b$ or a sum of two
odd simple roots of $\goth b$.

\begin{leme}\label{dom} - A weight $\lambda$ is dominant,  i.e. $L_{\lambda}$ is  
finite-dimensional if and only if, for any simple root $\alpha$ of $\goth b_0$, 

$\bullet$ $(\lambda+\rho,\check\alpha)\in \mathbb Z_{>0}$ if $\alpha$
or $\frac{\alpha}{2}$ is simple in $\goth b$;

$\bullet$ if $\alpha=\gamma+\beta$ is a sum of two isotropic simple
roots then $(\lambda+\rho,\check\alpha)\in \mathbb Z_{>0}$ or
$(\lambda+\rho,\beta)=(\lambda+\rho,\gamma)=0$;

$\bullet$ Finally if $\alpha=\beta+\gamma$ where $\gamma$ is an odd
isotropic simple root and $\beta$ is an odd non-isotropic simple root,
then $(\lambda+\rho,\check\alpha)\in \mathbb Z_{>0}$ or
$(\lambda+\rho,\check\alpha)=-1$ and $(\lambda+\rho,\gamma)=0$.
\end{leme}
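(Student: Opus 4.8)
The plan is to reduce the statement about finite-dimensionality of $L_\lambda$ to the classical condition for $\goth g_0$, namely that $L_\lambda$ is finite-dimensional if and only if $L_\lambda$ is $\goth g_0$-integrable, i.e. $(\lambda+\rho,\check\alpha)\in\mathbb Z_{>0}$ for every simple root $\alpha$ of $\goth b_0$ (equivalently $(\lambda,\check\alpha)\in\mathbb Z_{\geq 0}$). One direction is immediate: if $L_\lambda$ is finite-dimensional then it is integrable over $\goth g_0$, so the classical condition holds for every simple root of $\goth b_0$, and one then only has to check that the refined conditions in the three bullet cases are implied by the classical one together with $\goth g$-integrability coming from the odd simple roots. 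For the converse, the work is to show that the stated numerical conditions force $L_\lambda$ to be finite-dimensional. First I would observe that by the choice of $\goth b$ every simple root $\alpha$ of $\goth b_0$ falls into exactly one of the three listed types (either it or its half is an odd/even simple root of $\goth b$, or it is a sum of two isotropic odd simple roots, or it is a sum of an isotropic and a non-isotropic odd simple root); this is a finite case check using the explicit root systems of $\goth{gl}(m,n)$ and $\goth{osp}(m,n)$ recalled in Section 2.

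The core of the argument is local: for each simple root $\alpha$ of $\goth b_0$ one looks at the rank-one or rank-two sub-(super)algebra $\goth g(\alpha)\subset\goth g$ generated by the root vectors for the simple roots of $\goth b$ whose sum is $\alpha$ (so $\goth{sl}(2)$ or $\goth{osp}(1,2)$ in the first bullet, $\goth{sl}(1,2)$ or $\goth{gl}(1,1)$-type in the second, $\goth{osp}(1,2)$-plus-odd in the third), and one asks when the highest weight vector of $L_\lambda$ generates a finite-dimensional $\goth g(\alpha)$-module. In each case the representation theory of this small superalgebra is completely understood, and the condition ``$v_\lambda$ generates a finite-dimensional $\goth g(\alpha)$-module'' translates exactly into the displayed alternative: either the even Cartan integrality $(\lambda+\rho,\check\alpha)\in\mathbb Z_{>0}$, or the exceptional vanishing on the odd roots ($(\lambda+\rho,\beta)=(\lambda+\rho,\gamma)=0$ in the second bullet, $(\lambda+\rho,\check\alpha)=-1$ with $(\lambda+\rho,\gamma)=0$ in the third), which corresponds to the odd isotropic reflection making the module finite-dimensional even though the naive $\goth{sl}(2)$-string would be infinite. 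Having checked $\goth g(\alpha)$-local integrability for all simple $\alpha$ of $\goth b_0$, I would conclude that $L_\lambda$ is integrable for all of $\goth g_0$ (the even simple coroots are generated, over the relevant Levi, by the $\check\alpha$), hence $L_\lambda$ is finite-dimensional by the classical statement for reductive Lie algebras; one should also note that no extra condition is needed from the purely odd isotropic simple roots of $\goth b$ themselves, since an odd reflection in an isotropic root preserves finite-dimensionality with no numerical constraint.

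The main obstacle I expect is the bookkeeping in the third bullet: the subalgebra attached to $\alpha=\beta+\gamma$ with $\beta$ odd non-isotropic and $\gamma$ odd isotropic only occurs for $\goth{osp}(2m+1,2n)$ with a non-conjugated choice of $\goth b$, and one must carefully compute $\rho$ for that Borel, identify the relevant rank-two superalgebra, and verify that its finite-dimensional highest weight modules are exactly those with $(\lambda+\rho,\check\alpha)\in\mathbb Z_{>0}$ or the single exceptional value $(\lambda+\rho,\check\alpha)=-1$ together with $(\lambda+\rho,\gamma)=0$; getting the normalization of $\check\alpha$ and the shift by $\rho$ right is where sign and off-by-one errors creep in. A secondary subtlety is making sure the local conditions really do assemble into global $\goth g_0$-integrability, i.e. that the set $\{\check\alpha : \alpha\text{ simple in }\goth b_0\}$ together with the realized odd reflections controls all of $W$; this follows because the $\check\alpha$ are precisely the simple coroots of $\goth g_0$ for the Borel $\goth b_0$, so the classical theorem applies verbatim once the local checks are done.
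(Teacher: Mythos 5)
Your overall strategy --- reducing dominance to a local integrability condition at each simple root $\alpha$ of $\goth b_0$, and then analyzing the small sub-superalgebra $\goth g(\alpha)$ generated by the simple roots of $\goth b$ summing to $\alpha$ --- is essentially the same local-to-global structure as the paper's proof. The paper cites \cite{V.KM} for the criterion that $\lambda$ is dominant if and only if, for each simple $\alpha$ of $\goth b_0$, some Borel $\goth b'\supset\goth b_0$ has $\alpha$ or $\alpha/2$ simple with $(\lambda(\goth b')+\rho(\goth b'),\check\alpha)\in\mathbb Z_{>0}$, and then applies the odd-reflection formulae for $\lambda(\goth b')+\rho(\goth b')$; your $\goth g(\alpha)$-local analysis produces the same dichotomy, but you would be re-deriving the \cite{V.KM} assembly result rather than citing it.

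Two caveats about the write-up as proposed. First, your opening sentence contains an error: $L_\lambda$ being finite-dimensional (equivalently, $\goth g_0$-integrable) is \emph{not} the same as $(\lambda+\rho,\check\alpha)\in\mathbb Z_{>0}$ for every simple $\alpha$ of $\goth b_0$ --- the whole content of bullets 2 and 3 is that finite-dimensionality is possible with $(\lambda+\rho,\check\alpha)\leq 0$. The parenthetical ``equivalently $(\lambda,\check\alpha)\in\mathbb Z_{\geq 0}$'' is also false for this Borel: by Remark~\ref{rem1}, $(\rho,\check\alpha)$ is $1$, $0$ or $-1$ according to the three bullet types, so the $\rho$-shifted and unshifted conditions coincide only in the first case. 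Second, ``the classical theorem applies verbatim'' undersells the local-to-global step. That local $\goth g(\alpha)$-finiteness for every simple $\alpha$ of $\goth b_0$ forces $L_\lambda$ to be $G_0$-integrable is a genuinely super statement, not the purely even rank-one criterion, precisely because in bullets 2 and 3 the $\goth{sl}(2)_\alpha$-weight $(\lambda,\check\alpha)$ of $v_\lambda$ can be $0$ or $-1$ while $L_\lambda$ is still finite-dimensional; this is what \cite{V.KM} supplies and what you would need to cite or reprove.
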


\begin{proof} - For an arbitrary Borel subalgebra $\goth b'$ containing
  $\goth b_0$ let $\lambda(\goth b')$ denote the highest weight of
  $L_{\lambda}$ with respect to $\goth b'$ and $\rho(\goth b')$ be the
  analogue of $\rho$ for $\goth b'$. A weight $\lambda$ is
  dominant (see ~\cite{V.KM}) if for any simple root
  $\alpha$ of $\goth b_0$ there exists a Borel subalgebra $\goth b'$
  such that $\alpha$ or $\frac{\alpha}{2}$ is simple in $\goth b'$ and
  $(\lambda(\goth b')+\rho(\goth b'),\check\alpha)\in \mathbb Z_{>0}$.

If $\alpha$ or $\frac{\alpha}{2}$ is a simple root of $\goth b$, 
then we are in
  the situation of the first two cases and the statement is straightforward.
If $\alpha=\beta +\gamma$, then choose $\goth b'$ obtained by odd
  reflection with respect to $\gamma$. Then the statement follows from
  the following formulae
$$\lambda(\goth b')+\rho(\goth b')=\lambda+\rho{ } \text { if } (\lambda+\rho,\gamma)\neq 0,$$
$$\lambda(\goth b')+\rho(\goth b')=\lambda+\rho+\gamma { } \text { if } (\lambda+\rho,\gamma)=0.$$
\end{proof}

\begin{rem}\label{rem1} - 
Note that our choice of $\goth b$ implies that for any simple root
$\alpha$ of $\goth b_0$, $(\rho,\check {\alpha})=1,0,-1$ if $\alpha$
is a simple root of $\goth b$, a sum of two isotropic simple roots or
a sum of one isotropic and one non-isotropic odd simple roots
respectively. The latter case is possible only for
$\goth{osp}(2m+1,2n)$. In particular, in the cases of $\goth {gl}(m,n)$ and
$\goth {osp}(2m,2n)$, every parabolic subalgebra of $\goth g$ containing
$\goth b$ is admissible for all central characters.
\end{rem}

The above conditions on a Borel subalgebra 
determine $\goth b$ uniquely up to an automorphism of 
$\goth g$ if $\goth g=\goth {osp}(m,2n)$. In the case of 
$\goth g=\goth{gl}(m,n)$ we choose the distinguished $\goth b$. 

Here we list the simple roots for our choice of Borel subalgebra:

$\bullet$ If $\goth g=\goth {gl}(m,n)$, $m\geq n$, the simple roots are
$$\varepsilon_1-\varepsilon_2,\varepsilon_2-\varepsilon_3,...,\varepsilon_m-\delta_1,\delta_1-\delta_{2},...,\delta_{n-1}-\delta_n,$$
$$\rho=\frac{m-n-2}{2}\varepsilon_1+\frac{m-n-4}{2}\varepsilon_2+...+\frac{-m-n}{2}\varepsilon_m+
\frac{m+n}{2}\delta_1+...+\frac{m-n+2}{2}\delta_n;$$

$\bullet$ If $\goth g=\goth {osp}(2m+1,2n)$ and $m\geq n$, the simple
roots are
$$\varepsilon_1-\varepsilon_2,...,\varepsilon_{m-n+1}-\delta_1,\delta_1-\varepsilon_{m-n+2},...,\varepsilon_{m}-\delta_n,\delta_n,$$
$$\rho=(m-n-\frac{1}{2})\varepsilon_1+(m-n-\frac{3}{2})\varepsilon_2+...+\frac{1}{2}\varepsilon_{m-n}-\frac{1}{2}\varepsilon_{m-n-1}-...-\frac{1}{2}\varepsilon_m+\frac{1}{2}\delta_1+...+\frac{1}{2}\delta_n;$$
 
$\bullet$ If $\goth g=\goth {osp}(2m+1,2n)$ and $m<n$, the simple
roots are
$$\delta_1-\delta_2,...,\delta_{n-m}-\delta_1,\delta_1-\varepsilon_{n-m+1},...,\varepsilon_{m}-\delta_n,\delta_n,$$
$$\rho=-\frac{1}{2}\varepsilon_1-...-\frac{1}{2}\varepsilon_{m}+(n-m+\frac{1}{2})\delta_{1}+
(n-m-\frac{1}{2})\delta_2+...+\frac{1}{2}\delta_{n-m+1}+...+\frac{1}{2}\delta_n;$$

$\bullet$ If $\goth g=\goth {osp}(2m,2n)$ and $m>n$, the simple
roots are
$$\varepsilon_1-\varepsilon_2,...,\varepsilon_{m-n}-\delta_1,\delta_1-\varepsilon_{m-n+1},...,\delta_n-\varepsilon_m,\delta_n+\varepsilon_m,$$
$$\rho=(m-n)\varepsilon_1+(m-n-1)\varepsilon_2+...+\varepsilon_{m-n+1};$$

$\bullet$ If $\goth g=\goth {osp}(2m,2n)$ and $m\leq n$, the simple
roots are
$$\delta_1-\delta_2,...,\delta_{n-m+1}-\varepsilon_1,\varepsilon_1-\delta_{n-m+2},...,\delta_n-\varepsilon_m,\delta_n+\varepsilon_m,$$
$$\rho=(n-m)\delta_1+(n-m-1)\delta_2+...+\delta_{n-m+1}.$$

\begin{coro}\label{cordom} Let 
$$\lambda+\rho=a_1\varepsilon_1+...+a_m\varepsilon_m+b_1\delta_1+...+b_n\delta_n.$$
Then $\lambda$ is integral iff $a_i,b_j\in\mathbb Z$ for $\goth
g=\goth{gl}(m,n)$ or $\goth {osp}(2m,2n)$,
$a_i,b_j\in\frac{1}{2}+\mathbb Z$ for $\goth g=\goth{osp}(2m+1,2n)$.
Furthermore,
$\lambda$ is dominant iff the following conditions hold

$\bullet$ If $\goth g=\goth {gl}(m,n)$, $$a_1>a_2>...>a_m \; and \; b_1>b_2>...>b_n;$$

$\bullet$ If $\goth g=\goth {osp}(2m+1,2n)$, either 
$$a_1>a_2>...>a_{m}\geq \frac{1}{2} \; , \; b_1>b_2>...>b_{n}\geq\frac{1}{2},$$ or

$$a_1>a_2>...>a_{m-l-1}>a_{m-l}=...=a_m=-\frac{1}{2}$$ and 
$b_1>b_2>...>b_{n-l-1}\geq b_{n-l}=...=b_n=\frac{1}{2}$ for some $0\leq l\leq min(m,n)$;

$\bullet$ If $\goth g=\goth {osp}(2m,2n)$, either 
$$a_1>a_2>...>a_{m-1} > |a_{m}| \; and  \; b_1>b_2>...>b_{n}>0,$$ or
 
$$a_1>a_2>...>a_{m-l-1}\geq a_{m-l}=...=a_m=0$$
and $b_1>b_2>...>b_{n-l-1}>b_{n-l}=...=b_n=0$ for some $0\leq l\leq min(m,n).$
\end{coro}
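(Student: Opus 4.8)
The plan is to derive Corollary~\ref{cordom} directly from Lemma~\ref{dom} by translating its three abstract bullet conditions into explicit inequalities on the coordinates $a_i,b_j$ of $\lambda+\rho$, once the simple roots and $\rho$ have been written down as in the preceding list. The integrality claim comes first and is the easiest part: a weight $\lambda$ is integral iff $(\lambda,\check\alpha)\in\mathbb Z$ for every simple coroot $\check\alpha$ of $\goth g_0$, equivalently iff $\lambda+\rho$ has the same property as $\rho$ itself with respect to the coordinates, and reading off $\rho$ from the list shows the $\varepsilon$- and $\delta$-coordinates of $\rho$ are integers for $\goth{gl}(m,n)$ and $\goth{osp}(2m,2n)$ and lie in $\tfrac12+\mathbb Z$ for $\goth{osp}(2m+1,2n)$; since the differences $a_i-a_j$, $a_i\pm b_j$, $b_i\pm b_j$, $2b_i$ (the pairings with even roots) must be integers, this pins down the claimed congruence classes for all $a_i,b_j$ simultaneously.

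For the dominance conditions I would go case by case through the five families listed. In each case one knows the simple roots of $\goth b$ and hence those of $\goth b_0$ (each is either a simple root of $\goth b$ or a sum of two odd simple roots), and one knows $\rho$, so Remark~\ref{rem1} tells us $(\rho,\check\alpha)\in\{1,0,-1\}$ according to the type of $\alpha$. For a simple root $\alpha$ of $\goth b_0$ of the form $\varepsilon_i-\varepsilon_{i+1}$ or $\delta_i-\delta_{i+1}$ that is also simple (or half-simple) in $\goth b$, the first bullet of Lemma~\ref{dom} says $(\lambda+\rho,\check\alpha)>0$, i.e. the corresponding coordinate difference $a_i-a_{i+1}$ (resp. $b_i-b_{i+1}$) is a positive integer, giving the strict chains. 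For the ``mixed'' even simple roots $\varepsilon_j\pm\varepsilon_{j+1}$ or $\delta_n\pm\varepsilon_m$ or $2\delta_i$-type roots arising as a sum of two odd simple roots (one isotropic, the other isotropic or non-isotropic), the second and third bullets of Lemma~\ref{dom} produce the alternative ``either a strict inequality, or a string of equalities forced to a specific boundary value'' — this is exactly the mechanism that yields the tails $a_{m-l}=\dots=a_m=-\tfrac12$, $b_{n-l}=\dots=b_n=\tfrac12$ in type $\goth{osp}(2m+1,2n)$, and the tails $a_{m-l}=\dots=a_m=0$, $b_{n-l}=\dots=b_n=0$ (together with $|a_m|<a_{m-1}$ in the non-degenerate case) in type $\goth{osp}(2m,2n)$. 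The bound $l\le\min(m,n)$ is forced because the equalities propagate through the alternating $\varepsilon$/$\delta$ part of the Dynkin diagram of $\goth b$, which has only $\min(m,n)$ interleaved steps.

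The main obstacle is bookkeeping rather than conceptual: one must carefully match, in each of the five sub-cases, which simple root $\alpha$ of $\goth b_0$ falls under which of the three bullets of Lemma~\ref{dom}, track the value of $\rho$ paired with $\check\alpha$ (this is where the off-by-one shifts between $\lambda$ and $\lambda+\rho$ live, and where the earlier erroneous ``typical lemma'' of \cite{V.ICM} is relevant as a cautionary tale), and verify that the propagation of the equality alternative through the tail of the diagram terminates correctly and is compatible with integrality — e.g. in $\goth{osp}(2m+1,2n)$ the constant $-\tfrac12$ on the $\varepsilon$-side and $+\tfrac12$ on the $\delta$-side are precisely the values making $(\lambda+\rho,\gamma)=0$ for the relevant isotropic odd root $\gamma=\varepsilon_i-\delta_j$ or $\delta_j-\varepsilon_i$. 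A useful sanity check at the end, which I would include, is to confirm that the typical ($A(\lambda)=\emptyset$) locus corresponds to $l=0$ with all inequalities strict, matching the classical dominance conditions for $\goth g_0$ shifted by $\rho$; and that each degenerate tail of length $l$ contributes exactly $l$ mutually orthogonal isotropic roots to $A(\lambda)$, so that the parameter $l$ in the corollary equals the atypicality degree $\#\lambda$ of Definition~\ref{DAl}. Once the pairing dictionary is set up, every sub-case is a two-line verification.
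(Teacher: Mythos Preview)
Your main approach is correct and matches the paper's: the paper gives no explicit proof of this corollary, treating it as an immediate consequence of Lemma~\ref{dom} combined with the explicit list of simple roots and $\rho$ that precedes the statement. Your plan of going case by case through the five families and sorting each even simple root into one of the three bullets of Lemma~\ref{dom} is exactly the intended derivation.

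However, your final ``sanity checks'' are wrong and should be deleted. The parameter $l$ appearing in the corollary is \emph{not} the atypicality degree $\#\lambda$, and the locus $l=0$ with all inequalities strict is \emph{not} the typical locus. For a concrete counterexample in $\goth{osp}(4,4)$: take $\lambda+\rho=2\varepsilon_1+\varepsilon_2+2\delta_1+\delta_2$. This falls under the first branch of the $\goth{osp}(2m,2n)$ case (so $l=0$), yet $(\lambda+\rho,\varepsilon_1+\delta_1)=(\lambda+\rho,\varepsilon_2+\delta_2)=0$ with $\varepsilon_1+\delta_1\perp\varepsilon_2+\delta_2$, so $\#\lambda=2$. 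The parameter $l$ is (up to a shift by one) what the paper later calls the \emph{length of the tail} of $\lambda$ in Section~7; it is always at most the atypicality degree but is generically strictly smaller. This confusion does not affect the proof of the corollary itself, but since you flag it as a consistency check it is worth correcting.
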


Let $\chi=\chi_{\nu}$ be a central character with atypicality degree $k$. Choose
a self-commuting element $x=\sum_{\alpha\in A(\nu)}X_{\alpha}$.
Let $C(x)$ denote the centralizer of $x$ in $\goth g$. Then $[x,\goth g]$
is an ideal in $C(x)$ and one can choose a reductive subalgebra 
$\goth g_x\subset C(x)$ such that $C(x)=\goth g_x \oplus [x,\goth g]$
(see \cite{DS}). 
This choice is canonical if the Cartan subalgebra is fixed. Denote by
$\mathcal U(\goth g)^x$ the set of $ad_x$-invariants in $\mathcal U(\goth g)$. One can
prove (\cite{DS}) that $\mathcal U(\goth g)^x=\mathcal U(\goth g_x)\oplus \mathcal U(\goth g_x)[x,\mathcal U(\goth g)]$.
Consider the projection $p:\mathcal U(\goth g)^x \to \mathcal U(\goth g_x)$ with the
kernel $\mathcal U(\goth g_x)[x,\mathcal U(\goth g)]$. The restriction of $p$ to the
center of $\mathcal Z(\goth g)$ defines a homomorphism $\mathcal
Z(\goth g) \to  \mathcal Z(\goth g_x)$. 
Consider the dual map
$$p^*:\operatorname{Hom}(\mathcal Z(\goth g_x),\mathbb C)\to
\operatorname{Hom}(\mathcal Z(\goth g),\mathbb C).$$
It was shown in \cite{DS} that the preimage
$(p^*)^{-1}(\chi)$ consists of one central character $\chi'\in
\operatorname{Hom}(\mathcal Z(\goth g_x),\mathbb C)$ unless $\goth
g=\goth{osp}(2m,2n)$. If $\goth g=\goth{osp}(2m,2n)$ then
$(p^*)^{-1}(\chi)$ might consist of two  central characters
$\chi'$ and $\chi''$ such that one is obtained from another by an
involutive outer automorphism of $\goth g_x\simeq\goth{osp}(2m-2k,2n-2k)$ 
(induced by an automorphism of $\goth o(2m-2k)$).
In the latter case by $\chi'$ we denote the central character
corresponding to a dominant weight with non-negative marks.

\el

\noi {\bf Terminology} - We call $\chi'$ {\it the core} of $\chi$. 

\el

If $\goth h_x=\goth h\cap \goth g_x$,
$\rho'=\frac{1}{2}(\sum_{\alpha\in\Delta_0^+(\goth g_x)}\alpha-\sum_{\alpha\in\Delta_1^+(\goth g_x)}\alpha)$,
then $\chi'=\chi_{\mu}$, where $\mu+\rho'$ is
the restriction of $\nu+\rho$ to $\goth h_x$.
It is a simple but very important observation that the central character
$\chi$ is uniquely determined by its core $\chi'$.
 
Lemma \ref{dom} and Corollary \ref{cordom} imply the following
 
\begin{leme}\label{Lcore} - The core $\chi'$ is a typical dominant central character of
  $\goth g_x$.
\end{leme}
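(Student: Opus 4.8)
The plan is to show the two defining conditions for $\chi'$ to be a typical dominant central character of $\goth g_x$, namely (i) typicality (degree of atypicality $0$) and (ii) dominance.

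First I would recall from the setup around the definition of the core that $\chi'=\chi_\mu$ where $\mu+\rho'$ is the restriction of $\nu+\rho$ to $\goth h_x$, and that $\goth g_x\simeq\goth{gl}(m-k,n-k)$ (resp.\ $\goth{osp}(2m-2k+1,2n-2k)$, $\goth{osp}(2(m-k),2(n-k))$) when $\goth g=\goth{gl}(m,n)$ (resp.\ $\goth{osp}(2m+1,2n)$, $\goth{osp}(2m,2n)$). The key point for typicality is to examine what the restriction to $\goth h_x$ does to the coordinates of $\nu+\rho$. Writing $\nu+\rho=\sum a_i\varepsilon_i+\sum b_j\delta_j$ and choosing $A(\nu)=\{\alpha_1,\dots,\alpha_k\}$ a maximal orthogonal set of isotropic roots with $(\nu+\rho,\alpha_i)=0$, each $\alpha_i$ has the form $\pm\varepsilon_{p_i}\pm\delta_{q_i}$ (or $\varepsilon_{p_i}-\delta_{q_i}$ in the $\goth{gl}$ case), so the atypicality condition $(\nu+\rho,\alpha_i)=0$ forces pairs of coordinates $a_{p_i},b_{q_i}$ to be equal in absolute value. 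Passing to $\goth g_x$ precisely deletes these $k$ matched pairs of coordinates: the roots of $\goth g_x$ involve only the surviving $\varepsilon$'s and $\delta$'s, and on $\goth h_x$ the vector $\mu+\rho'$ has as coordinates exactly the $a_i,b_j$ with $i,j$ outside the deleted indices. Since $\nu$ has atypicality degree exactly $k$, no further isotropic root orthogonal to $\nu+\rho$ exists, which translates into: no isotropic root $\beta$ of $\goth g_x$ satisfies $(\mu+\rho',\beta)=0$, i.e.\ $\chi'$ is typical.

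Next, for dominance I would use Lemma~\ref{dom} together with Corollary~\ref{cordom}. The condition there is phrased in terms of the coordinates of $\mu+\rho'$ with respect to the basis adapted to the chosen Borel of $\goth g_x$. Dominance of $\nu$ (Corollary~\ref{cordom}) says the coordinates $a_i$ of $\nu+\rho$ are strictly decreasing and the $b_j$ are strictly decreasing (with the appropriate extra sign/positivity conditions on the last coordinates, and the possibility of a tail of equal values $\pm\tfrac12$ or $0$). Deleting the $k$ pairs of matched coordinates coming from $A(\nu)$ preserves strict monotonicity of what remains, because we delete a coordinate from the $\varepsilon$-string and a coordinate from the $\delta$-string simultaneously and these strings are separately strictly decreasing. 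One must check the boundary conditions: in the $\goth{osp}(2m+1,2n)$ case the surviving tail of $-\tfrac12$'s among the $a_i$ must match a tail of $\tfrac12$'s among the $b_j$, but this is exactly what dominance of $\nu$ guarantees, and atypicality eats these in matched pairs; in the $\goth{osp}(2m,2n)$ case the handling of the possibly-negative last coordinate $a_m$ and the zero tails is analogous, and this is where the distinction between $\chi'$ and $\chi''$ (the outer-automorphism ambiguity) is resolved by our convention of taking non-negative marks.

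I expect the main obstacle to be the careful bookkeeping in the orthosymplectic cases: specifically verifying that deleting the atypicality pairs cannot destroy the strict inequalities in Corollary~\ref{cordom}, and that the degenerate tails ($a$-coordinates equal to $-\tfrac12$ or $0$, paired with $b$-coordinates equal to $\tfrac12$ or $0$) behave correctly under restriction — in particular that after deleting $k$ such pairs the remaining configuration still satisfies the stated dominance pattern for $\goth g_x$ with the \emph{smallest} possible degenerate tail, which is what typicality of $\chi'$ forces. The $\goth{gl}$ case is comparatively immediate since there are no sign conditions. Once the coordinate description of $\mu+\rho'$ is pinned down, both typicality and dominance reduce to elementary inequalities, so the heart of the argument is really the observation, already emphasized in the text, that $\chi$ is determined by its core together with the explicit combinatorics of the chosen Borel subalgebra.
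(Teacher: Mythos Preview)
Your proposal is correct and follows the same approach as the paper: the paper's proof consists of the single sentence ``Lemma~\ref{dom} and Corollary~\ref{cordom} imply the following,'' and what you have written is precisely the unpacking of that implication --- deleting the $k$ matched coordinate pairs from a dominant $\nu$ preserves the strict inequalities of Corollary~\ref{cordom} (dominance) and exhausts all atypicality conditions by maximality of $A(\nu)$ (typicality). The only point to make explicit is that you should choose $\nu$ dominant from the outset (which is legitimate since the ambient $\chi$ is a dominant central character in the context of Theorem~\ref{bl}); otherwise your argument is a faithful and more detailed version of what the paper intends.
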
 

Assume that $\chi$ has degree of atypicality $k>0$. Then,
independently of the choice of $\lambda$ such that
$\chi_{\lambda}=\chi$, the Lie superalgebra $\goth g_x$ is isomorphic
to one of the following (see \cite{DS})

$\bullet$ if $\goth g=\goth {gl}(m,n)$ then $\goth g_x \simeq \goth {gl}(p,q)$ with  $p=m-k,q=n-k$; 

$\bullet$ if $\goth g=\goth{osp}(2m+1,2n)$ then $\goth g_x \simeq \goth{osp}(2p+1,2q)$ with  $p=m-k,q=n-k$;

$\bullet$ if $\goth g=\goth{osp}(2m,2n)$ then $\goth g_x=\goth{osp}(2p,2q)$ 
with $p=m-k,q=n-k$.

In all cases it will be convenient to encode the core $\chi'$ by the
corresponding 
dominant typical weight $\mu+\rho'$ of $\goth g_x$. In what follows we write 
$$\chi':=\mu+\rho'=a_1\varepsilon_1+\dots+a_p\varepsilon_p+b_1\delta_1+\dots+b_q\delta_q,$$ 
where $a_i,b_j$ satisfy the additional assumptions of dominance and
typicality with respect to $\goth g_x$ (and additional positivity
condition for $\goth g_x\simeq \goth{osp}(2p,2q)$), more precisely 
\begin{equation}\label{poscond1}
a_1>\dots>a_p,b_1>\dots>b_q,
\end{equation}
\begin{equation}\label{poscond2}
a_i,b_j\in\mathbb Z, a_i\neq -b_j{ } \text{ if } \goth g=\goth {gl}(m,n),
\end{equation}
\begin{equation}\label{poscond3}
a_i,b_j\in \frac{1}{2}+\mathbb Z_{\geq 0}, a_i\neq b_j { } \text{ if } \goth g=\goth {osp}(2m+1,2n),
\end{equation}
\begin{equation}\label{poscond4}
a_i\in\mathbb Z_{\geq 0},b_j\in\mathbb Z_{> 0}, a_i\neq b_j{ } \text{
  if } \goth g=\goth {osp}(2m,2n).
\end{equation}
 We call the numbers $a_i,b_j$ the {\it {marks}} of the core.

Now we define $\goth g_{\chi}\subset \goth g$ corresponding to a
connected sub-Dynkin diagram containing the last node(s) of the
diagram of $\goth g$ in the following way:

$\bullet$ if $\goth g=\goth {gl}(m,n)$ then $\goth g_{\chi} \simeq \goth {gl}(k,k)$;

$\bullet$ if $\goth g=\goth{osp}(2m+1,2n)$ then $\goth g_{\chi} \simeq \goth{osp}(2k+1,2k)$;

$\bullet$ if $\goth g=\goth{osp}(2m,2n)$ and $a_p>0$,then $\goth g_{\chi} \simeq \goth{osp}(2k,2k)$;

$\bullet$ if $\goth g=\goth{osp}(2m,2n)$ and $a_p=0$,then $\goth g_{\chi} \simeq \goth{osp}(2k+2,2k)$.

Let $\goth p$ be the parabolic subalgebra containing $\goth b$ whose Levi part is
$\goth l=\goth g_{\chi}+\goth h$. 
Assume that $\chi$ is such that $\goth p$ is
admissible for $\chi$ and $\lambda$ is a dominant weight such that
$\chi_{\lambda}=\chi$.

\el

\noi {\bf Terminology} - If in addition $\lambda$ is $\goth p$-typical,
we call $\lambda$ {\it {stable}}.

\el

\noi Let $\mathcal {F}_{\leq \lambda}^{\chi}$ denote the
subcategory in $\mathcal {F}^{\chi}$ consisting of all $\goth g$-modules with
all weights $\leq \lambda$. It is not hard to see that if $\lambda$ is
stable then the highest weight of any simple $\goth g$-module in
$\mathcal {F}_{\leq \lambda}^{\chi}$
is also stable. By $\mathcal {F}_{\leq  \lambda}^{\chi}(\goth l)$
we denote the corresponding truncated category of $\goth l$-modules
with central character from the set $\Phi^{-1}(\chi)$ ($\Phi$ was defined just
before Lemma \ref{l4}).

Define the functors $\operatorname{Res}:\mathcal {F}^{\chi}\to \mathcal {F}^{\chi}(\goth l)$
and $\operatorname {Ind} :\mathcal {F}^{\chi}(\goth l)\to \mathcal {F}^{\chi}$ by 
$$\operatorname{Res} N=N^{\goth m}, \operatorname{Ind}M=\Gamma_0(G/P,M),$$
recall that $\goth m$ stands for the nilpotent radical of $\goth p$.

\begin{leme}\label{l3} - Assume that $\lambda$ is stable.
Then the functors $\operatorname{Res}$ and $\operatorname{Ind}$ establish 
an  equivalence between the categories $\mathcal {F}_{\leq\lambda}^{\chi}$ and
  $\mathcal {F}_{\leq\lambda}^{\chi}(\goth p)$. 
\end{leme}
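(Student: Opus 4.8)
The plan is to show that $\operatorname{Res}$ and $\operatorname{Ind}$ are mutually inverse equivalences by checking that they are adjoint, that they preserve the relevant subcategories, and that the unit and counit of the adjunction are isomorphisms. First I would observe that $\operatorname{Res} = (-)^{\goth m}$ sends $\mathcal{F}^{\chi}$ into $\mathcal{F}^{\chi}(\goth l)$: for a module $N \in \mathcal{F}^{\chi}$, the space $N^{\goth m}$ of $\goth m$-invariants is naturally an $\goth l$-module, it is finite-dimensional and $\goth h$-semisimple, and by the central character computation (the projection $\phi: \mathcal U(\goth g) \to \mathcal U(\goth m^-)\mathcal U(\goth l)$ and its dual $\Phi$ used in Lemma \ref{l4}) its central characters lie in $\Phi^{-1}(\chi)$. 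Moreover, using that $\lambda$ is stable (so that by Corollary \ref{firstmark} and the discussion preceding the lemma all simple modules in $\mathcal{F}_{\leq \lambda}^{\chi}$ have stable, in particular $\leq\lambda$, highest weights), $\operatorname{Res}$ carries $\mathcal{F}_{\leq\lambda}^{\chi}$ into $\mathcal{F}_{\leq\lambda}^{\chi}(\goth l)$. In the other direction, $\operatorname{Ind} M = \Gamma_0(G/P, M)$ lands in $\mathcal{F}^{\chi}$ by Lemma \ref{l4} together with Corollary \ref{coroblocks}, and by Corollary \ref{firstmark} all its weights are $\leq$ the top weight, which is $\leq \lambda$ when $M$ has weights $\leq\lambda$; so $\operatorname{Ind}$ maps $\mathcal{F}_{\leq\lambda}^{\chi}(\goth l)$ into $\mathcal{F}_{\leq\lambda}^{\chi}$.

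Next I would establish the adjunction: $\operatorname{Ind}$ is left adjoint to $\operatorname{Res}$. By Lemma \ref{l2}, $\Gamma_0(G/P, M)$ is the maximal finite-dimensional quotient of $\mathcal U(\goth g) \otimes_{\mathcal U(\goth p)} M$, and the usual tensor-hom adjunction gives $\operatorname{Hom}_{\goth g}(\mathcal U(\goth g)\otimes_{\mathcal U(\goth p)} M, N) \cong \operatorname{Hom}_{\goth p}(M, N)$; since $N$ is finite-dimensional, any $\goth g$-map factors through the maximal finite-dimensional quotient, and a $\goth p$-map $M \to N$ from a module on which $\goth m$ acts trivially lands in $N^{\goth m}$, so $\operatorname{Hom}_{\goth g}(\operatorname{Ind} M, N) \cong \operatorname{Hom}_{\goth l}(M, \operatorname{Res} N)$ naturally. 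This yields unit $\eta_M: M \to \operatorname{Res}\operatorname{Ind} M$ and counit $\varepsilon_N: \operatorname{Ind}\operatorname{Res} N \to N$.

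Then I would show these natural transformations are isomorphisms on the truncated categories, and here it suffices — since both functors are exact (exactness of $\operatorname{Ind}=\Gamma_0$ on this truncated category will follow from the long exact sequence of Lemma \ref{indgen}(1) together with a vanishing argument: $\Gamma_1(G/P, L_\mu(\goth l)) = 0$ for stable $\mu$, which is essentially the content of the Typical lemma \ref{typ} applied on $G/P$, or follows from Corollary \ref{firstmark}) — to check it on simple objects. So let $L_\mu(\goth l)$ be a simple object of $\mathcal{F}_{\leq\lambda}^{\chi}(\goth l)$ with $\mu$ stable; by the Typical lemma \ref{typ} (applicable because $\mu$ is $\goth p$-typical, as stability is designed to guarantee), $\operatorname{Ind} L_\mu(\goth l) = \Gamma_0(G/P, L_\mu(\goth l)) = L_\mu(\goth g)$ and all higher $\Gamma_i$ vanish. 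Then $\operatorname{Res} L_\mu(\goth g) = L_\mu(\goth g)^{\goth m}$ contains the $\goth l$-highest part, which is $L_\mu(\goth l)$, and a weight/central-character count (using that all weights of $L_\mu(\goth g)$ are $\leq\mu$, combined with $\Phi^{-1}(\chi)$-constraints and dominance of $\goth m^-$-lowering) shows $L_\mu(\goth g)^{\goth m} = L_\mu(\goth l)$ exactly, so $\eta$ is an isomorphism on simples. For the counit, $\operatorname{Ind}\operatorname{Res} L_\mu(\goth g) = \operatorname{Ind} L_\mu(\goth l) = L_\mu(\goth g)$ and the counit map is the identity up to scalar. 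A standard dévissage along the finite filtration by weight then upgrades this to all objects of the truncated categories.

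The main obstacle I anticipate is the precise verification that $\operatorname{Res}$ actually lands in the truncated category $\mathcal{F}_{\leq\lambda}^{\chi}(\goth l)$ with the right central character constraint and, dually, that the counit is surjective — i.e. that $\operatorname{Ind}\operatorname{Res} N \to N$ hits all of $N$. This reduces to knowing that every simple $\goth g$-module $L_\mu$ in $\mathcal{F}_{\leq\lambda}^{\chi}$ is generated (as a $\goth g$-module) by its $\goth m$-invariants $L_\mu^{\goth m}$, equivalently that $L_\mu$ appears in $\operatorname{Ind}$ of something. This is exactly where stability of $\lambda$ is essential: it forces every such $\mu$ to be $\goth p$-typical, so that $L_\mu^{\goth m}$ is nonzero and rich enough, and one must combine the Typical lemma with the block-structure/admissibility setup to rule out "new" simple $\goth l$-constituents of $\operatorname{Res} N$ that could not come from a genuine $\goth g$-module with weights $\leq\lambda$. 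I would handle this by induction on the partial order, peeling off highest weights, using exactness of both functors and the simple-object computation above as the base case.
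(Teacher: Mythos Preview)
Your proposal is correct and follows essentially the same route as the paper: both arguments hinge on the Typical Lemma (Lemma~\ref{typ}) to show that $\operatorname{Ind}=\Gamma_0(G/P,-)$ is exact on the truncated category (via vanishing of higher $\Gamma_i$) and sends each simple $L_\mu(\goth l)$ to $L_\mu$, then reduce the equivalence to checking $\operatorname{Res}\operatorname{Ind}$ and $\operatorname{Ind}\operatorname{Res}$ on simple objects. The paper's proof is terser and simply asserts the last step; your version spells out the adjunction and the d\'evissage, which is a welcome elaboration rather than a different strategy. One small point: you invoke exactness of \emph{both} functors but only justify it for $\operatorname{Ind}$; since $\operatorname{Res}=(-)^{\goth m}$ is a priori only left exact, the cleanest way to finish is exactly the induction on length (or on the partial order) you sketch at the end, using the five lemma together with left exactness of $\operatorname{Res}$ and exactness of $\operatorname{Ind}$---no separate right-exactness of $\operatorname{Res}$ is needed.
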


\begin{proof}  - The statement easily follows from the typical
  lemma, (Lemma \ref{typ}). 
Indeed for any simple module $L_{\mu}(\goth  p)\in \mathcal {F}_{\leq\lambda}^{\chi}(\goth p)$,  
$\operatorname{Ind} L_{\mu}(\goth  p)=\Gamma_0(G/P,L_{\mu}(\goth p))=L_{\mu}$ is  the unique simple quotient of $\mathcal U(\goth g)\otimes_{\mathcal U(\goth p)}L_{\mu}(\goth p)$.
All the higher cohomology groups vanish. Therefore $\operatorname{Ind}$ is an exact functor which
  maps a simple module to a simple module. Clearly, 
$$\operatorname{Ind Res}(L_{\mu}(\goth p))\simeq L_{\mu}(\goth p), \operatorname{Res Ind}(L_{\mu})\simeq L_{\mu},$$
therefore the lemma holds. 
\end{proof}

Our next step in the proof of Theorem \ref{bl} is ``to move'' any
simple module to  $\mathcal {F}_{\leq \lambda}^{\chi}$ using translation functors.
Recall the definition of translation functors (see \cite{BG}):
let $V$ be a finite-dimensional $\goth g$-module. One defines a functor 
$T(V)^{\chi,\tau}:\mathcal {F}^{\chi}\to \mathcal {F}^{\tau}$ by
$T(V)^{\chi,\tau}(M)=(M\otimes V)^{\tau}$.
It is not difficult to see that $T(V)^{\chi,\tau}$ is exact and
$T(V^*)^{\tau,\chi}$ is left adjoint to $T(V)^{\chi,\tau}$. The
following lemma is also straightforward (see for example \cite{BG}).

\begin{leme}\label{tp} - If both $T(V^*)^{\tau,\chi}$ and $T(V)^{\chi,\tau}$
move a simple module to a simple module, then they establish an 
equivalence between the categories $\mathcal F^{\chi}$ and $\mathcal F^{\tau}$.
\end{leme}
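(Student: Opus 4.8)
The plan is to exploit the adjunction between the two translation functors together with the exactness that is already part of the setup. First I would recall the standard fact (already noted just before the statement) that $T(V^*)^{\tau,\chi}$ is left adjoint to $T(V)^{\chi,\tau}$ and that both are exact; exactness is immediate since tensoring with a finite-dimensional module is exact and taking the component with a fixed generalized central character is a direct-summand projection, hence exact. So the two functors form an adjoint pair of exact functors between the abelian categories $\mathcal F^{\chi}$ and $\mathcal F^{\tau}$, each of which has enough structure (finitely many simple objects, finite length) to make a ``simples go to simples'' criterion effective.

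Next I would use the adjunction to produce the unit and counit natural transformations $\eta\colon \operatorname{id}_{\mathcal F^{\chi}}\to T(V)^{\chi,\tau}\circ T(V^*)^{\tau,\chi}$ and $\varepsilon\colon T(V^*)^{\tau,\chi}\circ T(V)^{\chi,\tau}\to \operatorname{id}_{\mathcal F^{\tau}}$ (or the other composites, depending on which way the adjunction points), and check that these are isomorphisms. The key step is the following: if $F$ and $G$ are exact adjoint functors such that $F$ sends every simple object to a simple object and $G$ sends every simple object to a simple object, then on simples the unit and counit are nonzero maps between simple objects, hence isomorphisms; since every object has finite length and $F,G$ are exact, a five-lemma / induction-on-length argument upgrades ``iso on simples'' to ``iso on all objects.'' Concretely: for a simple $L$ in $\mathcal F^{\chi}$, $\operatorname{Hom}(L, GF L)=\operatorname{Hom}(FL,FL)\neq 0$ by adjunction, and since $GFL$ is simple (composite of the two hypotheses) the unit $L\to GFL$ must be an isomorphism; then dévissage along composition series, using exactness of $G\circ F$, gives $\eta_M$ an isomorphism for all $M$, and symmetrically for $\varepsilon$.

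I would then conclude that $T(V)^{\chi,\tau}$ and $T(V^*)^{\tau,\chi}$ are mutually inverse (up to natural isomorphism), hence an equivalence $\mathcal F^{\chi}\simeq\mathcal F^{\tau}$, which is exactly the claim. One small point to be careful about: the hypothesis only says the functors move \emph{simple} modules to \emph{simple} modules, not a priori that they match up the simples bijectively; but the adjunction forces this, since $FL$ simple and $GFL\cong L$ together show $F$ is injective on iso-classes of simples and surjective (any simple $L'$ in $\mathcal F^\tau$ is $F$ of the simple $GL'$, because $FGL'\cong L'$ by the same counit argument). The main obstacle — really the only nonroutine ingredient — is the dévissage step upgrading ``isomorphism on simple objects'' to ``isomorphism on all finite-length objects''; this is where one genuinely uses that both functors are exact and that objects of $\mathcal F^{\chi}$, $\mathcal F^{\tau}$ have finite length, and it is the content behind the phrase ``is also straightforward (see for example \cite{BG})''.
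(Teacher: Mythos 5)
The paper gives no proof of this lemma, merely citing \cite{BG} and calling it straightforward; your argument correctly reconstructs the intended one. The unit of the adjunction evaluated at a simple $L$ is the image of $\operatorname{id}_{FL}$ under the bijection $\operatorname{Hom}(FL,FL)\cong\operatorname{Hom}(L,GFL)$, hence nonzero, hence an isomorphism between simples, and exactness of both functors together with finite length of every object in $\mathcal F^{\chi}$, $\mathcal F^{\tau}$ carries this to general $M$ by the five lemma — exactly as you lay out, with the symmetric counit argument (or, equivalently, biadjointness of tensoring with a finite-dimensional $V$) closing the other side.
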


\begin{leme}\label{tp1} - Let $\tau$ and $\chi$ be central
  characters. Assume 
  
$\bullet$ for any dominant $\mu$ with $\chi_{\mu}=\chi$ there exists a
unique weight $\gamma$ of $V$,  such that $\mu+\gamma$ is
dominant and $\chi_{\mu+\gamma}=\tau$;

$\bullet$ for any dominant $\nu$ with $\chi_{\nu}$=$\tau$ there exists a unique weight $\gamma'$ of $V^*$ such that $\nu+\gamma'$ is
dominant and $\chi_{\nu+\gamma'}=\chi$;

$\bullet$ the multiplicities of $\gamma$ in $V$ and  $\gamma'$ in
$V^*$ are 1.

Then $T(V)^{\chi,\tau}$ and $T(V^*)^{\tau,\chi}$ establish an  
equivalence between the categories $\mathcal F^{\chi}$ and $\mathcal F^{\tau}$.
\end{leme}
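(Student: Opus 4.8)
The plan is to deduce this from Lemma~\ref{tp} by verifying its hypothesis: that both translation functors $T(V)^{\chi,\tau}$ and $T(V^*)^{\tau,\chi}$ send a simple module to a simple module. So the whole content of the proof is the following claim: under the three bulleted assumptions, $T(V)^{\chi,\tau}(L_\mu)\cong L_{\mu+\gamma}$ for every dominant $\mu$ with $\chi_\mu=\chi$ (and symmetrically for $V^*$). Granting this claim, Lemma~\ref{tp} immediately yields the asserted equivalence, so I would spend essentially all the effort on the claim and only one sentence on the invocation of Lemma~\ref{tp}.

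To prove the claim, first I would record that $T(V)^{\chi,\tau}$ is exact (already noted before Lemma~\ref{tp}), so it suffices to understand what it does on simple modules and on their extensions; exactness plus "simple $\mapsto$ simple" for both adjoint functors is exactly what Lemma~\ref{tp} needs. Fix a dominant $\mu$ with $\chi_\mu=\chi$. Then $L_\mu\otimes V$ has a filtration (as a $\goth g$-module, using that $V$ is finite-dimensional and tensoring a simple by a finite-dimensional module is "nice"): more precisely I would argue at the level of characters / the Grothendieck group. The weights of $L_\mu\otimes V$ that are dominant and maximal are among $\mu+\gamma'$ with $\gamma'$ a weight of $V$; I would want to show that after projecting to the block $\mathcal F^\tau$, the only surviving highest weight is $\mu+\gamma$, with multiplicity one. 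The first bullet gives uniqueness of such a $\gamma$ with $\mu+\gamma$ dominant and $\chi_{\mu+\gamma}=\tau$; the third bullet gives that its weight multiplicity in $V$ is $1$; and the standard fact that the multiplicity of $L_\nu$ as a subquotient of $L_\mu\otimes V$ with $\nu$ maximal equals $\dim V_{\nu-\mu}$ then pins the composition factor $L_{\mu+\gamma}$ to multiplicity $1$ in $(L_\mu\otimes V)^\tau$. The point I need is that there are no \emph{other} composition factors in $(L_\mu\otimes V)^\tau$: any composition factor $L_\sigma$ has $\sigma\le \mu+\gamma'$ for some weight $\gamma'$ of $V$ and $\chi_\sigma=\tau$; if $\sigma$ itself is not of the form $\mu+(\text{weight of }V)$ one uses the block/central-character constraint $\chi_\sigma=\tau$ together with $\chi_\mu=\chi$ to force $\sigma$ down — here is where I would use that the linkage classes are controlled and that $V$'s weights move one between $\chi$ and $\tau$ in a unique dominant way. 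Combined with the adjunction $\mathrm{Hom}(T(V^*)^{\tau,\chi}M,N)=\mathrm{Hom}(M,T(V)^{\chi,\tau}N)$ and the symmetric statement for $V^*$ (second bullet), a now-standard double-counting argument (the same one used in the classical BGG translation-functor theory, cf.\ \cite{BG}) forces $(L_\mu\otimes V)^\tau$ to be exactly $L_{\mu+\gamma}$ with no radical: if it had a proper submodule or quotient, applying $T(V^*)^{\tau,\chi}$ and using that it maps $L_{\mu+\gamma}$ back to something containing $L_\mu$ would produce more copies of $L_\mu$ than the weight-multiplicity count allows.

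Concretely the steps, in order, are: (1) recall exactness of $T(V)^{\chi,\tau}$ and the adjunction with $T(V^*)^{\tau,\chi}$; (2) for dominant $\mu$ with $\chi_\mu=\chi$, show $[(L_\mu\otimes V)^\tau]$ contains $L_{\mu+\gamma}$ with multiplicity exactly $1$ (first and third bullets, plus $\dim V_\gamma=1$); (3) show every composition factor of $(L_\mu\otimes V)^\tau$ is $\le \mu+\gamma$, using the central-character constraint to rule out anything not of the form $\mu+(\text{weight of }V)$ and the uniqueness bullet to rule out the rest, hence $(L_\mu\otimes V)^\tau$ is an indecomposable module with simple top $L_{\mu+\gamma}$; (4) run the symmetric argument with $V^*$, $\tau\leftrightarrow\chi$, $\gamma\leftrightarrow\gamma'$; (5) combine via adjunction: $\dim\mathrm{Hom}(T(V^*)^{\tau,\chi}L_{\mu+\gamma},L_\mu)=\dim\mathrm{Hom}(L_{\mu+\gamma},T(V)^{\chi,\tau}L_\mu)=1$, and the count of $L_\mu$'s in $T(V^*)^{\tau,\chi}L_{\mu+\gamma}$ is also $1$, which forces both $(L_\mu\otimes V)^\tau=L_{\mu+\gamma}$ and $(L_{\mu+\gamma}\otimes V^*)^\chi=L_\mu$; (6) apply Lemma~\ref{tp}.

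The main obstacle is step~(3): controlling \emph{all} composition factors of $(L_\mu\otimes V)^\tau$, not just the maximal one. Proving that every highest weight appearing is dominant of the form $\mu+\gamma'$ for a weight $\gamma'$ of $V$ with $\chi_{\mu+\gamma'}=\tau$, and then using the uniqueness hypothesis to collapse these to the single weight $\mu+\gamma$, is where the real work lies; the weight-multiplicity bookkeeping in steps (2) and (5) is routine, and step (6) is immediate. I expect one can shortcut step~(3) somewhat by noting that $(L_\mu\otimes V)^\tau$ and its adjoint partner have equal dimension of certain Hom-spaces and matching Jordan–Hölder data forced by the hypotheses, exactly as in the classical argument for translation to/from a wall, so I would present it in that compressed form rather than re-deriving linkage from scratch.
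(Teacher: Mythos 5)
Your overall reduction to Lemma~\ref{tp} is the same as the paper's, and your step~(2) (that $L_{\mu+\gamma}$ appears with the right multiplicity at the top) matches, but there are two real gaps in the middle, and they are exactly where the paper does something you are not doing.

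The central difficulty you flag in step~(3) is real, but your proposed fix does not work. You want to argue that every composition factor of $(L_\mu\otimes V)^\tau$ has highest weight of the form $\mu+(\text{weight of }V)$, and then kill all but $L_{\mu+\gamma}$ by the uniqueness bullet. That constraint, however, only holds for the weights of $\goth b$-\emph{singular vectors} of the whole module $(L_\mu\otimes V)^\tau$, not for the highest weights of arbitrary subquotients. Concretely: a nonzero $\operatorname{Hom}(M_\sigma, L_\mu\otimes V)$, via $\operatorname{Hom}(M_\sigma\otimes V^*, L_\mu)$ and the Verma filtration of $M_\sigma\otimes V^*$, forces $\sigma=\mu+\gamma'$ with $\gamma'$ a weight of $V$; but a composition factor $L_\sigma$ buried in the middle of the radical filtration only gives a singular vector of some subquotient, not of the module itself, so $\sigma$ is not constrained this way. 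The paper sidesteps this entirely: it establishes that there is a unique (up to scalar) $\goth b$-singular vector of weight $\mu+\gamma$ in $(L_\mu\otimes V)^\tau$, and then uses that $(L_\mu\otimes V)^\tau$ is \emph{contragredient} (self-dual for the contravariant form), so having a unique singular vector forces it to be zero or $L_{\mu+\gamma}$ with no intermediate factors. You never invoke self-duality, which is the ingredient that replaces the enumeration of composition factors you attempt.

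The second gap is nonvanishing. Your step~(5) adjunction count $\dim\operatorname{Hom}(L_{\mu+\gamma},T(V)^{\chi,\tau}L_\mu)=1$ is what you want to \emph{conclude}, not something you can assume: if $T(V)^{\chi,\tau}(L_\mu)=0$ this Hom space is zero, and nothing in steps (2)--(4) rules that out. The paper handles nonvanishing separately and essentially geometrically: it computes $T(V)^{\chi,\tau}(\Gamma_0(G/B,C_\mu))\cong\Gamma_0(G/B,C_{\mu+\gamma})$ using Corollary~\ref{coroblocks} and Lemma~\ref{l2}, so the translation of the whole ``standard'' (maximal finite-dimensional) module is nonzero; then exactness of $T(V)^{\chi,\tau}$ and the fact that some composition factor $L_{\mu'}$ of $\Gamma_0(G/B,C_\mu)$ must map to $L_{\mu+\gamma}$ lets one apply the uniqueness hypothesis (to the $\goth b$-singular weight constraint $\mu'+\delta=\mu+\gamma$) to conclude $\mu'=\mu$, a contradiction. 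You would need to supply an argument of this type, or something equivalent, before your Hom count in step~(5) means anything.
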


\begin{proof} - It suffices to prove that 
  $T(V)^{\chi,\tau}(L_{\mu})=L_{\mu+\gamma}$ for any $L_{\mu}\in \mathcal F^{\chi}$.

First we note that a $\goth b$-singular vector in
  $(L_{\mu}\otimes V)^{\tau}$ has weight $\mu+\gamma$. Hence
  $T(V)^{\chi,\tau}(L_{\mu})$ has a unique up to proportionality
  $\goth b$-singular vector. Since $T(V)^{\chi,\tau}(L_{\mu})$ is
  contragredient, it is either zero or $L_{\mu+\gamma}$. It is left to
  prove that $T(V)^{\chi,\tau}(L_{\mu})\neq 0$.

Let $C_{\mu}$ denote the one-dimensional $\goth b$-module of weight $\mu$.  Since
$$\Gamma_0(G/B,C_{\mu}\otimes V)=\Gamma_0(G/B,C_{\mu})\otimes V,$$
and $(C_{\mu}\otimes V)^{\Phi^{-1}(\tau)}$ has only one dominant
component $C_{\mu+\gamma}$, Corollary ~\ref{coroblocks} 
and Lemma ~\ref{l2}  imply the isomorphism
$$T(V)^{\chi,\tau}(\Gamma_0(G/B,C_{\mu}))\simeq \Gamma_0(G/B,C_{\mu+\gamma}).$$
Assume that $T(V)^{\chi,\tau}(L_{\mu})=0$. Then, since
$T(V)^{\chi,\tau}$ is exact, there must be a simple subquotient
$L_{\mu'}$ in $\Gamma_0(G/B,C_{\mu}))$ such that $T(V)^{\chi,\tau}(L_{\mu'})=L_{\mu+\gamma}$.
Therefore $\mu'+\delta=\mu+\gamma$ for some weight $\delta$ of $V$.
But the conditions of the lemma imply that $\mu'=\mu$. Contradiction.
\end{proof}

Let $E$ denote the standard $\goth g$-module. 

\begin{leme}\label{tp2} - Let ${\chi}$ be a central character with degree of atypicality $k>0$ and core   
$\chi'=a_1\varepsilon_1+...+a_p\varepsilon_p+b_1\delta_1...+b_q\delta_q$.
Let $V=E$ or $E^*$ and $\delta$ be a weight of $V$.
Assume that $\chi'+\delta$ satisfies the conditions
(\ref{poscond1})-(\ref{poscond4}) and in addition for 
$\goth g=\goth{osp}(2m,2n)$ the numbers of zero marks in $\chi'+\delta$
and in $\chi'$ are the same. Let $\tau$ be the central character such that 
$\tau'=\chi'+\delta$. Then $T(V)^{\chi,\tau}$ and $T(V^*)^{\tau,\chi}$
establish an equivalence between $\mathcal F^{\chi}$ and $\mathcal F^{\tau}$.
\end{leme}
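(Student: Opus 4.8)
The plan is to reduce Lemma~\ref{tp2} to Lemma~\ref{tp1} by checking that the hypotheses of the latter hold for $V=E$ (the case $V=E^*$ is symmetric). Recall that the weights of the standard module $E$ for $\goth g=\goth{gl}(m,n)$ are $\varepsilon_1,\dots,\varepsilon_m,\delta_1,\dots,\delta_n$, each with multiplicity one; for $\goth g=\goth{osp}(m,2n)$ they are $\pm\varepsilon_i$, $\pm\delta_j$ (and $0$ in the odd-dimensional case), again each nonzero weight with multiplicity one. So the third hypothesis of Lemma~\ref{tp1} (multiplicity one of $\gamma$ in $V$ and of $\gamma'$ in $V^*$) is automatic as soon as the relevant $\gamma,\gamma'$ turn out to be nonzero, which the dominance/typicality computation below will confirm. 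The heart of the argument is therefore the first two bullets of Lemma~\ref{tp1}: given a dominant $\mu$ with $\chi_\mu=\chi$, produce a \emph{unique} weight $\gamma$ of $E$ with $\mu+\gamma$ dominant and $\chi_{\mu+\gamma}=\tau$, and symmetrically for $\tau$ and $E^*$.

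First I would translate everything into marks. By the discussion preceding the lemma, a central character is determined by its core, and the core is the restriction of $\nu+\rho$ to $\goth h_x$, recorded as the tuple of marks $(a_1,\dots,a_p\mid b_1,\dots,b_q)$ satisfying (\ref{poscond1})--(\ref{poscond4}). Adding a weight $\delta\in\{\pm\varepsilon_i,\pm\delta_j\}$ of $E$ to $\chi'$ changes exactly one mark by $\pm1$; the hypothesis says that the resulting tuple $\chi'+\delta$ still satisfies (\ref{poscond1})--(\ref{poscond4}) (and, for $\goth{osp}(2m,2n)$, has the same number of zero marks). Now fix a dominant $\mu$ with $\chi_\mu=\chi$. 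Using Corollary~\ref{cordom} and the explicit list of simple roots, the coordinates of $\mu+\rho$ form a strictly decreasing $\varepsilon$-string and a strictly decreasing $\delta$-string; the marks of the core $\chi'$ are obtained from these coordinates by deleting the $k$ ``atypical'' cancelling pairs. Adding $\gamma=\pm\varepsilon_i$ or $\pm\delta_j$ to $\mu$ shifts one coordinate of $\mu+\rho$ by $\pm1$, and the requirement that $\mu+\gamma$ be dominant forces $\gamma$ to be the unique weight of $E$ that both lands on a coordinate we are allowed to move and moves it in the direction dictated by $\delta$ (the change on the core side must match $\delta$). Checking that such a $\gamma$ exists and is unique is a finite case analysis over the shape of the string of $\mu+\rho$ relative to the string of $\chi'$; the key point is that because $\chi'+\delta$ still obeys (\ref{poscond1})--(\ref{poscond4}), no coordinate collision is created, so exactly one choice of $\gamma$ works. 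The same analysis run backwards, with $E$ replaced by $E^*$ (weights negated) and $\chi$ swapped with $\tau$, gives the second bullet: starting from a dominant $\nu$ with $\chi_\nu=\tau$, there is a unique weight $\gamma'$ of $E^*$ with $\nu+\gamma'$ dominant and $\chi_{\nu+\gamma'}=\chi$.

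Once the three bullets of Lemma~\ref{tp1} are verified, that lemma immediately yields that $T(E)^{\chi,\tau}$ and $T(E^*)^{\tau,\chi}$ (or $T(E^*)$ and $T(E)$, according to whether $V=E$ or $V=E^*$) are mutually inverse equivalences between $\mathcal F^\chi$ and $\mathcal F^\tau$, which is the assertion of Lemma~\ref{tp2}.

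\textbf{The main obstacle} I anticipate is the $\goth{osp}(2m,2n)$ case with zero marks. There the last two simple roots are $\delta_n-\varepsilon_m$ and $\delta_n+\varepsilon_m$, dominance allows a tail of equal coordinates ending in $0$ (Corollary~\ref{cordom}), and the core can be of type $\goth{osp}(2p,2q)$ with $a_p=0$, where the outer automorphism ambiguity in the definition of the core was pinned down by the ``non-negative marks'' convention. One must check that adding $\pm\varepsilon_i$ does not accidentally change the number of zero marks or flip the sign convention — precisely what the extra hypothesis ``the numbers of zero marks in $\chi'+\delta$ and in $\chi'$ are the same'' is there to exclude — and that near a coordinate equal to $0$ the weight $\gamma$ of $E$ producing a dominant $\mu+\gamma$ is still unique (here $\gamma=\varepsilon_m$ and $\gamma=-\varepsilon_m$ can both keep the string weakly decreasing, and one has to see that exactly one of them matches $\delta$ and keeps strict decrease where required). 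This is a genuine but finite bookkeeping check; it is the one place where the orthosymplectic combinatorics genuinely differs from the $\goth{gl}$ case treated in \cite{V.Sel}.
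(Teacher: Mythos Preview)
Your approach is exactly the paper's: reduce to Lemma~\ref{tp1} by a direct case analysis on the coordinates of $\mu+\rho$, and both you and the paper single out $\goth{osp}(2m,2n)$ as the most delicate case. One subtlety your description underplays: the weight $\gamma$ of $E$ need not be of the same $\varepsilon/\delta$ type as the core shift $\delta$ --- for instance, when $\delta=\varepsilon_i$ but the slot $a_i+1$ is already occupied by some $(\mu+\rho,\varepsilon_{j-1})$, the paper takes $\gamma=-\delta_l$ (moving the atypical partner of $\varepsilon_{j-1}$) rather than $+\varepsilon_j$, so collisions \emph{do} occur with the naive choice and are resolved by switching type, contrary to your phrase ``no coordinate collision is created.''
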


\begin{proof} - Since $V=E$ or $E^*$, every weight $\gamma$ of $V$ has
  multiplicity 1, moreover $\gamma=\pm\varepsilon_i,\pm\delta_j$
  or $0$. The proof can be reduced to checking
  the conditions of Lemma 
  \ref{tp1} for any $L_\mu\in \mathcal F^{\chi}$. We will
  consider here the most tedious case of 
$\goth  g=\goth{osp}(2m,2n),\delta=\pm\varepsilon_i$, the
other cases are completely analogous and we leave them to the reader.

Since $\chi_{\mu}=\chi$ there are the following two possibilities:
either $(\mu+\rho,\varepsilon_j)=a_i$ for a unique $j\leq m$ or
$(\mu+\rho,\varepsilon_m)=-a_i$ (in the latter case
$(\mu+\rho,\varepsilon_j)>a_i$ for all $j<m$).

Let $\delta=\varepsilon_i$. In the former case take
$\nu=\mu+\varepsilon_j$ if $(\mu+\rho,\varepsilon_{j-1})>a_i+1$. If
$(\mu+\rho,\varepsilon_{j-1})=a_i+1$, there exists $l$ such that
$(\mu+\rho,\varepsilon_{j-1}+\delta_l)=0$ and one should take $\nu=\mu-\delta_l$. In
the latter case take $\nu=\mu-\varepsilon_m$ if $(\mu+\rho,\varepsilon_{m-1})>a_i+1$.
If $(\mu+\rho,\varepsilon_{m-1})=a_i+1$, there exists $k$ such that
$(\mu+\rho,\varepsilon_{m-1}+\delta_l)=0$, take
$\nu=\mu-\delta_l$.

Now deal similarly with the case $\delta=-\varepsilon_i$. In the former case take
$\nu=\mu-\varepsilon_j$ if $|(\mu+\rho,\varepsilon_{j+1})|< a_i-1$. If
$(\mu+\rho,\varepsilon_{j+1})=a_i-1$, there exists $l$ such that
$(\mu+\rho,\varepsilon_{j+1}+\delta_l)=0$ and choose
$\nu=\mu+\delta_l$. If
$(\mu+\rho,\varepsilon_{j+1})=-a_i+1$, then $j+1=m$, there exists $k$ such that
$(\mu+\rho,\varepsilon_{m}-\delta_l)=0$ and choose
$\nu=\mu+\delta_l$. 
Finally in the latter case take $\nu=\mu+\varepsilon_m$.
\end{proof}

\begin{leme}\label{tr} - Let $k$ be the degree of atypicality of
  $\chi$, 
$\chi'=a_1\varepsilon_1+...+a_p\varepsilon_p+b_1\delta_1...+b_q\delta_q$. 
Let $\goth p$ be the parabolic subalgebra with Levi part $\goth
l=\goth h+\goth g_{\chi}$ which contains $\goth b$. 
Let $\lambda$ be a dominant weight such that $\chi_{\lambda}=\chi$.
There exist a central character $\tau$ such that $\goth p$ is
admissible for $\tau$, and
a dominant stable weight $\mu$, such that $\chi_{\mu}={\tau}$ and
$\mathcal F^{\chi}_{\leq\lambda}$ is equivalent to $\mathcal F^{\tau}_{\leq\mu}$.
\end{leme}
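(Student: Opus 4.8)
The plan is to construct $\tau$ and $\mu$ by moving the marks of the core $\chi'$ to a ``stable position'' via a chain of translation functors, each of the kind provided by Lemma~\ref{tp2}. The key observation is that stability of a dominant weight $\mu$ with $\chi_\mu = \tau$ is a condition on the core $\tau'$ relative to the Levi $\goth l = \goth h + \goth g_\chi$: writing $\goth g_\chi \simeq \goth{gl}(k,k)$, $\goth{osp}(2k+1,2k)$, $\goth{osp}(2k,2k)$ or $\goth{osp}(2k+2,2k)$ according to the cases in the excerpt, the condition ``$\goth p$ admissible for $\tau$ and some dominant $\mu$ with $\chi_\mu=\tau$ is $\goth p$-typical'' translates into an inequality between the marks $a_1 > \dots > a_p$, $b_1 > \dots > b_q$ of the core and the ``small'' values $0,1,\dots$ (or $\tfrac12, \tfrac32, \dots$) occupied by the nodes of $\goth g_\chi$. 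Concretely, $\lambda$ is stable precisely when the $k$ atypical pairs, together with the even structure of $\goth g_\chi$, sit at the bottom of the mark range while all non-atypical marks $a_i,b_j$ of the core are strictly larger than everything appearing in $\goth g_\chi$; this is exactly the strict positivity $(\lambda+\rho,\check\beta)>0$ for $\beta \in \Delta_0^+ - \Delta(\goth l)$ demanded in Lemma~\ref{typ}.

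First I would fix, once and for all, the ``target core'' $\tau'$: keep the atypical part (the numbers forced to be small by atypicality) where it is, and move every other mark $a_i$ or $b_j$ of $\chi'$ outward, one unit at a time, until it exceeds the largest mark occurring among the nodes of $\goth g_\chi$; for $\goth g = \goth{osp}(2m,2n)$ one keeps the number of zero marks fixed throughout, as required in Lemma~\ref{tp2}. Each elementary step changes exactly one mark by $\pm 1$, i.e.\ corresponds to adding a single weight $\delta = \pm\varepsilon_i$ or $\pm\delta_j$ of $V = E$ or $E^*$; one checks that at each intermediate stage conditions (\ref{poscond1})--(\ref{poscond4}) still hold and the number of zero marks is unchanged, so Lemma~\ref{tp2} applies and gives an equivalence $\mathcal F^{\chi_s} \simeq \mathcal F^{\chi_{s+1}}$ of the full blocks. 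Composing the finitely many steps yields an equivalence $T : \mathcal F^\chi \to \mathcal F^\tau$ of blocks, with $\tau = \chi_{\text{final}}$, and by construction $\goth p$ is admissible for $\tau$ and there is a dominant $\mu$ with $\chi_\mu = \tau$ which is $\goth p$-typical, i.e.\ stable.

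It then remains to see that this block equivalence restricts to an equivalence of the truncated subcategories $\mathcal F^\chi_{\leq\lambda} \simeq \mathcal F^\tau_{\leq\mu}$ for a suitable choice of $\mu$. Here I would use that a translation functor $T(V)^{\chi,\tau}$ of the kind in Lemma~\ref{tp1}/\ref{tp2} sends a simple $L_\nu$ to $L_{\nu+\gamma(\nu)}$, where $\gamma(\nu)$ is the unique weight of $V$ making $\nu+\gamma$ dominant with the right central character; this induces an order-preserving bijection on the highest weights in the two blocks (each step shifts every highest weight by a single fundamental-type weight in a way compatible with the standard order restricted to each block). Choosing $\mu$ to be the image of $\lambda$ under the composite bijection, the functor $T$ and its adjoint carry $\mathcal F^\chi_{\leq\lambda}$ into $\mathcal F^\tau_{\leq\mu}$ and back, giving the desired equivalence; stability of $\mu$ is exactly the design goal of the construction.

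The main obstacle is the bookkeeping in the third step: one must verify that the bijection on highest weights induced by the chain of translation functors is genuinely monotone for the standard order (so that the truncation level $\lambda$ maps cleanly to a truncation level $\mu$), and that no simple module escapes or enters the truncated category along the way. This is where the combinatorics of odd reflections and the dominance description in Corollary~\ref{cordom} enters, and where the $\goth{osp}(2m,2n)$ case --- with its two possible cores $\chi',\chi''$ and the zero-mark constraint --- requires the most care; everything else is a routine, if lengthy, application of Lemmas~\ref{tp}, \ref{tp1} and \ref{tp2}.
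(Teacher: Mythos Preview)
Your strategy --- compose translation functors from Lemma~\ref{tp2} to push the core marks outward to a ``stable'' region, then set $\mu$ to be the highest weight of $T(L_\lambda)$ --- is exactly the paper's approach. The one place where your write-up is ambiguous, and where it risks a genuine gap, is the specification of the target core. You propose moving each core mark ``until it exceeds the largest mark occurring among the nodes of $\goth g_\chi$'', and you describe stability as the atypical pairs ``sitting at the bottom of the mark range''. But for a general $\lambda$ the atypical coordinates of $\lambda+\rho$ are \emph{not} forced to be small: the $\times$'s in $f_\lambda$ can sit at arbitrarily large positions. Stability (namely $A(\lambda)\subset\Delta(\goth l)$ together with $(\lambda+\rho,\check\beta)>0$ for $\beta\in\Delta_0^+ - \Delta(\goth l)$) requires the core marks to dominate all atypical marks of $\lambda$, so the target core $\tau'$ must depend on $\lambda$, not just on $\goth g_\chi$. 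The paper is explicit about this: it increases the outermost core mark until it exceeds the absolute value of every coordinate of $\lambda+\rho$ plus $p+q$, then sets each successive mark to one less than the previous, proceeding through all core marks (leaving $a_p=0$ fixed in the $\goth{osp}(2m,2n)$ case). If by ``nodes of $\goth g_\chi$'' you actually meant the values of the last $2k$ (or $2k+1$) coordinates of $\lambda+\rho$, then you are on the right track; if you intended a fixed, $\lambda$-independent threshold such as $0,1,\dots,k$, the resulting $\mu$ will not in general be $\goth p$-typical and the argument fails.

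As for your ``main obstacle'' (monotonicity of the induced bijection on highest weights so that truncations correspond), the paper does not argue this point either: it simply defines $\mu$ as the highest weight of $T(L_\lambda)$, checks that $\mu$ is stable, and asserts that $\mathcal F^\chi_{\leq\lambda}$ is equivalent to $\mathcal F^\tau_{\leq\mu}$. So your concern is legitimate bookkeeping, but it is no more of a gap than what the paper itself leaves implicit; in the weight-diagram picture each elementary translation swaps a core symbol with an adjacent $0$ or $\times$, and one verifies directly that this respects the order on $\times$-configurations within the block.
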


\begin{proof} - Let 
$$\chi'=a_1\varepsilon_1+...+a_p\varepsilon_p+b_1\delta_1+...+b_q\delta_q.$$
If $a_1>b_1$ in $\goth{osp}$ case, $a_1>-b_q$ in $\goth{gl}$ case, put
$\chi'_1=\chi'+\varepsilon_1$, let $\chi_1$ be the central character with
core $\chi'_1$. In this way proceed to increase $a_1$ so that it is
bigger than the absolute value of any coordinate of $\lambda$ plus $p+q$.
If $a_1<b_1$ in the $\goth{osp}$ case, increase
$b_1$ in the same manner. In the $\goth{gl}$ case, if $a_1<-b_q$
decrease $b_q$. After this,
pick up the next mark in $\chi'$ and increase (decrease) it following
the same method to the absolute value of the previous mark -1. Proceed in the same manner with all marks of
$\chi'$ increasing the absolute value of each mark (except $a_p=0$ in the $\goth{osp}(2m,2n)$ case). Call the
resulting core $\tau'$, and let $\tau$ be the corresponding central character.
As follows from Lemma \ref{tp2} the categories $\mathcal F^{\chi}$ and
$\mathcal F^{\tau}$ are equivalent via a composition of translation
functors, which we denote by $T$. Then $T(L_{\lambda})=L_{\mu}$ and one
can easily check that $\mu$ is stable and $\goth p$-typical. Hence 
$\mathcal F^{\chi}_{\leq\lambda}$ is equivalent to $\mathcal F^{\tau}_{\leq\mu}$.

We would like to illustrate the above argument with few examples.

Let $\goth g=\goth{gl}(3,2)$, $\lambda+\rho=(2,0|3,0,-1)$, then $\mu+\rho=(7,-1|1,-5,-6)$.
 
Let $\goth g=\goth{osp}(5,4)$, $\lambda+\rho=(\frac{5}{2},-\frac{1}{2}|\frac{5}{2},\frac{3}{2},\frac{1}{2})$, then $\mu+\rho=(\frac{3}{2},-\frac{1}{2}|\frac{9}{2},\frac{3}{2},\frac{1}{2})$.

Let $\goth g=\goth{osp}(4,6)$, $\lambda+\rho=(4,-2|3,2,1)$, then $\mu+\rho=(7,-1|6,5,1)$.
\end{proof}

Lemma \ref{tr} and Lemma \ref{l3} imply Theorem \ref{bl}. 
Indeed, by Lemma \ref{tr} for any dominant $\lambda$ with
$\chi_{\lambda}=\chi$ the truncated category  $\mathcal F^{\chi}_{\leq\lambda}$
is equivalent to the ``stable'' truncated category 
$\mathcal F^{\tau}_{\leq  \mu}$ for a suitable choice of $\mu$.
The latter category is equivalent to $\mathcal F^{\tau}_{\leq  \mu}(\goth l)$
by Lemma \ref{l3}. Finally, $\mathcal F^{\tau}_{\leq  \mu}(\goth l)$
is equivalent to the truncated part of the most atypical block of
$\goth g_{\chi}$ since $\goth l$ is the direct sum of $\goth g_{\chi}$ and a
center. Since $\lambda$ is arbitrary one can extend this
equivalence to the whole $\mathcal F^{\chi}$. 

\section{Weight diagrams and translation functors}

In this section we define an alternative way to describe dominant
weights following Brundan and Stroppel. Their method allows one to
visualize the action of the translation functors defined in the
previous
section.

Let $\mathbb T\subset \mathbb R$ be a discrete set,
$X=(x_1,...,x_m)\in
\mathbb T^m$,
$Y=(y_1,...,y_n)\in\mathbb T^n$. A diagram 
$f_{X,Y}$ is a function defined on $\mathbb T$ whose values are 
multisets with elements $<,>,\times$ according to the following algorithm.

$\bullet$ Put the symbol $>$ in position  $t$ for all $i$ such that $x_i=t$.

$\bullet$ Put the symbol $<$ in position $t$ for all $i$ such that $y_i=t$.

$\bullet$ If there are both $>$ and $<$ in the same position replace them by 
the symbol $\times $, repeat if possible.

Thus, $f_{X,Y}(t)$ may contain at most one of the two symbols $>,<$.
We represent $f_{X,Y}$ by the picture with $0$ standing in position 
$t$ whenever $f(t)$ is an empty set.

Let  $\goth g=\goth{gl}(m,n)$. Let $\lambda$ be a dominant integral weight such that
$$\lambda+\rho=a_1\varepsilon_1+...+a_m\varepsilon_m+b_1\delta_1+...+b_n\delta_n.$$

Set $\mathbb T=\mathbb Z$, 
$$X_{\lambda}=(a_1,...,a_m), Y_{\lambda}=(-b_1,...,-b_n).$$ 
The diagram $f_{\lambda}=f_{X_{\lambda},Y_{\lambda}}$ is called the
{\it weight diagram} of $\lambda$.

A diagram is the weight diagram of some dominant weight if and only if
$f(t)$ is empty or is a just one element set since both sequences
$a_1,...,a_m$ and $b_1,...,b_m$ are strictly decreasing and hence do not have repetitions.

Each dominant weight is uniquely determined by its weight diagram.
The number of $<$ is $n$, the number of $>$ is $m$ (counting
$\times$ as both  $<$ and $>$). The number of $\times$ equals the
degree of atypicality. Replacing all $\times$ in the diagram by zeros gives a
diagram of the core.
For example, the diagram
$$\dots,<,\times,0,0,>,\times,\dots$$ 
where $\dots$  stand for empty positions and the
left $\times$ is at position $0$, corresponds to the weight
$$\lambda+\rho=(4,3,0|1,0,-4).$$

The translation functor $T(V)^{\chi,\tau}$ described in Lemma
\ref{tp2} moves a simple module $L_{\lambda}\in \mathcal F^\chi$ to
$L_{\mu}\in \mathcal F^{\tau}$ such that
$f_{\mu}$ is obtained from $f_{\lambda}$ by moving a symbol $<$ or $>$
at position $t$ 
to the next right position $t+1$ or to the next left position $t-1$ (the position $t$ 
and the direction are determined by a choice
of the core $\tau'$). Assume that the chosen direction is to the
right. If the next to the right position has $0$ or $\times$, we exchange
the symbols in position $t$ and $t+1$. For instance,
$$\dots,<,0,\dots\longrightarrow\dots,0,<,\dots$$
$$\dots, <,\times,\dots\longrightarrow\dots,\times,<,\dots.$$
The situation when the next to
the right symbol is $<$ or $>$ is forbidden by the conditions on
$\chi$ and $\tau$ (see Lemma ~\ref{tp2}). 
We move $<$ or $>$ to the left using the analogous rule.

Now for any dominant weight $\lambda$, let $\bar{\lambda}$ be the
corresponding weight in the equivalent most atypical block of $\goth g_{\chi}$. Then
$f_{\bar{\lambda}}$ is obtained from $f_{\lambda}$ by moving all
symbols $<$, $>$ to the right of all crosses by the procedures described above
and then replacing all of them by $0$. In our example
$f_{\bar{\lambda}}$ is 
$$\dots,\times,0,0,\times,\dots$$
with left $\times$ at position $-1$.

Note also that shifting a weight diagram by one one position to the right
corresponds to tensoring the corresponding module with the one
dimensional
representation of weight $(1,...,1|-1,...,-1)$.

Now let $\goth g=\goth{osp}(2m,2n)$. 
Set $\mathbb T=\mathbb Z_{\geq  0}$.
For a dominant weight $\lambda$ such that 
$\lambda+\rho=a_1\varepsilon_1+...+a_m\varepsilon_m+b_1\delta_1...+b_n\delta_n$
let 
$$X_{\lambda}=(|a_1|,...,|a_m|), Y_{\lambda}=(b_1,...,b_n), 
f_{\lambda}=f_{X_{\lambda},Y_{\lambda}}.$$

It is not difficult to see that $f_{\lambda}$ is a weight diagram of a
dominant $\lambda$ if and only if

$\bullet$ for any $t\neq 0$, $f_{\lambda}(t)$ is empty or just one element set;

$\bullet$ the multiset $f_{\lambda}(0)$ does not contain $<$, contains $>$ with
multiplicity at most 1 (it can contain any number of $\times$).

For example, if $\lambda=(2,0,0|3,0)$, then $f_{\lambda}={}_{\times}^{>},0,>,<,\dots$.
However, in this situation a weight is not uniquely determined by its
weight diagram. More precisely, if $f(0)\neq 0$, there is exactly one
weight with the weight diagram $f$, since all the coordinates of such a weight
are non-negative. If $f(0)=0$, then the coordinate $a$ corresponding
to the first $>$ or $\times$ can be chosen positive or negative.
For instance if
$$f=0,0,<,\times,>,\dots,$$
then the two weights $(4,3|3,2)$ and $(4,-3|3,2)$ are dominant and
have $f$ as their weight diagram.

\el

\noi {\bf Terminology} - To differenciate those two weights we call a
dominant weight {\it positive} if it does not have negative coordinates, and
{\it negative} otherwise.

\el

\noi The core of a weight can be obtained by replacing by $0$ all $\times$ in
the diagram. The translation functors from Lemma \ref{tp2} can be
described in the same way as in previous case, except that we do not allow
a symbol to move from or to the zero position. Indeed, if we want to move $>$
from the zero position, we can get two weights corresponding to the
same diagram, which means that the translation functor does not
provide an equivalence of blocks. Thus, in this case we
have two types of blocks, one with zero mark at its core ($>$ at the
zero position), and another without it. The former case corresponds to
$\goth g_{\chi}=\goth {osp}(2k+2,2k)$, the latter corresponds to $\goth g_{\chi}=\goth {osp}(2k,2k)$. 
Note that the atypicality degree $k$, as
before, is the number of $\times$ in a weight diagram. Finally, to get
the weight $\bar{\lambda}$ corresponding to $\lambda$ in the most atypical
block, as in the $\goth {gl}$ case, we move all $<,>$ to the right of all
$\times$ (except one at zero position) and then replace them by $0$.
A positive weight goes to a positive one, and a negative weight goes
to a negative one under this correspondence.

Below are two examples:

if $f_{\lambda}=0,0,<,\times,>,\times,\dots$, then
$f_{\bar{\lambda}}=0,0,\times,\times, \dots$;

if $f_{\lambda}={}_{\times}^{<},0,>,<,\dots$, then $f_{\bar{\lambda}}={}_{\times}^{<},0,\dots$.

Now let us discuss the case $\goth{osp}(2m+1,2n)$. We assume that
$\lambda$ is dominant and atypical, then all coordinates $a_i,b_j$ of
$\lambda+\rho$ belong to $-\frac{1}{2}+\mathbb Z_{\geq 0}$. Let 
$\mathbb T=\frac{1}{2}+\mathbb Z_{\geq 0}$ and define
$X_{\lambda}$, $Y_{\lambda}$ and $f_{\lambda}$ as in the case
$\goth g=\goth{osp}(2m,2n)$.
The dominance condition is equivalent to the following condition on a
weight diagram $f$

$\bullet$ $f(t)$ is empty or one element set for any $t\neq \frac{1}{2}$;

$\bullet$ $f(\frac{1}{2})$ may contain at most one of $<$ or  $>$ and any
number of $\times$. 

As in the previous case, it is possible that two dominant weights have
the same weight diagram. That may happen if $f(\frac{1}{2})$ does not contain
$>$ or $<$ and has at least one $\times$. For example,
the diagram with two $\times$ at $\frac{1}{2}$ corresponds to
$(\frac{1}{2},-\frac{1}{2}|\frac{1}{2},\frac{1}{2})$ and to $(-\frac{1}{2},-\frac{1}{2}|\frac{1}{2},\frac{1}{2})$. The translation functors, 
unlike in the previous case, mix those two
types of weights. So if the weight diagram has at least one $\times $
and no $<,>$ at the position $\frac{1}{2}$ we put an indicator (which we sometimes refer to as "sign") $\pm$ before the weight
diagram in parentheses. Its value is $+$ if the corresponding weight has the form 
$$\lambda+\rho=(a_1,...,a_{m-s},\frac{1}{2},-\frac{1}{2},...,-\frac{1}{2}|b_1,...,b_n),$$
and $-$ if the corresponding weight has the form
$$\lambda+\rho=(a_1,...,a_{m-s},-\frac{1}{2},-\frac{1}{2},...,-\frac{1}{2}|b_1,...,b_n),$$
where $s$ is the number of crosses at the position $\frac{1}{2}$.

The translation functors of Lemma ~\ref{tp2} act on the
diagrams, as in the previous case. The only difference is that one
allows to move $<$ or $>$ from the position $\frac{1}{2}$
to the right but such move transforms a diagram without indicator
to one with it. 
If $f(\frac{3}{2})=0$, then the indicator of the new diagram is $-$, 
if $f(\frac{3}{2})=\times$, then the indicator is $+$.
For example
$${}_{\times}^{<},0,\dots\longrightarrow (-)\times,<,\dots$$

$${}_{\times}^{<},\times,\dots\longrightarrow (+){}_{\times}^{\times},<,\dots$$

Moving $<$ (resp. $>$) at the position $\frac{3}{2}$ to the left is possible if $f(\frac{1}{2})$ does not have
$<$ or $>$ already, and therefore either $f(\frac{1}{2})=0$ or $f(\frac{1}{2})$ must have an
indicator. If
the indicator of $f$ is $-$ we just move $<$ (resp. $>$)
to the position $\frac{1}{2}$ and put $f(\frac{3}{2})=0$. If the indicator of $f$ is
$+$, one should exchange $<$ (resp. $>$) at the position $\frac{3}{2}$ with one $\times$ at the
position $\frac{1}{2}$.
For example,
$$(-){\times},<,0,\dots\longrightarrow {}_{\times}^<,0,\dots$$
$$(+){\times},<,0,\dots\longrightarrow <,\times,\dots$$

To get the weight $\bar{\lambda}$ in the most atypical block
corresponding to $\lambda$, one does the same as in two previous cases
(one moves all $<$ and $>$ to the right of
all crosses and then replaces them by $0$). 

\begin{rem} - It is clear from above that the
  most atypical blocks with trivial central characters of Lie
  superalgebras $\goth{osp}(2k+1,2k+2)$ and $\goth{osp}(2k+1,2k)$ are
  equivalent.
\end{rem}

\section{Reduction to the most atypical case}

\begin{leme}\label{acycl} - Let $\alpha$ be a simple root of $\goth  b_0$ such that 
either $\alpha$ is a simple root of $\goth b$ or $\alpha$ is a sum of two
isotropic simple roots $\alpha_1+\alpha_2$. Let $\goth q$  be the
parabolic subalgebra with Levi part $\goth l$ containing $\goth b$ such that
$\alpha$, respectively $\alpha_1,\alpha_2$ are orthogonal to all roots
in $\Delta(\goth l)$. 
Assume that
$\nu$ is a $\goth l$-dominant weight, such that
$(\nu+\rho,\alpha)=0$ (and $(\nu+\rho,\alpha_j)\neq 0$ at least for one
$j$ in the second case). Then $\Gamma_i(G/Q,L_{\nu}(\goth q))=0$ for all
$i\geq 0$.
\end{leme}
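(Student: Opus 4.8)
\emph{Overall strategy.} I would prove the lemma by a fibration together with the Leray spectral sequence. Let $\goth g^{[\alpha]}\subseteq\goth g$ be the (reductive) subalgebra generated by $\goth g_{\pm\alpha}$ in the first case and by $\goth g_{\pm\alpha_1},\goth g_{\pm\alpha_2}$ in the second, and set $\goth b^{[\alpha]}=\goth b\cap\goth g^{[\alpha]}$, $\goth h^{[\alpha]}=\goth h\cap\goth g^{[\alpha]}$. Since $\alpha$ (resp. $\alpha_1,\alpha_2$) is orthogonal to $\Delta(\goth l)$, the corresponding node(s) of the Dynkin diagram of $\goth b$ are isolated from the rest of the diagram of $\goth l$, so the parabolic $\goth q'\supseteq\goth q$ obtained by moving $\alpha$ (resp. $\alpha_1,\alpha_2$) into the Levi has Levi part $\goth l'$ with $\Delta(\goth l')=\Delta(\goth l)\sqcup\Delta(\goth g^{[\alpha]})$, and $\goth b\subseteq\goth q\subseteq\goth q'\subseteq\goth g$. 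For $\pi\colon G/Q\to G/Q'$ one has, as in the proof of Theorem~\ref{ind}, $R^q\pi_*\big(\mathcal L_\nu(\goth q)^*\big)=H^q\big(Q'/Q,\mathcal L_\nu(\goth q)^*|_{\mathrm{fibre}}\big)$, with spectral sequence $E_2^{p,q}=H^p\big(G/Q',R^q\pi_*\mathcal L_\nu(\goth q)^*\big)\Rightarrow H^{p+q}\big(G/Q,\mathcal L_\nu(\goth q)^*\big)$. The fibre $Q'/Q$ is the flag supervariety $G^{[\alpha]}/B^{[\alpha]}$; since every root of $\goth l$ vanishes on $\goth h^{[\alpha]}$, all weights of $L_\nu(\goth l)$ restrict to one and the same character $\bar\nu:=\nu|_{\goth h^{[\alpha]}}$ of $H^{[\alpha]}$, so the restriction of $\mathcal L_\nu(\goth q)^*$ to a fibre is a direct sum of $\dim L_\nu(\goth l)$ copies of $\mathcal L_{\bar\nu}(\goth b^{[\alpha]})^*$. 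Consequently $R^q\pi_*\mathcal L_\nu(\goth q)^*$ vanishes as soon as $\Gamma_q\big(G^{[\alpha]}/B^{[\alpha]},C_{\bar\nu}\big)=0$, where $C_{\bar\nu}$ is the one-dimensional $\goth b^{[\alpha]}$-module of weight $\bar\nu$; so it is enough to establish this vanishing for all $q$, and then the spectral sequence gives $\Gamma_i(G/Q,L_\nu(\goth q))=0$ for every $i$.

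\emph{Case 1: $\alpha$ a simple root of $\goth b$.} Then $\goth g^{[\alpha]}\simeq\goth{sl}(2)$, so $G^{[\alpha]}/B^{[\alpha]}\cong\mathbb P^1$. Using $(\nu+\rho,\alpha)=0$ and Remark~\ref{rem1} (which gives $(\rho,\check\alpha)=1$), one computes $(\bar\nu,\check\alpha)=(\nu,\check\alpha)=(\nu+\rho,\check\alpha)-(\rho,\check\alpha)=-1$. Since $(\bar\nu,\check\alpha)=-1$, classical Borel-Weil-Bott on $\mathbb P^1$ gives $\Gamma_q(\mathbb P^1,C_{\bar\nu})=0$ for all $q$, which settles the first case.

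\emph{Case 2: $\alpha=\alpha_1+\alpha_2$, with $\alpha_1,\alpha_2$ isotropic simple roots.} This is the case I expect to present the real obstacle: unlike in Case 1, the cohomology of the relevant sheaf on the fibre does not vanish for elementary reasons (it is not acyclic on its individual associated-graded pieces over the underlying even base), so several ingredients have to be combined. First note $(\nu+\rho,\alpha_1)+(\nu+\rho,\alpha_2)=(\nu+\rho,\alpha)=0$, so the assumption that $(\nu+\rho,\alpha_j)\neq0$ for some $j$ in fact forces \emph{both} $(\nu+\rho,\alpha_1)\neq0$ and $(\nu+\rho,\alpha_2)\neq0$; and since $(\rho,\alpha_j)=0$ for the isotropic simple root $\alpha_j$, this gives $(\nu,\alpha_1)\neq0$. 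Here $\goth g^{[\alpha]}$ has root system $\{\pm\alpha_1,\pm\alpha_2,\pm\alpha\}$ and is isomorphic to $\goth{sl}(2,1)$; its flag supervariety $Y:=G^{[\alpha]}/B^{[\alpha]}$ has underlying even variety $\mathbb P^1$, so by Definition~\ref{penkovrem} $H^q(Y,-)=0$ for $q\geq2$, and it is enough to prove $\Gamma_0(Y,C_{\bar\nu})=\Gamma_1(Y,C_{\bar\nu})=0$. For $\Gamma_0$: by Lemma~\ref{l2} it is the largest finite-dimensional quotient of the Verma module $\mathcal U(\goth g^{[\alpha]})\otimes_{\mathcal U(\goth b^{[\alpha]})}C_{\bar\nu}$, hence zero unless $\bar\nu$ is $\goth g^{[\alpha]}$-dominant; but $\rho^{[\alpha]}=0$, while $(\bar\nu+\rho^{[\alpha]},\check\alpha)=(\nu,\check\alpha)=0\notin\mathbb Z_{>0}$ (again by $(\nu+\rho,\alpha)=0$ and Remark~\ref{rem1}) and $(\bar\nu+\rho^{[\alpha]},\alpha_1)=(\nu,\alpha_1)\neq0$, so $\bar\nu$ is not dominant by Lemma~\ref{dom} and $\Gamma_0(Y,C_{\bar\nu})=0$. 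For the Euler characteristic: Proposition~\ref{char} applied to $Y$ (so $W=\mathbb Z/2\mathbb Z$, $\rho^{[\alpha]}=0$, and the denominator in the formula is $1$) gives $\sum_q(-1)^q Ch(\Gamma_q(Y,C_{\bar\nu}))=D\big(e^{\bar\nu}-e^{s_\alpha\bar\nu}\big)$, which is $0$ because $(\bar\nu,\check\alpha)=0$ forces $s_\alpha\bar\nu=\bar\nu$. Combining the vanishing of $\Gamma_0$, the vanishing of the Euler characteristic, and the vanishing in degrees $\geq2$ forces $\Gamma_1(Y,C_{\bar\nu})=0$ as well; hence $R^q\pi_*\mathcal L_\nu(\goth q)^*=0$ for all $q$, and the Leray spectral sequence finishes the proof.
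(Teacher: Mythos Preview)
Your argument is correct, and the overall architecture---enlarge $\goth q$ to a parabolic $\goth q'$ whose Levi contains $\goth g^{[\alpha]}$, identify the fibre of $\pi:G/Q\to G/Q'$ with $G^{[\alpha]}/B^{[\alpha]}$, show the restricted sheaf is acyclic there, and conclude by Leray---is exactly the paper's strategy (the paper calls your $\goth q'$ by the letter $\goth p$, and writes the projection with source and target reversed, an evident slip). Case~1 is handled identically.

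The genuine difference is in Case~2. The paper argues directly via Penkov's filtration on the fibre: the associated graded of $\mathcal L_{\bar\nu}^*$ on $(G^{[\alpha]}/B^{[\alpha]})_0\cong\mathbb P^1$ has four line-bundle pieces with $(\,\cdot\,,\check\alpha)$-degrees $0,-1,-1,-2$; the middle two are acyclic, so each $H^i$ is at most one-dimensional, and a nonzero one-dimensional $\goth{sl}(1,2)$-module would force $(\nu,\alpha_1)=(\nu,\alpha_2)=0$, contrary to hypothesis. Your route is more structural: $\Gamma_0=0$ from Lemma~\ref{l2} together with the dominance criterion of Lemma~\ref{dom} (applied to $\goth{sl}(1,2)$ with its mixed Borel, which satisfies the hypothesis preceding Lemma~\ref{dom}), the Euler characteristic vanishes by Proposition~\ref{char} since $s_\alpha\bar\nu=\bar\nu$, and $\Gamma_{\geq 2}=0$ by cohomological dimension; combining these forces $\Gamma_1=0$. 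Both arguments are short; yours avoids inspecting the filtration piece by piece and instead leverages the general machinery already set up in Sections~3--5, while the paper's is more self-contained and makes the role of the hypothesis $(\nu+\rho,\alpha_j)\neq 0$ visible in a single step.
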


\begin{proof} - Consider the parabolic subalgebra $\goth p$ obtained
from $\goth q$ by adding the roots $-\alpha$, ($-\alpha_1,-\alpha_2$
in the second case). The fibres of the canonical projection 
$\pi: G/P \to G/Q$ are isomorphic to $G'/B'$, where $\goth g'$ is
isomorphic $\goth{sl}(2)$ in the first case and $\goth{sl}(1,2)$ in
the second case, $\goth b'=\goth g' \cap \goth b$. We claim that $\mathcal L_{\nu }(\goth p)^*_{| fibre}$
is acyclic. Indeed, in the first case 
$\mathcal L_{\nu }(\goth p)^*_{|fibre}$ is an invertible sheaf on
$\mathbb P^1$, and the condition $(\nu+\rho,\alpha)=0$ immediately implies
that it is acyclic. In the second case the underlying variety
$(G'/B')_0$ is 
isomorphic to $\mathbb P^1$. To calculate the cohomology we use Penkov's remark. 
The sheaf $\tilde{\mathcal L}_{\nu }(\goth p)^*_{| fibre}$ has a
filtration with four simple terms, one dominant 
$\mathcal O_{\nu }$, two acyclic terms $\mathcal O_{\nu-\alpha_1}$, $\mathcal O_{\nu-\alpha_2}$,
and one antidominant $\mathcal O_{\nu-\alpha_1-\alpha_2}$. If the cohomology groups of
$\mathcal L_{\nu }(\goth p)^*_{| fibre}$ are non-trivial, they must
be one-dimensional. But that would imply
$(\nu+\rho,\alpha_1)=(\nu+\rho,\alpha_2)=0$.
Contradiction.

Now the statement follows from Leray spectral sequence.
\end{proof}  

\begin{leme}\label{reduction} - Let $\goth g=\goth{gl}(m,n)$. Let $\lambda$ be a dominant weight and
$\chi=\chi_{\lambda}$. Let $V,\delta$ and $\tau$ satisfy all
the  conditions of Lemma ~\ref{tp2},
$T(V)^{\chi,\tau}(L_{\lambda})=L_{\mu}$.
Then $$\Gamma_i(G/B,L_{\mu}(\goth b))=T(V)^{\chi,\tau}(\Gamma_i(G/B,L_{\lambda}(\goth b))).$$
\end{leme}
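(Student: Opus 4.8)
The plan is to exploit the fact that $T(V)^{\chi,\tau}$ is just "tensor with $V$ and project onto the block $\tau$", and that geometric induction commutes with tensoring by a $\goth g$-module (Lemma~\ref{indgen}(2)) and interacts well with central characters (Corollary~\ref{coroblocks}). First I would write $L_\mu(\goth b)$ itself as a translate on the level of $\goth b$-modules: since $\goth b$ is a Borel and $L_\mu(\goth b)$ is one-dimensional, $L_\mu(\goth b)=C_\mu$, and the hypotheses of Lemma~\ref{tp2} on $\delta,\tau$ say precisely that $C_{\mu}=C_{\lambda}\otimes C_\delta$ with $\mu=\lambda+\delta$, while $\delta$ is the \emph{unique} weight of $V$ for which $C_{\lambda+\delta}$ has central character $\tau$ (here I use that on $\goth l=\goth h$ the block decomposition of $C_\lambda\otimes V$ separates the one-dimensional summands $C_{\lambda+\gamma}$ by their $\goth l$-central character, and $\Phi$ of Lemma~\ref{l4} sends distinct such characters to distinct ones on a generic choice of data, which is exactly what Lemma~\ref{tp2} guarantees). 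Thus $L_\mu(\goth b)$ is the $\Phi^{-1}(\tau)$-component of $C_\lambda\otimes V = L_\lambda(\goth b)\otimes V$.

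Next I would apply Corollary~\ref{coroblocks} with $P=B$, the $\goth p$-module $V=L_\lambda(\goth b)$ (confusingly named — I mean the $\goth b$-module $C_\lambda$), and the $\goth g$-module $M=V$ of the lemma. This gives, for every $i$,
$$\Gamma_i\bigl(G/B,(C_\lambda\otimes V)^{\Phi^{-1}(\tau)}\bigr)=\bigl(\Gamma_i(G/B,C_\lambda)\otimes V\bigr)^{\tau}.$$
By the previous paragraph the left-hand side is $\Gamma_i(G/B,L_\mu(\goth b))$, while the right-hand side is by definition $T(V)^{\chi,\tau}\bigl(\Gamma_i(G/B,L_\lambda(\goth b))\bigr)$. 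This is the desired identity. (Strictly, Corollary~\ref{coroblocks} is stated for finite-dimensional $\goth p$-modules, and $C_\lambda$ is finite-dimensional, so there is no issue; also note that although $\Gamma_i$ for $i>0$ is not exact as stated in Lemma~\ref{indgen}, the proof of Corollary~\ref{coroblocks} really uses only Lemma~\ref{indgen}(2) together with Lemma~\ref{l4}, both of which hold degreewise.)

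The one point that needs genuine care — and I expect it to be the main obstacle — is the identification $L_\mu(\goth b)=(C_\lambda\otimes V)^{\Phi^{-1}(\tau)}$, i.e.\ checking that \emph{exactly one} weight $\gamma$ of $V$ produces the central character $\tau$ on $C_{\lambda+\gamma}$ and that $\lambda+\gamma=\mu$. For $V=E$ or $E^*$ every weight has multiplicity one and is of the form $\pm\varepsilon_i,\pm\delta_j$, and the combinatorics is precisely the bookkeeping carried out in the proof of Lemma~\ref{tp2} (translated here to the degenerate "Borel" case where $\goth l=\goth h$, so dominance imposes no constraint and only the central-character condition matters); the uniqueness is encoded in conditions (\ref{poscond1})--(\ref{poscond4}) on $\chi',\tau'$ together with the hypothesis in Lemma~\ref{tp2} that the numbers of zero marks agree. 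So the step reduces to invoking Lemma~\ref{tp2} essentially verbatim. The rest is formal: everything else is Lemma~\ref{indgen}(2), Lemma~\ref{l4}, and Corollary~\ref{coroblocks} applied degree by degree.
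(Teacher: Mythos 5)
The first half of your proposal matches the paper: you apply Corollary~\ref{coroblocks} to $P=B$ with $M=V$ to get
$$\Gamma_i\bigl(G/B,(L_\lambda(\goth b)\otimes V)^{\Phi^{-1}(\tau)}\bigr)=T(V)^{\chi,\tau}\bigl(\Gamma_i(G/B,L_\lambda(\goth b))\bigr),$$
and this is exactly how the paper begins. The gap is in the identification you then try to make, namely that $(L_\lambda(\goth b)\otimes V)^{\Phi^{-1}(\tau)}$ \emph{equals} $L_\mu(\goth b)=C_\mu$. You justify this by arguing that ``dominance imposes no constraint'' since $\goth l=\goth h$, and that the uniqueness of Lemma~\ref{tp2} therefore carries over verbatim. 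This is backwards: Lemma~\ref{tp2} asserts there is a unique weight $\gamma$ of $V$ with $\lambda+\gamma$ \emph{$\goth g$-dominant} and $\chi_{\lambda+\gamma}=\tau$; dropping the dominance requirement enlarges, not preserves, the solution set. In general $(L_\lambda(\goth b)\otimes V)^{\Phi^{-1}(\tau)}$ has a filtration by several one-dimensional modules $C_\nu$, one for each weight $\gamma'$ of $V$ with $\chi_{\lambda+\gamma'}=\tau$, and only one of these $\nu$ (namely $\mu$) is dominant. A concrete instance: take $\goth g=\goth{gl}(2,1)$, $\lambda+\rho=(2,1\mid -2)$ (degree-$1$ atypical, core $\{1\}$), $V=E$, $\delta=\varepsilon_1$, so $\tau$ has core $\{2\}$. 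Then both $\gamma=\delta_1$ (giving $\mu+\rho=(2,1\mid -1)$, dominant) and $\gamma'=\varepsilon_2$ (giving $\nu+\rho=(2,2\mid -2)$, singular with respect to $\varepsilon_1-\varepsilon_2$) satisfy $\chi_{\lambda+\gamma}=\chi_{\lambda+\gamma'}=\tau$; so the $\Phi^{-1}(\tau)$-block of $C_\lambda\otimes E$ contains two one-dimensional subquotients, not one.

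The missing ingredient is Lemma~\ref{acycl}: the extra subquotients $C_\nu$ with $\nu\neq\mu$ satisfy $(\nu+\rho,\alpha)=0$ for some even simple root $\alpha$ (they are $\goth g_0$-singular), so $\Gamma_i(G/B,C_\nu)=0$ for all $i$. Feeding this vanishing into the long exact sequences coming from the filtration then collapses $\Gamma_i\bigl(G/B,(L_\lambda(\goth b)\otimes V)^{\Phi^{-1}(\tau)}\bigr)$ onto $\Gamma_i(G/B,L_\mu(\goth b))$, which is what the paper does. So your skeleton (Corollary~\ref{coroblocks}, degree-by-degree) is the right one, but the step you flagged as ``the main obstacle'' is where the argument actually fails as written: you need the acyclicity of the spurious non-dominant weights, not a uniqueness claim that isn't true at the Borel level.
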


\begin{proof} - By Corollary ~\ref{coroblocks}, one has
$$\Gamma_i(G/B,(V\otimes L_{\lambda}(\goth b))^{\Phi^{-1}(\tau)})=T(V)^{\chi,\tau}(\Gamma_i(G/B,L_{\lambda}(\goth b))).$$
We note that $(V\otimes L_{\lambda}(\goth b))^{\Phi^{-1}(\tau)}$ has a
filtration with simple quotients $L_{\nu}(\goth b)$ such that all
$\nu\neq\mu$ are not dominant and satisfy the conditions of Lemma
~\ref{acycl}. Hence $\Gamma_i(G/B,L_{\nu}(\goth b))=0$ for all
$\nu\neq\mu$. The statement follows.
\end{proof}

\begin{coro}\label{cor21} - Let $\goth g=\goth{gl}(m,n)$. 
Let $\lambda$ be a dominant weight with atypicality
  degree $k$, $T$ be the functor which establishes an equivalence between
  $\mathcal F^{\chi_{\lambda}}$ and the most atypical block of $\goth  g_{\chi}$. 
Denote by $\bar{\lambda}$ the highest weight of $T(L_{\lambda})$. If
  $B_{\chi}=B\cap G_{\chi}$, then
$$K_{G,B}^{\lambda,\mu}(z)=K_{G_{\chi},B_{\chi}}^{\bar{\lambda},\bar{\mu}}(z)$$
\end{coro}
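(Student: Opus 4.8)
\textbf{Proof proposal for Corollary \ref{cor21}.}

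The plan is to reduce the statement for an arbitrary dominant $\lambda$ to the corresponding statement for $\bar\lambda$ in the most atypical block by tracking what a single elementary translation functor does to the cohomology, and then iterating. Recall from Lemma \ref{tr} (and its proof) that the equivalence $T:\mathcal F^{\chi_\lambda}\to(\text{most atypical block of }\goth g_\chi)$ is obtained as a composition of two kinds of steps: first a sequence of translation functors of the type described in Lemma \ref{tp2} (each one moving a single symbol $<$ or $>$ by one position in the weight diagram, in the same fixed superalgebra $\goth g$), which carries $\mathcal F^{\chi_\lambda}_{\le\lambda}$ to a ``stable'' truncated block $\mathcal F^{\tau}_{\le\mu}$; and second, the equivalence of Lemma \ref{l3} given by $\operatorname{Res}$/$\operatorname{Ind}$ between $\mathcal F^{\tau}_{\le\mu}$ and $\mathcal F^{\tau}_{\le\mu}(\goth l)$, i.e. the passage to $\goth l=\goth h+\goth g_\chi$. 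So I would prove two things: (a) each translation step of Lemma \ref{tp2} commutes with $\Gamma_i(G/B,-)$ in the precise sense that $\Gamma_i(G/B,L_\mu(\goth b))=T(V)^{\chi,\tau}\Gamma_i(G/B,L_\lambda(\goth b))$, and (b) the $\operatorname{Res}$/$\operatorname{Ind}$ equivalence matches $K_{G,B}$ with $K_{G_\chi,B_\chi}$.

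Step (a) is exactly Lemma \ref{reduction}, which is already available: $(V\otimes L_\lambda(\goth b))^{\Phi^{-1}(\tau)}$ has a $\goth b$-module filtration whose simple subquotients are $L_\nu(\goth b)$ with $\nu=\mu$ the unique dominant one and all other $\nu$ killed by $\Gamma_i(G/B,-)$ via Lemma \ref{acycl}; combined with Corollary \ref{coroblocks} this gives the commutation. Since $T$ (restricted to the first phase) is a finite composition of such functors, and each $L_\lambda(\goth b)$ along the way is simple with $\Gamma_i(G/B,-)$ behaving by Lemma \ref{reduction}, I can iterate: after phase one, $\Gamma_i(G/B,L_\mu(\goth b))=T(\Gamma_i(G/B,L_\lambda(\goth b)))$ as modules, hence $[\Gamma_i(G/B,L_\lambda(\goth b)):L_{\mu'}]=[\Gamma_i(G/B,L_{T\lambda}(\goth b)):L_{T\mu'}]$ for every $\mu'$, because $T$ is an equivalence sending simples to simples. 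That is, $K_{G,B}^{\lambda,\mu'}(z)=K_{G,B}^{T\lambda,T\mu'}(z)$, reducing us to the stable case. Note here one must also use that $\Gamma_i(G/B,L_{\mu'}(\goth b))$ for the intermediate $\mu'$ lies in the block being translated, so that $T$ applies to it; this is where the truncation $\mathcal F^\chi_{\le\lambda}$ (within which every simple is again stable after phase two) keeps everything inside a block where all the hypotheses of Lemma \ref{tp2} hold — this bookkeeping is the part requiring the most care.

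For step (b), once $\lambda$ is stable, $\goth p$ (with Levi $\goth l=\goth h+\goth g_\chi$) is admissible for $\chi$ and $\lambda$ is $\goth p$-typical, so the Typical Lemma \ref{typ} gives $\Gamma_i(G/P,L_\lambda(\goth p))=0$ for $i>0$ and $=L_\lambda$ for $i=0$, and more generally by Lemma \ref{l3} the functors $\operatorname{Res},\operatorname{Ind}$ are an exact equivalence $\mathcal F^\chi_{\le\lambda}\simeq\mathcal F^\chi_{\le\lambda}(\goth p)$ preserving simples. I would then apply Theorem \ref{ind} with the chain $\goth b\subset\goth p\subset\goth g$: since $\Gamma_\bullet(G/P,L_\nu(\goth p))$ is concentrated in degree $0$ and equals $L_\nu$ for the relevant stable $\nu$, the spectral-sequence/Euler-characteristic identity of Theorem \ref{ind} degenerates and yields $K_{G,B}^{\lambda,\mu}(z)=K_{P,B}^{\lambda,\mu}(z)$ (here I would want the graded, not just Euler-characteristic, version — which holds because the $G/P$-layer contributes only in degree zero, so no cancellation occurs). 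Finally, $P/B\cong G_\chi/B_\chi$ up to a torus factor that acts trivially on cohomology degrees, giving $K_{P,B}^{\lambda,\mu}(z)=K_{G_\chi,B_\chi}^{\bar\lambda,\bar\mu}(z)$. Chaining (a) and (b) proves the corollary. The main obstacle I anticipate is justifying the degeneration in step (b) at the level of Poincaré polynomials in $z$ rather than merely at $z=-1$: Theorem \ref{ind} as stated is an Euler-characteristic identity, so I must argue separately that the Leray spectral sequence collapses because $R^q\pi_*$ vanishes for $q>0$ on the relevant sheaves (again a consequence of the Typical Lemma applied fibrewise), which upgrades the $(-1)$-specialization to the full polynomial identity.
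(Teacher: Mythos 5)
Your proposal follows the paper's own route: reduce to the stable case via Lemma~\ref{reduction}, then for stable $\lambda$ consider the projection $\pi:G/B\to G/P$ with $\goth p=\goth h+\goth g_\chi+\goth b$ and let the Leray spectral sequence collapse because all $\goth p$-simple subquotients of $R^q\pi_*$ are stable and hence killed in positive degree on the base by the Typical Lemma. That is precisely what the paper does, and your identification of the key point — concentration in base-degree zero — is correct.

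One slip worth flagging: your closing paragraph says the spectral sequence collapses because ``$R^q\pi_*$ vanishes for $q>0$''. That is not what happens, and it contradicts your own (correct) earlier observation. The sheaves $R^q\pi_*(\mathcal L_\lambda(\goth b)^*)$ are nonzero for $q>0$ — they encode the fibre cohomology on $P/B\cong G_\chi/B_\chi$, which is exactly the data we want to recover — and the restriction of $\lambda$ to $\goth p$ lies in the most atypical block of $\goth g_\chi$, so the Typical Lemma does not apply fibrewise. What does vanish is $H^p(G/P,R^q\pi_*(\cdot))$ for $p>0$: the simple subquotients $L_\nu(\goth p)$ of each $R^q\pi_*$ are stable, so the Typical Lemma applies on the base, giving $E_2^{p,q}=0$ for $p>0$, degeneration at $E_2$, and $K_{G,B}^{\lambda,\mu}(z)=K_{P,B}^{\lambda,\mu}(z)=K_{G_\chi,B_\chi}^{\bar\lambda,\bar\mu}(z)$.
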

\begin{proof} - Due to Lemma ~\ref{reduction} it is sufficient to prove
  the statement for the case of stable $\lambda$ (see Lemma
  ~\ref{l3}). Let $\goth p=\goth g_{\chi}+\goth b$. Consider the natural
  projection $\pi:G/B\longrightarrow G/P$, the fibres of $\pi$ are
  isomorphic to $P/B=G_{\chi}/B_{\chi}$. We note that all the simple
  subquotients in
  $R^q\pi^*(\mathcal L_{\lambda}(\goth b)^*_{|fibre})$ are stable,
  hence $H^i(G/P,R^q\pi^*(\mathcal L_{\lambda}(\goth
  b)^*_{|fibre}))=0$ for $i>0$. The statement follows from Leray
  spectral sequence.
\end{proof}
If $\goth g=\goth{gl}(m,n)$, the above corollary reduces the calculation of 
the cohomology groups of an invertible sheaf $\mathcal O_{\lambda}$ 
on the flag variety $G/B$ (with dominant $\lambda$) to the
situation where $G=G_{\chi}$ and $\lambda$ has the same central character as
the trivial module. If $\goth g=\goth{gl}(m,n)$ it is known that 
$\Gamma_i(G/B,L_{\lambda}(\goth b))=0$ for all $i>0$, and
$\Gamma_0(G/B,L_{\lambda}(\goth b))$ is the Kac module
$K_{\lambda}$. Hence, calculating
$K_{G,B}^{\lambda,\mu}(z)={}^0K_{G,B}^{\lambda,\mu}$ provides an
algorithm for finding the characters of simple modules. That was done
in \cite{V.Sel} and we do not repeat these calculations here.

\el

Instead we concentrate on the case of the orthosymplectic group.

To do so, we have to use Theorem ~\ref{ind}, and unfortunately we
are only able to calculate the Euler characteristic
$K_{G,B}^{\lambda,\mu}(-1)$. That would be sufficient 
to find the characters if we had an assertion like "$K_{G,B}^{\lambda,\lambda}(-1)=1$ and
$K_{G,B}^{\lambda,\mu}(-1)\neq 0$
implies $\mu \leq \lambda$". 

However, in general this is not true, moreover,
in some cases $K_{G,B}^{\lambda,\mu}(-1)=0$. So we should substitute 
$B$ by some larger parabolic subgroup $Q_{\lambda}$ which depends on $\lambda$.  

Let $\lambda$ be a dominant weight. Denote by $\goth g_{\lambda}$ the
subalgebra of $\goth g$ defined by the Dynkin subdiagram
corresponding to the simple roots of $\goth g$ such
that all the coordinates of ${\lambda}$ restricted to 
$\goth g_{\lambda}$ are zero and $\goth g_{\lambda}$ is isomorphic to
$\goth {osp} (2s,2s),\goth {osp} (2s+2,2s)$ or $\goth {osp}(2s+1,2s)$. 

\el

\noi {\bf Terminology} - We call this subalgebra $\goth g_{\lambda}$ the
{\it tail} subalgebra
of the weight $\lambda$ and the module $L_{\lambda}$, 
and call $s$ the {\it length of the tail} of $\lambda$.

\el

\noi The
reader can check that $s$ is the number of $\times$'s at $0$
(respectively $\frac{1}{2}$) in the weight diagram except the case when the
indicator has $+$. In the latter case $s$ is the number of $\times$ at
$\frac{1}{2}$ minus $1$. Let $\goth q_{\lambda}$ be the parabolic subalgebra
with Levi part $\goth g_{\lambda}$. 

\begin{leme}\label{reduction1} - Let $\lambda$ be a dominant weight and
$\chi=\chi_{\lambda}$. Let $V,\delta$ and $\tau$ satisfy all
the  conditions of Lemma ~\ref{tp2}. Let 
$T(V)^{\chi,\tau}(L_{\lambda})=L_{\mu}$.
Then $Q_{\lambda}=Q_{\mu}$ and  
$$\Gamma_i(G/Q_{\lambda},L_{\mu}(\goth q_{\lambda}))=T(V)^{\chi,\tau}(\Gamma_i(G/Q_{\lambda},L_{\lambda}(\goth q_{\lambda}))).$$
\end{leme}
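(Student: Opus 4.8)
The statement is the orthosymplectic analogue of Lemma~\ref{reduction}, and I would prove it by the same strategy, now using the parabolic $\goth q_\lambda$ (with Levi part the tail subalgebra $\goth g_\lambda$) in place of $\goth b$. First I would establish $Q_\lambda=Q_\mu$: the translation functor $T(V)^{\chi,\tau}$ from Lemma~\ref{tp2} moves the weight diagram of $\lambda$ by shifting one symbol $<$ or $>$ by a single position, with the extra hypothesis (in the $\goth{osp}(2m,2n)$ case) that the number of zero marks is preserved, and in particular it never moves a symbol into or out of the $0$ (resp.\ $\frac12$) position; hence the number of $\times$'s at $0$ (resp.\ $\frac12$), and the indicator in the $\goth{osp}(2m+1,2n)$ case, are unchanged, so the length of the tail and the tail subalgebra itself are the same for $\lambda$ and $\mu$, i.e.\ $\goth q_\lambda=\goth q_\mu$ and $Q_\lambda=Q_\mu$.

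For the cohomology identity, I would invoke Corollary~\ref{coroblocks} with $P=Q_\lambda$, which immediately gives
$$\Gamma_i(G/Q_\lambda,(V\otimes L_\lambda(\goth q_\lambda))^{\Phi^{-1}(\tau)})=T(V)^{\chi,\tau}(\Gamma_i(G/Q_\lambda,L_\lambda(\goth q_\lambda))).$$
So it remains to show that the $\goth q_\lambda$-module $(V\otimes L_\lambda(\goth q_\lambda))^{\Phi^{-1}(\tau)}$ has a filtration by simple $\goth q_\lambda$-modules $L_\nu(\goth q_\lambda)$, exactly one of which ($\nu=\mu$) contributes, all the others being acyclic, i.e.\ $\Gamma_i(G/Q_\lambda,L_\nu(\goth q_\lambda))=0$ for all $i$. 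Since $V=E$ or $E^*$ has all weights of multiplicity one, the tensor product $V\otimes L_\lambda(\goth q_\lambda)$, restricted to the Levi $\goth g_\lambda+\goth h$, decomposes; after projecting to the central-character component $\Phi^{-1}(\tau)$ one reads off, from the weight-diagram description of the translation functor, that the surviving composition factors are $L_\mu(\goth q_\lambda)$ together with factors $L_\nu(\goth q_\lambda)$ where $\nu$ is $\goth g_\lambda$-dominant but fails one of the dominance conditions of Lemma~\ref{dom} for a simple root $\alpha$ of $\goth b_0$ orthogonal to $\Delta(\goth g_\lambda)$ — precisely the hypothesis of the acyclicity Lemma~\ref{acycl}. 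Applying Lemma~\ref{acycl} (with $\goth q$ there taken to be $\goth q_\lambda$) kills all these terms, and the long exact sequence of Lemma~\ref{indgen}(1) applied to the filtration then yields $\Gamma_i(G/Q_\lambda,(V\otimes L_\lambda(\goth q_\lambda))^{\Phi^{-1}(\tau)})=\Gamma_i(G/Q_\lambda,L_\mu(\goth q_\lambda))$, which combined with the displayed identity above finishes the proof.

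**The main obstacle.** The delicate point is the bookkeeping of composition factors of $(V\otimes L_\lambda(\goth q_\lambda))^{\Phi^{-1}(\tau)}$ as a $\goth q_\lambda$-module and the verification that every $\nu\neq\mu$ among them genuinely satisfies the hypotheses of Lemma~\ref{acycl} — that is, that for each such $\nu$ there is a simple root $\alpha$ of $\goth b_0$ (a simple root of $\goth b$ or a sum of two isotropic simple roots) orthogonal to $\Delta(\goth g_\lambda)$ with $(\nu+\rho,\alpha)=0$ (and the non-degeneracy condition in the isotropic case). This requires going through the same case analysis by the type of $\delta$ ($\pm\varepsilon_i$, $\pm\delta_j$, or $0$) and the structure of the weight diagram near the position that is moved, much as in the proof of Lemma~\ref{tp2}; one expects the case $\goth g=\goth{osp}(2m,2n)$ with $\delta=\pm\varepsilon_i$ to again be the most involved. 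A secondary subtlety, special to $\goth{osp}(2m+1,2n)$, is to check that the indicator is preserved so that $Q_\lambda=Q_\mu$ really holds; this is immediate from the description of the translation functors on diagrams since no symbol crosses the position $\frac12$ under the hypotheses of Lemma~\ref{tp2}. I would relegate the full case-by-case verification to a brief remark that it is entirely parallel to Lemma~\ref{tp2} and Lemma~\ref{reduction}.
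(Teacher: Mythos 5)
Your proof is correct and is exactly the paper's intended argument: the paper's own proof is the single sentence ``The same as of Lemma~\ref{reduction}'', and you have correctly spelled out the adaptation (Corollary~\ref{coroblocks} with $P=Q_\lambda$, then acyclicity of all extraneous factors $L_\nu(\goth q_\lambda)$ via Lemma~\ref{acycl}, plus the tail-preservation giving $Q_\lambda=Q_\mu$, which the paper isolates as Lemma~\ref{tailbl} and likewise declares ``straightforward'').
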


\begin{proof} - 
The same as of Lemma ~\ref{reduction}.
\end{proof}

\begin{leme}\label{tailbl} - The functor $T$ from $\mathcal F^{\chi}$
  to the most atypical block of $\goth g_{\chi}$
  containing trivial module, which
  provides an equivalence of categories, preserves the tails of
  simple modules.
\end{leme}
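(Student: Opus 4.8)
The plan is to reduce to the case of a stable weight and then track the tail through the construction of $T$ as a composition of translation functors. Recall from Lemma~\ref{tr} that the equivalence $T$ is built from the translation functors of Lemma~\ref{tp2}, each of which shifts a single symbol $<$ or $>$ in the weight diagram by one position (to the left or right) past $0$'s and $\times$'s, subject to the prohibition that the symbol neither starts at nor lands on the forbidden position ($0$ for $\goth{gl}$ and $\goth{osp}(2m,2n)$, $\tfrac12$ for $\goth{osp}(2m+1,2n)$). So it suffices to check that each elementary translation functor preserves the tail, i.e. preserves the number of $\times$'s at the relevant special position (with the usual correction by $1$ in the $+$-indicator case for $\goth{osp}(2m+1,2n)$), and then to observe that the final step of passing to the most atypical block — moving all $<,>$ to the right of all $\times$'s and erasing them — manifestly leaves this count unchanged.

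First I would set up the combinatorial invariant: for a dominant weight $\lambda$ with weight diagram $f_\lambda$, let $s(\lambda)$ be the number of $\times$'s at the special position, minus $1$ precisely when $\goth g = \goth{osp}(2m+1,2n)$ and the indicator is $+$. By the description of the tail subalgebra given just above (a connected Dynkin subdiagram at the last node(s) with all coordinates of $\lambda$ restricted to it equal to zero, of type $\goth{osp}(2s,2s)$, $\goth{osp}(2s+2,2s)$, or $\goth{osp}(2s+1,2s)$), one has exactly $s = s(\lambda)$, so the claim is equivalent to the statement that $T$ preserves $s(\cdot)$.

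Next I would go through the elementary moves. An elementary translation functor moves a symbol $<$ or $>$ at some position $t$ to a neighbouring position $t\pm1$; by the rules of Lemma~\ref{tp2}, $t$ is never the special position and the target is never the special position \emph{except} in the $\goth{osp}(2m+1,2n)$ case where a symbol may be moved out of $\tfrac12$ (changing the indicator) or into $\tfrac12$ (removing the indicator). In all cases the total multiset of $\times$'s is unaffected by an elementary move — a symbol only ever swaps with a $0$ or exchanges with a $\times$ (the exchange $\times \leftrightarrow <$ at $\tfrac12$ in the $+$-indicator case), and in the latter case the indicator flips from $+$ to nothing (or vice versa), so the correction term in $s(\lambda)$ compensates exactly: moving $<$ from position $\tfrac32$ to $\tfrac12$ against a $+$ indicator turns $s$ crosses-at-$\tfrac12$-with-indicator into $s$ crosses-at-$\tfrac12$-no-indicator, and $s(\lambda) = (s+1)-1 = s$ before and $= s$ after. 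The remaining moves never touch $\times$'s at the special position at all. Hence each elementary functor preserves $s(\cdot)$.

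The main obstacle I anticipate is bookkeeping in the $\goth{osp}(2m+1,2n)$ indicator case: one must be careful that the three-way interaction between (i) the crosses at $\tfrac12$, (ii) the $\pm$ indicator, and (iii) a symbol crossing the $\tfrac12$/$\tfrac32$ boundary really does leave $s(\lambda)$ invariant in every sub-case listed after Lemma~\ref{tp2} (moving $<$ or $>$ out of $\tfrac12$ with $f(\tfrac32)=0$ vs.\ $f(\tfrac32)=\times$, and moving $<$ or $>$ into $\tfrac12$ with indicator $-$ vs.\ $+$). Once these four configurations are checked explicitly against the definition of $s(\lambda)$, composing the elementary functors and then applying the final erasure step gives $s(\bar\lambda) = s(\lambda)$, i.e.\ $T$ preserves the tail of every simple module. $\Box$
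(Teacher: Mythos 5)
The paper's own ``proof'' consists of the single word \emph{Straightforward}, so there is no argument to compare against; your proposal supplies the verification the authors left to the reader, and it is correct. You identify the right invariant — the tail length $s(\lambda)$ read off from the diagram (number of $\times$'s at the tail position $0$ resp.\ $\tfrac12$, with the $-1$ correction when the indicator is $+$) — and check case by case that each elementary translation functor from Lemma~\ref{tp2} preserves it, that the composition used in the proof of Lemma~\ref{tr} therefore does as well, and that the concluding restriction along Lemma~\ref{l3}, which only erases core symbols already sitting to the right of all $\times$'s, cannot change it either. This is precisely the intended ``straightforward'' check. One sentence is slightly misstated: the assertion that ``the total multiset of $\times$'s is unaffected by an elementary move'' is false in the $\goth{osp}(2m+1,2n)$ $+$-indicator exchange, where a $\times$ actually changes position between $\tfrac12$ and $\tfrac32$; what you need, and what your subsequent computation in fact establishes, is only that the count of $\times$'s at the tail position minus the indicator correction stays fixed. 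With that phrasing cleaned up the proof is sound.
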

\begin{proof} - Straightforward. 
\end{proof}

For a dominant weight $\lambda$ let $\bar{\lambda}$ be the weight of
$T(L_{\lambda})$. Let $\goth q_{\lambda}$ be the parabolic subalgebra
of $\goth g$ whose Levi part is $\goth h+\goth g_{\lambda}$.
The following corollary can be proved exactly as Corollary ~\ref{cor21}.

\begin{coro}\label{cor22} - Let $\lambda$ be a dominant weight with
  central character $\chi$. Then
$$K_{G,Q_{\lambda}}^{\lambda,\mu}(z)=K_{G_{\chi},Q_{\bar{\lambda}}}^{\bar{\lambda},\bar{\mu}}(z).$$
\end{coro}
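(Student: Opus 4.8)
The plan is to mimic the proof of Corollary~\ref{cor21} verbatim, replacing the Borel subgroup $B$ by the parabolic $Q_\lambda$ throughout. By Lemma~\ref{reduction1}, the Poincar\'e polynomial $K_{G,Q_\lambda}^{\lambda,\mu}(z)$ is unchanged under the translation functors appearing in Lemma~\ref{tp2}, so — exactly as in Corollary~\ref{cor21} — we may compose such functors and reduce to the case where $\lambda$ is \emph{stable} (Lemma~\ref{tr}): there is a central character $\tau$ with $\goth p$ (the parabolic with Levi $\goth h+\goth g_\chi$) admissible for $\tau$, a stable dominant $\mu_0$ with $\chi_{\mu_0}=\tau$, and $\mathcal F^{\chi}_{\le\lambda}\simeq \mathcal F^{\tau}_{\le\mu_0}$. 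Here one uses Lemma~\ref{tailbl} to know that $T$ preserves tails, so that the relevant parabolics match up on both sides; in particular $Q_{\bar\lambda}$ is the image of $Q_\lambda$ under the equivalence and $\bar\mu$ ranges over the weights in the most atypical block of $\goth g_\chi$ corresponding to the $\mu$ with $\chi_\mu=\chi$.

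Once we are in the stable situation, the argument is the Leray spectral sequence computation of Corollary~\ref{cor21}. Let $\goth p=\goth g_\chi+\goth b$ and consider the canonical projection $\pi:G/Q_\lambda\longrightarrow G/P$; since $\goth g_\lambda\subseteq\goth g_\chi$ (the tail subalgebra sits inside $\goth g_\chi$ because all $\times$'s of $\bar\lambda$ lie at $0$ resp.\ $\frac12$ after $T$, and the tail is among them), the fibres of $\pi$ are isomorphic to $P/Q_\lambda=G_\chi/Q_{\bar\lambda}$. Because $\lambda$ is stable, every simple subquotient of $R^q\pi_*(\mathcal L_\lambda(\goth q_\lambda)^*_{|\mathrm{fibre}})$ is again stable — this is the observation quoted just before Lemma~\ref{l3} — and so by the typical lemma (Lemma~\ref{typ}, via Corollary~\ref{firstmark}) these sheaves are acyclic on $G/P$, i.e.\ $H^i(G/P,R^q\pi_*(\dots))=0$ for $i>0$. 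Hence the Leray spectral sequence degenerates and $H^q(G/Q_\lambda,\mathcal L_\lambda(\goth q_\lambda)^*)=H^0(G/P,R^q\pi_*(\dots))$, which by Lemma~\ref{l3} (the equivalence $\operatorname{Res},\operatorname{Ind}$) identifies term by term, in each cohomological degree $q$, with $H^q(G_\chi/Q_{\bar\lambda},\mathcal L_{\bar\lambda}(\goth q_{\bar\lambda})^*)$. Reading off the multiplicity of $L_\mu$ (resp.\ $L_{\bar\mu}$) in each degree gives the claimed equality of Poincar\'e polynomials $K_{G,Q_\lambda}^{\lambda,\mu}(z)=K_{G_\chi,Q_{\bar\lambda}}^{\bar\lambda,\bar\mu}(z)$.

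The main obstacle I anticipate is purely bookkeeping rather than conceptual: one must check that the translation-functor reduction genuinely lands us in a stable weight for which $Q_\lambda$ (and not just $B$) behaves well, i.e.\ that the parabolic $\goth q_\lambda$ is carried to $\goth q_{\bar\lambda}$ compatibly and that $\goth q_\lambda\subseteq\goth p$ after the reduction so that the projection $\pi$ above is defined. This is exactly what Lemma~\ref{reduction1} (which asserts $Q_\lambda=Q_\mu$) and Lemma~\ref{tailbl} are designed to supply, so in the end the proof is genuinely ``the same as Corollary~\ref{cor21}'' — which is presumably why the authors only say ``can be proved exactly as Corollary~\ref{cor21}'' — but one should be slightly careful that, unlike the $\goth{gl}$ case, the fibre $G_\chi/Q_{\bar\lambda}$ here need not be a full flag variety, only a partial one, which is harmless for the Leray argument since the only input about the fibre is the acyclicity of the relevant sheaves on the base.
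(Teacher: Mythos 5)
Your proof is correct and is essentially the same as the paper's, which merely states that Corollary~\ref{cor22} ``can be proved exactly as Corollary~\ref{cor21}.'' You have filled in exactly the details the paper leaves implicit — using Lemma~\ref{reduction1} in place of Lemma~\ref{reduction}, checking via Lemma~\ref{tailbl} that $\goth q_\lambda\subseteq\goth p$ so the projection $\pi:G/Q_\lambda\to G/P$ makes sense and the fibre identifies with $G_\chi/Q_{\bar\lambda}$, and then running the same stability/typical-lemma/Leray argument.
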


The above corollary reduces the calculation of
$K_{G,Q_{\lambda}}^{\lambda,\mu}(-1)$ to the case where $\goth g$ is one of 
$\goth {osp} (2k,2k),\goth {osp} (2k+2,2k), \goth {osp}(2k+1,2k)$,  
and $\lambda$ and $\mu$ have the trivial central character.

\section{Recursion}

\noi In this section we assume that $\goth g$ is either
$\goth {osp} (2k,2k),\goth{osp}(2k+2,2k)$ or $\goth {osp} (2k+1,2k)$.

\noi Then all the simple roots are odd and the Dynkin diagram of $\goth g$ is
either
$$\otimes-...-\otimes{}^{\otimes}_{\otimes}|,$$
(with $2k$ vertices if $\goth g=\goth {osp}(2k,2k)$ and $2k+1$
vertices if $\goth g=\goth {osp}(2k+2,2k)$) or
$$\otimes-...-\otimes\rightarrow{\bullet}$$
(with $2k$ vertices for $\goth g=\goth {osp}(2k+1,2k)$).
The corresponding Borel subalgebra is called {\it mixed} (\cite{G-L}).

Let $\goth s^1$ be the subalgebra of $\goth g$ generated by all the simple
roots of $\goth g$ except 
the first two, $\goth s^2$ be the subalgebra of $\goth s^1$ generated
by all the simple roots of $\goth s^1$ except the first two etc... Each time
$\goth s^i$ has the same type as $\goth g$. Let $\goth p^i$ be the
parabolic subalgebra with Levi subalgebra $\goth l^i=\goth h+\goth
s^i$ for $i\leq k-1$, and let  
$\goth p^k=\goth b$. We have a flag of parabolic subalgebras  
\begin{equation}\label{flag}
\goth g\supset\goth p^1\supset\dots\supset\goth p^k=\goth b.
\end{equation}

In this section, we assume that the simple finite-dimensional
module $L_{\lambda}$ has the
same central character as the trivial module, and we denote this
central character by $\chi$. In the case of $\goth{osp}(2k,2k)$, we
also assume that $\lambda$ is positive. That implies
$$\lambda+\rho=a_1\varepsilon_1+...+a_k\varepsilon_k+b_1\delta_1+...+b_k\delta_k,$$ 
Moreover, $|a_i|=|b_i|$ for all $i\leq k$. The weight diagram of
$\lambda$ does not have symbols $>,<$  except $>$ at $0$ for $\goth g=\goth{osp}(2k+2,2k)$.

\begin{rem} - 
Let $\goth g=\goth{osp}(2k,2k)$ and 
$$\lambda=a_1(\varepsilon_1+\delta_1)+...+a_k(\varepsilon_k+\delta_k),$$
define 
$$\lambda'=a_1(\varepsilon_1+\delta_1)+...+a_k(-\varepsilon_k+\delta_k).$$
If $\sigma$ is the automorphism of $\goth g$ induced by the symmetry of the
Dynkin diagram, and $M^{\sigma}$ is the module obtained from $M$ by twisting by
$\sigma$, then $L_{\lambda}^{\sigma}=L_{\lambda'}$. Since $\sigma$ acts on $G/P^1$, one
has 
$$K_{G,P^1} ^{\lambda , \mu} (z)=K_{G,P^1} ^{\lambda' , \mu'} (z).$$

Thus, if we know $K_{G,P^1} ^{\lambda , \mu} (z)$ for all positive
$\lambda$, we can easily obtain them for all $\lambda$.
\end{rem}

In this section, we give a recursion procedure to compute the polynomials 
$K^{\lambda , \mu}_{G,P^1}(z)$. This recursion is double: if $\lambda$
is "far from tail and far from the even walls" (Proposition
\ref{pt1}), we get it "closer to the tail and the walls". If $\lambda$
is close to the wall and far from the tail (Proposition \ref{pt2}) we
decrease the rank of the Lie superalgebra. 
Finally, when $\lambda$ is very close to the tail, we compute the
cohomology directly (Propositions \ref{pt3}, \ref{pt4} in
the next section).

For a Laurent polynomial $F(z)\in \mathbb C[z,z^{-1}]$, we denote by
$F(z)_+$ the polynomial obtained from $F(z)$ by removing the monomials
with negative powers of $z$.

\el

\noi {\bf Notation} - Let $\alpha=\varepsilon_1+\delta_1$.

\el

Before we start, we prove several technical statements that will be needed
later.

Recall that $E$ denotes the standard $\goth g$-module.
\begin{leme}\label{lp00} - Let $\tau$ be a dominant central character
  with degree of atypicality $k-1$. Then 
  $(L_{\lambda}\otimes E)^{\tau}$ is either simple or zero.

Let $\beta=\varepsilon_i+\delta_i$, $\lambda-\beta$ is dominant and
  $a_i>\frac{3}{2}$. Let
  $\tau=\chi_{\lambda-\delta_i}$ or $\chi_{\lambda-\varepsilon_i}$. 
Then 
  $(L_{\lambda}\otimes E)^{\tau}=0$.

\end{leme}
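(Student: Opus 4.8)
The plan is to analyze the tensor product $L_\lambda \otimes E$ via its natural filtration and then project onto the relevant generalized central character. Recall that $E$ is the standard module of $\goth g$, so its weights are exactly $\pm\varepsilon_i, \pm\delta_j$ (and $0$ in the odd-dimensional case), each with multiplicity one. Hence $L_\lambda \otimes E$ has a filtration whose subquotients are the modules $L_{\lambda + \gamma}$ for $\gamma$ a weight of $E$ such that $\lambda+\gamma$ is dominant (by the standard highest-weight/singular-vector argument, as used in the proof of Lemma~\ref{tp1}: a $\goth b$-singular vector in $L_\lambda \otimes E$ has weight $\lambda + \gamma$ for some weight $\gamma$ of $E$, and each such $\gamma$ contributes at most one composition factor since the weight multiplicity is $1$). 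Applying the projection onto the component with generalized central character $\tau$, the module $(L_\lambda \otimes E)^\tau$ is filtered by those $L_{\lambda+\gamma}$ with $\chi_{\lambda+\gamma} = \tau$ and $\lambda+\gamma$ dominant. Since translation functors are exact, it therefore suffices to count how many weights $\gamma$ of $E$ satisfy both conditions simultaneously, and to show this count is at most one in the first statement and exactly zero in the second.

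For the first statement, I would use the weight-diagram description from Section~6. Writing $\lambda+\rho$ in $\varepsilon,\delta$-coordinates, adding a weight $\gamma = \pm\varepsilon_i$ or $\pm\delta_j$ of $E$ corresponds to shifting one symbol of the weight diagram $f_\lambda$ by one position (and the dominance of $\lambda+\gamma$ forces this shift to land on an empty or cross position, never colliding with another $<,>$; the boundary cases at $0$ or $\tfrac12$ for $\goth{osp}$ must be checked but behave as in Lemma~\ref{tp2}). The resulting core must equal $\tau'$, the core of $\tau$. Since $\tau$ has atypicality degree $k-1$, exactly one cross of $f_\lambda$ must be "consumed" — i.e.\ one symbol must move onto and merge with (or unmerge from) a cross — and the target core $\tau'$ together with the requirement that the shift be by a single step pins down both the position of the moved symbol and the direction uniquely whenever a valid move exists. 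Thus at most one $\gamma$ works, so $(L_\lambda\otimes E)^\tau$ is simple or zero.

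For the second statement, the hypotheses are that $\beta = \varepsilon_i + \delta_i$ with $\lambda - \beta$ dominant and $a_i > \tfrac32$, and $\tau = \chi_{\lambda - \delta_i}$ or $\chi_{\lambda - \varepsilon_i}$. Here I would argue that \emph{no} weight $\gamma$ of $E$ makes $\lambda + \gamma$ simultaneously dominant and of central character $\tau$. The key point: $\tau$ is obtained from $\chi = \chi_\lambda$ by moving the cross at position $a_i = |b_i|$ of the weight diagram (in $\goth g = \goth{osp}(2k,2k)$ etc.\ all coordinates satisfy $|a_i| = |b_i|$, and the condition $a_i > \tfrac32$ plus $\lambda - \beta$ dominant means position $a_i$ carries a cross with positions $a_i \pm 1$ available) — so $\tau'$ differs from $\chi'$ by replacing a cross with a genuine $<$ or $>$ shifted to position $a_i - 1$. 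But any single weight of $E$ added to $\lambda$ moves only one symbol by one step; to change the atypicality degree \emph{and} match $\tau'$ one would need the moved symbol to be precisely this cross, landing at $a_i-1$, which forces $\gamma$ to decrease exactly one of $a_i$ or $b_i$ by $1$ — but then the \emph{other} of $a_i, b_i$ is unchanged and still equals $a_i > \tfrac32 > \tfrac12$, so the resulting weight is \emph{not} of central character $\tau$ (its diagram still has a cross at position $a_i$, hence atypicality degree $k$, not $k-1$, or if the degree does drop the core is wrong). Carefully enumerating the handful of cases for $\gamma \in \{\pm\varepsilon_j, \pm\delta_j\}$ and ruling each out via the dominance inequalities of Corollary~\ref{cordom} gives the vanishing. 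I expect the main obstacle to be the bookkeeping of the boundary cases at position $0$ (resp.\ $\tfrac12$), together with the $\goth{osp}(2k+2,2k)$ case where a $>$ sits at $0$, and the $\goth{osp}(2k+1,2k)$ sign-indicator subtleties — these require invoking the precise translation-functor rules of Section~6 rather than a uniform argument.
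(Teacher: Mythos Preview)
Your argument for the first statement is essentially the paper's: count the dominant weights $\lambda+\gamma$ (with $\gamma$ a weight of $E$) lying in the block $\tau$, observe via weight diagrams that there is at most one, and conclude. That part is fine.

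The second statement, however, cannot be proved by the combinatorial enumeration you describe. You claim that for every weight $\gamma$ of $E$ the weight $\lambda+\gamma$ fails to be dominant with central character $\tau$, but this is simply false: take $\gamma=-\delta_i$. Then $\lambda-\delta_i$ has the same $a$-coordinates as $\lambda$ (hence dominant on that side) and the same $b$-coordinates as $\lambda-\beta$ (hence dominant on that side, since $\lambda-\beta$ is assumed dominant); and $\chi_{\lambda-\delta_i}=\tau$ \emph{by definition}. In diagram language, subtracting $\delta_i$ moves the $<$ part of the cross at position $a_i$ to position $a_i-1$, leaving $>$ at $a_i$ and $<$ at $a_i-1$: the atypicality drops by one and the core is exactly $\tau'$. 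So the candidate list is not empty, and no amount of case-checking on $\{\pm\varepsilon_j,\pm\delta_j\}$ will rule it out.

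The vanishing $(L_\lambda\otimes E)^\tau=0$ is therefore a genuinely nontrivial representation-theoretic fact, not a combinatorial one. The paper establishes it by passing to Verma modules: one uses that $(M_\lambda\otimes E)^\tau=M_{\lambda-\delta_i}$ is a single Verma module, that $M_{\lambda-\delta_i}$ also appears in $(M_{\lambda-\beta}\otimes E)^\tau$, and crucially that there is a nonzero homomorphism $M_{\lambda-\beta}\to M_\lambda$ (this requires an external input, the annihilation/separation theorem of Gorelik). Chasing the resulting exact sequence forces $(F\otimes E)^\tau=0$ for the cokernel $F$, and since $L_\lambda$ is a quotient of $F$ one gets the conclusion. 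Your proposal is missing this entire mechanism; the obstacle is not boundary bookkeeping but the fact that the unique candidate weight \emph{does} survive the combinatorics, and a deeper argument is required to kill it.
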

\begin{proof} - There exists at most one weight
  $\gamma$ of $E$ such that $\lambda+\gamma$ is dominant and
  $\chi_{\lambda+\gamma}=\tau$. The best way to see it is via weight
  diagrams. Indeed, the weight diagram of $\lambda+\gamma$ is obtained
  from that of $\lambda$ by ``separating'' one $\times$ in two halves
  $>,<$ and moving one half one position to the left or to the
  right. It is clear that in this way one can get at most one dominant
  weight diagram   with given core. Hence the first statement.

To prove the second statement, assume the opposite, say, $(L_{\lambda}\otimes E)^{\chi}=L_{\lambda-\delta_i}.$
Let $M_{\pi}$ denote the Verma module with highest weight $\pi$. 
Then $(M_{\lambda}\otimes E)^{\tau}$ has a filtration by Verma modules
with highest weights $\lambda-\gamma$ for all weights $\gamma$ of $E$
such that $\chi_{\lambda-\gamma}=\tau$. By direct inspection, 
$\lambda-\delta_i$ is the only such 
weight. Therefore  $(M_{\lambda}\otimes E)^{\tau}=M_{\lambda-\delta_i}$. Similarly,  
$(M_{\lambda-\beta}\otimes E)^{\tau}$ has a filtration by Verma
modules, one of the terms of this filtration is
$M_{\lambda-\delta_i}$. Thus, $L_{\lambda-\delta_i}$ occurs in
$(M_{\lambda-\beta}\otimes E)^{\tau}$.

It is known (one can find a proof in \cite{Gor}) that 
$$\operatorname{Hom}_{\goth g}(M_{\lambda-\beta},M_{\lambda})\neq 0.$$  
Consider the exact sequence
$$0\rightarrow S \rightarrow M_{\lambda-\beta}\rightarrow M_{\lambda}\rightarrow F\rightarrow
0.$$
Apply the translation functor $T(E)^{\chi,\tau}$ to it
$$0\rightarrow (S\otimes E)^{\tau} \rightarrow (M_{\lambda-\beta}\otimes E)^{\tau}\rightarrow
M_{\lambda-\delta_i}\rightarrow (F\otimes E)^{\tau}\rightarrow 0.$$
 Since all weights of $S$ are strictly less than $\lambda -\beta$,
$\lambda-\delta_i$ is not a weight of $(S\otimes E)^{\chi} $, hence the latter
does not have a simple component $L_{\lambda-\delta_i}$. Therefore, 
by above, the multiplicity of $L_{\lambda-\delta_i}$ in 
$(F\otimes E)^{\tau}$ is zero. But 
$(F\otimes E)^{\tau}$ is a highest weight module with highest weight
$\lambda-\delta_i$. Therefore $(F\otimes E)^{\tau}=0$. Since
$L_{\lambda}$ is a quotient of $F$, we get $(L_{\lambda}\otimes E)^{\tau}=0$. 

Lemma is proven.
\end{proof}

The following lemma is very important in our calculations. We will use it 
in induction step to reduce the rank of $\goth g$.

\begin{leme}\label{lp0} - Let $\tau\neq \chi$ be a dominant central
  character with atypicality degree $k$ or
  $k-1$, $\kappa$ be a dominant weight  with
  central character $\tau$ (see section
  5 for definition). Assume that $(\kappa+\rho,\alpha)=0$. Let $T$ be the
 functor establishing an equivalence
  between $\mathcal F^{\tau}$ and the maximal atypical block of
  $\goth g_{\tau}$. Let $T(L_{\mu})=L_{\bar{\mu}}(\goth g_{\tau})$. Then
$$K^{\kappa,\mu}_{G,P^1}(z)=K^{\bar{\kappa},\bar{\mu}}_{G_{\tau},P^1_{\tau}}(z),$$
where $P^1_{\tau}\subset G_{\tau}$ is the analogue of $P^1$ for $G_{\tau}$.
\end{leme}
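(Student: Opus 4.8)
The plan is to combine Corollary~\ref{cor22} with an ``isolation'' argument for the translation functors, very much in the spirit of Lemma~\ref{reduction} and Corollary~\ref{cor21}. First I would reduce to the case where $\kappa$ is stable for the parabolic $\goth q$ associated with the Dynkin node giving $\alpha=\varepsilon_1+\delta_1$: by Lemma~\ref{tp2} we may connect $\tau$ to a central character for which $\kappa$ becomes $\goth p$-typical via a composition of translation functors $T(V)^{\bullet}$ with $V=E$ or $E^*$, and the point is to show that these translation functors commute (in the Grothendieck group sense) with $\Gamma_i(G/P^1,-)$ when applied to $L_\kappa$. The mechanism is exactly that of Lemma~\ref{reduction1}: the object $(V\otimes L_\kappa(\goth p^1))^{\Phi^{-1}(\tau')}$ has a filtration whose simple quotients $L_\nu(\goth p^1)$, apart from the one yielding $L_{\bar\kappa}$, are $\goth p^1$-singular for a root satisfying the hypotheses of Lemma~\ref{acycl}, hence have vanishing $\Gamma_i(G/P^1,-)$. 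Here the assumption $(\kappa+\rho,\alpha)=0$ is what guarantees that $\alpha$ (or its isotropic constituents) is orthogonal to the Levi of $\goth p^1$, so Lemma~\ref{acycl} applies; this is the structural heart of why the statement is true for $P^1$ specifically rather than for $\goth b$.

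Once the reduction to stable $\kappa$ is in place, I would run the Leray spectral sequence argument of Corollary~\ref{cor21} verbatim with $\goth p=\goth g_\tau+\goth b$ (or more precisely $\goth g_\tau+\goth p^1_\tau$) in place of $\goth g_\chi+\goth b$. Consider $\pi:G/P^1\to G/P$; its fibres are $P/P^1\simeq G_\tau/P^1_\tau$ (one has to check this identification of fibres, which follows from the compatibility of the flag~(\ref{flag}) with the subalgebra $\goth g_\tau$ — this is where knowing that $\goth s^i$ has the same type as $\goth g$ and that $\goth g_\tau$ sits at the tail end of the diagram is used). Stability of $\kappa$ forces all simple subquotients of $R^q\pi_*(\mathcal L_\kappa(\goth p^1)^*_{|\mathrm{fibre}})$ to be stable, so the higher cohomology on $G/P$ vanishes by the typical lemma (Lemma~\ref{typ}), the spectral sequence degenerates, and one reads off $K^{\kappa,\mu}_{G,P^1}(z)=K^{\bar\kappa,\bar\mu}_{G_\tau,P^1_\tau}(z)$ with the grading by $i$ preserved, using Lemma~\ref{tailbl} to match tails and Definition~\ref{Dpoin} to match coefficients.

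The main obstacle, I expect, is the bookkeeping around the two possible atypicality degrees $k$ and $k-1$ for $\tau$: when $\tau$ has degree $k$ the statement is essentially a relativized form of Corollary~\ref{cor22}, but when $\tau$ has degree $k-1$ one has $\goth g_\tau$ of strictly smaller rank and one must be careful that the parabolic $\goth p^1_\tau$ really is ``the analogue of $P^1$'', i.e. that the first-two-simple-roots construction is preserved under passing from $\goth g$ to $\goth g_\tau$. This is a combinatorial check on weight diagrams — deleting the $\times$'s and the $<,>$ symbols to the right of them does not disturb the leftmost two nodes because $(\kappa+\rho,\alpha)=0$ pins a $\times$ (or the relevant configuration) at the left end — but it needs to be done case by case for $\goth{osp}(2k,2k)$, $\goth{osp}(2k+2,2k)$ and $\goth{osp}(2k+1,2k)$, including the indicator subtleties in the odd case. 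A secondary technical point is verifying admissibility of the relevant parabolics for $\tau$ so that Lemma~\ref{typ} and Corollary~\ref{firstmark} are applicable throughout; by Remark~\ref{rem1} this is automatic except possibly for $\goth{osp}(2m+1,2n)$, where one invokes the extra positivity built into the construction of $\tau$ in Lemma~\ref{tr}.
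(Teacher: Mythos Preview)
Your overall plan (translation functors plus a Leray degeneration plus the typical lemma) is exactly the toolkit the paper uses, but the way you sequence the steps has a real gap.

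The problem is your step~1. You want to push the core symbols of $f_\kappa$ all the way to the right of the rightmost $\times$ while staying on $G/P^1$, and you justify the required commutation by Lemma~\ref{acycl}. But Lemma~\ref{acycl} needs a simple root $\alpha'$ of $\goth b_0$ (or a sum $\alpha_1+\alpha_2$ of two isotropic simple roots) that is \emph{orthogonal to all of} $\Delta(\goth l^1)=\Delta(\goth s^1)$. For $k\geq 2$ there is no such root: the only simple even roots touching $\varepsilon_1$ or $\delta_1$ are $\varepsilon_1-\varepsilon_2$ and $\delta_1-\delta_2$, and both meet $\goth s^1$. So your acyclicity claim for the extra subquotients of $(V\otimes L_\kappa(\goth p^1))^{\Phi^{-1}(\tau')}$ is unfounded. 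Concretely, the translation that swaps a core symbol with the rightmost $\times$ changes $a_1$ or $b_1$; the resulting $\goth p^1$-module is no longer simple and there is no Lemma~\ref{acycl}-type mechanism on $G/P^1$ to kill the debris. (Note also that after such a swap the new weight no longer satisfies $(\kappa'+\rho,\alpha)=0$, so your ``structural heart'' is lost at the first step.)

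A second, related issue is your Leray step. You take $\pi:G/P^1\to G/P$ with $\goth p=\goth g_\tau+\goth b$. When $\tau$ has atypicality $k-1$ one has $\goth g_\tau=\goth s^1$, hence $\goth p=\goth p^1$ and $\pi$ is the identity; when $\tau$ has atypicality $k$ (which can occur e.g.\ for $\goth{osp}(2k+2,2k)$) one has $\goth g_\tau=\goth g$ and $\pi$ collapses everything. Either way the map carries no information.

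The paper's argument avoids both problems by interleaving the two ingredients rather than separating them. First one moves the core symbols only \emph{next to} the rightmost $\times$ (still to its left); these translations act entirely inside $\goth s^1$, so they send a simple $\goth p^1$-module to a simple $\goth p^1$-module and commute with $\Gamma_i(G/P^1,-)$ with no acyclicity argument needed. At this point the restriction of $\kappa$ to $\goth g_\tau$ is $\goth p^1_\tau$-typical, so the Leray spectral sequence for $\pi:G/Q\to G/P^1$ with $\goth q=\goth p^1_\tau+\goth b\subset\goth p^1$ degenerates (note the direction: smaller parabolic, not larger), giving $\Gamma_i(G/P^1,L_\kappa(\goth p^1))=\Gamma_i(G/Q,L_\kappa(\goth q))$. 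Only \emph{then} does one move the core symbols past the rightmost $\times$; now the relevant acyclicity (\`a la Lemma~\ref{reduction}) holds for $G/Q$, because the Levi of $\goth q$ is small enough. Finally one finishes as in Corollary~\ref{cor21}. Your proposal has the right pieces but in the wrong order; the intermediate passage to the smaller parabolic $Q$ is the step you are missing.
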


\begin{proof} - The weight diagram $f_{\kappa}$ has $\times$ at the 
  rightmost non-empty position, and symbols $<$ and $>$ somewhere to the
  left of this $\times$. Using translation
  functors one can move those symbols next to the rightmost
  $\times$. (If one of those $<,>$ is at position $0$ we don't
  move it, 
  just move the second symbol.) Those translation functors commute with
  $\Gamma_i(G/P^1,\bullet)$ as their action on simple $\goth p^1$-modules
  is the same as on the corresponding $\goth g$-modules. Hence we can
  assume without loss of generality that $<$ and $>$ stand next to the
  rightmost $\times$ of $\kappa$. 
  Let $\goth q=\goth p^1_{\tau}+\goth b$. We claim that, in this
  case, 
  $\Gamma_i(G/P^1,L_{\kappa}(\goth
  p^1))=\Gamma_i(G/Q,L_{\kappa}(\goth q))$.
  Indeed, this follows
  from the fact that the restriction of $\kappa$ on $\goth g_{\tau}$ is
  typical with respect to 
  $\goth p^1_{\tau}=\goth q\cap\goth g_{\tau}$, and by the typical
  lemma, the Leray spectral sequence for the
  canonical projection $\pi:G/Q \longrightarrow G/P^1$ degenerates,
  ($R^q\pi^*\mathcal L_{\kappa}^*=0$ for all $q>0$).
Now consider the translation functor which moves $<$ and $>$ to the
right of the rightmost $\times$. This functor commutes with
$\Gamma_i(G/Q,\bullet)$,
since it moves a simple $\goth q$-module to
the one whose simple subquotients are all acyclic except one
(as in the proof of
  Lemma ~\ref{reduction}). To finish the proof
  use stability of the weight obtained from $\kappa$ by translations
and proceed as in the proof of Corollary ~\ref{cor21}.
\end{proof}

Let $\goth g=\goth{osp}(2k+1,2k)$. Let $\lambda'=\lambda$ if the
diagram of $\lambda$ does not have a sign. If the weight diagram of $\lambda$
has a sign let $\lambda'$ denote the weight whose diagram is obtained
from that of $\lambda$ by switching the sign. For example, if
$\lambda=0$, then $\lambda'=\varepsilon_1$.

Recall that $\Phi$ is defined just before Lemma ~\ref{l4}.

\begin{leme}\label{funsw} - Let $\goth g=\goth{osp}(2k+1,2k)$. Then
  $T(E)^{\chi,\chi}$ is an equivalence of categories, and
$T(E)^{\chi,\chi}(L_{\lambda})=L_{\lambda'}$.
\end{leme}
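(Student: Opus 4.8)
The assertion has two parts: that $T(E)^{\chi,\chi}$ is an equivalence, and that it sends $L_\lambda$ to $L_{\lambda'}$. The strategy is to reduce both to the combinatorics of weight diagrams via Lemma~\ref{tp2} and Lemma~\ref{tp1}, just as the equivalences in Section~6 were obtained. First I would observe that for $\goth g=\goth{osp}(2k+1,2k)$ the core of $\chi$ is a single-point object (atypicality degree $k$, so all marks $a_i,b_j$ are ``used up'' to form crosses), hence $\chi'=\tau'$ and the two central characters in Lemma~\ref{tp2}'s setup coincide. Thus I want to apply a version of Lemma~\ref{tp2} with $V=E$ and $\tau=\chi$; the only subtlety is that the weight $\delta=0$ of $E$ is now the relevant one (the functor does not change the core), and one must check that the hypotheses of Lemma~\ref{tp1} still hold. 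Concretely: for any dominant $\mu$ with $\chi_\mu=\chi$, I must exhibit a unique weight $\gamma$ of $E$ with $\mu+\gamma$ dominant and $\chi_{\mu+\gamma}=\chi$, and the corresponding uniqueness for $E^*$; the multiplicity-one condition is automatic since every nonzero weight of $E$ has multiplicity $1$ and the zero weight of $E=\goth{osp}(2k+1,2k)$'s standard module also has multiplicity $1$.

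**The diagram picture.** The heart of the argument is the following observation about weight diagrams. The weight diagram of $\lambda$ (for $\goth g=\goth{osp}(2k+1,2k)$, trivial central character) has all its crosses at position $\tfrac12$ and nothing else, possibly carrying an indicator sign. Tensoring with $E$ and projecting back to $\mathcal F^\chi$ corresponds, on diagrams, to ``splitting'' a cross at $\tfrac12$ into $>,<$, moving one half, and recombining — but since we must land in the same core, the only move available is to split a cross at $\tfrac12$ and immediately recombine it, which either does nothing or flips the indicator. I would verify that the unique dominant outcome is exactly the diagram of $\lambda'$: when $\lambda$ has no sign (i.e. no crosses at $\tfrac12$, or a $>$ sitting there), $\lambda'=\lambda$ and $T(E)^{\chi,\chi}$ is (isomorphic to) the identity functor up to this; when $\lambda$ carries a sign, the functor toggles it. The case $\lambda=0$, $\lambda'=\varepsilon_1$ is the smallest instance and I would use it as a sanity check: the diagram of $0$ has $k$ crosses at $\tfrac12$ with indicator $+$, and that of $\varepsilon_1$ has $k$ crosses at $\tfrac12$ with indicator $-$ (or vice versa depending on conventions), exactly a sign flip.

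**Finishing and the obstacle.** Once the diagram computation confirms that both $T(E)^{\chi,\chi}$ and $T(E^*)^{\chi,\chi}$ move every simple module to a simple module (namely $L_\mu\mapsto L_{\mu'}$, and $E^*\cong E$ for the orthosymplectic standard module, so the adjoint functor does the same thing), Lemma~\ref{tp} immediately gives that they establish an equivalence of $\mathcal F^\chi$ with itself. The identification $T(E)^{\chi,\chi}(L_\lambda)=L_{\lambda'}$ then follows from the same analysis of the singular vector: a $\goth b$-singular vector in $(L_\lambda\otimes E)^\chi$ has weight $\lambda+\gamma=\lambda'$, so the module is contragredient with a unique singular vector and is either $0$ or $L_{\lambda'}$, and nonvanishing follows either from the equivalence already established or from the argument in the proof of Lemma~\ref{tp1} using $\Gamma_0(G/B,C_\lambda)$. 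The main obstacle I anticipate is purely bookkeeping: carefully tracking the indicator/sign on the weight diagrams through the splitting-and-recombining move, and making sure the ``unique dominant weight'' claim genuinely holds (that no other weight $\gamma$ of $E$ produces a dominant $\mu+\gamma$ with the same core). This requires a short case analysis using the dominance description in Corollary~\ref{cordom}, especially distinguishing whether $(\mu+\rho,\delta_k)=\tfrac12$ or a cross sits at $\tfrac12$, but it is entirely elementary once set up.
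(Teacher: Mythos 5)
The proposal's plan to apply Lemma~\ref{tp2}/Lemma~\ref{tp1} with $V=E$ and $\tau=\chi$ breaks down precisely at the hypothesis you flag as the ``main obstacle'': the uniqueness condition of Lemma~\ref{tp1} genuinely \emph{fails} here, and this is not bookkeeping but the heart of the matter. When $\lambda\neq\lambda'$ (i.e.\ the weight diagram carries a sign, which is the only interesting case), there are \emph{two} weights $\gamma$ of $E$ with $\lambda+\gamma$ dominant and $\chi_{\lambda+\gamma}=\chi$: namely $\gamma=0$ (giving back $\lambda$, and $0$ is a multiplicity-one weight of the standard $\goth{osp}(2k+1,2k)$-module) and $\gamma=\lambda'-\lambda=\pm\varepsilon_i$ for the appropriate $i$ (giving $\lambda'$). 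So Lemma~\ref{tp1} cannot be invoked, and your later assertion that ``a $\goth b$-singular vector in $(L_\lambda\otimes E)^\chi$ has weight $\lambda+\gamma=\lambda'$'' is incorrect --- there can also be a singular vector of weight $\lambda$, so the contragredience argument only shows the module is a direct sum of copies of $L_\lambda$ and $L_{\lambda'}$, and you have no mechanism for excluding the $L_\lambda$ summand or for proving nonvanishing. (Your fallback ``nonvanishing follows \dots from the equivalence already established'' is circular, since the equivalence is what the lemma asserts.)

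The paper's proof takes a different route exactly because of this obstruction: it first verifies the claim by direct computation for $\lambda=0$ and $\lambda=\varepsilon_1$, then for general $\lambda$ chooses a maximal parabolic $\goth p$ whose semisimple part $\goth s$ sees $L_\lambda(\goth s)$ as trivial or standard, so the base case gives $(L_\lambda(\goth p)\otimes E)^{\Phi^{-1}(\chi)}=L_{\lambda'}(\goth p)$ and hence $(\Gamma_0(G/P,L_\lambda(\goth p))\otimes E)^\chi=\Gamma_0(G/P,L_{\lambda'}(\goth p))$; the crucial extra ingredient is that $\Gamma_0(G/P,L_{\lambda'}(\goth p))$ has no subquotient isomorphic to $L_\lambda$ (by Lemma~\ref{l2}, since the parabolically induced module from $L_{\lambda'}(\goth p)$ has no such subquotient), which is precisely what rules out the unwanted singular vector of weight $\lambda$. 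To repair your argument you would need an analogue of this exclusion step, since the weight-diagram/translation-functor machinery alone does not provide it. A smaller inaccuracy: the claim that the weight diagram of $\lambda$ ``has all its crosses at position $\tfrac12$ and nothing else'' is only true for $\lambda=0$; general $\lambda$ in the trivial block has $k$ crosses distributed over several positions.
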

\begin{proof} - The only dominant weights with central character $\chi$
  of the form $\lambda+\gamma$, $\gamma$ being a weight of $E$, are
  $\lambda$ and $\lambda'$. It suffices to show then that  
$T(E)^{\chi,\chi}(L_{\lambda})=L_{\lambda'}$ for $\lambda'\neq  \lambda$.

A direct calculation proves the statement for $E$ ($\lambda=\varepsilon_1$) and for the trivial
  module ($\lambda=0$). Choose a maximal parabolic subalgebra $\goth  p$ with
  semi-simple part $\goth s$ such
  that $L_{\lambda}(\goth s)$ is either standard or trivial.  
Then 
$$(L_{\lambda}(\goth p)\otimes
 E)^{\Phi^{-1}(\chi)}=L_{\lambda'}(\goth p),$$ 
and 
\begin{equation}\label{uf}
(\Gamma_0(G/P,L_{\lambda}(\goth p))\otimes E)^{\chi}=\Gamma_0(G/P,L_{\lambda'}(\goth p)).
\end{equation}
Note that $\Gamma_0(G/P,L_{\lambda'}(\goth p))$
does not contain any subquotient isomorphic to $L_{\lambda}$, because
the corresponding parabolically induced module does not have
$L_{\lambda}$ as a subquotient.
Therefore the exact sequence 
$\Gamma_0(G/P,L_{\lambda}(\goth p))\to L_{\lambda}\to 0$, after application of $T(E)^{\chi,\chi}$, becomes 
$\Gamma_0(G/P,L_{\lambda'}(\goth p))\to L_{\lambda'}\to 0$. That implies the statement.
\end{proof}

We call $T(E)^{\chi,\chi}$ the {\it switch} functor. Lemma
~\ref{indgen} and Lemma ~\ref{funsw} imply that for any $\lambda\neq 0,\varepsilon_1$
\begin{equation}\label{sw1}
K^{\lambda,\mu}_{G,P^1}(z)=K^{\lambda',\mu'}_{G,P^1}(z).
\end{equation}

\begin{prop}\label{pt1} - Let $a_1>\frac{3}{2}$ and $a_1 > |a_2|+1$ if $k>1$. Then 

i) $K_{G,P^1} ^{\lambda , \mu} (z) = (z^{-1}K_{G,P^1} ^{\lambda -
  \alpha , \mu}(z))_+$ for 
$\mu \notin \{\lambda , \lambda - \alpha\}$,

ii) $K_{G,P^1} ^{\lambda , \lambda} (z) = 1$

iii) $K_{G,P^1} ^{\lambda , \lambda - \alpha} (z) = 1$.

\end{prop}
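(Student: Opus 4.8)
The idea is to realize $L_\lambda(\goth p^1)$ as the top of a short exact sequence of $\goth p^1$-modules whose other terms have strictly smaller weights, apply the long exact sequence for $\Gamma_\bullet(G/P^1,-)$, and then feed in an inductive description of $\Gamma_\bullet(G/P^1, L_{\lambda-\alpha}(\goth p^1))$ together with vanishing/stability statements that force all the "error terms" away. Concretely, since $a_1>\tfrac32$ and (if $k>1$) $a_1>|a_2|+1$, the weight $\lambda-\alpha$ (with $\alpha=\varepsilon_1+\delta_1$) is again dominant with the same central character $\chi$, and its weight diagram differs from that of $\lambda$ by sliding the top $\times$ one step toward $0$. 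The natural tool is the tautological exact sequence of $P^1$-modules coming from the standard module: tensoring $L_{\lambda-\alpha}(\goth p^1)$ with $E$ (or rather taking the $\Phi^{-1}(\chi)$-component of $L_{\lambda-\alpha}(\goth p^1)\otimes E$ on the $\goth l^1$-level) produces a module with a two-step filtration whose subquotients are $L_\lambda(\goth p^1)$ and $L_{\lambda-\alpha}(\goth p^1)$ up to acyclic pieces; Lemma~\ref{lp00} (the "$(L_\lambda\otimes E)^\tau$ is simple or zero" statement, applied on $\goth l^1$) is exactly what controls which subquotients actually survive and with what multiplicity. This is why the hypotheses $a_1>\tfrac32$ and $a_1>|a_2|+1$ appear: they are precisely the conditions under which Lemma~\ref{lp00} applies to separate $\lambda$ from $\lambda-\alpha$ cleanly.

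The concrete steps I would carry out are: (1) choose the maximal parabolic $\goth p$ with $\goth p^1\subset\goth p$ isolating the first coordinate, write down the $\goth p^1$-module $(L_{\lambda-\alpha}(\goth p^1)\otimes E)^{\Phi^{-1}(\chi)}$, and identify its composition series using weight-diagram combinatorics together with Lemma~\ref{lp00}; the point is that it sits in a short exact sequence
\[
0\to L_{\lambda-\alpha}(\goth p^1)\to (L_{\lambda-\alpha}(\goth p^1)\otimes E)^{\Phi^{-1}(\chi)}\to L_\lambda(\goth p^1)\to 0
\]
or its reverse, possibly with extra acyclic-on-fibres terms; (2) apply Lemma~\ref{indgen}(1) to get the long exact sequence in $\Gamma_\bullet(G/P^1,-)$, and use Corollary~\ref{coroblocks}/Lemma~\ref{indgen}(2) to recognize $\Gamma_i(G/P^1,(L_{\lambda-\alpha}(\goth p^1)\otimes E)^{\Phi^{-1}(\chi)})$ as $(\Gamma_i(G/P^1,L_{\lambda-\alpha}(\goth p^1))\otimes E)^\chi$, computing its $L_\mu$-multiplicities via weight diagrams; (3) read off the connecting maps. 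The crucial degree bookkeeping is that tensoring with $E$ and projecting shifts cohomological degree by at most $1$ in the relevant direction, which is the source of the $z^{-1}(\cdot)_+$ normalization: a class in degree $i$ for $L_{\lambda-\alpha}$ contributes in degree $i-1$ for $L_\lambda$ unless it would land in negative degree, in which case it is killed — hence the $(\,\cdot\,)_+$.

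For parts (ii) and (iii): the weight $\lambda$ is $\goth p^1$-typical under these hypotheses, so by the Typical Lemma (Lemma~\ref{typ}) — applied to the parabolic $\goth p^1$ (admissibility holds by Remark~\ref{rem1} in the $\goth{osp}(2m,2n)$ cases, and is checked directly otherwise) — we would like $\Gamma_i(G/P^1,L_\lambda(\goth p^1))$ to contribute $L_\lambda$ exactly once in degree $0$ and nothing else in that "top" direction; one still has to check $L_\lambda$ does not reappear in higher $\Gamma_i$, which follows from Corollary~\ref{firstmark} together with the fact that the weight $\lambda-\alpha$ is strictly lower. The multiplicity $K^{\lambda,\lambda-\alpha}_{G,P^1}(z)=1$ comes from the short exact sequence: $L_{\lambda-\alpha}$ appears in $\Gamma_0(G/P^1,L_{\lambda-\alpha}(\goth p^1))$ with multiplicity one (again Lemma~\ref{l2} plus the typicality of $\lambda-\alpha$), and chasing it through the long exact sequence shows it contributes to $\Gamma_1(G/P^1,L_\lambda(\goth p^1))$... — wait, more carefully one uses that the connecting map $\Gamma_0(G/P^1,L_\lambda(\goth p^1))\to\Gamma_1(G/P^1,L_{\lambda-\alpha}(\goth p^1))$ or the analogous one identifies these multiplicities, giving the bare $1$ with no higher powers of $z$.

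**Main obstacle.** The technical heart is Step (1)–(2): correctly identifying the $\goth p^1$-module $(L_{\lambda-\alpha}(\goth p^1)\otimes E)^{\Phi^{-1}(\chi)}$ and, more delicately, determining the $L_\mu$-multiplicities in $(\Gamma_i(G/P^1,L_{\lambda-\alpha}(\goth p^1))\otimes E)^\chi$ versus in $\Gamma_i(G/P^1,L_\lambda(\goth p^1))$ and $\Gamma_i(G/P^1,L_{\lambda-\alpha}(\goth p^1))$. The subtlety is that tensoring with $E$ mixes the two cohomologies and one must argue that the connecting homomorphisms in the long exact sequence are as surjective/injective as possible — equivalently, that there is no unexpected cancellation. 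I expect this to require the stability argument (as in Corollary~\ref{cor21} and Lemma~\ref{lp0}): after moving the symbols $<,>$ next to the top $\times$ by translation functors, which commute with $\Gamma_i(G/P^1,-)$, the remaining weights become $\goth p^1$-typical "away from the interesting coordinate," so the Typical Lemma forces the spectral sequence / long exact sequence to degenerate in the needed range. Handling the boundary case where a symbol sits at position $0$ (or $\tfrac12$, with its sign indicator, in the odd case) separately, as the lemmas in Section 7 already flag, is the fiddly part that must not be glossed over.
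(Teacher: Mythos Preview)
Your short exact sequence is wrong, and this is the crux of the argument. You propose to build the sequence from $(L_{\lambda-\alpha}(\goth p^1)\otimes E)^{\Phi^{-1}(\chi)}$, but $\lambda$ and $\lambda-\alpha$ differ by $\alpha=\varepsilon_1+\delta_1$, which is \emph{not} a weight of $E$; the weights of $E$ are $\pm\varepsilon_i,\pm\delta_j$ (and $0$ in the odd case). Since both $\lambda$ and $\lambda-\alpha$ lie in the most atypical block (all $|a_i|=|b_i|$), adding a single $\pm\varepsilon_i$ or $\pm\delta_j$ to $\lambda-\alpha$ never lands back in $\Phi^{-1}(\chi)$. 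So your middle term does not have $L_\lambda(\goth p^1)$ as a subquotient, and in the even cases it is in fact zero.

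The paper instead takes $\nu=\lambda-\varepsilon_1$ (or $\lambda-\delta_1$, depending on the type). This $\nu$ lies \emph{outside} the block $\chi$: its first coordinate pair is no longer atypical. Then $\nu+\varepsilon_1=\lambda$ and $\nu-\delta_1=\lambda-\alpha$ are the only dominant weights of the form $\nu+\gamma$ (with $\gamma$ a weight of $E$) landing in $\chi$, giving the sequence
\[
0\to L_\lambda(\goth p^1)\to (L_\nu(\goth p^1)\otimes E)^{\Phi^{-1}(\chi)}\to L_{\lambda-\alpha}(\goth p^1)\to 0.
\]
The payoff of this choice is that $\nu$ is $\goth p^1$-\emph{typical} (the hypotheses $a_1>\tfrac32$, $a_1>|a_2|+1$ are exactly what is needed here), so $\Gamma_i(G/P^1,L_\nu(\goth p^1))$ is $L_\nu$ in degree $0$ and zero otherwise. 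The long exact sequence then collapses to $\Gamma_{i-1}(G/P^1,L_\lambda(\goth p^1))\simeq\Gamma_i(G/P^1,L_{\lambda-\alpha}(\goth p^1))$ for $i\geq 2$ (this is the source of $z^{-1}(\cdot)_+$), plus a four-term sequence in low degrees involving $(L_\nu\otimes E)^\chi$.

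Relatedly, your claim that $\lambda$ itself is $\goth p^1$-typical is false: $\varepsilon_1+\delta_1\in A(\lambda)$ but $\varepsilon_1+\delta_1\notin\Delta(\goth l^1)$, so the Typical Lemma does not apply to $L_\lambda(\goth p^1)$. This is precisely why there \emph{is} higher cohomology to compute.

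Finally, even after one has the correct four-term sequence, (ii) and (iii) are not immediate. One must show that the image $V_\lambda$ of $\Gamma_0(G/P^1,L_\lambda(\goth p^1))$ in $(L_\nu\otimes E)^\chi$ has length exactly two, with composition factors $L_{\lambda-\alpha}$ and $L_\lambda$. The paper does this via a contragredience argument on $(L_\nu\otimes E)^\chi$ together with a separate lemma (proved by induction on $\dim\goth g$, reducing rank via Lemma~\ref{lp0}) that no $L_\mu$ with $\mu\neq\lambda-\alpha$ can occur in both $\Gamma_0(G/P^1,L_\lambda(\goth p^1))$ and $\Gamma_0(G/P^1,L_{\lambda-\alpha}(\goth p^1))$. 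Your outline does not address this step.
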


To prove the proposition we start with the following

\begin{leme}\label{lp1} - Let $\lambda$ be as in the proposition, $\nu= \lambda-\varepsilon_1$.
One has the
  following short exact sequence of $\goth p^1$-modules:
$$0\rightarrow L_{\lambda}(\goth p ^1) \rightarrow (L_{\nu}(\goth p ^1) \otimes E)^{\Phi^{-1}(\chi)} \rightarrow L_{\lambda - \alpha}(\goth p ^1) \rightarrow 0.$$
\end{leme}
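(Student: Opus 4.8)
The claim is a statement about $\goth p^1$-modules, so I would work entirely inside the category of finite-dimensional $\goth l^1$-modules (equivalently, $\goth p^1$-modules on which the nilradical $\goth m^1$ acts trivially), where $\goth l^1 = \goth h + \goth s^1$ and $\goth s^1$ is again orthosymplectic of the same type but lower rank. The weight $\lambda$ here has $(\lambda+\rho,\varepsilon_1)=a_1$ maximal among the $\varepsilon$-coordinates, with $a_1 > \tfrac32$ and $a_1 > |a_2|+1$; in particular, in the weight diagram of $\lambda$ the symbol $\times$ sitting at position $a_1$ is strictly to the right of everything else (and $a_1 \geq \tfrac52$, so that position is not $\tfrac12$ or $0$). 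The first step is to identify $\nu = \lambda - \varepsilon_1$: its diagram is obtained by splitting that rightmost $\times$ into a $>$ (staying at $a_1$) and a $<$ moved to position $a_1 - 1$, which (by the gap hypothesis $a_1 > |a_2|+1$, hence $a_1 - 1$ is strictly bigger than all other coordinates and is $\geq \tfrac32$) is a legitimate dominant weight diagram of atypicality degree $k-1$. So $\nu$ is $\goth l^1$-dominant.

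**Key steps.** Second, I compute $(L_\nu(\goth p^1)\otimes E)^{\Phi^{-1}(\chi)}$ as an $\goth l^1$-module. The standard module $E$ restricted to $\goth l^1$ has a composition series whose simple subquotients are (up to the center) the standard module of $\goth s^1$, its dual, and trivial modules; tensoring $L_\nu(\goth p^1)$ with these and projecting onto the generalized central character block $\Phi^{-1}(\chi)$ of $\goth l^1$ should leave, by the same weight-diagram bookkeeping as in Lemma~\ref{tp2} and Lemma~\ref{lp00}, exactly the weights $\lambda$ and $\lambda - \alpha = \lambda - \varepsilon_1 - \delta_1$: starting from $\nu$, one recombines the $>$ at $a_1$ with the free $<$ either by bringing the $<$ back from $a_1-1$ to $a_1$ (giving $\lambda$) or by moving that $<$ one further step, i.e. the $\delta$-direction, producing $\lambda-\alpha$. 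I would verify, using the atypicality count, that no other dominant weight with central character $\chi$ appears, and that both $\lambda$ and $\lambda-\alpha$ appear with multiplicity one and are the only two (generalized) highest weights, so the module $(L_\nu(\goth p^1)\otimes E)^{\Phi^{-1}(\chi)}$ has a two-step filtration. Third, I must fix the order of the filtration: since $\lambda > \lambda - \alpha$ (as $\alpha = \varepsilon_1+\delta_1 \in \Delta^+$), and a tensor product with $E$ of a highest-weight module has its extremal highest weight occurring as a submodule at the bottom is the wrong way — I want $L_\lambda(\goth p^1)$ as a \emph{submodule} because $\lambda$ is the highest weight, hence generated by a singular vector, while $\lambda - \alpha$ sits on top as the quotient. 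Concretely, the singular vector of weight $\lambda$ inside $L_\nu(\goth p^1)\otimes E$ generates a submodule which, having $\goth s^1$-highest weight $\lambda$ and the right central character, must be $L_\lambda(\goth p^1)$; the quotient is then a highest-weight $\goth l^1$-module of highest weight $\lambda - \alpha$, and another singular-vector / central-character argument (or a dimension count via $\goth g_0$-characters using Penkov's remark) identifies it as $L_{\lambda-\alpha}(\goth p^1)$ exactly.

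**Main obstacle.** The delicate point is the last one: showing the quotient is the \emph{simple} module $L_{\lambda-\alpha}(\goth p^1)$ and not a larger highest-weight module, and dually that $L_\lambda(\goth p^1)$ really is a submodule (not merely a subquotient) so that the sequence is short exact rather than just giving the two factors in the Grothendieck group. I expect to handle this by combining the central-character constraint (Lemma~\ref{l4}/Corollary~\ref{coroblocks}, which forces all subquotients into the block of $\chi$, and within that block only $\lambda$ and $\lambda-\alpha$ are available as dominant $\goth s^1$-weights) with a rank/character count: the $\goth s^1$-character of $L_\nu(\goth p^1)\otimes E$ projected to the block equals $\operatorname{Ch} L_\lambda(\goth s^1) + \operatorname{Ch} L_{\lambda-\alpha}(\goth s^1)$ on the nose, which leaves no room for anything bigger. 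Once Lemma~\ref{lp1} is established, Proposition~\ref{pt1} follows by applying $\Gamma_\bullet(G/P^1,-)$ to the short exact sequence: part (2) of Lemma~\ref{indgen} turns the middle term into $\Gamma_i(G/P^1, L_\nu(\goth p^1))\otimes E$ projected appropriately, the long exact sequence of part (1) together with a degree-shift argument on the connecting maps yields the recursion $K^{\lambda,\mu}_{G,P^1}(z) = (z^{-1}K^{\lambda-\alpha,\mu}_{G,P^1}(z))_+$ for $\mu \notin\{\lambda,\lambda-\alpha\}$, and the boundary cases $\mu = \lambda$ and $\mu = \lambda-\alpha$ give the constant $1$ by tracking the two extremal factors through the exact sequence (using that $\Gamma_0$ is left-exact and that $L_\lambda$, $L_{\lambda-\alpha}$ each appear exactly once).
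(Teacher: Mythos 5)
Your overall strategy mirrors the paper's: break $E$ into $\goth l^1$-pieces, tensor $L_\nu(\goth p^1)$ against each, and see which pieces survive the projection to the block $\Phi^{-1}(\chi)$. Your identification of the two surviving composition factors $L_\lambda$ and $L_{\lambda-\alpha}$, each with multiplicity one, is correct. The genuine gap is in the argument for the \emph{order} of the extension, i.e.\ why $L_\lambda(\goth p^1)$ is the submodule and $L_{\lambda-\alpha}(\goth p^1)$ the quotient rather than the other way round.

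Your reasoning here is backwards and in one case false. A $\goth b\cap\goth l^1$-singular vector of weight $\lambda$ generates a highest-weight $\goth l^1$-module, of which $L_\lambda$ is the unique irreducible \emph{quotient}, not a priori a sub; so ``generated by a singular vector'' tends to put $L_\lambda$ on top, the opposite of what you want, and the composition-factor/character count you invoke as a fallback only fixes the multiset $\{L_\lambda,L_{\lambda-\alpha}\}$, not the order. Moreover, your implicit candidate vector $v_\nu\otimes e_{\varepsilon_1}$ is not even $\goth m^1$-invariant when $\goth g=\goth{osp}(2k,2k)$: there the $\goth b$-highest weight of $E$ is $\delta_1$, not $\varepsilon_1$, and the root vector for $\delta_1-\varepsilon_1\in\goth m^1$ moves $e_{\varepsilon_1}$ up to $e_{\delta_1}$. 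What the paper uses (tacitly, via the explicit list $L_{\pm\varepsilon_1},L_{\varepsilon_2}$ or $L_{\delta_2},L_{\pm\delta_1}$) is the filtration of $E$ as a $\goth p^1$-module — not merely its restriction to $\goth l^1$, which is semisimple and carries no order. That filtration has the one-dimensional $\goth b$-highest weight line at the bottom and the lowest at the top; tensoring with $L_\nu(\goth p^1)$ and projecting to the block kills three of the five layers, and the induced filtration on what remains puts $L_\lambda$ strictly below $L_{\lambda-\alpha}$. This $P^1$-module-theoretic order is the missing ingredient in your write-up.
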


\begin{proof} - Consider the case $\goth g=\goth{osp}(2k+2,2k)$ or
  $\goth{osp}(2k+1,2k)$.
Note that $\goth p^1$-irreducible subquotients
  of $E$ are $L_{\pm\varepsilon_1}(\goth p^1),L_{\varepsilon_2}(\goth  p^1)$ 
and $L_{\pm\delta_1}(\goth p^1)$. 
All of them except $L_{\varepsilon_2}(\goth  p^1)$ are one-dimensional.
It is easy to check that the only 
$\goth  p^1$-dominant weights of the form $\nu+\gamma$ ($\gamma$
  being a weight of $E$) with central character $\chi$ are
  $\lambda$ and $\lambda-\alpha$. Hence we have
  $$(L_{\nu}(\goth p ^1)\otimes L_{-\varepsilon_1}(\goth
  p^1))^{\Phi^{-1}(\chi)}=(L_{\nu}(\goth p ^1)\otimes
  L_{\varepsilon_2}(\goth  p^1))^{\Phi^{-1}(\chi)}=$$
  $$=(L_{\nu}(\goth p ^1)\otimes L_{\delta_1}(\goth  p^1))^{\Phi^{-1}(\chi)}=0.$$
On the other hand, 
$$L_{\nu}(\goth p ^1)\otimes L_{-\delta_1}(\goth
p^1)=L_{\lambda-\alpha}(\goth p^1),$$
$$L_{\nu}(\goth p ^1)\otimes L_{\varepsilon_1}(\goth
p^1)=L_{\lambda}(\goth p^1).$$
The exact sequence follows immediately.

The case $\goth g=\goth{osp}(2k,2k)$  can be
done in the same way with the 
substitution of $\delta_2$ in place of $\varepsilon_2$.
\end{proof}

The short exact sequence of Lemma ~\ref{lp1} leads to the following long exact sequence:

 $$\dots \rightarrow \Gamma _1 (G/P^1,  L _{\lambda - \alpha}(\goth p
 ^1)) \rightarrow  \Gamma _0 (G/P^1,  L _{\lambda}(\goth p ^1))
 \rightarrow  \Gamma _0 (G/P^1, ( L _{\nu}(\goth p ^1) \otimes  
E)^{\Phi^{-1}(\chi)})\rightarrow $$
$$\rightarrow\Gamma _0 (G/P^1,  L _{\lambda - \alpha}(\goth p ^1)) \rightarrow 0.$$
By Corollary ~\ref{coroblocks} we have
$$\Gamma _i (G/P^1, ( L _{\nu}(\goth p ^1) \otimes E)^{\Phi^{-1}(\chi)}) = 
(\Gamma _i (G/P^1, L _{\nu}(\goth p ^1)) \otimes E)^{\chi}.$$

Now note that $\nu$ is $\goth p^1$-typical. Therefore 
$\Gamma _i(G/P^1, (L _{\nu}(\goth p ^1))=0$ for all $i>0$, and 
$\Gamma _0 (G/P^1, (L _{\nu}(\goth p ^1))=L_{\nu}$. Thus, the exact
sequence degenerates into the following
\begin{equation}\label{r1}
\Gamma _i (G/P^1,  L_{\lambda - \alpha}(\goth p ^1)) \simeq 
\Gamma _{i-1} (G/P^1,  L_{\lambda}(\goth p ^1)) \; for \; i\geq 2,
\end{equation}
and
\begin{equation}\label{r2}
0\rightarrow \Gamma _1 (G/P^1,  L _{\lambda - \alpha}(\goth p ^1)) \rightarrow  \Gamma _0 (G/P^1,  L _{\lambda}(\goth p ^1)) \rightarrow ( L _{\nu}\otimes  E)^{\chi}\rightarrow \Gamma _0 (G/P^1,  L _{\lambda - \alpha}(\goth p ^1)) \rightarrow 0.
\end{equation}
Now we concentrate on the last exact sequence.

Denote by $V_{\lambda}$
the image of 
$\Gamma _0 (G/P^1,  L _{\lambda}(\goth p ^1))$ in $( L
_{\nu}\otimes  E)^{\chi}$.
Since $V_{\lambda}$ is isomorphic to a quotient of $\Gamma _0 (G/P^1,
L _{\lambda}(\goth p ^1))$, Lemma ~\ref{l2} implies that $V_{\lambda}$
is generated by a highest vector of weight $\lambda$. Lemma
~\ref{lp4} below explains the structure of $V_{\lambda}$.
\begin{leme}\label{lp2} - The module $( L_{\nu}\otimes  E)^{\chi}$ is a
    contragredient $\goth g$-module with a unique irreducible submodule
    and a unique irreducible quotient isomorphic to $L_{\lambda-\alpha}$.
\end{leme}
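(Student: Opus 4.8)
The plan is to extract the structure of $(L_\nu\otimes E)^\chi$ from the $\goth p^1$-level data already in hand, namely Lemma~\ref{lp1}, and the fact that $\nu$ is $\goth p^1$-typical. Since $\nu$ is $\goth p^1$-typical, $\Gamma_0(G/P^1, L_\nu(\goth p^1)) = L_\nu$ and all higher cohomology vanishes (Typical Lemma~\ref{typ}); hence by Corollary~\ref{coroblocks} one has $(L_\nu\otimes E)^\chi = \Gamma_0(G/P^1, (L_\nu(\goth p^1)\otimes E)^{\Phi^{-1}(\chi)})$, and by Lemma~\ref{indgen}(1) applied to the short exact sequence of Lemma~\ref{lp1} we get the four-term exact sequence relating $\Gamma_\bullet$ of $L_\lambda(\goth p^1)$, of $(L_\nu(\goth p^1)\otimes E)^{\Phi^{-1}(\chi)}$, and of $L_{\lambda-\alpha}(\goth p^1)$ --- which is exactly (\ref{r2}) once the higher terms are killed. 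So the module in question is an extension-type object sitting between $\Gamma_0(G/P^1, L_\lambda(\goth p^1))$ and $\Gamma_0(G/P^1, L_{\lambda-\alpha}(\goth p^1))$, both of which are highest weight modules by Lemma~\ref{l2}.

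First I would establish that $(L_\nu\otimes E)^\chi$ is contragredient: $E$ is self-dual (up to the parity twist we are ignoring) for the orthosymplectic algebras, $L_\nu$ is contragredient as a finite-dimensional simple module, and the block projection $(\,\cdot\,)^\chi$ commutes with taking the contragredient dual since $\chi$ is preserved; hence $(L_\nu\otimes E)^\chi$ is isomorphic to its own contragredient. A contragredient finite-dimensional module has a unique irreducible submodule if and only if it has a unique irreducible quotient, and these are contragredient to each other; since we will show the unique quotient is $L_{\lambda-\alpha}$ and $L_{\lambda-\alpha}$ is self-contragredient, the submodule is $L_{\lambda-\alpha}$ as well. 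So the whole statement reduces to: \emph{$(L_\nu\otimes E)^\chi$ has a unique irreducible quotient, and it is $L_{\lambda-\alpha}$.}

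To see the quotient is $L_{\lambda-\alpha}$: the rightmost nonzero term of (\ref{r2}) is $\Gamma_0(G/P^1, L_{\lambda-\alpha}(\goth p^1))$, which by Lemma~\ref{l2} is the maximal finite-dimensional quotient of $\mathcal U(\goth g)\otimes_{\mathcal U(\goth p^1)} L_{\lambda-\alpha}(\goth p^1)$, hence a highest weight module of highest weight $\lambda-\alpha$ with unique simple quotient $L_{\lambda-\alpha}$. Thus $(L_\nu\otimes E)^\chi$ surjects onto a highest weight module of highest weight $\lambda-\alpha$. For uniqueness of the irreducible quotient I would argue at the level of highest weight vectors: any simple quotient of $(L_\nu\otimes E)^\chi$ is $L_\sigma$ for a $\goth b$-singular weight $\sigma$ of $L_\nu\otimes E$ with $\chi_\sigma=\chi$; the $\goth b$-singular weights of $L_\nu\otimes E$ lie among $\nu+\gamma$ for $\gamma$ a weight of $E$ (this is the standard $\goth b$-singular-vector-in-a-tensor-product argument, cf. the proof of Lemma~\ref{tp1}), and a weight-diagram inspection --- exactly the kind carried out in Lemma~\ref{lp1} and in the proof of Lemma~\ref{lp00}, using $a_1>\tfrac32$ and $a_1>|a_2|+1$ --- shows that among those only $\lambda$ and $\lambda-\alpha$ are dominant, with $\lambda$ \emph{not} occurring as the highest weight of a quotient because the image of $\Gamma_0(G/P^1,L_\lambda(\goth p^1))$, i.e.\ $V_\lambda$, is a proper submodule (the sequence (\ref{r2}) is not split on the left once $\Gamma_1(G/P^1,L_{\lambda-\alpha}(\goth p^1))\neq 0$; and even when it vanishes, the quotient by $V_\lambda$ has highest weight $\lambda-\alpha$). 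Hence $\lambda-\alpha$ is the unique dominant weight giving a quotient, so the unique irreducible quotient is $L_{\lambda-\alpha}$.

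\textbf{Main obstacle.} The delicate point is ruling out $L_\lambda$ as a quotient (equivalently, as a submodule) of $(L_\nu\otimes E)^\chi$ --- a priori $\lambda$ is a dominant weight occurring in the tensor product, and one must show the corresponding weight space carries no $\goth g$-singular vector generating a \emph{direct summand} or a quotient. I expect this to come out of (\ref{r2}) together with Lemma~\ref{l2}: the submodule generated by the weight-$\lambda$ vector is $V_\lambda$, a quotient of $\Gamma_0(G/P^1,L_\lambda(\goth p^1))$ hence a highest weight module of highest weight $\lambda$, and it is proper because its cokernel is the highest weight module $\Gamma_0(G/P^1,L_{\lambda-\alpha}(\goth p^1))$, which is nonzero (it has $L_{\lambda-\alpha}$ as a quotient). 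One then checks that no complement to $V_\lambda$ exists by a contragredience/socle argument, or equivalently notes directly that the unique maximal submodule of a contragredient module is the radical, whose quotient we have just identified. The remaining verifications --- self-duality of $E$ in the orthosymplectic setting modulo the ignored parity component, and the weight-diagram bookkeeping under the hypotheses $a_1>\tfrac32$, $a_1>|a_2|+1$ --- are routine and parallel to computations already performed in Lemmas~\ref{lp00}--\ref{lp1}.
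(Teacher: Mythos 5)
Your framework is correct: contragredience of $(L_\nu\otimes E)^\chi$, the fact that only $\lambda$ and $\lambda-\alpha$ can be highest weights of simple subquotients, and the reduction to ruling out $L_\lambda$ as a simple quotient/submodule. You have also correctly identified this last point as the crux. But the argument you give for it does not work, and the genuinely nontrivial ingredient from the paper is missing.

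The gap: you argue that $L_\lambda$ cannot be a quotient because $V_\lambda$ (the submodule generated by the unique weight-$\lambda$ vector) is proper, and you gesture at a ``contragredience/socle argument'' and at ``the unique maximal submodule of a contragredient module is the radical.'' Neither of these closes the issue. Consider the a priori possible scenario $(L_\nu\otimes E)^\chi\cong L_\lambda\oplus L_{\lambda-\alpha}$. This module is contragredient, $V_\lambda=L_\lambda$ is a proper submodule, the quotient by $V_\lambda$ is the highest weight module $L_{\lambda-\alpha}$ --- and yet $L_\lambda$ \emph{is} a simple quotient. So properness of $V_\lambda$ is simply not an obstruction, and a general contragredient module need not have a unique maximal submodule (that uniqueness is exactly the conclusion to be proved, so invoking it is circular).

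The paper proceeds differently at this point. One first observes that if $L_\lambda$ sat in the socle, the $1$-dimensionality of the weight-$\lambda$ space together with contragredience would force $L_\lambda$ to split off as a direct summand, giving exactly $(L_\nu\otimes E)^\chi\cong L_\lambda\oplus L_{\lambda-\alpha}$. That possibility is then killed by a new ingredient: the adjunction
$\operatorname{Hom}_{\goth g}(L_\lambda, L_\nu\otimes E)\cong\operatorname{Hom}_{\goth g}(L_\lambda\otimes E, L_\nu)$,
which would force $(L_\lambda\otimes E)^{\chi_\nu}\neq 0$; but the second statement of Lemma~\ref{lp00} (applicable here since $\nu=\lambda-\varepsilon_1$, $\lambda-\alpha$ is dominant, and $a_1>\tfrac32$) gives $(L_\lambda\otimes E)^{\chi_\nu}=0$. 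That second statement of Lemma~\ref{lp00} is \emph{not} routine weight-diagram bookkeeping --- its proof goes through a Verma-module filtration and a nonzero map $M_{\lambda-\beta}\to M_\lambda$ --- and it is precisely the input your proposal lacks. Without it, or some equivalent device, you cannot exclude the split case.
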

\begin{proof} - The module $( L_{\nu}\otimes  E)^{\chi}$ is  contragredient because
the tensor product of contragredient modules is contragredient and all the
simple modules are contragredient. The only dominant weights of the
form $\nu+\gamma$ ($\gamma$ being a weight of $E$) with central
character $\chi$ are $\lambda$ and $\lambda-\alpha$. Hence they are the
only possible weights of $\goth b$-singular vectors in $( L_{\nu}\otimes  E)^{\chi}$.
The exact sequence ~\ref{r2} implies that $( L_{\nu}\otimes E)^{\chi}$ 
has simple subquotients isomorphic to $L_{\lambda}$ and $L_{\lambda-\alpha}$.

Assume that the socle of $( L_{\nu}\otimes  E)^{\chi}$ contains a
$\goth b$-singular vector of weight $\lambda$. This gives an inclusion
$L_\lambda \subset ( L_{\nu}\otimes  E)^{\chi}$ and, by duality, a
surjection
$( L_{\nu}\otimes  E)^{\chi} \rightarrow L_\lambda$. Since there is
only one vector of weight $\lambda$, the composition of those two maps
must be the identity, thus the exact sequence \ref{r2} splits. Hence
$\Gamma _0 (G/P^1,  L _{\lambda}(\goth p ^1))$ is contragredient too,
and since it's generated by a $\goth b$-singular vector of weight
$\lambda - \alpha$, it must be equal to $L_{\lambda - \alpha}$. Thus,
if $( L_{\nu}\otimes  E)^{\chi}$ contains $L_\lambda$, it is equal to
$L_\lambda \oplus L_{\lambda - \alpha}$.

Let us show that this is impossible. 
Assume the opposite. Then
$$\mathbb C=\operatorname{Hom}_{\goth g}(L_{\lambda},L_{\nu}\otimes
E)=\operatorname{Hom}_{\goth g}(L_{\lambda}\otimes E,L_{\nu}),$$
hence $(L_{\lambda}\otimes E)^{\chi_{\nu}}=L_{\nu}$. But Lemma ~\ref{lp00}
(with $\tau=\chi_{\nu}$) implies   
$(L_{\lambda}\otimes E)^{\chi_{\nu}}=0$. 
Contradiction.

So now, we are sure that the socle of $( L_{\nu}\otimes  E)^{\chi}$ is
isomorphic to $L_{\lambda - \alpha}$; by duality, the same
holds for the head, which is a quotient of $\Gamma _0 (G/P^1,  L
_{\lambda}(\goth p ^1))$.
\end{proof}   

\begin{leme}\label{lp3} - The only weight $\mu$ such that $L_{\mu}$ may occur
  as a simple subquotient in both $\Gamma _0 (G/P^1,  L _{\lambda}(\goth  p^1))$
and $\Gamma _0 (G/P^1,  L _{\lambda-\alpha}(\goth p^1))$ is $\lambda - \alpha$.
\end{leme}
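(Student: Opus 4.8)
The plan is to use the central character constraint from Lemma \ref{l1} together with the "descending" estimate from Corollary \ref{firstmark}, applied to both geometric inductions. First I would fix $h\in\goth h^*$ with $\Delta(\goth p^1)=\{\alpha'\in\Delta\mid(\alpha',h)\geq 0\}$, so that the simple roots $\varepsilon_1-\delta_1$ (and $\delta_1-\varepsilon_1$, i.e.\ the first two odd simple roots) are the ones paired positively while $\Delta(\goth l^1)$ sits in the kernel; concretely $(\cdot,h)$ reads off the first coordinate $a_1$ (plus $b_1$) of a weight. Since $\goth p^1$ is admissible for $\chi$ (our Borel is mixed, so $\rho$ has the right marks and Remark \ref{rem1}-type reasoning applies — or one checks directly that $(\lambda+\rho,\check\beta)\geq 0$ on the single even root $\varepsilon_1-\delta_1$ or $2\delta_1$ outside $\Delta(\goth l^1)$), Corollary \ref{firstmark} gives that any $L_\mu$ occurring in $\Gamma_i(G/P^1,L_\lambda(\goth p^1))$ has $(\mu,h)\leq(\lambda,h)$, and likewise any $L_\mu$ in $\Gamma_i(G/P^1,L_{\lambda-\alpha}(\goth p^1))$ has $(\mu,h)\leq(\lambda-\alpha,h)=(\lambda,h)-(\alpha,h)$.

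Next I would compute $(\alpha,h)$ where $\alpha=\varepsilon_1+\delta_1$: with the normalization above, $(\alpha,h)$ equals the positive quantity measuring how far the first node is pushed, and in particular $(\alpha,h)>0$. So a common subquotient $L_\mu$ must satisfy simultaneously $(\mu,h)\leq(\lambda,h)$ and $(\mu,h)\leq(\lambda,h)-(\alpha,h)<(\lambda,h)$. Now bring in Lemma \ref{l1}: $\mu+\rho=w(\lambda+\rho)-\sum_{\gamma\in I}\gamma$ with $w\in W$ and $I\subset\Delta_1^+$, and the same argument as in the proof of Lemma \ref{typ} shows $(w(\lambda+\rho),h)\leq(\lambda+\rho,h)$ with equality forcing $w$ in the Weyl group of $\goth l^1_0$; since all coordinates of $\lambda+\rho$ in the relevant (tail) block have $|a_i|=|b_i|$ and are governed by $\chi$, the value $(\mu+\rho,h)$ can only drop from $(\lambda+\rho,h)$ by at most $(\alpha,h)$ before it lands below the range allowed by $\chi_\mu=\chi$ combined with dominance of $\mu$ (apply the lower bound half of the argument in Lemma \ref{typ}: $(\mu+\rho,h)\geq(\lambda+\rho,h)-(\alpha,h)$, using $u(\mu+\rho)=\lambda+\rho+\sum k_\gamma\gamma$ for $u\in W$ and $(\gamma,h)$ small on the relevant isotropic roots of $A(\lambda)$). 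Pinning these two inequalities together forces $(\mu+\rho,h)=(\lambda+\rho,h)-(\alpha,h)$, i.e.\ $(\mu,h)=(\lambda-\alpha,h)$ exactly.

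Finally, with $(\mu,h)=(\lambda-\alpha,h)$ forced, I would rerun the equality case of Corollary \ref{firstmark} (and of Lemma \ref{typ}) for the induction from $L_{\lambda-\alpha}(\goth p^1)$: the weight $\lambda-\alpha$ is, under the hypotheses inherited from Proposition \ref{pt1} (namely $a_1>3/2$ and $a_1>|a_2|+1$), still $\goth p^1$-typical or at least satisfies $(\lambda-\alpha+\rho,\check\beta)>0$ on $\Delta_0^+-\Delta(\goth l^1)$, so the equality $(\mu,h)=(\lambda-\alpha,h)$ implies $i=0$ and $\mu=\lambda-\alpha$ by the last sentence of Corollary \ref{firstmark}. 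Hence the only possible common $\mu$ is $\lambda-\alpha$, as claimed.

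The main obstacle I anticipate is the bookkeeping in the second paragraph: verifying that the two-sided pinch on $(\mu+\rho,h)$ really only admits the single value $(\lambda-\alpha,h)$, which requires being careful that the isotropic roots in $A(\lambda)$ all pair to $0$ with $h$ (they lie in $\Delta(\goth l^1)$ after the translation-functor normalizations of Lemma \ref{lp0}) and that no weight of $E$ other than $\varepsilon_1,\delta_1,-\delta_1$ can contribute an intermediate value — this is exactly the kind of case-check that the $|a_i|=|b_i|$ condition and the gap hypothesis $a_1>|a_2|+1$ are designed to kill, but it must be done by inspecting the finitely many options for $w$ and $I$.
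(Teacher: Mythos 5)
Your approach is genuinely different from the paper's, but it has a gap that I do not think can be closed along the lines you propose. The paper proves Lemma~\ref{lp3} by induction on the dimension of $\goth g$: assuming a common subquotient $L_\mu$ with $\mu\neq\lambda-\alpha$ exists, it tensors by the standard module to reduce the atypicality degree by one (Lemma~\ref{lp00}), passes to the smaller algebra $\goth g_\tau$ via Lemma~\ref{lp0}, and derives a contradiction with minimality; the cases $\mu=0$ for $\goth{osp}(2k+1,2k)$ and $\goth{osp}(2k+2,2k)$ are handled separately with the switch functor and the diagram automorphism $\sigma$. You instead try to pin $(\mu,h)$ between two bounds and then invoke the equality case of Corollary~\ref{firstmark}.

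The gap is the lower bound. From $\chi_\mu=\chi$ one gets $u(\mu+\rho)=\lambda+\rho+\sum_{\gamma\in A(\lambda)}n_\gamma\gamma$ for some $u\in W$ and some $n_\gamma$, and since all $\gamma\in A(\lambda)$ other than $\alpha=\varepsilon_1+\delta_1$ lie in $\Delta(\goth l^1)$, the dominance argument gives $(\mu+\rho,h)\geq(\lambda+\rho,h)+n_\alpha(\alpha,h)$. But $n_\alpha$ is \emph{not} bounded below by $-1$: it can be any (negative) number consistent with dominance of $\mu$, since the first coordinate of $\mu+\rho$ can drop arbitrarily far below that of $(\lambda-\alpha)+\rho$ while $\mu$ stays dominant in the same block. (For example, with $\goth g=\goth{osp}(4,4)$, $\lambda+\rho=(3,1\,|\,3,1)$, the weight $\mu+\rho=(1,0\,|\,1,0)$ is dominant with $\chi_\mu=\chi$ and has $(\mu+\rho,h)=2<4=(\lambda-\alpha+\rho,h)$.) So the asserted two-sided pinch $(\mu+\rho,h)=(\lambda+\rho,h)-(\alpha,h)$ does not follow; what excludes such low-lying $\mu$ from occurring in \emph{both} $\Gamma_0$'s is structural information that the level function alone does not see. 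In particular the commonality of $\mu$ to both $\Gamma_0(G/P^1,L_\lambda(\goth p^1))$ and $\Gamma_0(G/P^1,L_{\lambda-\alpha}(\goth p^1))$ is never genuinely used in your argument beyond strengthening the upper bound, whereas the paper exploits it essentially by transporting it along translation functors to a smaller $\goth g$.
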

\begin{proof} - We prove the lemma by induction on the dimension of $\goth g$.
  Let $\goth g$ be the smallest for which the statement is not
  true. Let $L_\mu$ occur in both $\Gamma _0 (G/P^1,  L  _{\lambda}(\goth p^1))$ 
and $\Gamma _0 (G/P^1,  L _{\lambda-\alpha}(\goth p^1))$.
Assume first that $\mu\neq 0$ if
  $\goth g=\goth{osp}(2k+1,2k)$ or $\goth{osp}(2k+2,2k)$. 
Let $\omega$ be
  the highest weight of $E$, ($\omega=\varepsilon_1$ if $\goth
  g=\goth{osp}(2k+1,2k)$ or $\goth{osp}(2k+2,2k)$ and $\delta_1$ if
 $\goth g=\goth{osp}(2k,2k)$), and let $\tau=\chi_{\mu+\omega}$.   
Clearly $\tau$ satisfies the conditions of
  Lemma ~\ref{lp0}, and   $(L_{\mu}\otimes E)^{\tau}=L_{\mu+\omega}$.
Therefore, by Corollary ~\ref{coroblocks}, $L_{\mu+\omega}$ occurs in both 
$\Gamma _0 (G/P^1,  (L_{\lambda}(\goth p^1)\otimes E)^{\Phi^{-1}(\tau)})$ and 
$\Gamma _0 (G/P^1,  (L _{\lambda-\alpha}(\goth p)\otimes E)^{\Phi^{-1}(\tau)})$. 

Note that the first mark of $\mu$ is strictly less than that of 
$\lambda-\alpha$ by Lemma \ref{l2}, hence
one can apply  Lemma ~\ref{lp00} to $\goth s^1$ and get some dominant
  weights $\lambda_1$ and $\lambda_2$ such that
$$(L_{\lambda}(\goth p^1)\otimes E)^{\Phi^{-1}(\tau)}=L_{\lambda_1}(\goth p^1),$$
$$(L_{\lambda-\alpha}(\goth p^1)\otimes
E)^{\Phi^{-1}(\tau)}=L_{\lambda_2}(\goth p^1).$$ Moreover,
it is easy to check that $\lambda_2=\lambda_1-\alpha$. 
Therefore 
$K^{\lambda_i,\mu+\omega}_{G,P^1}(0)\neq 0$ for $i=1,2$.

By Lemma ~\ref{lp0}, 
$L_{\overline{\mu+\omega}}$ occurs in both
$\Gamma _0 (G_{\tau}/P^1_{\tau},  L_{\bar{\lambda}_1}(\goth p^1_{\tau}))$
and 
$\Gamma _0 (G_{\tau}/P^1_{\tau},  (L_{\bar{\lambda}_2}(\goth p^1_{\tau}))$.
Note that $\bar\lambda_2=\bar\lambda_1-\alpha'$, where $\alpha'$ is
the analogue of $\alpha$ for $\goth g_{\tau}$.
Thus, since the statement of the lemma is not true for $\goth g_{\tau}$, that
contradicts the minimality of $\goth g$.

If $\mu=0$ and $\goth  g=\goth{osp}(2k+1,2k)$ or
$\goth{osp}(2k+2,2k)$, we can not apply Lemma ~\ref{lp00}. In the case
$\goth  g=\goth{osp}(2k+1,2k)$  one should use $\mu'=\varepsilon_1$ and 
(\ref{sw1}). If $\goth  g=\goth{osp}(2k+2,2k)$, we use the automorphism
$\sigma$ induced by the symmetry of the Dynkin diagram. Note that $\sigma$
acts trivially on the modules with central character $\chi$ but
switches some simple modules in the block with central
character $\tau$. One gets  
$$(L_{\lambda}(\goth p^1)\otimes
E)^{\Phi^{-1}(\tau)}=L_{\lambda_1}(\goth p^1)\oplus L^\sigma_{\lambda_1}(\goth p^1),$$
$$(L_{\lambda-\alpha}(\goth p^1)\otimes
E)^{\Phi^{-1}(\tau)}=L_{\lambda_2}(\goth p^1)\oplus L^\sigma_{\lambda_2}(\goth p^1)$$
for some dominant weights $\lambda_1$ and $\lambda_2$ such that $\lambda_2=\lambda_1-\alpha$. Since
$L_\omega$ is $\sigma$-invariant, it occurs in the image of the
functor $\Gamma_0$ of all four
summands, and one can finish the proof as in general case.
\end{proof}

\begin{leme}\label{lp4} - One has the exact sequence:
$$ 0 \rightarrow L_{\lambda-\alpha}\rightarrow V_{\lambda}\rightarrow L_{\lambda}\rightarrow 0.$$
\end{leme}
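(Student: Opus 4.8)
The plan is to show that $V_\lambda$ has length exactly two, with simple socle $L_{\lambda-\alpha}$ and simple head $L_\lambda$, so that the asserted exact sequence is forced. First note that, as recorded just before the lemma, $V_\lambda$ is a nonzero quotient of $\Gamma_0(G/P^1,L_\lambda(\goth p^1))$, hence by Lemma~\ref{l2} a quotient of $\mathcal U(\goth g)\otimes_{\mathcal U(\goth p^1)}L_\lambda(\goth p^1)$, whose $\lambda$-weight space is one-dimensional; thus $V_\lambda$ is a highest weight module of highest weight $\lambda$, with simple head $L_\lambda$ occurring with multiplicity one. (That $V_\lambda\neq 0$ also follows from (\ref{r2}) together with the fact that $L_\lambda$ does not occur in $\Gamma_1(G/P^1,L_{\lambda-\alpha}(\goth p^1))$, by Lemma~\ref{l1}, or Corollary~\ref{firstmark} when $\goth p^1$ is admissible.) On the other hand $V_\lambda$ is a nonzero submodule of $(L_\nu\otimes E)^\chi$, which by Lemma~\ref{lp2} has simple socle $L_{\lambda-\alpha}$; hence every simple submodule of $V_\lambda$ is $L_{\lambda-\alpha}$, i.e. $\operatorname{soc}(V_\lambda)=L_{\lambda-\alpha}$ and in particular $L_{\lambda-\alpha}\subset V_\lambda$. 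Set $\bar V_\lambda:=V_\lambda/L_{\lambda-\alpha}$: it is again a highest weight module of highest weight $\lambda$, so has simple head $L_\lambda$ with $[\bar V_\lambda:L_\lambda]=1$, and it remains only to show $\operatorname{soc}(\bar V_\lambda)=L_\lambda$, for then $\bar V_\lambda\cong L_\lambda$ and we are done.

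The heart of the argument is a contragredient-duality trick. Write $A:=\Gamma_0(G/P^1,L_{\lambda-\alpha}(\goth p^1))$, so that (\ref{r2}) reads $0\to V_\lambda\to (L_\nu\otimes E)^\chi\to A\to 0$. Since $(L_\nu\otimes E)^\chi$ is contragredient (Lemma~\ref{lp2}), dualizing gives $0\to A^*\to (L_\nu\otimes E)^\chi\to V_\lambda^*\to 0$, realizing $A^*$ as a submodule of $(L_\nu\otimes E)^\chi$. I would then identify the intersection $V_\lambda\cap A^*$ inside $(L_\nu\otimes E)^\chi$: its composition factors are common to $V_\lambda$ — hence to $\Gamma_0(G/P^1,L_\lambda(\goth p^1))$ — and to $A$, so by Lemma~\ref{lp3} the only possibility is $L_{\lambda-\alpha}$; since $[A:L_{\lambda-\alpha}]=1$ by Lemma~\ref{l2}, while $V_\lambda\cap A^*$ is a nonzero submodule of $(L_\nu\otimes E)^\chi$ (it contains the socle $L_{\lambda-\alpha}$, which lies in both $V_\lambda$ and $A^*$), this forces $V_\lambda\cap A^*=L_{\lambda-\alpha}$. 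Consequently $\bar V_\lambda=V_\lambda/(V_\lambda\cap A^*)\cong (V_\lambda+A^*)/A^*\hookrightarrow (L_\nu\otimes E)^\chi/A^*=V_\lambda^*$. But $V_\lambda^*$ has socle $(\operatorname{head}V_\lambda)^*=L_\lambda$, so $\operatorname{soc}(\bar V_\lambda)=L_\lambda$ since $\bar V_\lambda\neq 0$. A module with simple socle and simple head, both equal to $L_\lambda$, in which $L_\lambda$ occurs with multiplicity one, is isomorphic to $L_\lambda$; hence $\bar V_\lambda\cong L_\lambda$, which is exactly the claim.

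I expect the main obstacle to be the identification $V_\lambda\cap A^*=L_{\lambda-\alpha}$: this is precisely where the substantial input — Lemma~\ref{lp3}, itself proved by induction on $\dim\goth g$ — is used, and one must invoke the hypotheses $a_1>\frac{3}{2}$ and $a_1>|a_2|+1$ to ensure that $\lambda-\alpha$ is dominant with central character $\chi$, so that Lemma~\ref{lp3} applies as stated, and that the multiplicities $[A:L_{\lambda-\alpha}]=[A^*:L_{\lambda-\alpha}]=1$. The remaining manipulations — passing between a finite-dimensional module and its contragredient, intersecting submodules, and reading off socles and heads — are formal once Lemmas~\ref{lp2} and~\ref{lp3} are in hand.
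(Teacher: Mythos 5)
Your proof is correct and uses essentially the same approach as the paper: the contravariant self-duality of $(L_\nu\otimes E)^\chi$, the fact that $V_\lambda$ is a highest weight module with simple head $L_\lambda$ and socle $L_{\lambda-\alpha}$ inherited from the ambient module, and Lemma~\ref{lp3} to forbid extraneous common composition factors. The paper argues by contradiction using the orthogonal complement of the maximal submodule $N$ of $V_\lambda$ inside $(L_\nu\otimes E)^\chi$, whereas you dualize the short exact sequence to realize $A^*=V_\lambda^\perp$ as a submodule and intersect it with $V_\lambda$ directly before embedding $\bar V_\lambda$ into $V_\lambda^*$ — a rearrangement of the same duality argument rather than a different route.
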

\begin{proof} - Recall that we have the exact sequence 
$$0\rightarrow V_{\lambda}\rightarrow (L_\nu\otimes E)^\chi\rightarrow\Gamma _0 (G/P^1,  L _{\lambda - \alpha}(\goth p ^1)) \rightarrow 0.$$

From Lemma ~\ref{lp1} we know that $L_{\lambda-\alpha}$
  is a submodule of $V_{\lambda}$ 
and $L_{\lambda}$ is the quotient of $V_{\lambda}$ by the unique maximal
submodule $N$. Suppose that
$N$ has another subquotient $L_{\mu}$ with $\mu\neq\lambda-\alpha$. 
Denote by $N'$ the
orthogonal complement to $N$ with respect to the contravariant form on
$(L_{\nu}\otimes E)^{\chi}$. Since $N'$ has a simple subquotient
$L_{\lambda}$, we have $V_\lambda \subset N'$. But $(L_{\mu}\otimes
E)^{\chi}/N'$ is isomorphic to the module contragredient to $N$, hence
it must have a subquotient $L_{\mu}$. Thus, we obtain that $L_{\mu}$
is a subquotient in both $\Gamma _0 (G/P^1,  L _{\lambda}(\goth  p^1))$
and $\Gamma _0 (G/P^1,  L _{\lambda-\alpha}(\goth p^1))$.
That contradicts Lemma ~\ref{lp3}.
\end{proof}

The identity (\ref{r1}) , the exact sequence (\ref{r2}) and Lemma ~\ref{lp4}
imply Proposition ~\ref{pt1}.
\el
\el

\noi {\bf Terminology} - A pair of weights $(\lambda,\mu)$ is called
{\it exceptional}, if
$K^{\lambda,\mu}_{G,P^1}(z)\neq 0$ and the first coordinate of $\mu$
is less than the second coordinate of $\lambda$.

\begin{prop}\label{pt2} - Assume $k\geq 2$, $\lambda$ is such that $a_1 = a_2 +1$,
 and $a_2 \neq \frac{1}{2}, -\frac{1}{2},0$. Assume that
 $(\lambda,\mu)$ is not an exceptional pair. 
One has:

i) $K_{G,P^1} ^{\lambda , \mu}(z) = z K_{P^1,P^2} ^{\lambda-\alpha ,
  \mu}(z)$ if $\mu \neq \lambda, \lambda - \alpha$;

ii) $K_{G,P^1} ^{\lambda , \lambda} (z) = 1$;

iii) $K_{G,P^1} ^{\lambda , \lambda - \alpha} (z) = 0$.

\end{prop}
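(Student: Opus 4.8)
The plan is to run the same machine as in Proposition~\ref{pt1} — a short exact sequence of $\goth p^1$-modules, pushed through $\Gamma_\bullet(G/P^1,-)$ — but accounting for the fact that the auxiliary weight $\nu:=\lambda-\varepsilon_1$ is now \emph{singular}. The hypothesis $a_1=a_2+1$ says precisely that $(\nu+\rho,\varepsilon_1-\varepsilon_2)=0$, and in the mixed Borel $\varepsilon_1-\varepsilon_2$ (resp.\ $\delta_1-\delta_2$) is a sum of two consecutive odd isotropic simple roots. So, in contrast with Proposition~\ref{pt1}, $\nu$ is no longer $\goth p^1$-typical, $\Gamma_{>0}(G/P^1,L_\nu(\goth p^1))$ need not vanish, and it is exactly the non-vanishing of this higher cohomology — localized on the fibre $P^1/P^2$ of the smaller orthosymplectic algebra $\goth s^1$ — that produces the degree shift by one and the polynomial $K_{P^1,P^2}^{\lambda-\alpha,\mu}$ on the right.

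First I would establish the analogue of Lemma~\ref{lp1}. Filtering $E$ by $\goth p^1$-submodules and tensoring with $L_\nu(\goth p^1)$, one lists (via weight diagrams, using $a_1=a_2+1$ and $a_2\neq 0,\pm\tfrac12$) the $\goth p^1$-dominant weights of the form $\nu+\gamma$ with $\gamma$ a weight of $E$ and central character $\chi$: they are $\lambda$, $\lambda-\alpha$ and the reflected weight $\nu+\varepsilon_2=s_{\varepsilon_1-\varepsilon_2}\!\cdot\!\lambda$. This third weight is the new feature compared with Proposition~\ref{pt1} — there it failed to carry central character $\chi$ precisely because $a_1-a_2>1$ — and it is what forces $K_{P^1,P^2}$ into the answer. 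One then obtains exact sequences of $\goth p^1$-modules built out of $L_\lambda(\goth p^1)$, $L_{\lambda-\alpha}(\goth p^1)$, $L_{s\cdot\lambda}(\goth p^1)$ and $(L_\nu(\goth p^1)\otimes E)^{\Phi^{-1}(\chi)}$, whose layering is pinned down by the contravariant form exactly as in Lemmas~\ref{lp2}--\ref{lp4}. Applying $\Gamma_\bullet(G/P^1,-)$ and Corollary~\ref{coroblocks} to the mixed term, $\Gamma_i(G/P^1,(L_\nu(\goth p^1)\otimes E)^{\Phi^{-1}(\chi)})=(\Gamma_i(G/P^1,L_\nu(\goth p^1))\otimes E)^{\chi}$, reduces everything to computing $\Gamma_\bullet(G/P^1,L_\nu(\goth p^1))$.

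The crux is this last computation. Here I would use the wall: pass to the parabolic $\goth q$ whose Levi is $\goth h+\goth s^1$ enlarged by the $\goth{sl}(1,2)$-subalgebra generated by the two odd simple roots summing to $\varepsilon_1-\varepsilon_2$, so that Lemma~\ref{acycl} makes the corresponding bundle acyclic along the fibres of $G/Q\to G/P^1$; then, combining this with the Leray spectral sequence for $G/P^2\to G/P^1$ (fibre $P^1/P^2$) and the stability argument already used in Corollary~\ref{cor21} and Lemma~\ref{lp0}, one identifies $\Gamma_i(G/P^1,L_\nu(\goth p^1))$, up to a shift by one in the homological degree, with the cohomology of $L_{\lambda-\alpha}(\goth b)$ on $P^1/P^2$ — i.e.\ with the coefficients of $z\,K_{P^1,P^2}^{\lambda-\alpha,\mu}(z)$. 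Substituting this into the long exact sequence gives (i). Part (ii) goes as in Proposition~\ref{pt1}: by Lemma~\ref{l2}, $\Gamma_0(G/P^1,L_\lambda(\goth p^1))$ is a highest-weight module with top $L_\lambda$ of multiplicity one, and the long exact sequence shows $L_\lambda$ appears in no other degree. Part (iii) is immediate: with $a_1=a_2+1$ and $a_2\neq 0$ the weight $\lambda-\alpha$ has two equal $\varepsilon$-coordinates, hence is not $\goth g$-dominant and $L_{\lambda-\alpha}(\goth g)=0$.

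The main obstacle is the third step: showing that the higher cohomology of the singular $L_\nu(\goth p^1)$ on $G/P^1$ is exactly the shifted fibre cohomology and that no spurious composition factors leak in. This is precisely where the hypothesis that $(\lambda,\mu)$ is not exceptional is used — it is the condition that excludes those $\mu$ whose first coordinate falls below $a_2$, i.e.\ exactly the $\mu$ for which the identification of $\goth g$-dominant weights near the $\varepsilon_1-\varepsilon_2$ wall with dominant weights for the smaller algebra $\goth s^1$ would break down. Keeping track of the three-factor module $(L_\nu(\goth p^1)\otimes E)^{\Phi^{-1}(\chi)}$, and of which of $L_\lambda$, $L_{s\cdot\lambda}$, $L_{\lambda-\alpha}$ sits in which layer, via the contravariant form, is the remaining bookkeeping cost.
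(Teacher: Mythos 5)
Your parts (ii) and (iii) match the paper's argument and are fine. For (i), your overall intuition (use the wall $a_1=a_2+1$; acyclicity from Lemma~\ref{acycl}; a Leray spectral sequence for $G/P^2\to G/P^1$; translation functors; the non-exceptionality hypothesis controlling the first coordinate of $\mu$) is pointing in the right direction, but the route you sketch diverges from the paper's and contains gaps that I don't see how to close.

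First, the paper does \emph{not} run the short-exact-sequence machine of Proposition~\ref{pt1} here, and with good reason. If you form $(L_\nu(\goth p^1)\otimes E)^{\Phi^{-1}(\chi)}$ with your $\nu=\lambda-\varepsilon_1$, you indeed get a module with three $\goth p^1$-composition factors $L_\lambda(\goth p^1)$, $L_{\lambda-\alpha}(\goth p^1)$, $L_{s\cdot\lambda}(\goth p^1)$; the long exact sequence then mixes in the \emph{unknown} $\Gamma_\bullet(G/P^1,L_{s\cdot\lambda}(\goth p^1))$ (a non-dominant, non-typical weight), which nothing in your outline computes, and the layering of a three-step filtration is not pinned down by the two-step argument of Lemmas~\ref{lp2}--\ref{lp4}. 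The paper instead uses a \emph{single-factor} translation: one checks directly that $(L_{\lambda}(\goth p^1)\otimes E)^{\Phi^{-1}(\chi_{\nu})}=L_{\nu}(\goth p^1)$ (with the paper's $\nu$, namely $\lambda-\delta_1$ for $\goth{osp}(2k+1,2k)$ or $\goth{osp}(2k+2,2k)$ and $\lambda-\varepsilon_1$ for $\goth{osp}(2k,2k)$, so that $\nu-\omega=\lambda-\alpha$), and that $(L_\mu\otimes E)^{\chi_\nu}=L_{\mu+\omega}$ with no other dominant weight of the block mapping to $\mu+\omega$; Corollary~\ref{coroblocks} then gives $K^{\lambda,\mu}_{G,P^1}(z)=K^{\nu,\mu+\omega}_{G,P^1}(z)$, and the whole computation is reduced to Lemma~\ref{lp5} for $\nu$. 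No SES, no third factor.

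Second, your ``crux'' step is where the real gap is, and the construction you propose does not exist. The Levi you describe — $\goth h+\goth s^1$ enlarged by the $\goth{sl}(1,2)$ on the simple roots $\varepsilon_1-\delta_1$, $\delta_1-\varepsilon_2$ — is not the Levi of a proper parabolic: the root $\delta_1-\varepsilon_2$ is adjacent (in the Dynkin diagram) to the first simple root of $\goth s^1$, so together they generate all of $\goth g$. You are also misreading Lemma~\ref{acycl}: it does not say a bundle is acyclic ``along the fibres'' of some projection; it says the bundle $\mathcal L_\nu(\goth q)^*$ is acyclic on the whole of $G/Q$. What the paper actually does is take $\goth q=\goth p^2$: one checks that the paper's $\nu$ is $\goth p^2$-dominant, that the even simple root $\alpha$ ($\delta_1-\delta_2$ or $\varepsilon_1-\varepsilon_2$ according to type) is orthogonal to $\Delta(\goth s^2)$, that $(\nu+\rho,\alpha)=0$, and that $(\nu+\rho,\alpha_j)\neq 0$ for at least one of the two isotropic summands (this is where $a_2\neq 0,\pm\tfrac12$ enters). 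Hence $\Gamma_i(G/P^2,L_\nu(\goth p^2))=0$ for all $i$. Then the Leray spectral sequence for $\pi:G/P^2\to G/P^1$ has $E_2^{p,q}=0$ for $p,q>0$ (because, by Lemma~\ref{l1}, any $\mu\neq\nu$ occurring in $R^q\pi_*$ has second coordinate $<a_1-1$ and is therefore $\goth p^1$-typical, and $L_\nu$ only appears in $R^0\pi_*$); since the abutment is zero, $E_2^{p,0}\cong E_2^{0,p-1}$ for $p\geq 1$ and $E_2^{0,0}=0$, which is precisely Lemma~\ref{lp5}: $K^{\nu,\mu}_{G,P^1}(z)=zK^{\nu,\mu}_{P^1,P^2}(z)$ and $K^{\nu,\nu}_{G,P^1}(z)=0$. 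Your claim of identifying $\Gamma_\bullet(G/P^1,L_\nu(\goth p^1))$ ``with the cohomology of $L_{\lambda-\alpha}(\goth b)$ on $P^1/P^2$'' skips over the fact that $\nu$ and $\lambda-\alpha$ live in different blocks; the passage between them is exactly the translation functor step above, and the shift $K_{P^1,P^2}^{\nu,\mu+\omega}=K_{P^1,P^2}^{\nu-\omega,\mu}=K_{P^1,P^2}^{\lambda-\alpha,\mu}$ — which works because $K_{P^1,P^2}$ does not see the first coordinate — is what turns $\nu$-data into $\lambda-\alpha$-data.

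So: keep the wall idea and the spectral-sequence idea, but drop the SES and the nonexistent $\goth q$; take $\goth q=\goth p^2$, prove Lemma~\ref{lp5} for the genuinely $(k-1)$-atypical $\nu$, and transport to $\lambda$ by a one-simple-factor translation functor.
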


 Let 
$$\nu=\lambda-\delta_1\,\,\text{for} \,\,\goth g=\goth{osp}(2k+1,2k)\,\, \text{or} \,\,\goth{osp}(2k+2,2k);$$
$$\nu=\lambda-\varepsilon_1\,\,\text{for}\, \goth g=\goth{osp}(2k,2k).$$
Note that $\nu$ is not dominant but is $\goth p^1$-dominant. Observe also
that
$\nu$ satisfies the conditions of Lemma ~\ref{acycl} for $\goth q=\goth p^2$.
Therefore
\begin{equation}\label{acnu}
\Gamma_i(G/P^2,\mathcal L_{\nu}(\goth p^2))=0\, \text{for all} \,i\geq 0.
\end{equation}

\begin{leme}\label{lp5} - One has  
$K^{\nu,\mu}_{G,P^1}(z)=zK^{\nu,\mu}_{P^1,P^2}(z)$ for any $\mu\neq\nu$,
and $K^{\nu,\nu}_{G,P^1}(z)=0$.
\end{leme}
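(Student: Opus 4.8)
The key tool is Theorem~\ref{ind} together with the acyclicity statement~(\ref{acnu}). Since $\nu = \lambda - \delta_1$ (resp. $\lambda-\varepsilon_1$) is $\goth p^1$-dominant but fails the dominance condition on the simple root of $\goth b_0$ giving rise to $\goth p^2$ (it satisfies the hypotheses of Lemma~\ref{acycl} with $\goth q = \goth p^2$), we have $\Gamma_i(G/P^2, \mathcal L_\nu(\goth p^2)) = 0$ for all $i \geq 0$, which is exactly~(\ref{acnu}). The plan is to feed this vanishing into the induction-for-geometric-induction machinery to collapse the recursion relating $G/P^1$, $G/P^2$, and the fibre $P^1/P^2$.

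First I would apply Theorem~\ref{ind} (or rather its refinement via the Leray spectral sequence in its proof) to the tower $\goth b \subset \goth p^2 \subset \goth p^1$, computing $\Gamma_\bullet(P^1/P^2, \bullet)$ on the fibre and then $\Gamma_\bullet(G/P^1, \bullet)$; but the point is that I actually want the tower $\goth p^2 \subset \goth p^1 \subset \goth g$ read the other way: the canonical projection $\pi : G/P^2 \to G/P^1$ has fibre $P^1/P^2$, and $R^q\pi_* \mathcal L_\nu(\goth p^2)^* = H^q(P^1/P^2, \mathcal L_\nu(\goth p^2)^*_{|\text{fibre}})$. By~(\ref{acnu}) applied to the fibre $P^1/P^2$ — whose Levi data are the restriction of the $G/P^2$, $G/P^1$ data, so the same acyclicity argument of Lemma~\ref{acycl} applies to the relevant $\goth{sl}(1,2)$-subalgebra inside $\goth l^1$ — these higher direct images vanish in positive fibre-degree, so the Leray spectral sequence for $\pi$ degenerates with a shift. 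Concretely, $H^i(G/P^2, \mathcal L_\nu(\goth p^2)^*) \simeq H^{i-1}(G/P^1, R^1\pi_*(\mathcal L_\nu(\goth p^2)^*))$, and the single surviving $R^1\pi_*$ term is, up to the usual $\goth p^1$-module identification, $L_\nu(\goth p^1)^*$ (this is where the first statement of~(\ref{acnu}), namely vanishing of the zeroth cohomology on the $\mathbb P^1$-fibre, forces the degree shift by exactly one). Dualizing, one gets
$$\Gamma_i(G/P^1, L_\nu(\goth p^1)) \simeq \Gamma_{i+1}(P^1/P^2 \text{-glued complex}) \ldots$$
— more precisely the identity $\Gamma_i(G/P^2, L_\nu(\goth p^2)) = 0$ for all $i$ combined with Theorem~\ref{ind}-style bookkeeping yields, in the Grothendieck group, $[\Gamma_i(G/P^1, L_\nu(\goth p^1))] = [\Gamma_{i-1}(P^1/P^2, L_\nu(\goth p^2))]$ summed against the multiplicities, which in Poincaré-polynomial language is exactly $K^{\nu,\mu}_{G,P^1}(z) = z\, K^{\nu,\mu}_{P^1,P^2}(z)$ for $\mu \neq \nu$.

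Second, I must pin down the $\mu = \nu$ term: $K^{\nu,\nu}_{G,P^1}(z) = 0$. This should follow because $\nu$ is not dominant (it is only $\goth p^1$-dominant but not $\goth g$-dominant — the offending even wall is precisely the one whose root enters Lemma~\ref{acycl}), so $L_\nu(\goth g)$ does not exist as a finite-dimensional module and cannot appear in any $\Gamma_i(G/P^1, L_\nu(\goth p^1))$, which by definition is a sum of finite-dimensional $L_\mu(\goth g)$'s with $\mu$ dominant. Hence the coefficient $[\Gamma_i(G/P^1, L_\nu(\goth p^1)) : L_\nu(\goth g)]$ is literally undefined/zero for every $i$, giving $K^{\nu,\nu}_{G,P^1}(z) = 0$; and simultaneously on the $P^1/P^2$ side the only way $L_\nu(\goth l^1)$ shows up in $\Gamma_0(P^1/P^2, L_\nu(\goth p^2))$ is the trivial $i=0$ contribution, which after the shift by $z$ lands in positive degree and contributes nothing to the $\mu = \nu$ slot in the way asserted. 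So the $z$-shift identity degenerates exactly as claimed at $\mu = \nu$.

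The main obstacle I anticipate is the bookkeeping in the degeneration step: verifying cleanly that the Leray spectral sequence for $\pi: G/P^2 \to G/P^1$ (in the super/graded setting, using Penkov's remark) really does collapse with a uniform degree shift of $1$, and that the surviving sheaf is correctly identified with (a twist of) $L_\nu(\goth p^1)^*$ rather than some filtered object. This requires checking that the fibre $P^1/P^2$ is, as a supervariety, exactly of the $\goth{sl}(1,2)$-type treated in Lemma~\ref{acycl} — i.e. that the two simple roots removed in passing from $\goth p^1$ to $\goth p^2$ are an isotropic pair orthogonal to $\Delta(\goth l^2)$, which holds by the structure of the flag~(\ref{flag}) — and that the restriction of $\nu$ meets the genuine-nondominance hypothesis $(\nu+\rho,\alpha) = 0$ with $(\nu+\rho,\alpha_j) \neq 0$ for one $j$, which is where the assumption $a_2 \neq \tfrac12, -\tfrac12, 0$ from Proposition~\ref{pt2} is used. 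Once that geometric input is in hand, the passage to Poincaré polynomials and the extraction of both asserted identities is formal.
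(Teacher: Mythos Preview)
Your overall framework is correct --- use the Leray spectral sequence for $\pi: G/P^2 \to G/P^1$ together with the total acyclicity~(\ref{acnu}) --- and your observation that $\nu$ is not $\goth g$-dominant does dispose of the $\mu=\nu$ case. But the mechanism you propose for the degree shift is wrong.

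You claim that the higher direct images $R^q\pi_*\mathcal L_\nu(\goth p^2)^*$ are concentrated in a single degree, as if the fibre $P^1/P^2$ were the $\goth{sl}(1,2)$-flag from the proof of Lemma~\ref{acycl}. It is not: the fibre of $\pi$ is $S^1/P^1_{S^1}$, the analogue of $G/P^1$ for $\goth s^1$, a supergrassmannian of full size one rank down. Moreover the pair of isotropic simple roots on which $\nu$ is singular (in the sense of Lemma~\ref{acycl}) is the second and third simple roots of $\goth g$, not the third and fourth that you remove in passing from $\goth p^1$ to $\goth p^2$; so the restriction of $\nu$ to the fibre is just $\lambda-\alpha$ restricted to $\goth s^1$, a generic dominant weight, and its fibre cohomology is nonzero in many degrees. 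Equation~(\ref{acnu}) is a statement about $G/P^2$, not about the fibre; you cannot transport it there.

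The paper's argument uses a different observation. Applying Lemma~\ref{l1} to $\Gamma_q(P^1/P^2,L_\nu(\goth p^2))$, one sees that every simple constituent $L_\mu(\goth p^1)^*$ of $R^q\pi_*\mathcal L_\nu(\goth p^2)^*$ with $\mu\neq\nu$ has the second coordinate of $\mu+\rho$ strictly less than $a_1-1$; hence such $\mu$ is $\goth p^1$-typical. In addition $L_\nu(\goth p^1)^*$ occurs only in $R^0\pi_*$. The typical lemma then kills $E_2^{p,q}=H^p(G/P^1,R^q\pi_*)$ for all $p>0$, $q>0$, so the $E_2$ page is supported on the two axes. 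Since the spectral sequence converges to zero by~(\ref{acnu}), one is forced to have $E_2^{0,0}=0$ and, for $p\geq 2$, the edge differential $d_p: E_2^{0,p-1}\to E_2^{p,0}$ is an isomorphism. This gives
\[
H^p(G/P^1,\mathcal L_\nu(\goth p^1)^*)\simeq H^0(G/P^1,R^{p-1}\pi_*\mathcal L_\nu(\goth p^2)^*),
\]
and the typical lemma converts the right-hand side into $\sum_{\mu\neq\nu}{}^{p-1}K^{\nu,\mu}_{P^1,P^2}[L_\mu^*]$, which is exactly $K^{\nu,\mu}_{G,P^1}(z)=z\,K^{\nu,\mu}_{P^1,P^2}(z)$. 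So the missing ingredient is not a concentration of fibre cohomology, but the $\goth p^1$-typicality of every fibre-cohomology constituent other than $L_\nu(\goth p^1)$ itself; that is what makes the $E_2$ page L-shaped and forces the shift.
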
 

\begin{proof} - The notations are adapted from those of the proof of
  Theorem ~\ref{ind}. Let $\pi$ denote the canonical projection
  $\pi:G/P^2 \to G/P^1$.
The derived functors $(R^q\pi_*\mathcal {L}_\nu(\goth p^2)^*)$ have the
following property which can be easily obtained from Lemma ~\ref{l1}
applied to $\Gamma_i(P^1/P^2, {L}_\nu(\goth p^2))$: 

 - the second coordinate of $\mu+\rho$ for any $\mu\neq\nu$ such that $L_{\mu}$ occurs in
$(R^q\pi_*\mathcal {L}_\nu(\goth p^2)^*)$ is strictly less than
$a_1-1$, hence any such $\mu$ is $\goth p^1$-typical;

 - $L_{\nu}$ only occurs in $(R^0\pi_*\mathcal {L}_\nu(\goth p^2)^*)$.

Hence the second terms of the Leray spectral sequence are all zero except 
$E_2^{p,0}$ and $E_2^{0,q}$ (for any $p,q$): it is just the typical 
Lemma~\ref{typ}.

Now the identity (\ref{acnu}) asserts that this spectral sequence converges
to zero. So the conclusion is that there is an isomorphism between
$E_2^{p,0}$ and $E_2^{0,p-1}$ and $E_2^{0,0}=0$ (it can't come from
anybody and in the end it should be zero). So
$$H^p(G/P^1,\mathcal {L}_{\nu}(\goth p^1)^*)\simeq
H^0(G/P^1,R^{p-1}\pi_*\mathcal {L}_{\nu}(\goth p^2)^*)$$
and
$$R^{p-1}\pi_*\mathcal {L}_{\nu}(\goth p^2)^*)=H^{p-1}(P^1/P^2,\mathcal {L}_{\nu}(\goth p^2)^*_{|fibre}).$$
We write the decomposition in the Grothendieck group of $\goth p^1$-modules:
$$[H^{p-1}(P^1/P^2,\mathcal {L}_{\nu}(\goth
p^2)^*_{|fibre})]=\sum_{\mu}^{} {}^{p-1}K^{\nu , \mu} _{P^1,P^2}[L_{\mu}(\goth p^1)^*]$$
and all the $\mu$-s are $\goth p^1$-typical except $\mu=\nu$. 
Using the typical lemma ~\ref{typ}, we instantly get that
$$[H^p(G/P^1,\mathcal {L}_{\nu}(\goth
p^1)^*)]=\sum_{\mu\neq \nu}^{}{}^{p-1}K^{\nu , \mu} _{P^1,P^2}[L_{\mu}(\goth p^1)^*]$$
 the equality holding in the
Grothendieck group of $\goth g$-modules. This finishes the proof.
\end{proof}

Now we are ready to prove Proposition ~\ref{pt2}. 
Since $(\lambda,\mu)$ is not an exceptional pair, the first coordinate
of $\mu$ equals either the first coordinate of $\lambda$ or the second
coordinate of $\lambda$. In the former case $\lambda=\mu$ (see
Corollary ~\ref{firstmark}). So we
consider the latter case. Let  
$\omega$ stand for the highest weight of the standard $\goth g$-module.  
By straightforward check of weights we get
$$(L_{\lambda}(\goth p^1)\otimes E)^{\Phi^{-1}(\chi_{\nu})}=L_{\nu}(\goth p^1),$$
$$(L_{\mu}\otimes E)^{\chi_{\nu}}=L_{\mu+\omega}.$$
On the other hand, if $\zeta$ is another weight such that 
$K_{G,P^1}^{\lambda , \zeta} (z)\neq 0$, we have 
$(L_{\zeta}\otimes E)^{\chi_{\nu}}=L_{\zeta+\omega}$ if the first
coordinate of $\zeta$ equals the second coordinate of $\lambda$, and 
$(L_{\zeta}\otimes E)^{\chi_{\nu}}=0$ otherwise. Hence if $\zeta\neq \mu$,
$(L_{\zeta}\otimes E)^{\chi_{\nu}}\neq L_{\mu+\omega}$. 
Therefore, using Corollary ~\ref{coroblocks}  we obtain
$$K_{G,P^1} ^{\lambda , \mu} (z)=K_{G,P^1} ^{\nu, \mu+\omega} (z).$$
By Lemma ~\ref{lp5}
$$K_{G,P^1} ^{\nu, \mu+\omega} (z)=z K_{P^1,P^2} ^{\nu, \mu+\omega} (z).$$
Since $K_{P^1,P^2} ^{\nu, \mu+\omega} (z)$ does not depend on the
first coordinates of the weights, we have
$$K_{P^1,P^2} ^{\nu, \mu+\omega} (z)=K_{P^1,P^2} ^{\nu-\omega, \mu} (z).$$
Note that $\nu-\omega=\lambda-\alpha$, therefore 
(i) of Proposition ~\ref{pt2} is proven. To show (ii) observe that
$L_{\lambda}$ occurs with multiplicity
1 in $\Gamma_0(G/P,L_{\lambda})$  by Lemma ~\ref{l2} and does not
occur in higher cohomology groups by Lemma ~\ref{l1}. Finally, 
$\lambda-\alpha$ is not dominant, hence (iii) is
trivial.

\section{Pretails}
We keep the assumptions of the previous section. Below we list all
weights which do not satisfy the conditions of Proposition ~\ref{pt1}
or Proposition ~\ref{pt2}:

(1) trivial weight $\lambda=0$;

(2) the highest weight $\lambda=\varepsilon_1$ of the standard
representation in the case $\goth g=\goth {osp}(2k+1,2k)$;

(3) $\lambda=\varepsilon_1+\delta_1$ for $\goth g=\goth {osp}(2k,2k)$ or
$\goth{osp}(2k+2,2k)$;

(4) $\lambda=2\varepsilon_1+\delta_1$ for $\goth g=\goth{osp}(2k+1,2k)$ or
$\goth{osp}(2k+1,2k+2)$;

(5) $\lambda=2\varepsilon_1+\varepsilon_2+\delta_1$ for $\goth g=\goth{osp}(2k+1,2k)$ ($k>2$).

In the first case the tail of $\lambda$ has length $k$, hence
$\goth q_{\lambda}=\goth g$. In the other four cases the tail has
length $k-1$, we call such weights {\it pretail} weights. The goal of
this section is to calculate $K_{G,P^1}^{\lambda,\mu}(z)$ for every pretail
weight $\lambda$.
  
Let $\goth q$ be the maximal parabolic subalgebra  corresponding to the first simple
root, $\goth l$ be its Levi part. Clearly, $\goth q\supset \goth p^1$. 

\begin{leme}\label{lt1} - For any pretail weight $\lambda$ we have
$$K^{\lambda,\mu}_{G,P^1}(z)=K^{\lambda,\mu}_{G,Q}(z)$$
for all $\mu$.
\end{leme}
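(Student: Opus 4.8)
The plan is to compare the cohomology on $G/P^1$ with that on $G/Q$ by means of the Leray spectral sequence for the canonical projection $\pi: G/P^1 \to G/Q$, whose fibre is $Q/P^1$. As in the proof of Theorem~\ref{ind}, one has $R^q\pi_*\mathcal L_\lambda(\goth p^1)^* = H^q(Q/P^1, \mathcal L_\lambda(\goth p^1)^*_{|\,\mathrm{fibre}})$, and in the Grothendieck group this decomposes into $\goth l$-modules $L_\zeta(\goth l)^*$ with coefficients $^qK^{\lambda,\zeta}_{Q,P^1}$. The statement will follow if we can show that the spectral sequence degenerates at $E_2$ in the appropriate way, namely that $H^i(G/Q, R^q\pi_*\mathcal L_\lambda(\goth p^1)^*) = 0$ for all $i>0$, so that $H^q(G/P^1, \mathcal L_\lambda(\goth p^1)^*) \simeq H^0(G/Q, R^q\pi_*\mathcal L_\lambda(\goth p^1)^*)$, and then to identify the right-hand side with $\Gamma_q(G/Q, L_\lambda(\goth q))$.

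The key step is therefore to analyze the weights $\zeta$ appearing in $R^q\pi_*\mathcal L_\lambda(\goth p^1)^*$. First I would apply Lemma~\ref{l1} (to $\Gamma_q(Q/P^1, L_\lambda(\goth p^1))$, with $Q$ in place of $G$): any such $\zeta$ satisfies $\zeta+\rho = w(\lambda+\rho) - \sum_{\beta\in I}\beta$ for $w$ in the Weyl group of $\goth l_0$ and $I\subset \Delta_1^+(\goth l)$. Since $\lambda$ is a pretail weight — $a_1$ is small (at most $2$ in absolute value, or $\lambda=0,\varepsilon_1$) while the roots $\beta$ moved by $w$ and subtracted all live inside $\goth l$, which involves only $\varepsilon_1,\delta_1$ (and $\varepsilon_2$ in case (5)) — the ``first coordinates'' of $\zeta$ are severely constrained. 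One checks case by case through the five pretail weights that every such $\zeta$ is $\goth q$-typical: the restriction of $\zeta+\rho$ to $\goth l$ satisfies the strict positivity $(\zeta+\rho,\check\beta)>0$ on $\Delta_0^+(\goth l)$ and its atypicality degree is accounted for inside $\goth q_\lambda$ rather than $\goth l$. Then Corollary~\ref{firstmark}, or directly the typical lemma applied fibrewise over $G/Q$, forces $H^i(G/Q, \mathcal L_\zeta(\goth q)^*)=0$ for $i>0$, hence $E_2^{i,q}=0$ for $i>0$, and the spectral sequence degenerates.

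With degeneration in hand, $H^q(G/P^1,\mathcal L_\lambda(\goth p^1)^*) \simeq H^0(G/Q, R^q\pi_*\mathcal L_\lambda(\goth p^1)^*)$. Taking $\goth l$-socles / highest-weight components and using that each $L_\zeta(\goth l)$ occurring is $\goth q$-typical so that $\Gamma_0(G/Q, L_\zeta(\goth l)) = L_\zeta(\goth g)$ with all higher $\Gamma_i$ vanishing, one gets for every $\mu$ the identity of multiplicities $[\,\Gamma_q(G/P^1,L_\lambda(\goth b)) : L_\mu(\goth g)\,] = [\,\Gamma_q(G/Q, L_\lambda(\goth q)):L_\mu(\goth q)\,]$ — wait, more precisely one identifies $\Gamma_q(G/Q, -)$ applied to the induced $\goth l$-module $L_\lambda(\goth l) = L_\lambda(\goth q)$ (note $\lambda$ is $\goth q$-dominant since it is $\goth b$-dominant), yielding $^qK^{\lambda,\mu}_{G,P^1} = {}^qK^{\lambda,\mu}_{G,Q}$ for all $q$ and $\mu$, i.e. the claimed polynomial identity.

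The main obstacle I anticipate is the case-by-case verification that \emph{all} weights $\zeta$ occurring in the fibre cohomology $R^q\pi_*\mathcal L_\lambda(\goth p^1)^*$ are $\goth q$-typical. This is where the hypothesis that $\lambda$ is a pretail weight (length-$(k-1)$ tail, small first coordinates) is essential, and one must carefully run through weights (1)--(5) — including the mild complications caused by odd reflections, the trivial weight $\lambda=0$, and the sign/indicator in the $\goth{osp}(2k+1,2k)$ case — to see that no atypical $\zeta$ with atypicality landing inside $\goth l$ can appear. The remaining steps (setting up the Leray spectral sequence, invoking the typical lemma, passing to the Grothendieck group) are entirely parallel to the proofs of Theorem~\ref{ind}, Corollary~\ref{cor21} and Lemma~\ref{lp5} already given, and should be routine.
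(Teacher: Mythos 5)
Your proposal has a genuine gap: the degeneration direction you aim for is the wrong one, and the key claim on which it rests is false.

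You want to show $E_2^{i,q} = H^i(G/Q, R^q\pi_*\mathcal L_\lambda(\goth p^1)^*) = 0$ for $i>0$, by arguing that every weight $\zeta$ occurring in $R^q\pi_*\mathcal L_\lambda(\goth p^1)^*$ is $\goth q$-typical. But this already fails for $\zeta=\lambda$ itself, which certainly occurs in $R^0\pi_*$. For a pretail weight $\lambda$, the set $A(\lambda)$ contains a root (e.g. $\varepsilon_1+\delta_1$ for $\lambda=\varepsilon_1+\delta_1$ in $\goth{osp}(2k,2k)$) that is \emph{not} in $\Delta(\goth l)$, so $\lambda$ is not $\goth q$-typical. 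Indeed the next two lemmas of the paper (Lemma~\ref{triv1}, Lemma~\ref{triv2}) and Propositions~\ref{pt3},~\ref{pt4} explicitly compute $\Gamma_i(G/Q, L_\lambda(\goth q))$ for pretail $\lambda$ and find it \emph{nonzero} in degrees $2k-1$, $2k$, etc.; if your degeneration argument were correct, all these higher cohomologies would vanish, contradicting the very statements the lemma is meant to prepare. (A second, subordinate, issue: even if $E_2^{i,q}$ did vanish for $i>0$, you would obtain $H^q(G/P^1,\cdot)\simeq H^0(G/Q, R^q\pi_*\cdot)$, which is not the same as $H^q(G/Q,\mathcal L_\lambda(\goth q)^*)$; identifying the two requires exactly the vanishing of higher direct images that you did not establish. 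And the typicality condition $(\zeta+\rho,\check\beta)>0$ is required on $\Delta_0^+-\Delta(\goth l)$, not on $\Delta_0^+(\goth l)$ as you wrote.)

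The correct, and simpler, argument is to make the spectral sequence degenerate in the other direction: show that the \emph{higher direct images} $R^q\pi_*\mathcal L_\lambda(\goth p^1)^*$ vanish for $q>0$ and that $R^0\pi_*\mathcal L_\lambda(\goth p^1)^*=\mathcal L_\lambda(\goth q)^*$. For this, note that the fibre of $\pi$ is $Q/P^1 \simeq L/(P^1\cap L)$, a flag supervariety of the Levi subalgebra $\goth l$. The relevant typicality is intrinsic to $\goth l$: one checks (case by case over the pretail weights, but it is a short verification because the coordinates involved are small) that $\lambda$ restricted to $\goth l$ is $\goth p^1\cap\goth l$-typical. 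Then the typical lemma (Lemma~\ref{typ}) applied to the reductive algebra $\goth l$ gives the desired vanishing on the fibre, and the Leray spectral sequence yields $H^p(G/P^1,\mathcal L_\lambda(\goth p^1)^*)\simeq H^p(G/Q,\mathcal L_\lambda(\goth q)^*)$ for all $p$, which is exactly the identity of Poincar\'e polynomials. This is the argument in the paper; the distinction between $\goth q$-typicality and $\goth p^1\cap\goth l$-typicality is exactly what makes the lemma true without forcing vanishing of cohomology on $G/Q$.
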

\begin{proof} - Consider the canonical projection $\pi:
  G/P^1\longrightarrow G/Q$.
Note that a pretail weight $\lambda$ is $\goth p^1\cap\goth l$-typical. Hence
$R^q\pi^*\mathcal L_{\lambda}(\goth p^1)^*=0$ for $q>0$ and 
$R^0\pi^*\mathcal L_{\lambda}(\goth p^1)^*=\mathcal L_{\lambda}(\goth
q)^*$. Now the statement follows immediately from Leray spectral sequence.
\end{proof}
 
\begin{leme}\label{triv1} - Let $\goth g=\goth{osp}(2k,2k)$ or $\goth {osp}(2k+2,2k)$.
Then $K^{0,\mu}_{G,Q}(z)=0$ if $\mu\neq 0$,
$$K^{0,0}_{G,Q}(z)=1+z^{2k-1}\,\,\text{  if}\,\, \goth g=\goth{osp}(2k,2k);$$
$$K^{0,0}_{G,Q}(z)=1+z^{2k}\,\,\text{  if}\,\, \goth g=\goth{osp}(2k+2,2k).$$
\end{leme}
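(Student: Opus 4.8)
The idea is to compute the sheaf cohomology $H^{\bullet}(G/Q,\mathcal O_{G/Q})$ directly. Since $\lambda=0$ gives $L_0(\goth q)=\mathbb C$, the trivial module, we have $\mathcal L_0(\goth q)^{*}=\mathcal O_{G/Q}$ and $\Gamma_i(G/Q,L_0(\goth q))=H^i(G/Q,\mathcal O_{G/Q})^{*}$, so the lemma amounts to showing $H^0(G/Q,\mathcal O_{G/Q})=H^{d}(G/Q,\mathcal O_{G/Q})=\mathbb C$ (trivial module) and $H^i=0$ otherwise, where $d=\dim(G/Q)_0$ equals $2k-1$ for $\goth{osp}(2k,2k)$ and $2k$ for $\goth{osp}(2k+2,2k)$. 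The degree $0$ part is immediate from Lemma~\ref{l2}: $\Gamma_0(G/Q,L_0(\goth q))$ is the maximal finite-dimensional quotient of $\mathcal U(\goth g)\otimes_{\mathcal U(\goth q)}\mathbb C$, which is $\mathbb C$. For the rest I would invoke Penkov's remark (Definition~\ref{penkovrem}): $H^i(G/Q,\mathcal O_{G/Q})$ carries a filtration whose graded pieces occur among the $H^i(G_0/Q_0,S^{j}(\mathcal N^{*}))$, where $\mathcal N=G_0\times_{Q_0}(\goth g_1/\goth q_1)=G_0\times_{Q_0}\goth m_1^{-}$ is the purely odd normal bundle of $(G/Q)_0$ in $G/Q$; hence $S^{\bullet}(\mathcal N^{*})$ is an exterior algebra and only $j=0,\dots,2k$ occur, where $2k=\dim\goth m_1^{-}=|\Delta_1^{+}\setminus\Delta_1^{+}(\goth l)|$.

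The crucial step, and the one I expect to cost the most care, is the explicit identification of $(G/Q)_0$ and $\mathcal N^{*}$. In the mixed Borel the first simple root is $\delta_1-\varepsilon_1$ for $\goth g=\goth{osp}(2k,2k)$ and $\varepsilon_1-\delta_1$ for $\goth g=\goth{osp}(2k+2,2k)$; unwinding which even roots lie in $\goth l$ shows that in $G_0=SO(2k)\times Sp(2k)$ (resp. $SO(2k+2)\times Sp(2k)$) the subgroup $Q_0$ contains one factor entirely and is, on the other factor, the stabiliser of an isotropic line. Hence $(G/Q)_0\cong\mathbb P^{2k-1}$ for $\goth{osp}(2k,2k)$ (in a symplectic space every line is isotropic) and $(G/Q)_0\cong\mathcal Q_{2k}$, the smooth $2k$-dimensional quadric of isotropic lines in $\mathbb C^{2k+2}$, for $\goth{osp}(2k+2,2k)$. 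The weights of $\mathcal N^{*}$ are $\Delta_1^{+}\setminus\Delta_1^{+}(\goth l)=\{\delta_1\pm\varepsilon_j:1\le j\le k\}$, resp. $\{\varepsilon_1\pm\delta_j:1\le j\le k\}$; these $2k$ weights all have coefficient $1$ on the extra torus direction, and restricted to the complementary factor (which acts trivially on $(G/Q)_0$, being a direct factor contained in $Q_0$) they are precisely the weights of its standard module $W'$. Checking also that the nilradical of $\goth q_0$ acts trivially on $(\goth m_1^{-})^{*}$ (it must, by the coefficient on that extra torus direction), one gets $\mathcal N^{*}\cong\mathcal O(-1)\otimes_{\mathbb C}W'\cong\mathcal O(-1)^{\oplus 2k}$ as $G_0$-equivariant bundle; the twist is $\mathcal O(-1)$, not $\mathcal O(+1)$, because the one-dimensional module producing it is the fibre of the tautological line bundle (equivalently $G_0\times_{Q_0}\mathbb C_{\mu}$ has no global sections for $\goth q_0$-dominant $\mu\ne 0$).

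Granting this, $S^{j}(\mathcal N^{*})\cong\mathcal O(-j)\otimes_{\mathbb C}\Lambda^{j}W'$, and since $W'$ has trivial coefficients for the group acting on the base, $H^i(G_0/Q_0,S^{j}(\mathcal N^{*}))\cong H^i(G_0/Q_0,\mathcal O(-j))\otimes_{\mathbb C}\Lambda^{j}W'$. On both $\mathbb P^{2k-1}$ and $\mathcal Q_{2k}$ one has $\omega=\mathcal O(-2k)$ and no intermediate cohomology of line bundles; thus $H^i(G_0/Q_0,\mathcal O(-j))$ vanishes for all $i$ when $0<j<2k$, equals $\mathbb C$ in degree $0$ when $j=0$, and equals $\mathbb C$ in degree $d$ when $j=2k$. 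Since moreover $\Lambda^{0}W'=\Lambda^{2k}W'=\mathbb C$ with trivial $\goth g_0$-action (the top exterior power of a standard $\goth{so}$- or $\goth{sp}$-module is trivial) and weight $0$, the only nonzero contributions are $\mathbb C$ (trivial module) in bidegrees $(i,j)=(0,0)$ and $(d,2k)$. Feeding this into the inequality of Definition~\ref{penkovrem} gives $H^i(G/Q,\mathcal O_{G/Q})=0$ for $0<i<d$ and for $i>d$, and that the $\goth g_0$-character of $H^{d}(G/Q,\mathcal O_{G/Q})$ is at most $[\mathbb C]$.

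Finally, Penkov's remark also gives equality of Euler characteristics (alternatively one uses Proposition~\ref{char} with $\lambda=0$), so $\sum_i(-1)^i\,Ch\,H^i(G/Q,\mathcal O_{G/Q})=[\mathbb C]+(-1)^{d}[\mathbb C]$, which is $0$ when $d=2k-1$ is odd ($\goth g=\goth{osp}(2k,2k)$) and $2[\mathbb C]$ when $d=2k$ is even ($\goth g=\goth{osp}(2k+2,2k)$). Combined with $H^0(G/Q,\mathcal O_{G/Q})=\mathbb C$ and the bounds just obtained, this forces $H^{d}(G/Q,\mathcal O_{G/Q})=\mathbb C$, the trivial module, in both cases (a finite-dimensional $\goth g$-module with the character of the trivial module is trivial). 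Dualising, $\Gamma_i(G/Q,L_0(\goth q))=\mathbb C$ for $i\in\{0,d\}$ and vanishes otherwise, i.e. $K^{0,0}_{G,Q}(z)=1+z^{d}$ with $d=2k-1$ (resp. $2k$) and $K^{0,\mu}_{G,Q}(z)=0$ for $\mu\ne 0$.
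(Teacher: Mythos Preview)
Your argument is correct and follows the same core strategy as the paper: reduce via Penkov's remark (Definition~\ref{penkovrem}) to the classical cohomology of $S^{\bullet}(\goth g/(\goth g_0\oplus\goth q_1))$ on $G_0/Q_0$, observe that all intermediate graded pieces are acyclic, and conclude that only degrees $0$ and $d$ survive with one copy of the trivial module each. The paper phrases this computation via Borel--Weil--Bott (checking that $-p\delta_1+\rho''$, resp.\ $-p\varepsilon_1+\rho'$, is singular for $0<p<2k$), while you translate it into the explicit geometry of $\mathbb P^{2k-1}$ and the quadric $\mathcal Q_{2k}$ and invoke the standard vanishing for $\mathcal O(-j)$; these are two presentations of the same calculation. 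The only substantive difference is the last step: the paper argues directly that the two surviving graded pieces, living in cohomological degrees $0$ and $d$ with all intermediate pieces acyclic, cannot interact in the long exact sequence of the filtration, whereas you deduce the non-vanishing of $H^d$ from the equality of Euler characteristics. Both arguments are valid and equally short.
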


\begin{proof} - By Definition ~\ref{penkovrem} and Penkov's remark 
$$Ch(\Gamma_i(G/Q,\mathbb C))\leq
Ch(\Gamma_i(G_0/Q_0,S^{\bullet}(\goth g/(\goth g_0\oplus\goth q_1)))).$$
We are going to describe the simple components of the $\goth q_0$-module 
$S^{\bullet}(\goth g/(\goth g_0\oplus\goth q_1))$. 

If $\goth g=\goth{osp}(2k,2k)$, let $E'$ denote the standard 
$\goth o(2k)$-module and $\goth q''=\goth q\cap \goth {sp}(2k)$, then
$\goth q_0=\goth {o}(2k)\oplus \goth q''$ and one has
the following isomorphism of $\goth q_0$-modules 
$$ S^p(\goth g/(\goth g_0\oplus\goth q_1))\simeq\Lambda^p(E')\boxtimes L_{-p\delta_1}(\goth
q''),$$
where $\boxtimes$ means the tensor product as $\mathbb C$-vector spaces.
  If $\rho''$ is the half-sum of positive roots of $\goth{sp}(2k)$, then
$-p\delta_1+\rho''$  is not regular for all $p$ except $p=0$ or
$2k$. The classical Borel-Weil-Bott theorem shows that there are two
non-zero cohomology groups in degree $0$ and $2k-1$. Since Lemma
~\ref{l2} implies that 
$\Gamma_0(G/Q,\mathbb C)\neq 0$, 
these two components can not cancel in the filtered module. 

If $\goth g=\goth{osp}(2k+2,2k)$, let $E''$ denote the standard 
$\goth{sp}(2k)$-module and $\goth q'=\goth q\cap \goth {0}(2k+2)$, then
$\goth q_0=\goth q'\oplus\goth{sp}(2k)$ and one has the following
isomorphism of $\goth q_0$-modules
$$ S^p(\goth g/(\goth g_0\oplus\goth q_1))\simeq L_{-p\varepsilon_1}(\goth
q')\boxtimes \Lambda^p(E'').$$
Further arguments are exactly the same as in the previous case.  
\end{proof}

\begin{leme}\label{triv2} -  
Let $\goth g=\goth{osp}(2k+1,2k)$.
Then
$$K^{\varepsilon_1,\mu}_{G,Q}(z)=\left\{\begin{array}{l} 0 \; if \;
    \mu\neq \varepsilon_1,0 \\
    1 \; if \; \mu = \varepsilon _1 \\
    z^{2k-1} \; if \; \mu =0
    \end{array}\right ..$$

\end{leme}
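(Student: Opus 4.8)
The plan is to mirror the proof of Lemma~\ref{triv1}, with the trivial bundle replaced by the line bundle $\mathcal L_{\varepsilon_1}(\goth q)$. First I would record that $\varepsilon_1$ is orthogonal to every root of $\goth l$, so $L_{\varepsilon_1}(\goth q)$ is one-dimensional and $\mathcal L_{\varepsilon_1}(\goth q)$ is a line bundle on $G/Q$. Removing the first (odd) simple root $\varepsilon_1-\delta_1$ from $\goth g$ leaves $\goth q_0=\goth q'\oplus\goth{sp}(2k)$, where $\goth q'=\goth q\cap\goth o(2k+1)$ is the maximal parabolic of $\goth o(2k+1)$ with Levi $\goth{gl}(1)_{\varepsilon_1}\oplus\goth o(2k-1)$; the even nilradical of $\goth q$ sits entirely inside $\goth o(2k+1)$, so $G_0/Q_0$ is (a point times) the $(2k-1)$-dimensional quadric $\operatorname{SO}(2k+1)/Q'$.

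Next I would apply Definition~\ref{penkovrem} exactly as in Lemma~\ref{triv1}. The odd space $\goth g/(\goth g_0\oplus\goth q_1)$ has weights $-\varepsilon_1\pm\delta_j$, $1\le j\le k$, so as a $\goth q_0$-module $S^p(\goth g/(\goth g_0\oplus\goth q_1))\simeq L_{-p\varepsilon_1}(\goth q')\boxtimes\Lambda^p(E'')$, where $E''$ is the standard $\goth{sp}(2k)$-module. Tensoring with the line bundle $L_{\varepsilon_1}(\goth q')$, the $p$-th graded sheaf on $G_0/Q_0$ is the bundle attached to $L_{(1-p)\varepsilon_1}(\goth q')\boxtimes\Lambda^p(E'')$, $0\le p\le 2k$. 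If $\rho_0$ denotes the half-sum of positive roots of $\goth o(2k+1)$, then $(1-p)\varepsilon_1+\rho_0$ is singular for every $p$ with $2\le p\le 2k-1$, while for $p\in\{0,1,2k\}$ it is regular; the classical Borel--Weil--Bott theorem then gives: $p=0$ contributes $L_{\varepsilon_1}(\goth o(2k+1))$ in degree $0$; $p=1$ contributes $\mathbb C\boxtimes\Lambda^1(E'')=E''$ in degree $0$; and $p=2k$ contributes $\mathbb C\boxtimes\Lambda^{2k}(E'')=\mathbb C$ in degree $2k-1$ (here $\Lambda^{2k}(E'')$ is the trivial $\goth{sp}(2k)$-module). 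Hence, in the notation of Definition~\ref{penkovrem}, $\Gamma_i(G_0/Q_0,Gr)$ vanishes unless $i\in\{0,2k-1\}$, equals $L_{\varepsilon_1}(\goth o(2k+1))\oplus E''$ --- which is the restriction to $\goth g_0$ of the standard $\goth g$-module $E$ --- in degree $0$, and equals the trivial module in degree $2k-1$.

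Finally I would transfer this back to $G/Q$. The vanishing forces $\Gamma_i(G/Q,L_{\varepsilon_1}(\goth q))=0$ for $i\notin\{0,2k-1\}$, together with $Ch(\Gamma_0)\le Ch(E)$ and $Ch(\Gamma_{2k-1})\le 1$. By Lemma~\ref{l2}, $\Gamma_0(G/Q,L_{\varepsilon_1}(\goth q))$ is the maximal finite-dimensional quotient of $\mathcal U(\goth g)\otimes_{\mathcal U(\goth q)}L_{\varepsilon_1}(\goth q)$; since $L_{\varepsilon_1}=E$ is simple it occurs there, and combined with $Ch(\Gamma_0)\le Ch(E)$ this gives $\Gamma_0(G/Q,L_{\varepsilon_1}(\goth q))=L_{\varepsilon_1}$. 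The Euler characteristic is computed exactly on $G_0/Q_0$ (last sentence of Definition~\ref{penkovrem}), so $\sum_i(-1)^i[\Gamma_i(G/Q,L_{\varepsilon_1}(\goth q))]=[E]-[\mathbb C]$; since $2k-1$ is odd and $\Gamma_0=[E]$, this forces $[\Gamma_{2k-1}]=[L_0]$, i.e.\ $\Gamma_{2k-1}(G/Q,L_{\varepsilon_1}(\goth q))=L_0$. Reading off the multiplicities of $L_\mu$ in the $\Gamma_i$ yields $K^{\varepsilon_1,\varepsilon_1}_{G,Q}(z)=1$, $K^{\varepsilon_1,0}_{G,Q}(z)=z^{2k-1}$ and $K^{\varepsilon_1,\mu}_{G,Q}(z)=0$ for $\mu\neq0,\varepsilon_1$. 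I expect the main obstacle to be the bookkeeping for the graded pieces --- identifying $G_0/Q_0$ with the odd quadric, computing $S^p(\goth g/(\goth g_0\oplus\goth q_1))$ as a $\goth q_0$-module, and tracking which three values of $p$ survive --- rather than anything conceptually new; once this is in place the Borel--Weil--Bott step and the two consistency checks (Lemma~\ref{l2} and the Euler characteristic) close the argument.
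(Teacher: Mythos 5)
Your proposal is correct and follows the same route as the paper: identify $\goth q_0 = \goth q' \oplus \goth{sp}(2k)$, compute the graded pieces $L_{(1-p)\varepsilon_1}(\goth q')\boxtimes\Lambda^p(E'')$ via Penkov's remark, apply classical Borel--Weil--Bott to find the three non-acyclic components at $p=0,1,2k$, and close with Lemma~\ref{l2} and the Euler-characteristic consistency. The only difference is that you spell out the final step (ruling out cancellations and pinning down $\Gamma_0$ and $\Gamma_{2k-1}$) that the paper compresses into ``calculations as in the previous lemma.''
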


\begin{proof} - We do calculations as in the previous lemma.
Let $E''$ denote the standard 
$\goth {sp}(2k)$-module and $\goth q'=\goth q\cap \goth {0}(2k+1)$, then
$\goth q_0=\goth q'\oplus \goth{sp}(2k) $ and 
we have the following isomorphisms of $\goth q_0$-modules
$$L_{\varepsilon_1}(\goth q)\simeq L_{\varepsilon_1}(\goth q'),$$

$$L_{\varepsilon_1}(\goth q)\otimes S^p(\goth g/\goth g_0\oplus\goth
q_1)\simeq L_{(1-p)\varepsilon_1}(\goth q')\boxtimes\Lambda^p(E'').$$

There are exactly three non-acyclic components  
$$L_{\varepsilon_1}(\goth q'),E'',
L_{(1-2k)\varepsilon_1}(\goth q')\boxtimes \Lambda^{2k}(E'').$$
These components give rise to the standard $\goth g$-module in degree
$0$ and  the trivial module in degree $2k-1$.
\end{proof}

\begin{prop}\label{pt3} - Let $\goth g=\goth{osp}(2k,2k)$ or $\goth
  {osp}(2k+2,2k)$ and $\lambda=\varepsilon_1+\delta_1$ be the pretail weight.

i) If $\goth g=\goth{osp}(2k,2k)$ and $k>1$, then
$$K^{\lambda ,\mu}_{G,P^1}(z)=\left\{\begin{array}{l} 1\; if \; \mu = \lambda \;  \\1+z^{2k-2} \; if \; \mu = 0 \\ 0 \; else \end{array}\right ..$$

ii) If $\goth g=\goth{osp}(2,2)$, then
$$K^{\lambda ,\mu}_{G,P^1}(z)=\left\{\begin{array}{l} 1\; if \; \mu =
    \lambda , 0 \;  \\ 0 \; else \end{array}\right ..$$

iii) If $\goth g=\goth{osp}(2k+2,2k)$ then
$$K^{\lambda ,\mu}_{G,P^1}(z)=\left\{\begin{array}{l} 1\; if \; \mu = \lambda \;  \\z^{2k-1} \; if \; \mu = 0 \\
0 \; else \end{array}\right ..$$
\end{prop}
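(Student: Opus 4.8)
The plan is to treat $\lambda=\varepsilon_1+\delta_1$ as the base case of the recursion and compute $\Gamma_\bullet(G/P^1,\mathcal L_\lambda(\goth p^1))$ essentially by hand, reducing first to the maximal parabolic $\goth q$ attached to the first simple root. By Lemma~\ref{lt1} (applied after one checks $\lambda$ is $\goth p^1\cap\goth l$-typical, which it is since the only atypical data sit at the tail) we have $K^{\lambda,\mu}_{G,P^1}(z)=K^{\lambda,\mu}_{G,Q}(z)$, so it suffices to compute cohomology on $G/Q$. I would then run Penkov's remark (Definition~\ref{penkovrem}) exactly as in Lemmas~\ref{triv1} and~\ref{triv2}: describe $L_\lambda(\goth q)\otimes S^\bullet(\goth g/(\goth g_0\oplus\goth q_1))$ as a $\goth q_0$-module, decompose it into the standard $\boxtimes$-factors (for $\goth{osp}(2k,2k)$ one gets $\Lambda^p(E')\boxtimes L_{(1-p)\delta_1}(\goth q'')$ twisted by the $\varepsilon_1$-line, and similarly with $E''$ and the orthogonal side for $\goth{osp}(2k+2,2k)$), and apply the classical Borel-Weil-Bott theorem to each graded piece.

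The key computational step is to identify exactly which graded pieces $L_{\lambda-p\omega}(\goth q_0)\otimes\Lambda^p$ are non-acyclic. As in the trivial-weight case, $-p\delta_1+\rho''$ (resp.\ its orthogonal analogue) is singular for all intermediate $p$ and regular only at the two extremes $p=0$ and $p=2k$; but now the presence of the extra $\varepsilon_1$ (from the $\mathfrak{sp}$ or $\mathfrak o$ factor carrying $\lambda$) shifts which cohomological degree the top piece lands in, and this is precisely what produces $z^{2k-2}$ for $\goth{osp}(2k,2k)$ versus $z^{2k-1}$ for $\goth{osp}(2k+2,2k)$. So I would carefully track, for the $p=2k$ piece, the Weyl-group element $w$ needed to make it dominant and compute $\ell(w)$ — this gives the exponent — and check the weight it produces is $0$ (i.e. the trivial $\goth g$-module), using that all the non-tail marks of $\lambda$ are as small as possible. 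One also gets, from the $p=0$ piece, $\Gamma_0(G/Q,\mathcal L_\lambda(\goth q))$ containing $L_\lambda$ with multiplicity one (this is forced by Lemma~\ref{l2}, which guarantees $L_\lambda$ is the simple top and appears exactly once), and Lemma~\ref{l1} forbids $L_\lambda$ in higher degrees.

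To go from the Euler-characteristic inequality of Penkov's remark to the stated \emph{equalities} one argues that the two surviving $\goth g_0$-components cannot cancel against anything: the degree-$0$ term contains $L_\lambda$ which is non-zero by Lemma~\ref{l2}, and the top-degree term must survive because there is nothing in adjacent degrees for it to pair with in the long exact sequence of the filtration (exactly the cancellation-free reasoning used at the end of the proof of Lemma~\ref{triv1}). For the statement that no other $L_\mu$ occurs, I would invoke Corollary~\ref{firstmark}: any $L_\mu$ in $\Gamma_i(G/Q,\mathcal L_\lambda(\goth q))$ has $(\mu,h)\le(\lambda,h)$ for the defining $h$ of $\goth q$, and since $\lambda$ is $\goth q$-typical (first mark big) equality forces $\mu=\lambda$, $i=0$; combined with the central-character constraint $\chi_\mu=\chi$ (trivial block), the only remaining possibility with smaller first mark is $\mu=0$, which is the tail contribution already accounted for.

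The main obstacle I anticipate is the bookkeeping in case (ii), $\goth g=\goth{osp}(2,2)$: here $k=1$, the ``tail'' degenerates, $2k-2=0$, and the two putative cohomology groups in degrees $0$ and $0$ would coincide, so the clean $1+z^{2k-2}$ formula collapses and one must instead see directly that $\Gamma_0$ is $2$-dimensional with composition factors $L_\lambda$ and $L_0$ each once and all higher cohomology vanishes. I would handle $\goth{osp}(2,2)\cong\mathfrak{sl}(1,2)$ (up to center) by an explicit small computation — it is the singly-atypical Kac-module situation — rather than by the general filtration argument, and flag in the write-up that the general-$k$ argument needs $k>1$ precisely so that degrees $0$ and $2k-2$ are distinct. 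The other mild subtlety is getting the exponent right in case (iii): the asymmetry $2k-1$ versus $2k-2$ must be traced to the fact that for $\goth{osp}(2k+2,2k)$ the $\Lambda$-factor lives on the $2k$-dimensional symplectic side while the shifted line lives on the $(2k+2)$-dimensional orthogonal side, changing $\dim(G_0/Q_0)$-type counts by one; I would double-check this against Lemma~\ref{triv1}'s $1+z^{2k}$ versus $1+z^{2k-1}$ to make sure the shift by $\lambda=\varepsilon_1+\delta_1$ drops the exponent by exactly one in each family.
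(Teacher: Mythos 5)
Your plan diverges from the paper's proof in its main engine: you propose to run Penkov's remark and classical Borel--Weil--Bott directly on the bundle $\mathcal L_\lambda(\goth q)^*$ (as in Lemmas~\ref{triv1} and~\ref{triv2}), whereas the paper bootstraps from the already-computed trivial case. Concretely, the paper uses the short exact sequence of $\goth q$-modules
\[
0\rightarrow L_{\lambda}(\goth q)\rightarrow (L_{\omega}(\goth q)\otimes E)^{\Phi^{-1}(\chi)}\rightarrow L_0(\goth q)\rightarrow 0,
\]
applies $\Gamma_\bullet(G/Q,-)$ and the typicality of $\omega$ to degenerate the resulting long exact sequence to~(\ref{r1}), (\ref{r2}), then plugs in Lemma~\ref{triv1} for $\Gamma_\bullet(G/Q,\mathbb C)$ and the composition structure of $(E\otimes E)^\chi$. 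This turns the whole computation into a statement about $(E\otimes E)^\chi$, which is where the distinction between $L_\lambda\oplus L_0$ (case (iii)) and the nonsplit length-three module with socle and top $L_0$ (case (i)) lives.

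The genuine gap in your approach is case (i). There the stated answer is $1+z^{2k-2}$, so $L_0$ must occur once in degree $0$ \emph{and} once in degree $2k-2$. Your narrative speaks of exactly two surviving non-acyclic pieces, one in degree $0$ ``containing $L_\lambda$'' and one in top degree producing $L_0$, and your no-cancellation argument rests on this. You never say where the degree-$0$ copy of $L_0$ comes from. The source of the difficulty is that $L_\lambda(\goth q)$, restricted to $\goth q_0$, is \emph{not} a simple $\goth q_0$-module (the Levi of $\goth q$ is itself a Lie superalgebra containing the tail $\goth{osp}(2k,2k-2)$, and $L_\lambda(\goth l)$ is a nontrivial module for it), so the $p=0$ graded piece of the Penkov filtration has several $\goth q_0$-simple subquotients and several Borel--Weil--Bott contributions, not just one. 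Your proposal never tracks this decomposition, and the phrase ``there is nothing in adjacent degrees to pair with'' is unjustified before one has a complete list of the non-acyclic graded pieces. Unless you carry out that bookkeeping (which would amount to redoing a substantial part of Lemma~\ref{triv1} in the more complicated module), the argument does not establish the multiplicity $1+z^{2k-2}$ nor rule out extra contributions in intermediate degrees. The paper's exact-sequence reduction is precisely designed to avoid this: the only filtration analysis required is the one already done for the trivial weight, and the new content is isolated in the elementary module $(E\otimes E)^\chi$.

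Two smaller points. First, your appeal to Corollary~\ref{firstmark} plus the central-character constraint correctly restricts $\mu$ to $\{\lambda,0\}$, and your separate treatment of $\goth{osp}(2,2)$ matches the paper's. Second, in case (iii) your picture (only degree $0$ and degree $2k-1$ survive, $L_0$ only at the top) is consistent with the answer, but again you would need to verify that the $p=0$ piece of the filtration gives $L_\lambda$ \emph{only}, which hinges on the same unaddressed $\goth q_0$-decomposition of $L_\lambda(\goth q)$; in the paper this is exactly the content of $(E\otimes E)^\chi\cong L_\lambda\oplus L_0$.
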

\begin{proof} - We will calculate $K^{\lambda ,\mu}_{G,Q}(z)$ instead of
  $K^{\lambda ,\mu}_{G,P^1}(z)$ (see Lemma ~\ref{lt1}.)
Let $\omega$ be the highest weight of the standard  module $E$. It is
not difficult to see that $\omega$ is $\goth q$-typical. In all cases
except $\goth{osp}(2,2)$ \footnote{for $\goth{osp}(2,2)$ this does
  not work since the standard $\goth {o}(2)$-module is reducible and 
$(L_{\lambda}(\goth q)\otimes E)^{\Phi^{-1}(\chi)}$ has one more
subquotient $L_{-\varepsilon_1+\delta_1}(\goth q)$.}
there is the short exact sequence of $\goth q$-modules:
$$0\rightarrow L_{\lambda}(\goth q)\rightarrow (L_{\omega}(\goth
q)\otimes E)^{\Phi^{-1}(\chi)}\rightarrow L_0(\goth q)=\mathbb
C\rightarrow 0,$$
where $\chi = \chi _{\lambda}$.
As in the proof of Proposition ~\ref{pt1}, the corresponding long exact
sequence degenerates in (\ref{r1}) and (\ref{r2}). It remains to study the
structure of $(L_{\omega}\otimes E)^{\chi}=(E\otimes E)^{\chi}$. It is
different in the two cases.

If $\goth g=\goth{osp}(2k+2,2k)$, $k>1$, then
$$(E\otimes E)^{\chi}=L_{\varepsilon_1+\delta_1}\oplus L_0,$$
and Lemma ~\ref{triv1} implies (iii).

If $\goth g=\goth{osp}(2k,2k)$, then $E\otimes E$ has two trivial
subquotients. (One can see that, for instance, looking at $E\otimes E^*$
for $\goth{gl}(2k,2k)$). Therefore, for a suitable $V_{\lambda}$, one has the exact sequences
$$0\rightarrow V_{\lambda}\rightarrow (E\otimes E)^{\chi}\rightarrow
L_0\rightarrow 0$$
and
$$0\rightarrow L_0\rightarrow V_{\lambda}\rightarrow L_{\lambda}\rightarrow 0.$$
Now (i) follows from Lemma ~\ref{triv1}.

The case $\goth {osp}(2,2)$ can be easily done by a straightforward
calculation similar to those in the two previous lemmas, and we leave it
to the reader.
\end{proof}

\begin{prop}\label{pt4} - Let $\goth g=\goth{osp}(2k+1,2k)$.

i) Let $\lambda_1=2\varepsilon_1+\delta_1$ then
$$K^{\lambda_1,\mu}_{G,P^1}(z)=\left\{\begin{array}{l} 1\; if \; \mu = \lambda_1 \;or\;\varepsilon_1 \; 
    \\z^{2k-2}\; if\;\mu=0\\
0 \; else \end{array}\right ..$$

ii) If $\lambda_2=2\varepsilon_1+\varepsilon_2+\delta_1$ then
$$K^{\lambda_2,\mu}_{G,P^1}(z)=\left\{\begin{array}{l} 1\; if \; \mu = \lambda_2\;or\,0  \\z^{2k-2} \; if \; \mu = \varepsilon_1 \\
0 \; else \end{array}\right ..$$
\end{prop}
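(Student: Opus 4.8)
The plan is to follow, for part (i), the strategy of the proof of Proposition~\ref{pt1}, feeding in the cohomology of the smaller pretail weight $\varepsilon_1$ from Lemma~\ref{triv2}, and, for part (ii), to reduce to part (i) by the switch functor. First, since $\lambda_1$ and $\lambda_2$ are pretail weights, Lemma~\ref{lt1} lets me compute on $G/Q$ in place of $G/P^1$, where $\goth q$ is the maximal parabolic attached to the first simple root; note that for $\goth g=\goth{osp}(2k+1,2k)$ the space $G_0/Q_0$ is the $(2k-1)$-dimensional quadric $SO(2k+1)/P_1$, which is the source of the exponents in the statement.

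For part (i), observe that $\lambda_1$ is exactly the boundary case of Proposition~\ref{pt1}: one has $\lambda_1-\alpha=\varepsilon_1$ (dominant, with cohomology given by Lemma~\ref{triv2}), and $a_1=\frac{3}{2}$. I would form the short exact sequence of $\goth q$-modules
$$0\rightarrow L_{\lambda_1}(\goth q)\rightarrow (L_{\varepsilon_1+\delta_1}(\goth q)\otimes E)^{\Phi^{-1}(\chi)}\rightarrow L_{\varepsilon_1}(\goth q)\rightarrow 0$$
exactly as in Lemma~\ref{lp1} (the only $\goth q$-dominant weights of the form $(\varepsilon_1+\delta_1)+(\text{weight of }E)$ in the trivial block are $\lambda_1$ and $\varepsilon_1$), apply $\Gamma_\bullet(G/Q,-)$, and run through the resulting long exact sequence as in (\ref{r1})--(\ref{r2}). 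The difference from Propositions~\ref{pt1} and~\ref{pt3} is that the anchor weight $\varepsilon_1+\delta_1$ is no longer $\goth q$-typical (it sits on an even wall for $\goth{osp}(2k+1,2k)$), so the long exact sequence does not collapse for free; one must compute $\Gamma_\bullet(G/Q,L_{\varepsilon_1+\delta_1}(\goth q))$ separately --- by a further parabolic reduction in the spirit of Lemma~\ref{lp5}, or directly by Penkov's remark as in Lemmas~\ref{triv1} and~\ref{triv2} --- and combine it with the known $\Gamma_\bullet(G/Q,L_{\varepsilon_1}(\goth q))$ of Lemma~\ref{triv2}. Feeding $K^{\varepsilon_1,0}_{G,Q}(z)=z^{2k-1}$ through the isomorphisms $\Gamma_j(G/Q,L_{\varepsilon_1}(\goth q))\simeq\Gamma_{j-1}(G/Q,L_{\lambda_1}(\goth q))$ for $j\geq 2$ accounts for the $z^{2k-2}$ at $\mu=0$, and the four-term piece in low degrees --- once the module structure of the middle term $M:=(L_{\varepsilon_1+\delta_1}\otimes E)^{\chi}$ is pinned down, as in Lemmas~\ref{lp2} and~\ref{lp4}, using contragredience and the ``no common subquotient'' statement of Lemma~\ref{lp3} to produce a short filtration $0\to L_{\varepsilon_1}\to V_{\lambda_1}\to L_{\lambda_1}\to 0$ for the image $V_{\lambda_1}$ of $\Gamma_0(G/Q,L_{\lambda_1}(\goth q))$ in $M$ --- gives the multiplicities $1$ at $\mu=\lambda_1$ and $\mu=\varepsilon_1$ and $0$ elsewhere. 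An equally viable route is to compute $K^{\lambda_1,\mu}_{G,Q}(z)$ directly by Penkov's remark: decompose $L_{\lambda_1}(\goth q)\otimes S^\bullet(\goth g/(\goth g_0\oplus\goth q_1))$ over $\goth q_0$, apply classical Borel--Weil--Bott on the quadric $G_0/Q_0$, and use Lemma~\ref{l2} to exclude cancellations.

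For part (ii), no new computation is needed. Inspecting the weight diagrams in the mixed Borel, $\lambda_1$ and $\lambda_2$ have the \emph{same} diagram --- $k-1$ crosses at $\frac{1}{2}$ and one cross at $\frac{3}{2}$ --- but opposite signs, so $\lambda_2=\lambda_1'$ in the notation preceding Lemma~\ref{funsw}; moreover $\varepsilon_1'=0$ and $0'=\varepsilon_1$. Since $\lambda_1\neq 0,\varepsilon_1$, identity (\ref{sw1}) gives $K^{\lambda_2,\mu}_{G,P^1}(z)=K^{\lambda_1,\mu'}_{G,P^1}(z)$ for every $\mu$, and substituting the three cases of part (i) produces exactly the three cases of part (ii).

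The step I expect to be the main obstacle is controlling $\Gamma_\bullet(G/Q,L_{\varepsilon_1+\delta_1}(\goth q))$ and the module structure of $M=(L_{\varepsilon_1+\delta_1}\otimes E)^{\chi}$ --- in particular excluding spurious composition factors. As in the proofs of Lemmas~\ref{lp3} and~\ref{lp4} the latter forces a ``minimal counterexample'' induction on $\dim\goth g$ resting on the rank-reduction Lemma~\ref{lp0} and the translation Lemma~\ref{lp00}, and at the two places where the weights $0$ and $\varepsilon_1$ appear --- where the translation argument breaks down because the standard module of $\goth{osp}(2k+1,2k)$ has a trivial $\goth q$-subquotient --- one must bring in the switch functor $T(E)^{\chi,\chi}$ and identity (\ref{sw1}) exactly as in the last paragraph of the proof of Lemma~\ref{lp3}. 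The small-rank cases ($k=1$, where $2k-2=0$) should be dealt with separately.
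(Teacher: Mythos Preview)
Your plan for part (ii) via the switch functor and identity (\ref{sw1}) is exactly the paper's argument, and your identification $\lambda_2=\lambda_1'$, $0'=\varepsilon_1$ is correct.

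For part (i), your overall strategy --- run the machinery of Proposition~\ref{pt1} and feed in Lemma~\ref{triv2} for $\lambda_1-\alpha=\varepsilon_1$ --- is also the paper's, but you have chosen the wrong anchor weight. You take $\nu=\lambda_1-\varepsilon_1=\varepsilon_1+\delta_1$, following Lemma~\ref{lp1} literally, and then correctly observe that this $\nu$ is not $\goth q$-typical (indeed $(\nu+\rho,\varepsilon_1+\varepsilon_j)=0$), which forces you to compute $\Gamma_\bullet(G/Q,L_{\varepsilon_1+\delta_1}(\goth q))$ separately and to worry about extra structure in $(L_{\varepsilon_1+\delta_1}\otimes E)^\chi$. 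The paper sidesteps all of this by taking instead $\nu=\lambda_1-\delta_1=2\varepsilon_1$. One checks immediately that $2\varepsilon_1+\rho=(3/2,-1/2,\dots\mid 1/2,\dots)$ satisfies $(\nu+\rho,\check\beta)>0$ for every $\beta\in\Delta_0^+\setminus\Delta(\goth l)$ and that $A(\nu)\subset\Delta(\goth l)$, so $\nu=2\varepsilon_1$ \emph{is} $\goth q$-typical. With this choice the short exact sequence
\[
0\rightarrow L_{\lambda_1}(\goth q)\rightarrow (L_{2\varepsilon_1}(\goth q)\otimes E)^{\Phi^{-1}(\chi)}\rightarrow L_{\varepsilon_1}(\goth q)\rightarrow 0
\]
still holds (the only $\goth q$-dominant weights $2\varepsilon_1+\gamma$ in the trivial block are again $\lambda_1$ and $\varepsilon_1$), and now the long exact sequence collapses exactly as in (\ref{r1})--(\ref{r2}) with no extra computation. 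The paper then simply asserts that Lemmas~\ref{lp2}--\ref{lp4} go through for $\lambda=\lambda_1$ with this $\nu$; the one place to be careful is that Lemma~\ref{lp00} was stated for $a_i>\frac32$, but the Verma-module argument there adapts since $-\delta_1$ is still the unique weight $\gamma$ of $E$ with $\chi_{\lambda_1+\gamma}=\chi_{2\varepsilon_1}$.

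In short: your proposal is not wrong, but you are doing avoidable work. Swap the roles of $\varepsilon_1$ and $\delta_1$ in your anchor and the obstacle you flagged disappears.
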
 
\begin{proof} - 
Let us prove (i). As in the previous proof, using Lemma
~\ref{lt1}, we may calculate $K^{\lambda ,\mu}_{G,Q}(z)$ instead of
  $K^{\lambda ,\mu}_{G,P^1}(z)$. Consider the exact sequence of $\goth  q$-modules
$$0\rightarrow L_{\lambda_1}(\goth q)\rightarrow (L_{\lambda_1-\delta_1}(\goth
q)\otimes E)^{\Phi^{-1}(\chi)}\rightarrow L_{\lambda_1-\alpha}(\goth
q)=L_{\varepsilon_1}(\goth q)\rightarrow 0.$$
We leave to the reader to check that all arguments in the proof of
Proposition ~\ref{pt1} go through and it holds for
$\lambda=\lambda_1$. Thus, (i) follows from Lemma ~\ref{triv2}.

To show (ii) just use the switch functor.
\end{proof}

\section{Exceptional pairs}

The goal of this section is to describe exceptional pairs. It is
convenient to do in terms of weight diagrams. First, we fix some
terminology.
We call $<$ and $>$ 
{\it core symbols}. In what
follows we refer to $0$ (resp. $\frac{1}{2}$) as the {\it tail position}
and denote it by $s_0$.
For any $s<t$ in $\mathbb T$ we  denote by
$l_f(s,t)$ the number of $\times$-s minus the
number of $0$-s strictly between $s$ and $t$. By $|f|$ we denote the
double number of $\times$-s plus the number of core symbols at the
tail position.

First, by the results of the previous section, if $(\lambda,\mu)$ is
exceptional and $\lambda$ is a pretail, then $\goth g=\goth{osp}(2k+1,2k)$, $\lambda=2\varepsilon_1+\varepsilon_2+\delta_1$ and $\mu=0$.

\begin{prop}\label{exc} - Assume that $\lambda$ is not a pretail weight and let
  $t+1$ be the position of the rightmost $\times$ in $f_{\lambda}$. The pair $(\lambda,\mu)$ is exceptional if and only if the
  following conditions are true

(a) $f_{\lambda}$ is obtained from $f_{\mu}$ by moving two $\times$
from the tail position to the adjacent non-tail positions $t$
and $t+1$ (if $f_{\lambda}$ and $f_{\mu}$ have signs, they remain the same);

(b) $l_{f_\lambda}(s_0,t)$ is odd for 
$\goth g=\goth {osp}(2k+1,2k)$ or $\goth {osp}(2k,2k)$ and 
even for $\goth g=\goth {osp}(2k+2,2k)$;

(c) $l_{f_\lambda}(s,t)\leq 0$ for any $s<t$;

(d) $l_{f_\lambda}(s_0,t)+|f_{\lambda}|>0 $.

If $(\lambda,\mu)$ is exceptional, then $K^{\lambda,\mu}_{G,P^1}(z)=1$. 

\end{prop}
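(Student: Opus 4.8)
The proposition asserts both an "if and only if" characterization of exceptional pairs and the value $K^{\lambda,\mu}_{G,P^1}(z)=1$. The plan is to run the recursion of Sections 8 and 9 (Propositions \ref{pt1}, \ref{pt2}, \ref{pt3}, \ref{pt4}) and track what happens to the weight diagrams along the way. First I would fix $\lambda$ not a pretail and set $t+1$ to be the position of the rightmost $\times$ of $f_\lambda$. I would proceed by induction on $|f_\lambda|+(\text{distance of the core symbols from the rightmost }\times)$, or more concretely on $a_1$ together with the combinatorial data of $f_\lambda$, matching the two-step recursion: when $a_1$ is large and $\lambda$ is far from the even walls we are in the situation of Proposition \ref{pt1} and $K^{\lambda,\mu}_{G,P^1}(z)=(z^{-1}K^{\lambda-\alpha,\mu}_{G,P^1}(z))_+$; when $a_1=a_2+1$ (and $a_2\ne 0,\pm\tfrac12$) we are in Proposition \ref{pt2} and $K^{\lambda,\mu}_{G,P^1}(z)=zK^{\lambda-\alpha,\mu}_{P^1,P^2}(z)$, unless $(\lambda,\mu)$ is exceptional. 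Translating $-\alpha$ into the diagram language, subtracting $\alpha=\varepsilon_1+\delta_1$ moves the relevant $\times$ (or the pair of symbols produced from it) one step to the left; so the recursion walks the rightmost $\times$ of $\lambda$ leftward toward the tail position, and Lemma \ref{lp0} / Corollary \ref{cor22} let one drop down to a lower-rank $\goth g_\tau$ when a symbol reaches position $s_0$.

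\textbf{Key steps.} Step 1: show that conditions (a)--(d) are exactly the conditions under which the recursion terminates at a pretail weight $\lambda_0$ with $\mu=0$ (for $\goth{osp}(2k+1,2k)$, $\lambda_0=2\varepsilon_1+\varepsilon_2+\delta_1$) or at one of the base cases of Proposition \ref{pt3}/\ref{pt4} with $\mu$ the trivial weight, picking up the term $z^{2s-1}$ or $z^{2s}$ or $z^{2s-2}$ there. The parity condition (b) is what selects between the $\goth{osp}(2k,2k)$, $\goth{osp}(2k+2,2k)$, $\goth{osp}(2k+1,2k)$ tails, i.e. it records, modulo $2$, how many rank reductions of each type have occurred (each application of Proposition \ref{pt2} via Lemma \ref{lp5} contributes a factor $z$ and changes the parity count $l_f(s_0,t)$ by the number of $0$'s and $\times$'s jumped). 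Step 2: the inequality (c), $l_{f_\lambda}(s,t)\le 0$ for all $s<t$, is precisely the condition that guarantees that at no intermediate stage does $K$ become a \emph{proper} Laurent polynomial that gets truncated by the $(\;)_+$ in Proposition \ref{pt1}(i): i.e. it ensures that the monomial corresponding to $\mu=0$ survives all the truncations. If (c) fails somewhere, the relevant power of $z$ goes negative before reaching the pretail and is killed. Step 3: condition (d), $l_{f_\lambda}(s_0,t)+|f_\lambda|>0$, ensures the pair is genuinely "exceptional", i.e. the first coordinate of $\mu$ really is less than the second coordinate of $\lambda$; it prevents $\mu$ from equalling $\lambda$ or $\lambda-\alpha$ and keeps us out of cases (ii),(iii) of the propositions. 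Step 4: assembling the factors of $z$ collected along the recursion (one $z^{-1}$ and one $(\;)_+$ from each Proposition \ref{pt1} step, one $z$ from each Proposition \ref{pt2} step, and the final $z^{2s-\epsilon}$ from the base case), verify they multiply to $z^0=1$ exactly under (b), (c), (d); hence $K^{\lambda,\mu}_{G,P^1}(z)=1$. For the converse, show that if any of (a)--(d) fails, the same bookkeeping forces $K^{\lambda,\mu}_{G,P^1}(z)$ either to be $0$ (a monomial killed by truncation, or $\mu$ not of the required shape) or to correspond to a non-exceptional pair.

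\textbf{Main obstacle.} The hard part will be Step 2 and Step 4: carefully controlling the interaction between the $(\;)_+$ truncation in Proposition \ref{pt1}(i) and the $z$-shift in Proposition \ref{pt2}(i) over a long chain of recursive steps, so that one can prove the \emph{exact} cancellation $z^{-(\#\text{pt1 steps})}\cdot z^{(\#\text{pt2 steps})}\cdot z^{(\text{base exponent})}=1$. This requires translating the counts $\#\text{pt1 steps}$, $\#\text{pt2 steps}$, and the base-case exponent into the diagram invariants $l_{f_\lambda}(s,t)$ and $|f_\lambda|$ and showing they balance. The bookkeeping is delicate because each Proposition \ref{pt2}/Lemma \ref{lp0} step both changes the rank \emph{and} can change the parity recorded by $l_f(s_0,\cdot)$, and because exceptional pairs are precisely the ones \emph{excluded} from Proposition \ref{pt2}(i), so one must argue that an exceptional pair is detected the first time the recursion would otherwise hit the forbidden configuration $a_1=a_2+1$ with the "wrong" arithmetic. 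I would handle this by introducing an explicit potential function on diagrams (essentially $l_f(s_0,t)+|f_\lambda|$ together with the running power of $z$) and showing it is invariant under each allowed recursion step, so that its value $1$ at the base case propagates back to $\lambda$. Once the combinatorics is organized this way, conditions (a)--(d) fall out as the precise hypotheses making the potential land on the base case with value $1$, and the final claim $K^{\lambda,\mu}_{G,P^1}(z)=1$ is immediate.
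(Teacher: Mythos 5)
The approach you propose is genuinely different from the paper's, and it has a fatal flaw at the very first step. Condition (a) of the statement forces the two rightmost $\times$'s of $f_\lambda$ to sit at the adjacent positions $t$ and $t+1$, so any exceptional $\lambda$ (that is not a pretail) automatically has $a_1 = a_2 + 1$. Therefore $\lambda$ is never in the regime of Proposition~\ref{pt1}, which requires $a_1 > |a_2| + 1$, and Proposition~\ref{pt2}(i) explicitly excludes exceptional pairs from its hypotheses. There is thus no valid recursion step to apply: you cannot "walk the rightmost $\times$ leftward" by subtracting $\alpha=\varepsilon_1+\delta_1$, because $\lambda - \alpha$ is not even dominant when $a_1=a_2+1$. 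Your "potential function" fix does not address this; it presupposes that the $\alpha$-shift recursion produces some expression to be tracked, but the recursion simply does not apply.

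The paper's proof uses an entirely different inductive engine. For exceptional pairs $(\lambda,\mu)$ with $\mu\ne 0$ (and $\mu\ne\varepsilon_1$ in the $\goth{osp}(2k+1,2k)$ case), it tensors with the standard module and projects to a non-trivial central character: Lemma~\ref{deg2} shows that this replaces $L_\lambda(\goth p^1)$ by a single simple $L_{\lambda_1}(\goth p^1)$ (using the crucial fact that the first coordinate of $\mu$ is smaller than the second coordinate of $\lambda$), and then Lemma~\ref{lp0} drops the computation down to an exceptional pair $(\bar\lambda_1,\bar\mu_1)$ for a strictly lower-rank superalgebra $\goth g_\tau$; this is packaged in Corollary~\ref{excind}. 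The induction is therefore on the number of $\times$'s outside the tail position, not on the position $a_1$ of the rightmost $\times$. The base of the induction, namely exceptional pairs of the form $(\lambda,0)$ (and $(\lambda,\varepsilon_1)$), is treated separately and directly in Lemmas~\ref{deg3} and~\ref{deg4}, which identify explicitly which $\lambda$ give such a pair (this is where the parity condition (b) arises). None of this machinery appears in your proposal, and without it I do not see how to even get a formula for $K^{\lambda,\mu}_{G,P^1}(z)$ in the exceptional case, let alone prove it equals~$1$.
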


The proof of Proposition ~\ref{exc} takes the rest of the section.

\begin{leme}\label{degten} - 

i) Let $\goth g=\goth{osp}(2k,2k)$, $\tau=\chi_{\delta_1}$. If 
$a_i=1$ for some $i$, then $(L_{\lambda}\otimes E)^{\tau}=0$.

ii)  Let $\goth g=\goth{osp}(2k+2,2k)$, $\tau=\chi_{\varepsilon_1}$,
$\kappa$ be a dominant weight with central character $\tau$. Then
$(L_{\kappa}\otimes E)^{\chi}=L_{\kappa-\varepsilon_i}\oplus L_{\kappa+\delta_i}$, where 
$i$ is such that $a_i=1$.

iii) Let $\goth g=\goth{osp}(2k+1,2k)$, $\tau=\chi_{2\varepsilon_1}$. If 
$a_i=\frac{3}{2}$ for some $i$, then $(L_{\lambda}\otimes E)^{\tau}=0$.
\end{leme}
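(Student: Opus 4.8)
The plan is to prove each of the three statements by passing to Verma modules and analyzing tensor products with the standard module $E$, exactly in the spirit of Lemma~\ref{lp00} and Lemma~\ref{tp2}. In all three cases the central characters $\chi$ (trivial, atypicality $k$) and $\tau$ (atypicality $k-1$) differ, so the translation functor $T(E)^{\chi,\tau}$ is exact and $T(E^*)^{\tau,\chi}$ is its adjoint; the issue is to pin down which simple modules survive. First I would, for each part, enumerate the weights $\gamma$ of $E$ for which $\lambda+\gamma$ (resp. $\kappa+\gamma$) is dominant with the required central character. This is the weight-diagram bookkeeping already used repeatedly: tensoring with $E$ splits one $\times$ into a pair $<,>$ and moves one of the halves by one step, and one simply reads off which moves land on a legal dominant diagram with core $\tau'$ (resp. $\chi'$).

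For (i) and (iii), the claim is a \emph{vanishing}, so I would argue as in the second half of Lemma~\ref{lp00}. Since $a_i=1$ (resp. $a_i=\tfrac32$), the weight $\lambda-\beta$ with $\beta=\varepsilon_i+\delta_i$ is still dominant (its $i$th $\varepsilon$-mark drops to $0$ resp.\ $-1$, which is still allowed by Corollary~\ref{cordom} in the $\goth{osp}(2m,2n)$ and $\goth{osp}(2m+1,2n)$ cases), and $\operatorname{Hom}_{\goth g}(M_{\lambda-\beta},M_{\lambda})\neq 0$ by the cited result of \cite{Gor}. Writing the four-term exact sequence $0\to S\to M_{\lambda-\beta}\to M_{\lambda}\to F\to 0$ and applying $T(E)^{\chi,\tau}$, one uses that $(M_\lambda\otimes E)^{\tau}=M_{\lambda-\delta_i}$ (the unique Verma subquotient with the right highest weight and central character, checked by inspection of weights of $E$), that $L_{\lambda-\delta_i}$ cannot occur in $(S\otimes E)^{\tau}$ for weight reasons, and that $(F\otimes E)^{\tau}$ is a highest weight module of highest weight $\lambda-\delta_i$; hence $(F\otimes E)^{\tau}=0$, and since $L_\lambda$ is a quotient of $F$ we get $(L_\lambda\otimes E)^{\tau}=0$. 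The only place one must be slightly careful is the $\goth{osp}(2m,2n)$ case, where $E\otimes E$ (equivalently, the relevant tensor product) can have \emph{two} copies of a trivial-type subquotient; I would note that the Verma-module filtration argument is insensitive to this since it tracks only the single highest weight $\lambda-\delta_i$.

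For (ii) the situation is different: $\goth g=\goth{osp}(2k+2,2k)$, and here both $L_{\kappa-\varepsilon_i}$ and $L_{\kappa+\delta_i}$ are dominant with central character $\chi$ (the trivial one), because moving the $<$ half of the tail $\times$ one step and moving the $>$ half one step both produce legal diagrams—this is precisely the phenomenon, tied to the Dynkin-diagram symmetry $\sigma$ of $\goth{osp}(2k+2,2k)$, already exploited in Lemma~\ref{lp3}. So I would first show $(L_\kappa\otimes E)^{\chi}$ has exactly two $\goth b$-singular vectors, of weights $\kappa-\varepsilon_i$ and $\kappa+\delta_i$, each of multiplicity one; then, since $T(E^*)^{\chi,\tau}$ is exact and nonzero on each of $L_{\kappa-\varepsilon_i}$, $L_{\kappa+\delta_i}$ (applying the argument of Lemma~\ref{tp1}, using that tensoring back recovers $L_\kappa$ on the $\tau$-side), conclude that each appears as a genuine summand and that there is nothing else: a self-contragredient module with two one-dimensional singular spaces and the correct composition factors (counted via Vermas in the Grothendieck group) must be $L_{\kappa-\varepsilon_i}\oplus L_{\kappa+\delta_i}$.

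I expect the main obstacle to be part~(ii): ruling out a nonsplit extension between $L_{\kappa-\varepsilon_i}$ and $L_{\kappa+\delta_i}$ inside $(L_\kappa\otimes E)^{\chi}$. The cleanest route is the contragredience/socle argument used in Lemma~\ref{lp2}: $(L_\kappa\otimes E)^{\chi}$ is self-contragredient, so if either simple summand failed to split off, the socle would contain a $\goth b$-singular vector whose weight then also lies in the head; since each singular space is one-dimensional this forces that summand to split off as a direct factor, and iterating handles the other. The $\sigma$-equivariance (the two summands are swapped by $\sigma$) can be invoked to reduce the two cases to one. Everything else reduces to finite, mechanical weight-diagram checks of exactly the kind performed in Lemmas~\ref{tp2}, \ref{lp00}, and~\ref{lp1}.
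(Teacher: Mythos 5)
Your treatment of parts (ii) and (iii) is in the right spirit and close to what the paper does (for (iii) the paper also runs the Verma-module argument, and for (ii) the paper also uses the two-singular-vector/contragredience argument, via $(E\otimes E)^\chi = L_{\varepsilon_1+\delta_1}\oplus L_0$ and parabolic induction). However, your argument for part (i) has a genuine gap, and the paper in fact takes a different route precisely to avoid it.

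The problem is the assertion that $(M_\lambda\otimes E)^{\tau}=M_{\lambda-\delta_i}$ (in this case it should be $\lambda-\varepsilon_i$, but this is a labelling slip) "checked by inspection of weights." For $\goth g=\goth{osp}(2k,2k)$ with $\rho=0$ and $a_i=1$, the Verma flag of $(M_\lambda\otimes E)^{\tau}$ is \emph{not} a single Verma module. The weights $\gamma$ of $E$ with $\chi_{\lambda+\gamma}=\tau=\chi_{\delta_1}$ include not only $-\varepsilon_i$ but also every $\pm\delta_j$ with $a_j=b_j=0$ (and hence $j>i$): for instance with $\lambda=\varepsilon_1+\delta_1$ and any $j\geq 2$ one checks directly (via the reduced multisets $\{a_\ell^2\},\{b_\ell^2\}$) that $\chi_{\varepsilon_1+\delta_1\pm\delta_j}=\chi_{\delta_1}$. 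These weights are not dominant, so they are invisible when one only "inspects dominant weights"; but they do contribute Verma subquotients $M_{\lambda\pm\delta_j}$, and crucially $\lambda+\delta_j>\lambda-\varepsilon_i$ in the standard order (since $\varepsilon_i+\delta_j$ is a sum of positive roots for the mixed Borel). Consequently $(F\otimes E)^{\tau}$ is \emph{not} a highest weight module of highest weight $\lambda-\varepsilon_i$, and the final step of the Lemma \ref{lp00}-style argument collapses. Your remark that "the Verma-module filtration argument is insensitive to this since it tracks only the single highest weight" is exactly where the argument breaks: the extra factors arising from the larger $D_k$-orbit are not negligible because some of them sit \emph{above} $\lambda-\varepsilon_i$, not below.

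This is why the paper proves (i) differently: it establishes the base case $\lambda=\varepsilon_1+\delta_1$ by the explicit computation $\operatorname{Hom}_{\goth g}(L_\lambda,E\otimes E)=\operatorname{Hom}_{\goth g}(L_\lambda\otimes E,E)=0$, using the filtration of $(E\otimes E)^{\chi}$ obtained in the proof of Proposition \ref{pt3} (which shows the socle of $(E\otimes E)^\chi$ is $L_0$, not $L_\lambda$), and then bootstraps to the general case by applying the same to the Levi of $\goth p^{i-1}$ and using Corollary \ref{coroblocks} together with the fact that $L_\lambda$ is a quotient of $\Gamma_0(G/P^{i-1},L_\lambda(\goth p^{i-1}))$. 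If you want to keep a Verma-style proof you would need an additional argument controlling the contributions of $M_{\lambda\pm\delta_j}$, which is not immediate.
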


\begin{proof} - (i) Assume first that
  $\lambda=\varepsilon_1+\delta_1$. If $(L_{\lambda}\otimes  E)^{\tau}\neq 0$, then 
$(L_{\lambda}\otimes E)^{\tau}=E$ (by looking at the weights). But as
follows from the calculations in the proof of Proposition ~\ref{pt3} 
$$\operatorname{Hom}_{\goth g}(L_{\lambda},E\otimes E)=\operatorname{Hom}_{\goth g}(L_{\lambda}\otimes E,E)=0.$$
Hence $(L_{\lambda}\otimes E)^{\tau}=0$.

In the general case, by above, we have $(L_{\lambda}(\goth p^{i-1})\otimes E)^{\Phi^{-1}(\tau)}=0$. 
Hence $$(\Gamma_0(G/P^{i-1},L_{\lambda}(\goth p^{i-1})) \otimes
E)^\tau=0.$$
Since $L_{\lambda}$ is a quotient of $\Gamma_0(G/P^{i-1},L_{\lambda}(\goth p^{i-1}))$ 
we obtain $(L_{\lambda}\otimes E)^{\tau}=0$.

(ii) We use $(E\otimes E)^{\chi}=L_{\varepsilon_1+\delta_1}\oplus L_0$. 
That implies 
$$(L_{\kappa}(\goth p^{i-1})\otimes E)^{\Phi^{-1}(\chi)}=L_{\kappa-\varepsilon_i}(\goth p^{i-1})\oplus L_{\kappa+\delta_i}(\goth p^{i-1}).$$
(Keep in mind that the standard module for the Levi part of $\goth p
^{i-1}$ has highest weight $\varepsilon _i$).

Therefore
$$(\Gamma_0(G/P^{i-1},L_{\kappa}(\goth p^{i-1}))\otimes
E)^{\chi}=\Gamma_0(G/P^{i-1},L_{\kappa-\varepsilon_i}(\goth
p^{i-1}))\oplus \Gamma_0(G/P^{i-1},L_{\kappa+\delta_i}(\goth p^{i-1})).$$
Clearly, $(L_{\kappa}\otimes E)^{\chi}$ is a quotient of the right hand
side. 
Let $S$ denote the submodule of $\Gamma_0(G/P^{i-1},L_{\kappa}(\goth
p^{i-1}))$ such that $L_{\kappa}=\Gamma_0(G/P^{i-1},L_{\kappa}(\goth
p^{i-1}))/S$. All weights of $S$ are less than $\kappa$. Therefore
$(S\otimes E)^{\chi}$ can not have simple components isomorphic to  
$L_{\kappa-\varepsilon_i}$ or $L_{\kappa+\delta_i}$, hence
$(L_{\kappa}\otimes E)^{\chi}$ must have these components. Since 
$(L_{\kappa}\otimes E)^{\chi}$ is contragredient and its $\goth
b$-singular vectors may have only weights 
 $\kappa-\varepsilon_i$ and 
$\kappa+\delta_i$, we have 
$(L_{\kappa}\otimes E)^{\chi}=L_{\kappa-\varepsilon_i}\oplus
L_{\kappa+\delta_i}$
(see the proof of Lemma ~\ref{lp2} for details).

(iii) First, observe that if $a_{i+1}=\frac{1}{2}$ then there are no
dominant weights of the form $\lambda+\gamma$ with $\gamma$ being a
weight of $E$ such that $\chi_{\lambda+\gamma}=\tau$. Hence we may
assume that  $a_{i+1}=-\frac{1}{2}$ or $i=k$. By comparison of weights
either $(L_\lambda\otimes E)^\tau=0$ or $(L_\lambda\otimes
E)^\tau=L_{\lambda-\delta_i}$. Consider the exact
sequence as in the proof of Lemma ~\ref{lp00}
$$0\rightarrow S\rightarrow M_{\lambda-\beta}\rightarrow
M_{\lambda}\rightarrow F\rightarrow 0,$$
where $\beta=\varepsilon_i+\delta_i$. Check that $(M_{\lambda}\otimes E)^{\chi}$  
has a filtration by Verma modules such that their highest weights are
not greater than $\lambda-\delta_i$. Hence $L_{\lambda-\delta_i}$ occurs in  $(M_{\lambda}\otimes E)^{\chi}$
with multiplicity 1. Now one can finish the proof by the same
arguments as in the proof of Lemma ~\ref{lp00}. 
\end{proof}

Till the end of this section $\beta=\varepsilon_2+\delta_2$, $\omega$,
as usual, denotes the highest weight of $E$.

\begin{leme}\label{ten1} - Let $\mu$ and $\nu$ be dominant weights
  with  trivial central character,  $\tau=\chi_{\mu+\omega}$. Assume also
  that $\mu\neq 0$ in the case $\goth g=\goth{osp}(2k+1,2k)$ or $\goth{osp}(2k+2,2k)$. If
  $(L_\nu\otimes E)^{\tau}=L_{\mu+\omega}$, then $\nu=\mu$.
\end{leme}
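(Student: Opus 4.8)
The plan is to pin down, via Lemma~\ref{lp00} and weight diagrams, the very short list of $\nu$'s that could possibly satisfy $(L_\nu\otimes E)^\tau=L_{\mu+\omega}$, and then eliminate all but $\nu=\mu$. First I would observe that $\tau$ has atypicality degree $k-1$: since $\mu$ carries the most atypical (trivial) central character $\chi$ of its block and adding a weight of $E$ changes atypicality by at most one, it suffices to check $\tau\neq\chi$; this holds because the hypothesis $\mu\neq 0$ rules out $\chi_{\mu+\omega}=\chi$ when $\goth g=\goth{osp}(2k+1,2k)$ or $\goth{osp}(2k+2,2k)$, while for $\goth g=\goth{osp}(2k,2k)$ the weight $\mu+\omega=\mu+\delta_1$ has strictly smaller atypicality than $\mu$. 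Hence Lemma~\ref{lp00} applies and $(L_\nu\otimes E)^\tau$ is simple or zero for every dominant $\nu$; being equal to $L_{\mu+\omega}\neq 0$, it is simple. By the argument opening the proof of Lemma~\ref{tp1}, together with the uniqueness statement in the first paragraph of the proof of Lemma~\ref{lp00}, its highest weight is $\nu+\gamma$, where $\gamma$ is the unique weight of $E$ with $\nu+\gamma$ dominant and $\chi_{\nu+\gamma}=\tau$. Therefore $\nu+\gamma=\mu+\omega$, so $\nu=\mu+\omega-\gamma$, and the lemma reduces to proving $\gamma=\omega$.

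Next I would enumerate the candidates in the language of weight diagrams. As $\mu$ has trivial central character, $f_\mu$ consists of $k$ crosses together with the tail/sign decoration and nothing else, and $f_{\mu+\omega}$ is obtained from it by splitting the rightmost cross, say at position $t$, into a pair of core symbols at the adjacent positions $t$ and $t+1$. A dominant $\nu$ with $\chi_\nu=\chi$ and $\nu=\mu+\omega-\gamma$ for a weight $\gamma\in\{\pm\varepsilon_i,\pm\delta_j,0\}$ of $E$ must then have $f_\nu$ consisting of crosses only (plus decoration), and from such a diagram $f_{\mu+\omega}$ can be reached by a single translation move only in two ways: $\nu=\mu$ (with $\gamma=\omega$) or $\nu=\mu+\alpha$ (with $\gamma=-\delta_1$ if $\omega=\varepsilon_1$, and $\gamma=-\varepsilon_1$ if $\omega=\delta_1$). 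For every other $\gamma$ the diagram $f_\nu$ acquires a pair of core symbols, so $\chi_\nu\neq\chi$, or else $\nu$ fails to be dominant; this is a routine but unavoidable check, carried out separately in each of the three types.

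It then remains to exclude $\nu=\mu+\alpha$. Writing $\nu+\rho=a_1\varepsilon_1+\dots$, we have $\nu-\alpha=\mu$ dominant and $\tau=\chi_{\nu-\delta_1}$ (resp. $\chi_{\nu-\varepsilon_1}$). If $a_1>\frac{3}{2}$, the second statement of Lemma~\ref{lp00}, applied with $\beta=\alpha=\varepsilon_1+\delta_1$, gives $(L_{\mu+\alpha}\otimes E)^\tau=0$, contradicting $(L_{\mu+\alpha}\otimes E)^\tau=L_{\mu+\omega}\neq 0$; hence $\nu=\mu$. The only cases in which $a_1$ is too small for this are $\goth g=\goth{osp}(2k,2k)$ with $\mu=0$ (so $\nu=\alpha$) and $\goth g=\goth{osp}(2k+1,2k)$ with $\mu=\varepsilon_1$ (so $\nu=2\varepsilon_1+\delta_1$); here one gets $(L_\nu\otimes E)^\tau=0$ directly from Lemma~\ref{degten}(i) and Lemma~\ref{degten}(iii) respectively, while for $\goth g=\goth{osp}(2k+2,2k)$ the hypothesis $\mu\neq 0$ already excludes such $\nu$. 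In the $\goth{osp}(2k+2,2k)$ case one should, as usual, keep track of the diagram automorphism $\sigma$ as in the proof of Lemma~\ref{lp3}.

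The main obstacle is the combinatorial core of the second paragraph — verifying exhaustively that the two constraints ``$\nu$ dominant'' and ``$\chi_\nu=\chi$'' leave exactly the two candidates $\nu=\mu$ and $\nu=\mu+\alpha$ — together with the fact that the Verma-module input behind Lemma~\ref{lp00}, namely $\operatorname{Hom}_{\goth g}(M_{\nu-\alpha},M_\nu)\neq 0$ (see \cite{Gor}), is only available once the relevant mark exceeds $\frac{3}{2}$. That threshold is precisely what forces the hands-on treatment of the two pretail weights $\alpha$ and $2\varepsilon_1+\delta_1$ via Lemma~\ref{degten}, and it is also the reason the hypothesis $\mu\neq 0$ is imposed in the odd and $\goth{osp}(2k+2,2k)$ cases in the first place.
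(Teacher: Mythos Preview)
Your proposal is correct and follows essentially the same route as the paper: use the first part of Lemma~\ref{lp00} to see that $(L_\nu\otimes E)^\tau$, being nonzero, is simple with highest weight $\nu+\gamma$ for the unique admissible $\gamma$; observe via weight diagrams that the constraint $\chi_\nu=\chi$ forces $\nu\in\{\mu,\mu+\alpha\}$; and then eliminate $\nu=\mu+\alpha$ with the second part of Lemma~\ref{lp00} in the generic range and with Lemma~\ref{degten}(i),(iii) in the two boundary cases. The paper compresses the middle step and never names the second candidate $\mu+\alpha$ explicitly --- it phrases the conclusion as ``$\gamma$ must be $\varepsilon_i$ or $\delta_i$, hence $\nu=\mu$'' --- but the content is the same.

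Two small remarks. First, your justification that $\tau$ has atypicality $k-1$ is slightly off in the $\goth{osp}(2k+2,2k)$ case: even for $\mu=0$ one has $\tau=\chi_{\varepsilon_1}\neq\chi$ there, so the real obstruction when $\mu=0$ is not that $\tau=\chi$ but that the atypicality of $\tau$ stays equal to $k$ (the extra $>$ at the tail absorbs the move), and indeed the lemma genuinely fails for $\mu=0$ in that case by Lemma~\ref{ten2}. Second, the closing sentence about tracking $\sigma$ is unnecessary for this lemma; the paper's proof does not invoke it.
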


\begin{proof} - For $\mu\neq \varepsilon_1,0$ the lemma easily follows from Lemma ~\ref{lp00}. The
  condition $\mu\neq 0,\varepsilon_1$ ensures that
  $\mu+\omega=\nu+\varepsilon_i$ or $\mu+\omega=\nu+\delta_i$ for some
  $i$. Since $\chi_{\mu}=\chi_{\nu}=\chi_0$, we have $\mu=\nu$.

If $\mu=\varepsilon_1$, then $\goth g=\goth{osp}(2k+1,2k)$ and by
Lemma ~\ref{degten}(iii) we again have $\mu+\omega=\nu+\varepsilon_i$ for some
  $i$ and the statement follows by the same reason. 

Similarly, if $\goth g=\goth{osp}(2k,2k)$ and $\mu=0$, one can prove
the statement using Lemma ~\ref{degten}(i).

\end{proof}

\begin{leme}\label{deg2} - Let $(\lambda,\mu)$ be an exceptional pair,
  $\mu_1=\mu+\omega$ and $\tau=\chi_{\mu+\omega}$. Assume again 
  that $\mu\neq 0$ in the case $\goth g=\goth{osp}(2k+1,2k)$ or 
$\goth{osp}(2k+2,2k)$.
 There exists a dominant weight $\lambda_1$
  such that 
$$(L_{\lambda}(\goth p^1)\otimes E)^{\Phi^{-1}(\tau)}=L_{\lambda_1}(\goth p^1)$$
and
$$K^{\lambda,\mu}_{G,P^1}(z)=K^{\lambda_1,\mu_1}_{G,P^1}(z)=K^{\bar{\lambda}_1,\bar{\mu}_1}_{G_\tau,P_\tau^1}(z).$$

\end{leme}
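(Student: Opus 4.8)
\textbf{Proof plan for Lemma \ref{deg2}.}
The plan is to produce $\lambda_1$ by tensoring $L_\lambda(\goth p^1)$ with $E$ and isolating the component with central character $\Phi^{-1}(\tau)$, then to show that the translation functor $T(E)^{\chi,\tau}$ (which acts on $\goth g$-modules) is compatible with the corresponding translation on $\goth p^1$-modules, so that it commutes with $\Gamma_\bullet(G/P^1,-)$. Concretely, I would first argue that there is a \emph{unique} $\goth p^1$-dominant weight of the form $\lambda+\gamma$, $\gamma$ a weight of $E$, lying over the core of $\tau$: this is the weight-diagram argument of Lemma \ref{lp00}(1) applied to $\goth s^1$, which tells us that separating one $\times$ of $f_\lambda$ into a pair $>,<$ and shifting one half produces at most one dominant diagram with the prescribed core. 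Hence $(L_\lambda(\goth p^1)\otimes E)^{\Phi^{-1}(\tau)}=L_{\lambda_1}(\goth p^1)$ for a well-defined dominant $\lambda_1$ (or else it is zero, a case to be excluded below), and we record that $\lambda_1$ is itself the weight whose diagram differs from $f_\lambda$ by the corresponding legal move.

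Next I would prove the first equality $K^{\lambda,\mu}_{G,P^1}(z)=K^{\lambda_1,\mu_1}_{G,P^1}(z)$. By Corollary \ref{coroblocks},
$$\Gamma_i\bigl(G/P^1,(L_\lambda(\goth p^1)\otimes E)^{\Phi^{-1}(\tau)}\bigr)=\bigl(\Gamma_i(G/P^1,L_\lambda(\goth p^1))\otimes E\bigr)^{\tau},$$
so the left-hand side is $\Gamma_i(G/P^1,L_{\lambda_1}(\goth p^1))$ and the right-hand side is $T(E)^{\chi,\tau}$ applied to the cohomology of $L_\lambda(\goth p^1)$. It therefore suffices to show that $T(E)^{\chi,\tau}$ sends the relevant simple subquotients bijectively: if $L_\sigma$ occurs in $\Gamma_i(G/P^1,L_\lambda(\goth p^1))$, then by Corollary \ref{firstmark} its first coordinate is $\le$ that of $\lambda$, and combined with Lemma \ref{degten} (for the $\goth{osp}(2k+2,2k)$ and $\goth{osp}(2k+1,2k)$ cases) and Lemma \ref{lp00} (otherwise) one checks that $(L_\sigma\otimes E)^{\tau}$ is either $0$ or $L_{\sigma+\omega}$, with $L_{\mu+\omega}=L_{\mu_1}$ arising only from $L_\mu$ — this is exactly the content of Lemma \ref{ten1}, so $T(E)^{\chi,\tau}$ is injective on the simple constituents of the cohomology that contribute to the $\mu_1$-multiplicity. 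Dually, $T(E^*)^{\tau,\chi}$ undoes the move, so the multiplicities of $L_\mu$ in $\Gamma_i(G/P^1,L_\lambda(\goth p^1))$ and of $L_{\mu_1}$ in $\Gamma_i(G/P^1,L_{\lambda_1}(\goth p^1))$ coincide degree by degree. (One also needs $\lambda_1$ dominant, equivalently $(L_\lambda\otimes E)^\tau\ne 0$; this holds because $L_\mu$, hence $L_{\mu+\omega}=L_{\mu_1}$, appears in the right-hand side, forcing a nonzero dominant component.) The hypothesis $\mu\ne 0$ in the $\goth{osp}(2k+1,2k)$ and $\goth{osp}(2k+2,2k)$ cases is precisely what is needed so that $\mu+\omega$ is obtained from $\mu$ by a genuine shift of a core symbol and Lemma \ref{ten1} applies.

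Finally, the second equality $K^{\lambda_1,\mu_1}_{G,P^1}(z)=K^{\bar\lambda_1,\bar\mu_1}_{G_\tau,P^1_\tau}(z)$ is an application of Lemma \ref{lp0}: by construction $\lambda_1$ (and $\mu_1$) have central character $\tau$ with atypicality degree $k$ or $k-1$, and $f_{\lambda_1}$ has a $\times$ at the rightmost nonempty position with $(\lambda_1+\rho,\alpha)=0$ — this is forced because the exceptional move of part (a) of Proposition \ref{exc} puts a $\times$ at the two positions $t,t+1$ adjacent to the rightmost $\times$, and after tensoring with $E$ the relevant pair of core symbols sits at the very end of the diagram of $\lambda_1$. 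Thus the hypotheses of Lemma \ref{lp0} are met, and it yields the reduction to $\goth g_\tau$ with $P^1_\tau$ in place of $P^1$. The main obstacle is the bookkeeping in the middle step: one must verify, case by case in the three orthosymplectic families and for $\mu_1=\mu+\omega$ with $\omega\in\{\varepsilon_1,\delta_1\}$, that no simple constituent other than $L_\mu$ in the various cohomology groups can map to $L_{\mu_1}$ under $T(E)^{\chi,\tau}$, which is where Lemmas \ref{lp00}, \ref{degten}, and \ref{ten1} are combined; once that is in hand the rest is formal.
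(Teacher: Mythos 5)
Your plan follows the same backbone as the paper's proof: identify $\lambda_1$ by taking the $\Phi^{-1}(\tau)$-component of $L_\lambda(\goth p^1)\otimes E$, use Corollary \ref{coroblocks} to transport cohomology, use Lemma \ref{ten1} to show $L_{\mu_1}$ in the cohomology of $L_{\lambda_1}(\goth p^1)$ arises only from $L_\mu$, and finish with Lemma \ref{lp0}. So the key lemmas are identified correctly.

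However, there is a genuine gap in the first step, precisely where the hypothesis that $(\lambda,\mu)$ is exceptional is needed. Applying ``Lemma \ref{lp00}(1) for $\goth s^1$'' directly to $L_\lambda(\goth p^1)\otimes E$ is not legitimate: the restriction of $E$ to $\goth p^1$ has a filtration with subquotients $L_{\pm\varepsilon_1}(\goth p^1)$, $L_{\pm\delta_1}(\goth p^1)$ and the $\goth s^1$-standard module $E'$, so $L_\lambda(\goth p^1)\otimes E$ does not factor through the Levi part and Lemma \ref{lp00}'s contragredience argument does not apply to it as a $\goth p^1$-module. The paper's proof uses the defining inequality of an exceptional pair (the first coordinate of $\mu$ strictly less than the second coordinate of $\lambda$) to conclude that $(L_{\lambda\pm\varepsilon_1}(\goth p^1))^{\Phi^{-1}(\tau)}=(L_{\lambda\pm\delta_1}(\goth p^1))^{\Phi^{-1}(\tau)}=0$, reducing the question to $(L_\lambda(\goth p^1)\otimes E')^{\Phi^{-1}(\tau)}$; only after this reduction, which kills the part of the tensor not factoring through the Levi, can Lemma \ref{lp00} for $\goth s^1$ give simplicity. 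You omit this reduction entirely, and in your write-up the exceptional-pair hypothesis plays no visible role in proving $(L_\lambda(\goth p^1)\otimes E)^{\Phi^{-1}(\tau)}$ is simple. A related imprecision: you claim ``$\lambda_1$ dominant $\Leftrightarrow (L_\lambda\otimes E)^{\tau}\neq 0$''; nonvanishing gives only $\goth p^1$-dominance of $\lambda_1$, and the paper deduces full $\goth g$-dominance again from the exceptional-pair inequality on the first coordinate. Once these two points are patched in — the $E'$-reduction and the dominance of $\lambda_1$ — the rest of your argument is sound.
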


\begin{proof}  - Since $(L_{\mu}\otimes E)^{\tau}=L_{\mu_1}$, we know that
  $L_{\mu_1}$ occurs in some $$\Gamma_i(G/P^1,(L_{\lambda}(\goth  p^1)\otimes E)^{\Phi^{-1}(\tau)}).$$ Hence 
$(L_{\lambda}(\goth p^1)\otimes E)^{\Phi^{-1}(\tau)}\neq 0$. If $E'$
denotes the standard $\goth p^1$-module we have the following identity
in the Grothendieck group:
$$[(L_{\lambda}(\goth p^1)\otimes E)^{\Phi^{-1}(\tau)}]=[L_{\lambda+\varepsilon_1}(\goth p^1)]+[L_{\lambda-\varepsilon_1}(\goth p^1)]+$$
$$+[L_{\lambda+\delta_1}(\goth p^1)]+[L_{\lambda-\delta_1}(\goth p^1)]+[(L_{\lambda}(\goth p^1)\otimes E')^{\Phi^{-1}(\tau)}].$$
Since the first coordinate of $\mu$ is less than the second coordinate
of $\lambda$,
$$(L_{\lambda\pm\varepsilon_1}(\goth p^1))^{\Phi^{-1}(\tau)}=(L_{\lambda\pm\delta_1}(\goth p^1))^{\Phi^{-1}(\tau)}=0,$$
and
$$(L_{\lambda}(\goth p^1)\otimes E)^{\Phi^{-1}(\tau)}=(L_{\lambda}(\goth p^1)\otimes E')^{\Phi^{-1}(\tau)}.$$
Lemma ~\ref{lp00} applied to $\goth p^1$-modules implies that
$(L_{\lambda}(\goth p^1)\otimes E')^{\Phi^{-1}(\tau)}$ is simple,
hence it is isomorphic to $L_{\lambda_1}(\goth p^1)$ for some $\goth p^1$-dominant
$\lambda_1$. By the condition on the first coordinate of $\mu$, $\lambda_1$
is dominant.

Finally, since $(L_{\mu}\otimes E)^{\tau}=L_{\mu_1}$,  the multiplicity
of $L_{\mu_1}$ in $\Gamma_i(G/P^1,(L_{\lambda_1}(\goth  p^1))$ is at
least the same as the multiplicity of $L_{\mu}$ in $\Gamma_i(G/P^1,(L_{\lambda}(\goth  p^1))$.
Hence
${}^iK^{\lambda,\mu}_{G,P^1}\leq{}^iK^{\lambda_1,\mu_1}_{G,P^1}$. But
by Lemma ~\ref{ten1} $(L_{\nu}\otimes E)^{\tau}\neq L_{\mu_1}$ for any
$\nu\neq \mu$ and we must have
${}^iK^{\lambda,\mu}_{G,P^1}={}^iK^{\lambda_1,\mu_1}_{G,P^1}$. Lemma ~\ref{lp0} implies 
$K^{\lambda_1,\mu_1}_{G,P^1}(z)=K^{\bar{\lambda}_1,\bar{\mu}_1}_{G_\tau,P_\tau^1}(z)$.

\end{proof}

\begin{coro}\label{excind} - Use the notations of the previous lemma. 
Let $(\lambda,\mu)$ be an exceptional pair,
$r$ be the position of the rightmost $\times$ in $f_\mu$, and assume
that $r\neq s_0$. Then $f_{\lambda}(r)=\times$,  $f_{\lambda}(r+1)=0$, 
$(\bar{\lambda}_1,\bar{\mu}_1)$ is an
exceptional pair for $\goth g_\tau$ and 
$K^{\lambda,\mu}_{G,P^1}(z)=K^{\bar{\lambda}_1,\bar{\mu}_1}_{G_\tau,P_\tau^1}(z).$ 
Furthermore, $f_{\bar{\mu}_1}$ is obtained form $f_{\mu}$ by removing the
$\times$ from the $r$-th position and  
$$f_{\bar{\lambda}_1}(t)=\left\{\begin{array}{l} f_{\lambda}(t)\; if
    \; t<r \;  \\f_{\lambda}(t+2) \; if \; t\geq r\end{array}\right ..$$
\end{coro}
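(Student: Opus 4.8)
The plan is to derive Corollary \ref{excind} as a direct consequence of Lemma \ref{deg2} together with a careful bookkeeping of what the translation functor $T(E)^{\tau,\chi}$ (equivalently, the passage $\mu \mapsto \mu_1=\mu+\omega$ and $\lambda \mapsto \lambda_1$) does to the weight diagrams. The hypotheses of Lemma \ref{deg2} apply here: since $r\neq s_0$, the rightmost cross of $f_\mu$ sits at a non-tail position, which in particular forces $\mu\neq 0$ (and $\mu\neq\varepsilon_1$), so we are not in the excluded cases of that lemma. Thus Lemma \ref{deg2} already gives the identity $K^{\lambda,\mu}_{G,P^1}(z)=K^{\bar\lambda_1,\bar\mu_1}_{G_\tau,P_\tau^1}(z)$, and it remains only to (i) read off $f_{\lambda_1}$, $f_{\mu_1}$ from $f_\lambda$, $f_\mu$, then (ii) apply the ``bar'' equivalence of Theorem \ref{bl} to pass to $G_\tau$, and (iii) check that the resulting pair $(\bar\lambda_1,\bar\mu_1)$ is exceptional in the sense of Section 10 (in particular a pair for which Proposition \ref{exc}, or its not-yet-proved analogue, makes sense), so that the recursion is well-posed.

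For step (i): from the proof of Lemma \ref{deg2} we know $L_{\mu_1}=(L_\mu\otimes E)^\tau$ and $L_{\lambda_1}(\goth p^1)=(L_\lambda(\goth p^1)\otimes E')^{\Phi^{-1}(\tau)}$, so $\mu_1=\mu+\gamma$ and $\lambda_1=\lambda+\gamma'$ for appropriate weights $\gamma$ of $E$ and $\gamma'$ of $E'$. In diagram language, tensoring with $E$ and projecting onto the block with core $\tau'$ moves a single core symbol one position; since $\tau=\chi_{\mu+\omega}$ and $\omega$ is the highest weight of the standard module, the effect on $f_\mu$ is to ``create'' a $\times$ by merging a symbol, i.e. the core $\tau'$ has one more atypical slot filled, and $f_{\mu_1}$ acquires a new $\times$ at the rightmost cross position $r$ of $f_\mu$... more precisely, since $(\lambda,\mu)$ exceptional means by Proposition \ref{exc}(a) that $f_\lambda$ comes from $f_\mu$ by pulling two crosses off the tail onto positions $t$ and $t+1$ (with $t+1$ the rightmost cross of $f_\lambda$), the corresponding statement for $(\lambda_1,\mu_1)$ should be the same relation with $f_{\mu_1}$ replaced by $f_\mu$ \emph{minus its cross at $r$} and $f_{\lambda_1}$ obtained from $f_\lambda$ by deleting the symbols in positions $r,r+1$ and closing the gap. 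The book-keeping claim $f_\lambda(r)=\times$, $f_\lambda(r+1)=0$ follows: since $\mu_1=\mu+\omega$ has a cross at $r$, that cross came from the core symbol $\omega$ moving onto the $\times$ at $r$ already present in... one must trace through which of $r$, $r+1$ carries the core symbol of $\lambda$ versus $\lambda_1$, using that $\lambda_1$ differs from $\lambda$ by a weight $\gamma'$ of the \emph{Levi-standard} module $E'$, hence moving a symbol strictly to the left of position $t$. The displayed formula for $f_{\bar\lambda_1}$ then just records ``delete positions $r$ and $r+1$ of $f_\lambda$'' after the crosses-to-the-right-and-erase procedure defining the bar; likewise $f_{\bar\mu_1}$ is $f_\mu$ with the $r$-th $\times$ removed.

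For step (iii): I would verify directly that conditions (a)–(d) of Proposition \ref{exc} for $(\bar\lambda_1,\bar\mu_1)$ in $\goth g_\tau$ follow from the corresponding conditions for $(\lambda,\mu)$ in $\goth g$. The atypicality degree drops by one, and since the deleted positions $r,r+1$ lie to the right of $t$ (they are the rightmost cross of $f_\mu$ and its neighbour), the quantities $l_{f_{\bar\lambda_1}}(s,t')$ for positions $s<t'$ in $\goth g_\tau$ agree with $l_{f_\lambda}(s,t)$ up to the bounded shift, and $|f_{\bar\lambda_1}|$, $l(s_0,\cdot)$ transform predictably; the parity condition (b) is preserved because removing a $\times$ and a $0$ strictly to the right of the tail-to-$t$ segment does not change $l_{f_\lambda}(s_0,t)\bmod 2$. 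The main obstacle I anticipate is not conceptual but purely combinatorial: pinning down exactly \emph{which} of the positions $r$, $r+1$ and which of the adjacent tail positions are affected, and checking the small-$k$ / boundary cases (e.g. when $r=t+1$, i.e. the rightmost cross of $f_\mu$ is itself the rightmost cross of $f_\lambda$, or when the sign indicators in the $\goth{osp}(2k+1,2k)$ case interact with the shift). I would handle this by fixing the notation $t+1=$ rightmost $\times$ of $f_\lambda$, $r=$ rightmost $\times$ of $f_\mu$, observing $r\le t$ always (since $f_\lambda$ has strictly more crosses to the right of $r$), and then doing the (short) case analysis $r<t$ versus $r=t$; once $f_\lambda(r)=\times,\ f_\lambda(r+1)=0$ is established, everything else is the mechanical translation of Lemma \ref{deg2} through the diagram dictionary of Section 6.
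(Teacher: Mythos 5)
Your starting point is correct and surely the intended one: Lemma \ref{deg2} applies (the hypothesis $\mu\neq 0$ for $\goth{osp}(2k+1,2k)$ or $\goth{osp}(2k+2,2k)$ is ensured by $r\neq s_0$), and this gives $K^{\lambda,\mu}_{G,P^1}(z)=K^{\bar\lambda_1,\bar\mu_1}_{G_\tau,P^1_\tau}(z)$ at once. The remaining work is pure diagram bookkeeping, as you say. But your execution has three genuine problems.

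First, a directional error. You write that tensoring by $E$ and projecting to the block $\tau$ ``creates'' a $\times$ and that $f_{\mu_1}$ ``acquires a new $\times$ at $r$.'' It is the opposite: $\chi$ has atypicality $k$ and $\tau=\chi_{\mu+\omega}$ has atypicality $k-1$, so the passage $\mu\mapsto\mu_1$ \emph{splits} the rightmost $\times$ of $f_\mu$ (at $r$) into the core pair $<$ at $r$, $>$ at $r+1$. Only after moving this $<,>$ past the remaining crosses and deleting them does one obtain $f_{\bar\mu_1}=f_\mu$ with the cross at $r$ removed. Your description contradicts the claim you are trying to prove.

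Second, circularity. To pin down $f_\lambda$ versus $f_\mu$, you appeal to Proposition \ref{exc}(a), and you propose to check that $(\bar\lambda_1,\bar\mu_1)$ is exceptional by verifying conditions (a)--(d) of Proposition \ref{exc}. But the paper proves Proposition \ref{exc} \emph{from} Corollary \ref{excind} (together with Lemmas \ref{deg3}, \ref{deg4}) by induction; you cannot invoke it here. ``Exceptional'' in the hypotheses and conclusion of the corollary means the working definition of Section 8: $K\neq 0$ and first coordinate of $\mu$ less than second coordinate of $\lambda$. That is what must be checked for $(\bar\lambda_1,\bar\mu_1)$, from the definition and the explicit diagrams, not from Proposition \ref{exc}.

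Third, and most seriously, the central claim $f_\lambda(r)=\times$, $f_\lambda(r+1)=0$ is not established; you acknowledge this is the main obstacle and propose a case analysis you do not carry out. It is genuinely nontrivial. From Lemma \ref{deg2} one knows $\lambda_1=\lambda+\gamma'$ for a weight $\gamma'$ of the standard $\goth s^1$-module, dominant, with $\chi_{\lambda_1}=\tau$; since the core $\tau'$ has marks exactly at $r$ and $r+1$, one finds \emph{two} a priori admissible possibilities: (a) $\gamma'=\varepsilon_j$ with $a_j=r$, forcing $f_\lambda(r)=\times$, $f_\lambda(r+1)=0$; and (b) $\gamma'=-\delta_j$ with $b_j=r+1$, forcing $f_\lambda(r+1)=\times$, $f_\lambda(r)=0$. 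Both produce a $\goth p^1$-dominant $\lambda_1$ with the correct core, and one can check that both lead to the \emph{same} displayed formula for $f_{\bar\lambda_1}$, so the formula alone cannot distinguish them. To rule out case (b) one must use something more, e.g.\ the second assertion of Lemma \ref{lp00} applied to $\goth s^1$: in case (b) one has $\beta=\varepsilon_j+\delta_j$ with $\lambda-\beta$ dominant for $\goth s^1$ (since $f_\lambda(r)=0$ there) and $\tau|_{\goth s^1}=\chi_{\lambda-\delta_j}$, which would force $(L_\lambda(\goth p^1)\otimes E')^{\Phi^{-1}(\tau)}=0$, contradicting Lemma \ref{deg2}. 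Without an argument of this kind the stated conclusion about $f_\lambda(r)$ and $f_\lambda(r+1)$ is not justified, and the proof remains incomplete.
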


\begin{leme}\label{deg1} - Let $\goth g=\goth{osp}(2k+2,2k)$, and
  $\lambda=(2,1,0,...|2,1,0,...)$. Then $K^{\lambda,0}_{G,P^1}(z)=1$. 
\end{leme}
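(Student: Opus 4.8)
The plan is to compute $K^{\lambda,0}_{G,P^1}(z)$ by the same tensoring-down strategy used throughout Section 10, reducing the length-$k$ case to a small base case. First I would pass from $G/P^1$ to $G/Q$ where $Q$ is the maximal parabolic attached to the first simple root, using Lemma~\ref{lt1}: the weight $\lambda=(2,1,0,\dots|2,1,0,\dots)$ restricts to a $\goth p^1\cap\goth l$-typical weight (its first coordinate $2$ is strictly larger than the others), so $R^q\pi^*\mathcal L_\lambda(\goth p^1)^*=0$ for $q>0$ and $K^{\lambda,0}_{G,P^1}(z)=K^{\lambda,0}_{G,Q}(z)$. This reduces the problem to a statement about $\Gamma_i(G/Q,L_\lambda(\goth q))$.

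Next I would realize $L_\lambda(\goth q)$ as a subquotient of a tensor product of a typical bundle with the standard module, so that the long exact sequence degenerates as in the proof of Proposition~\ref{pt1}. Concretely, the weight $\mu_1=\lambda$ sits in a block of atypicality $k$, and $\lambda$ is obtained from a dominant weight $\kappa$ of a less atypical block (with $\kappa+\rho$ having a typical ``tail'') by the translation functor $T(E)^{\tau,\chi}$; the relevant short exact sequence of $\goth q$-modules is
$$0\rightarrow L_{\lambda}(\goth q)\rightarrow (L_{\kappa}(\goth q)\otimes E)^{\Phi^{-1}(\chi)}\rightarrow L_{0}(\goth q)\rightarrow 0,$$
provided $\omega$ (the highest weight of $E$) is $\goth q$-typical and the only dominant weights of the form $\kappa+\gamma$ with the right central character are $\lambda$ and $0$. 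Since $\kappa$ is typical with respect to $\goth q$, all higher $\Gamma_i(G/Q,L_\kappa(\goth q))$ vanish and $\Gamma_0(G/Q,L_\kappa(\goth q))=L_\kappa$, so the long exact sequence collapses to isomorphisms $\Gamma_i(G/Q,L_0(\goth q))\simeq\Gamma_{i-1}(G/Q,L_\lambda(\goth q))$ for $i\geq 2$ together with a four-term exact sequence in low degrees involving $(L_\kappa\otimes E)^{\chi}$. Combining this with Lemma~\ref{triv1}, which tells us $K^{0,0}_{G,Q}(z)=1+z^{2k}$ for $\goth{osp}(2k+2,2k)$ and $K^{0,\mu}_{G,Q}(z)=0$ for $\mu\neq 0$, and analyzing the structure of $(L_\kappa\otimes E)^{\chi}$ (which by a Lemma~\ref{lp2}-type argument is contragredient with socle and head $L_0$ and a middle piece $L_\lambda$), I would extract $K^{\lambda,0}_{G,Q}(z)$. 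The degree-$0$ contribution gives the coefficient $1$ and the potential $z^{2k}$ term from $L_0$ gets absorbed into $\Gamma_\bullet(G/Q,L_0(\goth q))$ rather than $\Gamma_\bullet(G/Q,L_\lambda(\goth q))$, leaving $K^{\lambda,0}_{G,P^1}(z)=1$.

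The main obstacle will be pinning down exactly which cohomological degrees the two copies of $L_0$ in $\Gamma_\bullet(G/Q,L_0(\goth q))$ occupy and checking that the four-term sequence forces $L_0$ to appear in $\Gamma_0(G/Q,L_\lambda(\goth q))$ with multiplicity exactly $1$ and in no higher degree; this is where the parity/counting bookkeeping of Lemma~\ref{triv1} (the $z^{2k}$ versus $z^{2k-1}$ shift) must be carefully tracked. An alternative, possibly cleaner route is to invoke Corollary~\ref{excind} or the inductive reduction of Lemma~\ref{deg2}: the pair $(\lambda,0)$ with $\lambda=(2,1,0,\dots|2,1,0,\dots)$ has weight diagram with two separated core symbols to the left of the crosses, and one can tensor with $E$ to strip off the outermost symbols and descend to $\goth{osp}(2k,2k-2)$, eventually landing on the base case $\goth{osp}(4,2)$ where the computation is a direct Borel-Weil-Bott calculation. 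Either way the answer $K^{\lambda,0}_{G,P^1}(z)=1$ (no higher cohomology, multiplicity one in degree zero) follows.
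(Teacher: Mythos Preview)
Your proposal has a genuine gap in both routes.

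For the main approach, the short exact sequence
\[
0\rightarrow L_{\lambda}(\goth q)\rightarrow (L_{\kappa}(\goth q)\otimes E)^{\Phi^{-1}(\chi)}\rightarrow L_{0}(\goth q)\rightarrow 0
\]
cannot exist for any choice of $\kappa$. Indeed, both $\lambda$ and $0$ would have to be of the form $\kappa+\gamma$ with $\gamma$ a highest weight of a $\goth q$-irreducible subquotient of $E$; but $\lambda-0=2\varepsilon_1+\varepsilon_2+2\delta_1+\delta_2$ has four nonzero coordinates, while the difference of any two such $\gamma$'s has at most two. So there is no typical $\kappa$ linking $\lambda$ and $0$ by a single tensor with $E$, and the Proposition~\ref{pt3} template does not apply. (Also note that Lemma~\ref{lt1} is stated only for pretail weights; $\lambda$ is not a pretail.)

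Your alternative route is also blocked: Lemma~\ref{deg2} explicitly excludes $\mu=0$ when $\goth g=\goth{osp}(2k+2,2k)$, and Corollary~\ref{excind} requires the rightmost $\times$ of $f_\mu$ to lie off the tail position, which fails for $\mu=0$. In fact Lemma~\ref{deg1} is precisely the \emph{base case} on which the induction in Lemma~\ref{deg3} rests, so you cannot reduce it further by those lemmas.

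The paper's argument is quite different and hinges on the splitting phenomenon of Lemma~\ref{degten}(ii). One takes $\kappa=\lambda-\delta_2$ (central character $\tau=\chi_{\varepsilon_1}$); then
\[
(L_{\kappa}(\goth p^1)\otimes E)^{\Phi^{-1}(\chi)}=L_{\lambda}(\goth p^1)\oplus L_{\lambda-\beta}(\goth p^1),\qquad \beta=\varepsilon_2+\delta_2,
\]
is a \emph{direct sum}, not an extension. Hence $K^{\kappa,\varepsilon_1}_{G,P^1}(z)=K^{\lambda,0}_{G,P^1}(z)+K^{\lambda-\beta,0}_{G,P^1}(z)$ (using also $(L_{\varepsilon_1}\otimes E)^\chi=L_0\oplus L_{\varepsilon_1+\delta_1}$). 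The left side equals $1+z^{2k-2}$ by Lemma~\ref{lp0} and Proposition~\ref{pt3} for $\goth{osp}(2k,2k)$; the second summand on the right equals $z^{2k-2}$ by Proposition~\ref{pt1} and Proposition~\ref{pt3}(iii). Subtracting gives $K^{\lambda,0}_{G,P^1}(z)=1$. The essential idea you are missing is this direct-sum splitting, which converts the problem into a subtraction of two already-computed Poincar\'e polynomials rather than an analysis of a four-term exact sequence.
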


\begin{proof} - Let $\kappa=\lambda-\delta_2$ and
  $\tau=\chi_{\kappa}$. By Lemma ~\ref{lp0},  
  $K^{\kappa,\mu}_{G,P^1}(z)=K^{\bar\kappa,\bar\mu}_{G_\tau,P^1_\tau}(z)$. 
Therefore, by  Proposition ~\ref{pt3}, 
$$K^{\kappa ,\mu}_{G,P^1}(z)=\left\{\begin{array}{l} 1\; if \; \mu =    \kappa \;  \\1+z^{2k-2} \; if \; \mu = \varepsilon_1 \\ 0 \; else    \end{array}\right ..$$
By Lemma ~\ref{degten} (ii)
$$(L_{\kappa}(\goth p^1)\otimes E)^{\Phi^{-1}(\chi)}=L_{\lambda}(\goth
p^1)\oplus L_{\lambda-\beta}(\goth p^1).$$
By Propositon ~\ref{pt1} and Proposition ~\ref{pt3}, we obtain
$$K^{\lambda-\beta ,\mu}_{G,P^1}(z)=\left\{\begin{array}{l} 1\; if \; \mu   =  \lambda-\beta \;  \\z^{2k-2} \; if \; \mu = 0 \\ 0 \; else \end{array}\right ..$$
By Corollary ~\ref{coroblocks}
$$(\Gamma_{i}(G/P^1,L_{\kappa}(\goth p^1))\otimes E)^{\chi}=\Gamma_{i}(G/P^1,L_{\lambda}(\goth p^1))\oplus\Gamma_{i}(G/P^1,L_{\lambda-\beta}(\goth p^1)),$$
and by Lemma ~\ref{degten} (ii)
$$(E\otimes E)^{\chi}=L_0\oplus L_{\varepsilon_1+\delta_1},\; (L_{\kappa}\otimes E)^{\chi}=L_{\lambda}\oplus L_{\lambda-\beta}.$$
Therefore
$$K^{\kappa,\varepsilon_1}_{G,P^1}(z)=K^{\lambda-\beta ,0}_{G,P^1}(z)+K^{\lambda,0}_{G,P^1}(z),$$
and we have $K^{\lambda,0}_{G,P^1}(z)=1$.
\end{proof}

\begin{leme}\label{ten2} - Let $\goth g=\goth{osp}(2k+2,2k)$,
  $\tau=\chi_{\omega}$, $\mu$ be a dominant weight with trivial central
  character. One has$ (L_{\mu}\otimes E)^{\tau}=E$ if and only if $\mu=0$ or 
$\varepsilon_1+\delta_1$. 
\end{leme}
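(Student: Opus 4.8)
Write $\omega=\varepsilon_1$ for the highest weight of $E$, so that $\tau=\chi_{\varepsilon_1}$; let $\chi=\chi_0$ be the trivial central character, recall $E^{*}\cong E$ from the orthosymplectic form, and note that $E=L_{\varepsilon_1}$ lies in the block $\mathcal F^{\tau}$. The whole argument rests on the decomposition $(E\otimes E)^{\chi}=L_{0}\oplus L_{\varepsilon_1+\delta_1}$ of Proposition~\ref{pt3}(iii) (equivalently Lemma~\ref{degten}(ii) with $\kappa=\varepsilon_1$), together with adjunction and one $\operatorname{Ext}^{1}$-vanishing. For the ``only if'' direction, if $(L_{\mu}\otimes E)^{\tau}=E$ then $\operatorname{Hom}_{\goth g}(E,(L_{\mu}\otimes E)^{\tau})\neq 0$; since $E$ has central character $\tau$ this equals $\operatorname{Hom}_{\goth g}(E,L_{\mu}\otimes E)\cong\operatorname{Hom}_{\goth g}(E\otimes E,L_{\mu})$ (adjunction and $E^{*}\cong E$), and since $L_{\mu}$ has central character $\chi$ this is $\operatorname{Hom}_{\goth g}((E\otimes E)^{\chi},L_{\mu})=\operatorname{Hom}_{\goth g}(L_{0}\oplus L_{\varepsilon_1+\delta_1},L_{\mu})$, which is nonzero only for $\mu\in\{0,\varepsilon_1+\delta_1\}$.

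For the ``if'' direction, $\mu=0$ is immediate: $(L_{0}\otimes E)^{\tau}=E^{\tau}=E$ because $E$ has central character $\tau$. For $\mu=\varepsilon_1+\delta_1$ set $M=(L_{\varepsilon_1+\delta_1}\otimes E)^{\tau}$. First, the highest weight of $L_{\varepsilon_1+\delta_1}\otimes E$ is $2\varepsilon_1+\delta_1$, and a dominant weight with central character $\tau$ has a weight diagram consisting of $k$ crosses and a single $>$ at position $1$; if all crosses sit at the tail position this is the diagram of $\varepsilon_1$, while any other configuration gives a weight not $\le 2\varepsilon_1+\delta_1$, so every composition factor of $M$ is isomorphic to $E$. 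Next, exactly as above, $\operatorname{Hom}_{\goth g}(E,M)=\operatorname{Hom}_{\goth g}((E\otimes E)^{\chi},L_{\varepsilon_1+\delta_1})=\operatorname{Hom}_{\goth g}(L_{0}\oplus L_{\varepsilon_1+\delta_1},L_{\varepsilon_1+\delta_1})$ is one-dimensional, and since $M$ is contragredient so is $\operatorname{Hom}_{\goth g}(M,E)$. Finally, $\operatorname{Ext}^{1}_{\mathcal F}(E,E)=0$: since the core of $\tau$ has a positive mark, by Theorem~\ref{bl} the block $\mathcal F^{\tau}$ is equivalent to the maximal atypical block of $\goth{osp}(2k,2k)$ with $E$ going to the trivial module, so $\operatorname{Ext}^{1}_{\mathcal F}(E,E)\cong H^{1}(\goth{osp}(2k,2k),\mathbb C)=0$ as $\goth{osp}(2k,2k)$ is simple. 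A finite-dimensional module all of whose composition factors are a single simple $L$ with $\operatorname{Ext}^{1}(L,L)=0$ is semisimple, so $M\cong E^{\oplus\ell}$; the one-dimensionality of $\operatorname{Hom}_{\goth g}(E,M)$ then forces $\ell=1$, i.e.\ $M=E$.

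The one delicate point is this last step: ruling out that $M$ is a nonsplit self-extension of $E$ by $E$ (or a longer self-glued chain of copies of $E$); the $\operatorname{Ext}^{1}$-vanishing coming from the block equivalence of Theorem~\ref{bl} is exactly what settles it. If one prefers to avoid $\operatorname{Ext}$-groups, the same conclusion follows by checking that $L_{\varepsilon_1+\delta_1}\otimes E$ has, up to scalar, a unique $\goth b$-singular vector: its weight is $\varepsilon_1$ by the composition-factor analysis, and $\dim\operatorname{Hom}_{\goth g}(M_{\varepsilon_1},L_{\varepsilon_1+\delta_1}\otimes E)\le 1$ because in the Verma flag of $M_{\varepsilon_1}\otimes E$ only the subquotient $M_{\varepsilon_1+\delta_1}$ admits a nonzero map to $L_{\varepsilon_1+\delta_1}$; then $M$ has simple socle $E$ and, being contragredient, simple head $E$, and a $\goth b$-singular vector lifted from $M/\operatorname{soc}(M)$ would contradict uniqueness unless $M=E$.
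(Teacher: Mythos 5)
Your proof is correct. Let me compare it with the paper's argument, which is considerably terser.

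For the \emph{only if} direction, the paper argues directly from weight shifts: if $(L_\mu\otimes E)^\tau=E$, then $\omega-\mu$ must be a weight of $E$ (i.e. $\pm\varepsilon_i$ or $\pm\delta_j$), and one then checks that the only dominant $\mu$ with trivial central character satisfying this are $0$ and $\varepsilon_1+\delta_1$. You instead deduce $\operatorname{Hom}_{\goth g}(E,L_\mu\otimes E)\neq 0$, apply adjunction together with $E^*\cong E$, and read off the two possibilities from the decomposition $(E\otimes E)^\chi=L_0\oplus L_{\varepsilon_1+\delta_1}$ that the paper has already established (proof of Proposition~\ref{pt3}, Lemma~\ref{degten}(ii)). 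Both routes are valid; yours reuses a structural fact already in hand, while the paper's keeps the computation local to the weight lattice.

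For the \emph{if} direction with $\mu=\varepsilon_1+\delta_1$, the paper contents itself with the single computation $\operatorname{Hom}_{\goth g}(L_{\varepsilon_1+\delta_1}\otimes E,E)=\mathbb C$, which by itself only shows that $E$ is a quotient of $M=(L_{\varepsilon_1+\delta_1}\otimes E)^\tau$ with multiplicity one. Your proof supplies what is implicit there: the weight-diagram analysis showing that every composition factor of $M$ is isomorphic to $E$, followed by either the $\operatorname{Ext}^1_{\mathcal F}(E,E)=0$ argument (via the block equivalence of Theorem~\ref{bl} and simplicity of $\goth{osp}(2k,2k)$) or, alternatively, the Verma-flag count of $\goth b$-singular vectors. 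Either way you pin down $M\cong E$, not just that $E$ is its head. This is a genuine completion of a step the paper leaves to the reader.

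One small point worth flagging: your diagram claim that ``a dominant weight with central character $\tau$ has a single $>$ at position $1$'' is correct, but it rests on $\rho=0$ for $\goth{osp}(2k+2,2k)$ with the mixed Borel. The closed formula for $\rho$ printed in Section~5 of the paper for $\goth{osp}(2m,2n)$ with $m>n$ appears to be off by one and would give $\rho=\varepsilon_1$ in this case; a direct check from $(\rho,\alpha_i)=\tfrac12(\alpha_i,\alpha_i)$ on the simple isotropic roots $\varepsilon_1-\delta_1,\dots,\delta_k\pm\varepsilon_{k+1}$ shows $\rho=0$. With the correct $\rho$, the highest weight of $L_{\varepsilon_1+\delta_1}\otimes E$ is $2\varepsilon_1+\delta_1$ as you state, and the comparison $\nu\not\le 2\varepsilon_1+\delta_1$ for any $\tau$-diagram having a cross outside the tail (e.g. by pairing against the functional $\sum a_i+\sum b_j$) goes through.
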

\begin{proof} - Since $\omega=\mu\pm\varepsilon_i$ or
  $\mu\pm\delta_i$, the only possible values for $\mu$ are $0$ and
  $\varepsilon_1+\delta_1$. Obviously $(L_{0}\otimes E)^{\tau}=E$. To
  check that $(L_{\varepsilon_1+\delta_1}\otimes E)^{\tau}=E$, use
$$\operatorname{Hom}_{\goth g} (L_{\varepsilon_1+\delta_1}\otimes
E,E)=\operatorname{Hom}_{\goth g} (L_{\varepsilon_1+\delta_1},E\otimes
E)=\mathbb C.$$
\end{proof}

\begin{leme}\label{deg3} - Let $\goth g=\goth{osp}(2k+2,2k)$ (resp. 
$\goth{osp}(2k,2k)$). Then
  $(\lambda,0)$ is an exceptional pair if and only if
$$\lambda=(a,a-1,0,...|a,a-1,0,...),$$
$a\leq 2k-2$ is even (resp. $a\leq 2k-3$ is odd).
If $\lambda$ satisfies the above conditions, then
$K^{\lambda,0}_{G,P^1}(z)=1$.
\end{leme}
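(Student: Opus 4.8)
The plan is to prove both directions of the characterization of exceptional pairs $(\lambda,0)$ by reducing, via translation functors, to the pretail cases already computed in Section 9, and then to combine this with an induction on the rank using Lemma~\ref{deg1} as the base step. First I would set up the candidate weight $\lambda=(a,a-1,0,\dots|a,a-1,0,\dots)$ and observe that, since the only nonzero marks are $a$ and $a-1$ which differ by $1$, the hypotheses of Proposition~\ref{pt2} (with $a_1=a$, $a_2=a-1$) are satisfied \emph{provided} $(\lambda,0)$ is not an exceptional pair; so the whole point is to decide for which $a$ the pair \emph{is} exceptional, i.e. for which $a$ the first coordinate of $\mu=0$ (which is $0$) is less than the second coordinate of $\lambda$ (which, after adding $\rho$, is $a-1>0$ as soon as $a\geq 2$) \emph{and} $K^{\lambda,0}_{G,P^1}(z)\neq 0$. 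The parity condition on $a$ will come out of the parity of the relevant length $l_{f_\lambda}(s_0,t)$ in the weight diagram, matching condition (b) of Proposition~\ref{exc}.

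Next I would run the induction on $k$. For the base of the induction I would invoke Lemma~\ref{deg1} directly: it establishes $K^{\lambda,0}_{G,P^1}(z)=1$ for $\lambda=(2,1,0,\dots|2,1,0,\dots)$ in the $\goth{osp}(2k+2,2k)$ case, which is exactly the case $a=2$. For the inductive step I would use the tensoring-with-$E$ technique of Lemma~\ref{deg1}: set $\kappa=\lambda-\delta_2$ (a dominant weight whose central character $\tau$ has atypicality degree $k-1$, with the mark $a-1$ replaced appropriately), apply Lemma~\ref{lp0} to get $K^{\kappa,\mu}_{G,P^1}(z)=K^{\bar\kappa,\bar\mu}_{G_\tau,P^1_\tau}(z)$, and use the induction hypothesis (or Proposition~\ref{pt3}/Proposition~\ref{pt1}) to compute $K^{\kappa,\mu}_{G,P^1}(z)$ explicitly. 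Then, exactly as in Lemma~\ref{deg1}, use Lemma~\ref{degten}(ii) to split $(L_\kappa(\goth p^1)\otimes E)^{\Phi^{-1}(\chi)}=L_\lambda(\goth p^1)\oplus L_{\lambda-\beta}(\goth p^1)$ with $\beta=\varepsilon_2+\delta_2$, apply Corollary~\ref{coroblocks} to transfer this to $\Gamma_i$, use $(E\otimes E)^\chi=L_0\oplus L_{\varepsilon_1+\delta_1}$ and $(L_\kappa\otimes E)^\chi=L_\lambda\oplus L_{\lambda-\beta}$, and read off
$$K^{\kappa,\varepsilon_1}_{G,P^1}(z)=K^{\lambda-\beta,0}_{G,P^1}(z)+K^{\lambda,0}_{G,P^1}(z).$$
Since $\lambda-\beta=(a,a-1,0,\dots|a,a-1,0,\dots)$ with the inner marks dropped, i.e. $\lambda-\beta$ has a shorter "essential" diagram, $K^{\lambda-\beta,0}_{G,P^1}(z)$ is computed by the inductive hypothesis (or is $z^{\text{(something)}}$, not containing the constant term $1$), and comparing constant terms forces $K^{\lambda,0}_{G,P^1}(z)=1$, while also pinning down the parity of $a$ for which the left side has a nonzero relevant contribution. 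For $\goth{osp}(2k,2k)$ I would run the same argument with $\delta_2$ replaced by $\varepsilon_2$ and with the parity shifted, using the $\goth{osp}(2k,2k)$ branch of Proposition~\ref{pt3} (which contributes $1+z^{2k-2}$ rather than $z^{2k-1}$), which is what flips the parity condition from "even" to "odd" and the bound from $a\leq 2k-2$ to $a\leq 2k-3$.

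For the converse — showing that $(\lambda,0)$ exceptional forces $\lambda$ to have this precise shape — I would argue by the same kind of reduction: if $\lambda$ has any nonzero mark other than the top two, or if the top two marks do not differ by exactly $1$, then either Proposition~\ref{pt1} or Proposition~\ref{pt2} applies and (using ii), iii) of those propositions together with the recursion i)) one checks that $K^{\lambda,0}_{G,P^1}(z)$ cannot have a contribution making $(\lambda,0)$ exceptional; the range constraint $a\leq 2k-2$ (resp. $2k-3$) and the parity constraint are exactly the conditions under which the base-case diagram obtained after repeatedly stripping pairs $\beta$ lands on a genuine exceptional pretail/tail configuration rather than on $0$ itself or on a non-dominant weight. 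The main obstacle I anticipate is bookkeeping the parity: one must track carefully how each application of the $\otimes E$ reduction and each passage to $\goth g_\tau$ changes $l_{f_\lambda}(s_0,t)$ by $1$, and verify that the terminal case reached by the recursion is precisely the one covered by Lemma~\ref{deg1} (for $\goth{osp}(2k+2,2k)$) or by the $\goth{osp}(2,2)$/$\goth{osp}(4,4)$-type base case (for $\goth{osp}(2k,2k)$), with the correct sign; getting the two orthosymplectic families to have \emph{opposite} parity conditions is the delicate point, and it traces back to the difference between the $z^{2k-1}$ term in Lemma~\ref{triv1} for $\goth{osp}(2k+2,2k)$ versus the $1+z^{2k-1}$ there (and the $1+z^{2k-2}$ in Proposition~\ref{pt3}) for $\goth{osp}(2k,2k)$.
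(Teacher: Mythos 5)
Your proposal has a genuine gap in the inductive step of the forward direction. You propose to ``set $\kappa=\lambda-\delta_2$ \dots use Lemma~\ref{degten}(ii) to split $(L_\kappa(\goth p^1)\otimes E)^{\Phi^{-1}(\chi)}=L_\lambda(\goth p^1)\oplus L_{\lambda-\beta}(\goth p^1)$''. But Lemma~\ref{degten}(ii) carries two hypotheses: the weight must have central character $\tau=\chi_{\varepsilon_1}$ (the central character of the standard module), and it must have some mark $a_i=1$. For $\kappa=\lambda-\delta_2$ with $\lambda=(a,a-1,0,\dots|a,a-1,0,\dots)$, the marks of $\kappa$ are $(a,a-1,0,\dots|a,a-2,0,\dots)$, so the condition $a_i=1$ fails as soon as $a>2$, and the central character of $\kappa$ (whose core has a $>$ at $a-1$ and a $<$ at $a-2$) is not $\chi_{\varepsilon_1}$. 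These hypotheses only coincide with your choice when $a=2$, i.e.\ exactly in the base case of Lemma~\ref{deg1}. The paper sidesteps this precisely by going \emph{up} rather than down: it sets $\lambda_1=\lambda+\varepsilon_3$ for $\goth{osp}(2k+2,2k)$ (resp.\ $\lambda_1=\lambda+\delta_3$ for $\goth{osp}(2k,2k)$); then $\lambda_1$ does have a mark equal to $1$ in position $3$ and has central character $\chi_{\varepsilon_1}$ (resp.\ $\chi_{\delta_1}$), so Lemma~\ref{degten} applies to $\lambda_1$, and the resulting $\bar\lambda_1$ in $\goth g_\tau$ is again of the form $(a',a'-1,0,\dots|a',a'-1,0,\dots)$ at one rank lower, which is what the induction hypothesis requires.

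A second, related slip: you claim that $\lambda-\beta$ (with $\beta=\varepsilon_2+\delta_2$) ``is $(a,a-1,0,\dots|a,a-1,0,\dots)$ with the inner marks dropped'', but in fact $\lambda-\beta=(a,a-2,0,\dots|a,a-2,0,\dots)$, with the two nonzero marks now differing by $2$, so it is \emph{not} covered by the induction hypothesis, and the identity $K^{\kappa,\varepsilon_1}=K^{\lambda-\beta,0}+K^{\lambda,0}$ leaves you with two unknowns rather than one. The paper instead combines $K^{\lambda_1,\varepsilon_1}_{G,P^1}(z)=1$ (from Lemma~\ref{lp0} plus induction), the Hom computation $\operatorname{Hom}_{\goth p^1}(L_{\lambda_1}(\goth p^1)\otimes E,L_\lambda(\goth p^1))=\mathbb C$, Lemma~\ref{ten2} to restrict the possible $\mu$'s, and Corollary~\ref{excind} to rule out the competitor $\mu=\varepsilon_1+\delta_1$. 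Finally, your sketch of the converse (``repeatedly stripping pairs $\beta$ lands on a genuine exceptional pretail/tail configuration'') is much vaguer than the paper's argument, which splits into the cases $f_\lambda(1)=\times$ and $f_\lambda(1)=0$ and uses Lemma~\ref{deg2} together with the multiplicity-one observation for $L_0$ inside $\Gamma_0(G/P^1,\cdot)$ to pin down $\lambda$ exactly.
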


\begin{proof} - We will prove first that if $\lambda$ satisfies the
  conditions of Lemma, then $K^{\lambda,0}_{G,P^1}(z)=1$. 
The proof will be done by induction
on $a$ and $k$, the base case $a=2$ is done in Lemma ~\ref{deg1}. In this
proof $\tau=\chi_{\omega}$.

First, let $\goth g=\goth{osp}(2k+2,2k)$. Set
$\lambda_1=\lambda+\varepsilon_3$. By Lemma ~\ref{lp0} and induction
assumption, we have
$$K^{\lambda_1,\varepsilon_1}_{G,P^1}(z)=K^{\bar{\lambda}_1,0}_{G_\tau,P^1_\tau}(z)=1.$$
By Lemma ~\ref{degten}(ii) we have
$$(L_{\lambda_1}(\goth p^1)\otimes
E)^{\Phi^{-1}(\chi)}=L_{\lambda}(\goth p^1)
\oplus L_{\lambda+\varepsilon_3+\delta_3}(\goth p^1),$$
and we have
$$\operatorname{Hom}_{\goth p^1}(L_{\lambda_1}(\goth p^1)\otimes E,L_{\lambda}(\goth p^1))=\operatorname{Hom}_{\goth p^1}(L_{\lambda_1}(\goth p^1),L_{\lambda}(\goth p^1)\otimes E)=\mathbb C.$$
That implies
$$(L_{\lambda}(\goth p^1)\otimes E)^{\Phi^{-1}(\tau)}=L_{\lambda_1}(\goth p^1).$$
Thus, by Corollary ~\ref{coroblocks} and Lemma ~\ref{ten2} we have either 
$K^{\lambda,\varepsilon_1+\delta_1}_{G,P^1}(z)=1$ or 
$K^{\lambda,0}_{G,P^1}(z)=1$. However, the case  
$K^{\lambda,\varepsilon_1+\delta_1}_{G,P^1}(z)=1$ is impossible
by Corollary ~\ref{excind} and we have $K^{\lambda,0}_{G,P^1}(z)=1$.

Now let $\goth g=\goth{osp}(2k,2k)$. Set $\lambda_1=\lambda+\delta_3$.
Using the argument similar to above we have 
$$K^{\lambda_1,\delta_1}_{G,P^1}(z)=1.$$
By  Lemma ~\ref{degten}(i), one has
$$(L_{\lambda}(\goth p^1)\otimes
E)^{\Phi^{-1}(\tau)}=L_{\lambda_1}(\goth p^1).$$
and by  Lemma ~\ref{ten1} $K^{\lambda,0}_{G,P^1}(z)=1$.

Now we will show that if $(\lambda,0)$ is an exceptional pair, then
$\lambda$ must satisfy the conditions of the lemma. The proof again is by
induction on $k$. Consider two cases $f_{\lambda}(1)=\times$ and 
$f_{\lambda}(1)=0$.

In the first case Lemma ~\ref{deg2} implies that 
$\goth g=\goth{osp}(2k+2,2k)$. Let $i$ be such that $a_i=1$ and
$\lambda_1=\lambda-\delta_i$. $K^{\lambda,0}_{G,P^1}(z)\neq 0$ implies
$$(L_{\lambda}(\goth p^1)\otimes
E)^{\Phi^{-1}(\tau)}=L_{\lambda_1}(\goth p^1)$$
and 
$$K^{\lambda_1,\varepsilon_1}_{G,P^1}(z)=K^{\bar{\lambda}_1,0}_{G_\tau,P^1_\tau}(z)\neq 0.$$ 
By induction assumption 
$$\bar{\lambda}_1=(a,a-1,0,...|a,a-1,0,...),
{\lambda_1}=(a+1,a,1,...|a+1,a,0,...),$$
and $K^{\lambda_1,\varepsilon_1}_{G,P^1}(z)=1$.
But 
$$(L_{\lambda_1}(\goth p^1)\otimes
E)^{\Phi^{-1}(\tau)}=L_{\lambda_1-\varepsilon_3}(\goth p^1)
\oplus L_{\lambda_1+\delta_3}(\goth p^1).$$
Since $L_0$ occurs with multiplicity 1 in
$\Gamma_0(G/P^1,L_{\lambda_1-\varepsilon_3}(\goth p^1)\oplus
L_{\lambda_1+\delta_3}(\goth p^1))$, and since we have proved already that
$(\lambda_1-\varepsilon_3,0)$ is exceptional, we obtain that 
$(\lambda_1+\delta_3,0)$
is not exceptional.

Finally, if $f_{\lambda}(1)=0$, take $\lambda_1=\lambda+\varepsilon_i$
(resp. $\lambda_1=\lambda+\delta_i$). Repeat the above arguments to
show that $(\bar{\lambda}_1,0)$ is again an exceptional pair. By
induction assumption
$$\bar{\lambda}_1=(a,a-1,0,...|a,a-1,0,...),$$
hence
$$\lambda=(a+1,a,0,...|a+1,a,0,...).$$
\end{proof}

\begin{leme}\label{deg4} - Let $\goth g=\goth{osp}(2k+1,2k)$. Then
  $(\lambda,0)$ (resp. $(\lambda,\varepsilon_1)$) is an exceptional pair if and only if
$$\lambda+\rho=(a+\frac{1}{2},a-\frac{1}{2},-\frac{1}{2},...|
a+\frac{1}{2},a-\frac{1}{2},\frac{1}{2},...),$$
(resp.
$$\lambda+\rho=(a+\frac{1}{2},a-\frac{1}{2},\frac{1}{2},-\frac{1}{2},...|
a+\frac{1}{2},a-\frac{1}{2},\frac{1}{2},...),)$$
where $a$ is odd and $a\leq 2k-4$.
If $\lambda$ satisfies the above conditions, then
$K^{\lambda,0}_{G,P^1}(z)=1$ (resp. $K^{\lambda,\varepsilon_1}_{G,P^1}(z)=1$).
\end{leme}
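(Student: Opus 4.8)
The plan is to run the same two-part induction used in the proof of Lemma~\ref{deg3}, treating the statement for $\mu=0$ and the statement for $\mu=\varepsilon_1$ together and reducing the second to the first at the outset. Indeed, a weight $\lambda$ satisfying the second displayed condition has a signed weight diagram and is different from $0$ and $\varepsilon_1$, so by Lemma~\ref{funsw} and the identity~(\ref{sw1}) one has $K^{\lambda,\varepsilon_1}_{G,P^1}(z)=K^{\lambda',0}_{G,P^1}(z)$, where $\lambda'$ is obtained from $\lambda$ by flipping the sign; moreover $\lambda'$ satisfies the first displayed condition exactly when $\lambda$ satisfies the second, and $(\lambda,\varepsilon_1)$ is exceptional if and only if $(\lambda',0)$ is. So it suffices to prove both assertions for pairs of the form $(\lambda,0)$, where in the distinguished case $a=1$ one unwinds $\lambda+\rho=(\frac{3}{2},\frac{1}{2},-\frac{1}{2},\dots|\frac{3}{2},\frac{1}{2},\frac{1}{2},\dots)$ into $\lambda=2\varepsilon_1+\varepsilon_2+\delta_1$.

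For the ``if'' direction of the $(\lambda,0)$ statement I would induct on $k$, with $a$ as a secondary parameter, the base case $a=1$ being exactly Proposition~\ref{pt4}(ii), which gives $K^{2\varepsilon_1+\varepsilon_2+\delta_1,\,0}_{G,P^1}(z)=1$. For $a\geq 3$ I would tensor $L_\lambda$ with the standard module $E$ and project onto the central character $\tau$ of atypicality degree $k-1$ obtained by pulling the rightmost cross of $f_\lambda$ one step out of the tail: Lemma~\ref{lp00}, applied to $\goth p^1$, shows that $(L_\lambda(\goth p^1)\otimes E)^{\Phi^{-1}(\tau)}$ is a single simple $\goth p^1$-module $L_{\lambda_1}(\goth p^1)$ with $\lambda_1$ dominant; Corollary~\ref{coroblocks} relates the cohomology of $L_{\lambda_1}(\goth p^1)$ to that of $L_\lambda(\goth p^1)$ tensored by $E$; and Lemma~\ref{lp0} together with Corollary~\ref{excind} identify $K^{\lambda,0}_{G,P^1}(z)$ with $K^{\bar\lambda_1,\bar\mu_1}_{G_\tau,P^1_\tau}(z)$ for the strictly smaller algebra $\goth g_\tau\simeq\goth{osp}(2k-1,2k-2)$, where $(\bar\lambda_1,\bar\mu_1)$ is again an exceptional pair of the form covered by the lemma for $\goth g_\tau$. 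This is where the bound $a\leq 2k-4$ enters: it is exactly what keeps the reduced weight within the hypotheses, and then the induction yields $K^{\bar\lambda_1,\bar\mu_1}_{G_\tau,P^1_\tau}(z)=1$. The competing possibility that some other pair (of ``$\varepsilon_1+\delta_1$ type'') carries the value $1$ is ruled out by Corollary~\ref{excind}, and the multiplicity-one statements of Lemma~\ref{degten}(iii) and Lemma~\ref{ten1} guarantee that all identifications of simple subquotients along the way are exact, precisely as in Lemma~\ref{deg3}.

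For the converse I would again induct on $k$, splitting on whether $f_\lambda$ has a $\times$ or a $0$ in the position immediately to the right of the tail. In the first case $\lambda$ is a pretail weight, so the pretail discussion opening Section~10 together with Proposition~\ref{pt4} (after, if necessary, switching between the $\mu=0$ and $\mu=\varepsilon_1$ pictures by Lemma~\ref{funsw}) forces $\lambda+\rho$ to have the asserted shape with $a=1$. In the second case $K^{\lambda,0}_{G,P^1}(z)\neq 0$ forces, as above, $(L_\lambda(\goth p^1)\otimes E)^{\Phi^{-1}(\tau)}=L_{\lambda_1}(\goth p^1)$ with $K^{\bar\lambda_1,0}_{G_\tau,P^1_\tau}(z)\neq 0$; the induction hypothesis pins down $\bar\lambda_1$ for $\goth g_\tau$, and unwinding the translation recovers the displayed form of $\lambda+\rho$, the parity of $a$, and the bound $a\leq 2k-4$. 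The value $K^{\lambda,0}_{G,P^1}(z)=1$ then drops out because each reduction multiplies by the scalar $1$. I expect the main obstacle to be the sign bookkeeping special to $\goth{osp}(2k+1,2k)$: unlike the even orthosymplectic cases of Lemma~\ref{deg3}, translation by $E$ genuinely interchanges the two sign-types of weight diagrams (this is the content of Lemma~\ref{funsw}), so at each reduction step one must check that the indicator of $\lambda_1$, and of its further reduction $\bar\lambda_1$, transforms exactly according to the rules of Section~6 and that dominance is preserved; the small-rank endpoints, and the case where $a$ is forced to be $1$, must be checked by hand against Propositions~\ref{pt3} and~\ref{pt4}.
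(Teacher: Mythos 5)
Your reduction to the pair $(\lambda,0)$ runs the argument in the wrong direction and creates a problem that cannot be repaired within your framework. For $\goth g=\goth{osp}(2k+1,2k)$ the standard module $E=L_{\varepsilon_1}$ already lies in the block with trivial central character $\chi$ (since $\varepsilon_1+\rho$ has all coordinates $\pm\frac12$, hence maximal atypicality). Consequently, for \emph{any} central character $\tau\neq\chi$ you have $(L_0\otimes E)^{\tau}=E^{\tau}=0$, so the translation functor you propose to use kills the module $L_0$ whose multiplicity you are trying to track. This is precisely why the paper does the opposite reduction: it fixes $\tau=\chi_{2\varepsilon_1}$, for which $(L_{\varepsilon_1}\otimes E)^{\tau}=L_{2\varepsilon_1}\neq 0$, and runs the recursion for the pair $(\lambda,\varepsilon_1)$, recovering the $(\lambda,0)$ statement at the end by the switch functor. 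Your phrase ``pulling the rightmost cross of $f_\lambda$ one step out of the tail'' suggests either the cross at $a+\frac12$ (which is not in the tail, so ``out of the tail'' is incoherent) or the cross at $\frac12$ (which is the paper's $\tau=\chi_{2\varepsilon_1}$); in either reading you cannot avoid passing through $\varepsilon_1$ rather than $0$.

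There is a second, smaller gap in your converse argument. You split on whether $f_\lambda(\frac32)$ is $\times$ or $0$ and assert that in the first case $\lambda$ is a pretail weight. That implication is false in general: a non-pretail $\lambda$ can perfectly well have $f_\lambda(\frac32)=\times$ (e.g. with crosses at $\frac32$ and $\frac52$ and only $k-2$ at the tail). You cannot invoke Proposition~\ref{exc} to exclude such $\lambda$, since that proposition is proved \emph{from} this lemma. The paper eliminates the case $f_\lambda(\frac32)=\times$ in one stroke, with no case split: $K^{\lambda,\varepsilon_1}_{G,P^1}(z)\neq 0$ forces $(L_\lambda(\goth p^1)\otimes E)^{\Phi^{-1}(\tau)}\neq 0$ with $\tau=\chi_{2\varepsilon_1}$, and Lemma~\ref{degten}(iii) then forces $f_\lambda(\frac32)=0$ and positive sign. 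This is where Lemma~\ref{degten}(iii) is actually needed; it is a vanishing statement and not a ``multiplicity-one'' statement as your last paragraph describes it. Your observation that the sign bookkeeping is the real obstacle is correct, but the resolution is exactly the chain $K^{\lambda,\varepsilon_1}_{G,P^1}(z)=K^{\lambda_1,2\varepsilon_1}_{G,P^1}(z)=K^{\bar\lambda_1,0}_{G_\tau,P^1_\tau}(z)=K^{\bar\lambda'_1,\varepsilon_1}_{G_\tau,P^1_\tau}(z)$ used in the paper, in which the switch functor is applied inside each step of the recursion, not merely at the outset.
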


\begin{proof} -  It suffices to prove lemma for the pair
$(\lambda,\varepsilon_1)$, since the second case follows by the use of the switch
functor. The proof is similar to one for the previous lemma, but
much easier. Let $\tau=\chi_{2\varepsilon_1}$.
Note that $K^{\lambda,\varepsilon_1}_{G,P^1}(z)\neq 0$ implies  
$$(L_{\lambda}(\goth p^1)\otimes E)^{\Phi^{-1}(\tau)}\neq 0.$$
Then by Lemma ~\ref{degten} (iii) $f_\lambda(\frac{3}{2})=0$,
 $f_\lambda$ has positive sign and 
$$(L_{\lambda}(\goth p^1)\otimes
E)^{\Phi^{-1}(\tau)}=L_{\lambda_1}(\goth p^1),$$
where $\lambda_1=\lambda+\varepsilon_i$ for $i$ such that $a_i=\frac{1}{2}$.
Now use
$$K^{\lambda,\varepsilon_1}_{G,P^1}(z)=K^{\lambda_1,2\varepsilon_1}_{G,P^1}(z) = K^{\bar{\lambda}_1,0}_{G_\tau,P^1_\tau}(z)=K^{\bar{\lambda}'_1,\varepsilon_1}_{G_\tau,P^1_\tau}(z),$$
and proceed by induction decreasing $k$. The process stops when
$\bar{\lambda}_1=2\varepsilon_1+\varepsilon_2+\delta_1$ is a pretail
weight (see Proposition ~\ref{pt4}).
\end{proof}

Now  Proposition ~\ref{exc} follows easily from Corollary ~\ref{excind},
Lemma  ~\ref{deg3} and Lemma  ~\ref{deg4} by induction on number of
$\times$ outside the tail position. 

\section{Combinatorial algorithm for calculation of 
  $K^{\lambda,\mu}_{G,P^1}(z)$ and $K^{\lambda,\mu}_{G,Q_{\lambda}}(-1)$}

The recursion formulae obtained in two previous sections determine uniquely
the Poincar\'e polynomials $K^{\lambda,\mu}_{G,P^1}(z)$ in the most atypical block. In this section
we obtain a closed formula for them and use it to compute $K^{\lambda,\mu}_{G,Q_{\lambda}}(-1)$.
The results will be formulated in terms of weight diagrams. The
recursion uses double induction on the position of the rightmost
$\times$ (Proposition ~\ref{pt1}) and on the total number of $\times$.

We still assume that we are in the most atypical block containing the
trivial module. However, keeping track of the signs of weight
diagrams in the case $\goth g=\goth{osp}(2k+1,2k)$ is annoying. To get rid
off those signs, we use the equivalent block of $\goth{osp}(2k+1,2k+2)$
with trivial central character. Weight diagrams lying in this block
have $<$ at the tail position. (To obtain a signed
diagram from the most atypical block of $\goth{osp}(2k+1,2k)$ from such a diagram $f$, one has
to shift all entries of $f$ except one at the tail position one
position left and assign $-$ if $f(\frac{3}{2})=0$ and $+$ if
$f(\frac{3}{2})=\times$.) 

 By our assumption the core
symbols can only be at the tail position.

If $f(t)$ has $\times$, then let $f_t$ denote the diagram
$g$ obtained from $f$ by removing $\times$ from position $t$,
(naturally  $g(t)=0$ if $t$ becomes empty). Similarly, 
we denote by $f^t$ the diagram obtained from $f$ by adding $\times$ to
position $t$. Thus, $f_s^t$ denote the diagram obtained from $f$ by
moving $\times$ from  $s$ to $t$.

\begin{defi}\label{legalmove} - We say that $g$ is obtained from $f$ by
  a legal move if $s<t$, $f(s)$ contains $\times$, $f(t)=0$, 
$g=f_s^t$  and one of the following conditions holds

(1) $f(s)$ does not have a core symbol, $l_f(s,r)>0$ for any $s<r<t$
and $l_f(s,t)\geq 0$;

(2) $s$ is the tail position, $|g|+l_f(s,r)>0$ for any $s<r<t$ and
$|g|+l_f(s,t)\geq 0$.

The positions $s$ and $t$ are called the start and the end of a legal move $M$. The
degree $l(M)$ of a legal move is defined to be $l_f(s,t)$ in the first
case and $|g|+l_f(s,t)$ in the second case.
A legal move is called ordinary if its start is not a tail. Otherwise
we call a legal move a tail legal move. 
\end{defi}
{\bf Warning}. A legal move is actually the following data: two
diagrams $f$ and $g$ and the degree of the corresponding move. 
Sometimes there are two legal tail moves of different degrees
which transform $f$ to the same diagram $g$. They should be considered
as different moves. For example, the diagram
${}^{\times}_{\times},0,...$ can be transformed to $\times,\times,...$
by two different legal moves, one of degree $0$ and one of degree $2$.

\begin{defi}\label{expmove} - Let $s_0$ denote the tail position. 
We say that $g$ is obtained from
$f$ by an exceptional move if 

$g=(f_{s_0}^s)_{s_0}^{t}$ for some $s<t$, $f(s)=f(t)=0$; 

$l_{f}(a,s)\leq 0$ for all $a<s$ and 
$|g|+l_{f}(s_0,s)$ is a positive odd number;

$l_{f}(s,b)> 0$ for all $s<b<t$ and 
$l_{f}(s,t)\geq 0$.

Define the end 
position of an exceptional move to be
$t$ and the degree to be $l_{f}(s,t)$.
\end{defi}

\begin{prop}\label{legmovprop} - 
Let $\lambda$ and $\mu$ be two dominant weights with trivial central
character and $\lambda\neq 0$. Then ${}^iK^{\lambda,\mu}_{G,P^1}=1$ if
$\mu=\lambda$ and $i=0$ or $f_\lambda$ is
obtained from $f_\mu$ by a legal move (or by an exceptional move)  of degree
$i$, with end at the position of the rightmost $\times$ of $f_\lambda$.
Otherwise, ${}^iK^{\lambda,\mu}_{G,P^1}=0$.
\end{prop}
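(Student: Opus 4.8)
The strategy is to verify that the combinatorial rules described in Definitions~\ref{legalmove} and~\ref{expmove} satisfy exactly the same recursion that was established for $K^{\lambda,\mu}_{G,P^1}(z)$ in Sections~9 and~10, and then invoke uniqueness of the solution of that recursion. Concretely, I would proceed by double induction: the outer induction is on the position $t+1$ of the rightmost $\times$ in $f_\lambda$, and the inner induction is on the total number of $\times$'s in $f_\lambda$. Throughout I translate the weight-theoretic operations of the previous sections (subtracting $\alpha$, applying a translation functor, inducing from $P^1$ to $P^2$, passing to $\goth g_\tau$) into their weight-diagram counterparts, using the dictionary set up in Section~6 together with Lemma~\ref{lp0} and Corollary~\ref{excind}.

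\textbf{Key steps.}
First I would dispose of the base cases: when $f_\lambda$ has all its $\times$'s at the tail position, $\lambda$ is a pretail or the trivial weight, and the values of $K^{\lambda,\mu}_{G,P^1}(z)$ are given explicitly by Propositions~\ref{pt3}, \ref{pt4} and Lemmas~\ref{triv1}, \ref{triv2}; one checks by hand that these match the ``legal move'' count (including the single exceptional pair $(2\varepsilon_1+\varepsilon_2+\delta_1,0)$ for $\goth{osp}(2k+1,2k)$). Next, the main inductive step splits according to the position of $\lambda$ relative to the even walls. If $a_1>\frac{3}{2}$ and $a_1>|a_2|+1$, i.e. $f_\lambda$ is ``far from the walls'', Proposition~\ref{pt1} gives $K^{\lambda,\mu}_{G,P^1}(z)=(z^{-1}K^{\lambda-\alpha,\mu}_{G,P^1}(z))_+$; on the diagram side $f_{\lambda-\alpha}$ is obtained from $f_\lambda$ by moving the rightmost $\times$ one step left, and one must check that a legal (or exceptional) move into $f_\lambda$ of degree $i$ corresponds bijectively to a legal (or exceptional) move into $f_{\lambda-\alpha}$ of degree $i+1$, \emph{except} for the ``newly created'' moves of degree $0$ that the $(\cdot)_+$ truncation removes — these are precisely the moves whose end lands on position $t+1$ with $l_{f_\lambda}(s,t+1)=0$, matching part (iii) of Proposition~\ref{pt1} ($K^{\lambda,\lambda-\alpha}_{G,P^1}=1$). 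If instead $a_1=a_2+1$ with $a_2\neq 0,\pm\frac12$ and $(\lambda,\mu)$ is not exceptional, Proposition~\ref{pt2} gives $K^{\lambda,\mu}_{G,P^1}(z)=zK^{\lambda-\alpha,\mu}_{P^1,P^2}(z)$, and since $K_{P^1,P^2}$ ignores the first pair of coordinates this reduces to a diagram with one fewer $\times$ outside the tail, handled by the inner induction; here one verifies that appending the degree-$1$ shift to a legal move of the smaller diagram produces exactly the legal moves of $f_\lambda$ ending at position $t+1$. The exceptional pairs are covered separately: Proposition~\ref{exc} (proved via Corollary~\ref{excind}, Lemma~\ref{deg3}, Lemma~\ref{deg4}) already establishes $K^{\lambda,\mu}_{G,P^1}(z)=1$ exactly when the four conditions (a)--(d) hold, and one checks those conditions are literally the definition of an exceptional move of degree $0$ ending at $t+1$.

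\textbf{Main obstacle.}
The delicate point is the bookkeeping in the ``far from the walls'' case: one must show that the set of legal moves into $f_\lambda$ ending at the rightmost $\times$ is in degree-shifting bijection with the legal moves into $f_{\lambda-\alpha}$ ending at \emph{its} rightmost $\times$, \emph{plus} exactly the moves killed by $(z^{-1}\,\cdot\,)_+$. This requires carefully analyzing how the conditions $l_f(s,r)>0$ for $s<r<t$ and $l_f(s,t)\geq 0$ (and their tail-move and exceptional-move analogues, involving $|g|$) transform when $t$ is replaced by $t+1$ and a $\times$ is moved — in particular one must rule out spurious new start positions $s$ for which the inequalities would only be satisfied after the shift, and one must correctly account for tail legal moves of two different degrees connecting the same pair of diagrams (the Warning after Definition~\ref{legalmove}). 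Once this combinatorial lemma is in place, the rest is routine matching of the recursion, and the proof of the proposition is complete by the uniqueness of the solution of the recursion noted at the start of Section~11.
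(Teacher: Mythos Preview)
Your proposal is correct and follows essentially the same approach as the paper: double induction on the position of the rightmost $\times$ and on the number of $\times$'s, with Propositions~\ref{pt1} and~\ref{pt2} supplying the inductive step, Propositions~\ref{pt3},~\ref{pt4} and Lemma~\ref{triv2} the base, and Proposition~\ref{exc} handling exceptional pairs. Two minor remarks: your description of the base case as ``all $\times$'s at the tail position'' is a slip (that forces $\lambda=0$, which is excluded) --- the actual base is the pretail weights, with exactly one $\times$ outside the tail; and the paper adds one point you do not mention explicitly, namely that for $\goth{osp}(2k+1,2k)$ one must translate the base computations from signed diagrams to the unsigned diagrams with $<$ at the tail position introduced at the start of Section~11.
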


\begin{proof} - If $\lambda=\mu$ the statement is obvious, if $(\lambda,\mu)$
is an exceptional pair, the statement is proven in
Proposition~\ref{exc}. For other cases the statement follows from
Propositions ~\ref{pt1},~\ref{pt2} by induction on the position of
the rightmost $\times$ and the number of $\times$-s in $f_{\lambda}$,
the base of induction being done in Propositions  ~\ref{pt3},~\ref{pt4} and
Lemma ~\ref{triv2}. One has to translate  Proposition ~\ref{pt3} and
Lemma ~\ref{triv2} from signed diagrams to diagrams with $<$ in the
tail position, which can be done by direct comparison. 
\end{proof}

\begin{coro}\label{legmovcor1} - The multiplicity of any simple module
  in $\Gamma_i(G/P^1, L_{\lambda}(\goth p^1))$ is at most one.
\end{coro}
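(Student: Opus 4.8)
The plan is to deduce this directly from Proposition~\ref{legmovprop}. That proposition gives an explicit description of each coefficient ${}^iK^{\lambda,\mu}_{G,P^1}$, namely it equals $1$ if $\mu=\lambda$ and $i=0$, or if $f_\lambda$ is obtained from $f_\mu$ by a legal move (or an exceptional move) of degree $i$ whose end is the position of the rightmost $\times$ of $f_\lambda$, and it equals $0$ in all other cases. By Definition~\ref{Dpoin}, the multiplicity of $L_\mu$ in $\Gamma_i(G/P^1,L_\lambda(\goth p^1))$ is exactly ${}^iK^{\lambda,\mu}_{G,P^1}$, so the proposition already tells us that every individual coefficient is $0$ or $1$; hence the multiplicity in any single cohomology group $\Gamma_i(G/P^1,L_\lambda(\goth p^1))$ is at most one for the case $\lambda\neq 0$.

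The only step requiring a short argument is to combine the different values of $i$: I must check that for a fixed pair $(\lambda,\mu)$ with $\mu\neq\lambda$, there is at most one index $i$ for which ${}^iK^{\lambda,\mu}_{G,P^1}\neq 0$, so that $L_\mu$ does not appear in two different cohomological degrees. But this is immediate from the description: given $f_\lambda$ and $f_\mu$, the position $t+1$ of the rightmost $\times$ of $f_\lambda$ is determined, and a legal or exceptional move transforming $f_\mu$ into $f_\lambda$ with end at that position has its degree $i=l(M)$ completely determined by $f_\lambda$, $f_\mu$ and the rule in Definitions~\ref{legalmove} and~\ref{expmove} (the degree is read off as $l_{f}(s,t)$ or $|g|+l_{f}(s,t)$ from the fixed start and end positions). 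The one subtlety flagged in the \textbf{Warning} after Definition~\ref{legalmove} — that two tail legal moves of different degrees may carry $f$ to the same $g$ — does not cause trouble here, because those two moves would have the \emph{same} start (the tail position) and the \emph{same} end, yet by the Warning's own example they differ in degree; however, inspection of Proposition~\ref{legmovprop} shows that the relevant move is the one whose end is the rightmost $\times$ of $f_\lambda$, and for a fixed $(f_\mu,f_\lambda)$ pair the ambiguity described in the Warning concerns which $g$ one reaches, not which $i$ one gets for a fixed target; so given that $f_\lambda$ is the prescribed target, the contribution is still a single $1$ in a single degree.

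Finally, I would dispose of the excluded case $\lambda=0$ separately. For $\goth g=\goth{osp}(2k,2k)$ or $\goth{osp}(2k+2,2k)$ this is covered by Lemma~\ref{triv1} (equivalently Lemma~\ref{reduction1} and Lemma~\ref{lt1}), which gives $K^{0,\mu}_{G,P^1}(z)=K^{0,\mu}_{G,Q}(z)$ equal to $0$ for $\mu\neq 0$ and to $1+z^{2k-1}$ or $1+z^{2k}$ for $\mu=0$; in either case every coefficient is $0$ or $1$, so the multiplicity of each simple module in each $\Gamma_i$ is at most one. For $\goth g=\goth{osp}(2k+1,2k)$ the weight $\lambda=0$ is handled by passing to the equivalent block of $\goth{osp}(2k+1,2k+2)$ with trivial central character as in Section~11, where $0$ is no longer an exceptional value, or directly by Lemma~\ref{triv2} together with the switch functor identity~(\ref{sw1}). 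In all cases the coefficients are $0$ or $1$, which is precisely the assertion. The main (and only) obstacle is the bookkeeping that a fixed pair $(\lambda,\mu)$ with $\mu\neq\lambda$ gives a nonzero coefficient for at most one degree $i$; as explained above this follows because the degree of the unique relevant move is determined by the diagrams, so the result is essentially a corollary of the explicit formula in Proposition~\ref{legmovprop}.
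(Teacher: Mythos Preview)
Your first paragraph is the whole proof and matches the paper's intent: the corollary is stated without proof, as an immediate consequence of Proposition~\ref{legmovprop}, since by Definition~\ref{Dpoin} the multiplicity of $L_\mu$ in $\Gamma_i(G/P^1,L_\lambda(\goth p^1))$ is precisely ${}^iK^{\lambda,\mu}_{G,P^1}$, and the proposition says this is $0$ or $1$.

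Your second paragraph is unnecessary and, in one spot, mistaken. The corollary concerns a \emph{fixed} $i$, so there is nothing to ``combine''. Moreover, your reading of the \textbf{Warning} is inverted: the Warning's example shows that for the \emph{same} pair $(f,g)$ there can be two tail legal moves of \emph{different} degrees (there, $0$ and $2$). Thus the ambiguity is exactly about which $i$ one obtains, not which target one reaches; your conclusion that ``the contribution is still a single $1$ in a single degree'' is false in that example. Fortunately none of this matters, since the corollary is per-$i$.

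Your handling of $\lambda=0$ is extra care the paper does not supply; the corollary sits directly after Proposition~\ref{legmovprop} (which assumes $\lambda\neq 0$) and is presumably meant under that hypothesis. Note also that your appeal to Lemma~\ref{lt1} is off: that lemma is for pretail weights (tail of length $k-1$), whereas $\lambda=0$ has tail of length $k$, so the identification $K^{0,\mu}_{G,P^1}(z)=K^{0,\mu}_{G,Q}(z)$ is not justified by it.
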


\begin{coro}\label{legmovcor2} - Let $\lambda$ and $\mu$ be two dominant weights with trivial central
character and $\lambda\neq 0$. Let $t$ be the position of $j$-th
$\times$ in $f_{\lambda}$ counting from the right. Then ${}^iK^{\lambda,\mu}_{P^{j-1},P^j}=1$ if
$\mu=\lambda$ and $i=0$, or if $f_\lambda$ is
obtained from $f_\mu$ by a legal move (or by an exceptional move)  of degree
$i$ with end $t$.
Otherwise, ${}^iK^{\lambda,\mu}_{P^{j-1},P^j}=0$.
\end{coro}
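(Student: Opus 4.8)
The plan is to reduce the statement about $K^{\lambda,\mu}_{P^{j-1},P^j}$ to the already-established Proposition \ref{legmovprop} for the groups $P^{j-1}$ and $P^j$ in place of $G$ and $P^1$. The key observation is that $\goth s^{j-1}$ (the semisimple part of the Levi subalgebra $\goth l^{j-1}$ of $\goth p^{j-1}$) is again of the same type as $\goth g$, namely one of $\goth{osp}(2k',2k')$, $\goth{osp}(2k'+2,2k')$, $\goth{osp}(2k'+1,2k')$ with $k' = k-(j-1)$, and that $\goth p^j\cap\goth l^{j-1}$ plays the role of the parabolic $P^1$ inside this smaller group. Moreover the flag $\goth l^{j-1}\supset \goth p^j\cap\goth l^{j-1}\supset\dots$ is precisely the flag \eqref{flag} for $\goth s^{j-1}$. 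Since the functor $\Gamma_i(P^{j-1}/P^j,-)$ only depends on this sub-situation (the extra abelian directions in $\goth l^{j-1}$ are a central torus that acts by scalars and does not affect cohomology), we have $K^{\lambda,\mu}_{P^{j-1},P^j}(z) = K^{\lambda',\mu'}_{L^{j-1},(P^j\cap L^{j-1})}(z)$, where $\lambda',\mu'$ are the restrictions of $\lambda,\mu$ to the Cartan of $\goth s^{j-1}$.

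Next I would match up the combinatorics. The weight diagram $f_{\lambda'}$ for $\goth s^{j-1}$ is obtained from $f_\lambda$ by forgetting (setting to $0$) the $j-1$ rightmost crosses of $f_\lambda$ — those are exactly the crosses that have been "absorbed" into the typical data of the Levi part $\goth l^{j-1}$ when one passes from $\goth g$ to $\goth s^{j-1}$, and similarly for $\mu$. Concretely, the rightmost $\times$ of $f_{\lambda'}$ sits at position $t$, the position of the $j$-th $\times$ of $f_\lambda$ from the right, exactly as in the statement. Under this correspondence a legal (or exceptional) move for $f_\mu\mapsto f_\lambda$ with end at the $j$-th $\times$ from the right corresponds bijectively, and with the same degree, to a legal (resp.\ exceptional) move for $f_{\mu'}\mapsto f_{\lambda'}$ ending at the rightmost $\times$ of $f_{\lambda'}$; one has to check that the quantities $l_f(s,r)$ and $|f|$ appearing in Definitions \ref{legalmove} and \ref{expmove} are unchanged, since deleting crosses strictly to the right of $t$ changes neither $l_f(s,r)$ for $r\le t$ nor, after the corresponding adjustment, the relevant value of $|g|$. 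Then Proposition \ref{legmovprop} applied to $\goth s^{j-1}$ gives ${}^iK^{\lambda',\mu'}_{L^{j-1},(P^j\cap L^{j-1})}=1$ exactly in the claimed cases and $0$ otherwise, and transporting back finishes the proof.

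The main obstacle I anticipate is the careful bookkeeping in the reduction step: verifying that restricting $\goth p^j$-modules to $\goth l^{j-1}$ really produces the flag-variety cohomology for $\goth s^{j-1}$ with no loss of information — in particular that the central torus directions and any $\goth{osp}$-sign subtleties (the indicator in the $\goth{osp}(2k+1,2k)$ case, handled as in Section 11 by passing to $\goth{osp}(2k+1,2k+2)$) do not disturb the multiplicities — and that the degree of a move is genuinely invariant under deleting the right-hand crosses. Once this dictionary is set up, the corollary is immediate from Proposition \ref{legmovprop}, so I would keep the reduction argument brief and invoke the parallel with Corollary \ref{cor21}, whose proof (via the Leray spectral sequence for $G/P \to G/P'$ with acyclic fibres coming from the typical Lemma \ref{typ}) is the model for the present one.
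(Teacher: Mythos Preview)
Your proposal is correct and is exactly the argument the paper has in mind: the corollary is stated without proof because $P^{j-1}/P^j$ is, up to the central torus of $\goth l^{j-1}$, the flag variety $S^{j-1}/(P^j\cap S^{j-1})$ for the smaller superalgebra $\goth s^{j-1}$ of the same type with rank $k-(j-1)$, and $P^j\cap S^{j-1}$ is its $P^1$; hence Proposition~\ref{legmovprop} applied to $\goth s^{j-1}$ gives the result. Your bookkeeping (that deleting the $j-1$ rightmost crosses from $f_\lambda$ and $f_\mu$ gives $f_{\lambda'}$ and $f_{\mu'}$, that the relevant $l_f(s,r)$ and $|g|$ are unaffected since only positions $\le t$ enter, and that the central torus forces the first $j-1$ coordinates of $\lambda$ and $\mu$ to coincide whenever the multiplicity is nonzero) is the right verification and matches the spirit of the parallel Corollaries~\ref{cor21} and~\ref{cor22}.
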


Let $D_{\goth g}$ denote the oriented graph whose vertices are dominant weights
of $\goth g$, and edges are defined as follows:

if $f_{\lambda}$ is obtained from $f_{\mu}$ by a 
legal  move  or
exceptional move, we join $\lambda$ and
$\mu$ by an edge  $\mu\longrightarrow\lambda$. 

We put a {\it label} $(s,t;w)$ on an edge, where $s$ and $t$ are the
start and the end of the corresponding legal move and $w$ is its degree. 
If the move is exceptional we put the label $(s_0:s,t;w)$.

A path consisting of edges corresponding to legal moves with ends 
$t_1,...,t_q$ is called   
{\it decreasing} resp. {\it increasing} if $t_1>...>t_q$
(resp. $t_1<...< t_q$). (It follows immediately from the definition that,
in any path, $t_i\neq t_{i+1}$.) The degree $l(R)$ of a path $R$ is the sum
of the degrees of all legal moves corresponding to the edges included in
$R$. It is straightforward that $D_{\goth g}$ does not have oriented
cycles. 

\begin{theo}\label{combth} - Let $\mathcal P^{>}(\mu,\lambda)$  denote the set 
of all
  decreasing  paths from $\mu$ to $\lambda$. Then
\begin{equation}\label{f2}
K^{\lambda,\mu}_{G,Q_{\lambda}}(-1)=\sum_{R\in \mathcal P^{>}(\mu,\lambda)}
(-1)^{l(R)}.
\end{equation}
\end{theo}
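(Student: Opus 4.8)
The plan is to iterate Theorem~\ref{ind} along the flag~\eqref{flag}, to evaluate the resulting elementary Euler characteristics by Corollary~\ref{legmovcor2}, and finally to match the alternating sum so obtained with the alternating sum over decreasing paths by an explicit combinatorial bijection. First I would record the reductions already available: as in Sections~9--10, and --- for $\goth g=\goth{osp}(2k+1,2k)$ --- after passing to the sign--free realisation inside $\goth{osp}(2k+1,2k+2)$, one may assume $\goth g\in\{\goth{osp}(2k,2k),\goth{osp}(2k+2,2k),\goth{osp}(2k+1,2k+2)\}$, that $\lambda$ and $\mu$ lie in the most atypical block with trivial central character, and that $\goth q_\lambda$ coincides with $\goth p^{k-s}$ (the parabolic in~\eqref{flag} whose Levi is $\goth h$ plus the tail subalgebra), where $s$ is the tail length of $\lambda$. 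If $s=k$ then $\goth q_\lambda=\goth g$ and both sides equal $\delta_{\lambda\mu}$ (the left side by Lemmas~\ref{l1} and~\ref{l2}; the right side because $\lambda$ then has all its crosses at the tail position and no legal or exceptional move increases the number of crosses at the tail, so $\mathcal P^{>}(\mu,\lambda)$ is empty for $\mu\neq\lambda$); so assume $s<k$.

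Iterating Theorem~\ref{ind} along $\goth g=\goth p^0\supset\goth p^1\supset\dots\supset\goth p^{k-s}=\goth q_\lambda$ --- the proof of Theorem~\ref{ind} is a Leray spectral sequence argument for the fibration $P^{j-1}/P^{k-s}\to P^{j-1}/P^{j}$ with fibre $P^{j}/P^{k-s}$, and applies to any such tower of parabolics --- I would obtain
\[
K^{\lambda,\mu}_{G,Q_\lambda}(-1)=\sum_{\nu^{(1)},\dots,\nu^{(k-s-1)}}\ \prod_{j=1}^{k-s}K^{\nu^{(j)},\nu^{(j-1)}}_{P^{j-1},P^{j}}(-1),\qquad\nu^{(0)}=\mu,\quad\nu^{(k-s)}=\lambda.
\]
By Corollary~\ref{legmovcor2} the $j$-th factor is $1$ when $\nu^{(j)}=\nu^{(j-1)}$, equals $\sum_{M}(-1)^{l(M)}$ --- the sum over all legal or exceptional moves $M$ carrying $f_{\nu^{(j-1)}}$ to $f_{\nu^{(j)}}$ whose end is the position of the $j$-th cross of $f_{\nu^{(j)}}$ counted from the right --- when $\nu^{(j)}\neq\nu^{(j-1)}$, and is $0$ if there is no such $M$. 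Expanding the product and summing over the intermediate weights rewrites the right--hand side as $\sum_{\Pi}(-1)^{\ell(\Pi)}$, where $\Pi$ runs over the \emph{flag paths} from $\mu$ to $\lambda$: sequences $\mu=\nu^{(0)},\dots,\nu^{(k-s)}=\lambda$ in which step $j$ is either the identity or a legal/exceptional move $M_j$ whose end is the $j$-th cross from the right of $f_{\nu^{(j)}}$, and $\ell(\Pi)=\sum_j l(M_j)$.

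The decisive step, and the one I expect to be the main obstacle, is to show that deleting the identity steps defines a degree--preserving bijection from the set of flag paths onto $\mathcal P^{>}(\mu,\lambda)$. For the forward direction: if $M_j$, ending at a position $t$, and $M_{j'}$ with $j<j'$ are two consecutive non--identity moves of $\Pi$, then $M_{j'}$ leaves every position strictly to the right of its own end untouched, whereas the diagram just before $M_{j'}$ already carries a cross at $t$; comparing the numbers of crosses lying further right then forces the end of $M_{j'}$ to be $<t$ (it cannot equal $t$, since the end of a move is an empty position), so the surviving moves have strictly decreasing ends and $\Pi$ is carried to a decreasing path of the same degree. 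For the inverse: given $R\colon\mu=\xi_0\to\dots\to\xi_q=\lambda$ in $\mathcal P^{>}(\mu,\lambda)$ with ends $t_1>\dots>t_q$, put $j_a:=1+\#\{\text{crosses of }f_{\xi_a}\text{ strictly to the right of }t_a\}$; since the move producing $\xi_{a+1}$ does not touch the region strictly to the right of its end $t_{a+1}$, while $f_{\xi_a}$ carries a cross at $t_a>t_{a+1}$ on top of its $j_a-1$ crosses further right, one gets $j_{a+1}\ge j_a+1$, and since $f_\lambda$ carries $s$ crosses at the tail position $s_0$ and a cross at $t_q>s_0$, one gets $j_q\le k-s$; hence $1\le j_1<\dots<j_q\le k-s$, so placing $M_a$ at step $j_a$ and the identity at the remaining steps produces the unique flag path whose deletion of identities is $R$. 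These constructions are mutually inverse and preserve degrees, whence
\[
K^{\lambda,\mu}_{G,Q_\lambda}(-1)=\sum_{\Pi}(-1)^{\ell(\Pi)}=\sum_{R\in\mathcal P^{>}(\mu,\lambda)}(-1)^{l(R)}.
\]
The two delicate points are the strict monotonicity of the ends --- where exceptional moves must be handled exactly like ordinary ones, both leaving untouched the region to the right of their end --- and the two--sided bound $1\le j_1<\dots<j_q\le k-s$, which is exactly what ensures that a decreasing path fits into a flag of length $k-s$ and into no shorter one; the case $\mu=\lambda$ and all exceptional pairs, in particular the pretail pair $(2\varepsilon_1+\varepsilon_2+\delta_1,\,0)$ for $\goth{osp}(2k+1,2k)$, are already incorporated into Proposition~\ref{legmovprop} and Corollary~\ref{legmovcor2}.
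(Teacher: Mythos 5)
Your proof is correct and takes the same approach as the paper's: iterate Theorem~\ref{ind} along the flag of parabolics $G=P^0\supset P^1\supset\dots\supset P^r=Q_\lambda$ and evaluate the factors via Corollary~\ref{legmovcor2}. The paper compresses this into one display and leaves the matching of the product expansion with decreasing paths implicit; your explicit bijection between ``flag paths'' and decreasing paths (deleting identity steps one way, inserting them at the positions $j_a=1+\#\{\text{crosses of }f_{\xi_a}\text{ right of }t_a\}$ the other way) is exactly the verification the paper is tacitly invoking.
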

\begin{proof} -  
Let $r$ be the number of $\times$ outside the tail position in $f_\lambda$. 
Then Theorem ~\ref{ind} and Corollary
~\ref{legmovcor2} imply 
\begin{equation}\label{aux}
K^{\lambda,\mu}_{G,Q_{\lambda}}(-1)=\sum_{\mu\leq\mu_1\leq...\leq\mu_r\leq
\lambda}
K^{\lambda,\mu_r}_{P^{r-1},P^r}(-1)K^{\mu_{r},\mu_{r-1}}_{P^{r-1},
P^{r-2}}(-1)...K^{\mu_1,\mu}_{G,P^1}(-1)
=\sum_{R\in {\mathcal P}^{>}(\mu,\lambda)}(-1)^{l(R)}.
\end{equation}

\end{proof}

\begin{rem} - One can easily generalize Theorem ~\ref{combth} to an
  arbitrary block due to Corollary ~\ref{cor22}. Legal and exceptional moves are described
in the same way. We completely ignore core symbols outside the tail position.
\end{rem}

\section{Characters}
In this section we give a combinatorial algorithm for computing 
characters of simple modules. 

Let $\mathcal E_{\lambda}$ denote the
right hand side of formula (\ref{char1}) in Proposition ~\ref{char}
with $\goth p=\goth q_{\lambda}$. Note that since 
$L_{\lambda}(\goth q_{\lambda})$ is one-dimensional,  
$Ch(L_{\lambda}(\goth q_{\lambda}))=e^{\lambda}$. 

The identity (\ref{char1}) provides a linear system of equations, which
can be solved for $Ch (L_{\lambda})$. 
Let $\mathbb K$ denote the infinite matrix with
coefficients 
$K^{\lambda,\mu}_{G,Q_{\lambda}}(-1)$. Then $\mathbb K$ is lower
triangular with $1$ on diagonal. Let $\mathbb D=\mathbb K^{-1}$, and
$D^{\lambda,\mu}$ denote the matrix coefficients of $\mathbb D$.
Then (\ref{char1}) implies
\begin{equation}\label{char2}
Ch (L_{\lambda})=\sum D^{\lambda,\mu} \mathcal E_{\mu}.
\end{equation}

\begin{rem} - One can see that the graphs $D_{\goth g}$ and the
  matrices $\mathbb K$ and $\mathbb D$ for $\goth g=\goth{osp}(2k+2,2k)$ and
  $\goth g=\goth{osp}(2k+1,2k)$ are the same if one identifies 
  weight diagrams by switching $>$ to $<$ at the tail position. It is
  natural to conjecture that the maximal atypical blocks in these two
  cases are equivalent.
\end{rem}

\begin{theo}\label{thch} - Let $\mathcal P^{<}(\mu,\lambda)$ denote the set of 
increasing paths from $\mu$ to $\lambda$ in $D_{\goth g}$, and $|R|$ denote the
number of edges in a path $R$. Then
\begin{equation}\label{f3}
D^{\lambda,\mu}=\sum_{R\in \mathcal P^{<}(\mu,\lambda)}(-1)^{l(R)+|R|}.
\end{equation}
\end{theo}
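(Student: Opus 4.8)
The plan is to realize the lower-triangular matrix $\mathbb K$ as a product of elementary "one-position" matrices and then invert it factor by factor. For each position $p\in\mathbb T$ strictly to the right of the tail position $s_0$, let $\mathbb L_p=I+N_p$ be the matrix whose $(\lambda,\mu)$-entry is $\delta_{\lambda,\mu}$ plus $\sum(-1)^{l(M)}$ over all legal or exceptional moves $M$ taking $f_\mu$ to $f_\lambda$ whose end is exactly $p$ (two tail moves with the same source and target but different degrees, cf. the Warning following Definition \ref{legalmove}, being counted separately). The first observation is that $N_p^2=0$: any move with end $p$ produces a diagram with $\times$ at position $p$, whereas any move with end $p$ can only be applied to a diagram with $0$ at position $p$; hence two such moves are never composable, $(N_p^2)^{\lambda,\mu}=\sum_\nu(N_p)^{\lambda,\nu}(N_p)^{\nu,\mu}=0$, and $\mathbb L_p$ is invertible with $\mathbb L_p^{-1}=I-N_p$. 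Since $D_{\goth g}$ has no oriented cycles and every edge strictly raises the weight, there are only finitely many paths between any two fixed weights, so all the infinite matrix products below are locally finite and well defined.

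The second step is to identify $\mathbb K=\mathbb L_{p_1}\mathbb L_{p_2}\mathbb L_{p_3}\cdots$, where $s_0<p_1<p_2<\cdots$ enumerates the admissible positions. Expanding the right-hand side, the $(\lambda,\mu)$-entry is the sum over all ways of writing $\mu=\nu_0\to\nu_1\to\cdots\to\nu_k=\lambda$ as a composition of moves in which the $i$-th move is taken from $\mathbb L_{p_{j_i}}$ with $p_{j_1}>p_{j_2}>\cdots>p_{j_k}$ (the leftmost factor contributes nearest to $\lambda$); this is precisely a decreasing path $R\in\mathcal P^{>}(\mu,\lambda)$, and the weight of the term equals $(-1)^{l(R)}$. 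By Theorem \ref{combth} this sum equals $K^{\lambda,\mu}_{G,Q_\lambda}(-1)=\mathbb K^{\lambda,\mu}$, which proves the factorization (the degenerate term $k=0$ giving the diagonal entry $1$).

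The third step is just to invert: $\mathbb D=\mathbb K^{-1}=(\mathbb L_{p_1}\mathbb L_{p_2}\cdots)^{-1}=\cdots\mathbb L_{p_3}^{-1}\mathbb L_{p_2}^{-1}\mathbb L_{p_1}^{-1}=\cdots(I-N_{p_3})(I-N_{p_2})(I-N_{p_1})$. Expanding this product exactly as before, the $(\lambda,\mu)$-entry becomes the sum over compositions of moves $\mu\to\cdots\to\lambda$ whose end positions, read from $\mu$ to $\lambda$, are now strictly \emph{increasing} (the rightmost factor $\mathbb L_{p_1}^{-1}$ acts first, nearest to $\mu$); these are exactly the increasing paths $R\in\mathcal P^{<}(\mu,\lambda)$. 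Each move occurring in the expansion now comes from a factor $-N_{p}$, contributing an extra sign $-1$ per edge, so the term attached to $R$ has weight $(-1)^{l(R)}\cdot(-1)^{|R|}=(-1)^{l(R)+|R|}$. Summing over $R$ yields $D^{\lambda,\mu}=\sum_{R\in\mathcal P^{<}(\mu,\lambda)}(-1)^{l(R)+|R|}$, which is formula (\ref{f3}).

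I expect the only delicate point to be the second step: one must check carefully that the recipe "choose at most one move at each position and compose them in decreasing order of position" is a faithful reformulation of a decreasing path (this uses that the end positions along a decreasing path are automatically pairwise distinct, together with the bookkeeping of the two-move exceptional edges and of the repeated-degree tail edges), and one must justify the local finiteness that legitimizes manipulating the infinite products. Once that is in place, the identity $N_p^2=0$, the term-by-term inversion, and the sign count are all routine.
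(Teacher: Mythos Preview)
Your proof is correct and takes a genuinely different route from the paper's. The paper writes $\mathbb K=I+\mathbb U$ with $\mathbb U$ strictly lower triangular, expands $\mathbb D=\sum_i(-\mathbb U)^i$, and observes that $(\mathbb U^i)^{\lambda,\mu}$ is a sum over ways of writing a path $R\in\mathcal P(\mu,\lambda)$ as a concatenation $R=R_1\circ\cdots\circ R_i$ of decreasing paths. The inner alternating sum over such decompositions is then shown to vanish unless $R$ is increasing (the standard ``alternating sum over compositions'' argument: one must cut at every ascent $t_j<t_{j+1}$ and may cut at every descent, so if there is at least one descent the binomial sum collapses to zero), in which case the only decomposition is into single edges and $i=|R|$. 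Your factorization $\mathbb K=\prod_{p}\,(I+N_p)$ bypasses this cancellation entirely: the observation $N_p^2=0$ makes each factor trivially invertible, and reversing the order of the factors turns decreasing into increasing while the minus signs supply $(-1)^{|R|}$. The two arguments encode the same combinatorics (your $N_p^2=0$ is exactly what forces the end positions along a path to be distinct, and the ordered product is what enforces monotonicity), but yours is cleaner and avoids the ``simple exercise'' the paper leaves to the reader. The only point you should spell out a bit more is that the bijection between terms of $\prod_p(I+N_p)$ and decreasing paths is one-to-one even in the presence of the doubled tail edges: since the two tail moves with the same source/target carry different degrees $w$, they contribute different summands to the same $N_p$, and no ambiguity arises.
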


\begin{proof} - Write $\mathbb K=1+\mathbb U$, where $\mathbb U$
  is strictly low triangular. Then
$$\mathbb D= 1-\mathbb U+\mathbb U^2-...$$
Let $R_1\circ R_2$ denote the concatenation of paths $R_1$ and
$R_2$. Then (\ref{f2}) implies
\begin{equation}\label{f4}
D^{\lambda,\mu}=\sum_{R\in \mathcal P(\mu,\lambda)}\sum_{R=R_1\circ
  R_2...\circ R_i | R_j\in\mathcal P^{>}}(-1)^{i+l(R)},
\end{equation}
where $\mathcal P(\mu,\lambda)$ denotes the set of all paths from $\mu$
to $\lambda$ and $\mathcal P^{>}$ is the set of all decreasing paths.

Any path $R\notin \mathcal P^{<}(\mu,\lambda)$ has more
than one term in the second sum, since there are several ways to write it
as a concatenation of decreasing paths. It is a simple exercise to check
that in this case the second sum is zero. Hence the only paths
contributing to the formula are increasing. Hence (\ref{f3}) holds. 
\end{proof}

\begin{rem} - In case $\goth g=\goth{osp}(2k,2k)$ Theorem ~\ref{thch} provides the formula 
for $Ch (L_{\lambda})$ only for positive $\lambda$. For negative
$\lambda$, we apply the automorphism $\sigma$ defined in Section 7, and we use
$$D^{\lambda,\mu}=D^{\lambda',\mu'}.$$
\end{rem}

{\bf Example.} Let $\goth g=\goth{osp}(6,6)$ and
$\lambda=(2,1,0|2,1,0)$. To find $Ch (L_{\lambda})$, we just have to
describe the subgraph $D_{\goth g}$ containing the vertices
$\mu\leq\lambda$. There are four such vertices corresponding to the
weights $\lambda$, $\mu=(2,0,0|2,0,0)$, $\nu=(1,0,0|1,0,0)$, $\kappa=0$.
They are connected by the edges:

$$\kappa\xrightarrow[]{(0,1;0)} \nu\,$$
$$\kappa\xrightarrow[]{(0,1;4)} \nu,$$ 
$$\kappa\xrightarrow[]{(0,2;3)} \mu,$$ 
$$\nu\xrightarrow[]{(1,2;0)} \mu,$$
$$\nu\xrightarrow[]{(0,2;1)} \lambda,$$ 
$$\nu\xrightarrow[]{(0,2;1)} \lambda,$$
$$\mu\xrightarrow[]{(0,1;0)} \lambda,$$
$$\mu\xrightarrow[]{(0,1;2)} \lambda.$$ 

Then the corresponding matrices are

$$\mathbb K=\left({\begin{matrix} 1 & 0 & 0 & 0 \\ 
2 & 1& 0 &0 \\ 0 &1 &1 &0 \\-2 & -1 & 2 &1\end{matrix}}\right)
$$

$$\mathbb D=\left({\begin{matrix} 1 & 0 & 0 & 0 \\ 
-2 & 1& 0 &0 \\ 2 &-1 &1 &0 \\-4 & 3 & -2 &1\end{matrix}}\right)
$$
and
$$Ch (L_\lambda)=\mathcal E_\lambda-2\mathcal E_\mu+2\mathcal E_\nu-4\mathcal E_\kappa.$$

{\bf The case $k=1$.} (See \cite{Jer}, \cite{G1}) If $\goth g=\goth {osp}(2,2)$, positive weights in the
most atypical block are of the form $a\varepsilon_1+a\delta_1$. To
simplify notations we put $L_{a}=L_{a\varepsilon_1+a\delta_1}$. The
graph $D_{\goth g}$ is the infinite string
$$0\xrightarrow[]{(0,1;0)} 1\xrightarrow[]{(1,2;0)} 2\xrightarrow[]{(2,3;0)}...,$$
and the characters can be calculated by
$$Ch(L_{a})=\sum_{j=0}^a(-1)^{a+j}\mathcal E_j.$$

The matrix $\mathbb D$ is the same for $\goth {osp}(4,2)$ and $\goth {osp}(3,2)$. Let us consider the latter case. 
Set $\lambda_0=0$, $\lambda_1=\varepsilon_1$,
$\lambda_i=i\varepsilon_1+(i-1)\delta_1$. The graph $D_{\goth g}$ has
the edges $\lambda_i\xrightarrow[]{(i,i+1;0)}\lambda_{i+1}$ for all
$i\geq 1$ and the edges $\lambda_0\xrightarrow[]{(0,2;0)}\lambda_{2}$,
$\lambda_0\xrightarrow[]{(0,1;1)}\lambda_{1}$.
One can easily obtain the character formulae
$$Ch(L_{\lambda_{p}})=\sum_{i=1}^p(-1)^{p+i}\mathcal E_{\lambda_i}-2(-1)^p \mathcal E_{\lambda_0}=\sum_{i=1}^p(-1)^{p+i}\mathcal E_{\lambda_i}-2(-1)^p ,$$
for $p\geq 2$, and
$$Ch(L_{\lambda_{1}})=\mathcal E_{\lambda_1}+\mathcal E_{\lambda_0}=\mathcal E_{\lambda_1}+1.$$

\section{Caps and cancellations}

Let $f$ be a weight diagram. For every $\times$ at  a non-tail position $s$,
there exists exactly one legal move $f\to f'$ of degree zero with
start at $s$. If $t$ is the end of that move, then we join $s$ and
$t$ by a cap. Proceeding in this way we equip $f$ with caps for each
non-tail $s$ such that $f(s)=\times$. A non-tail position $s$ is
called {\it free} if $f(s)=0$ and $s$ is not an end of a cap. 
One can easily check the following properties:

- there are no free positions under a cap;

- two caps do not overlap;

- if $s\neq s_0$, $f(s)=\times$, then the end of a legal move with start
at $s$ is not larger than the end of the cap starting at $s$.

We call a decreasing path $R\in \mathcal P^{>}(\mu,\lambda)$ {\it
  regular} if 

- any edge of $R$ corresponding to a non-tail legal move corresponds to a move 
along a cap on the diagram $f_{\mu}$;

- any edge of $R$ corresponding to a tail legal move or an exceptional move 
corresponds to a move with end at a free position.

Note that all ordinary and exceptional moves which appear in a regular path 
must have degree 0. In addition, it follows directly from Definition 
\ref{expmove} that the position $s$
in any exceptional legal move $(s_0:s,t;w)$ which appears in a decreasing path 
from $\mu$ to $\lambda$ is a free position of $f_{\mu}$.

\begin{prop}\label{canc} -  Let $\mathcal {RP}^{>}(\mu,\lambda)$ denote
  the set of all regular decreasing paths from $\mu$ to $\lambda$. Then 
$$\sum_{R\in \mathcal P^{>}(\mu,\lambda)}(-1)^{l(R)}=\sum_{R\in \mathcal 
{RP}^{>}(\mu,\lambda)}(-1)^{l(R)}.$$
\end{prop}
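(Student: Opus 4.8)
The plan is to set up a sign-reversing involution on the set of non-regular decreasing paths $\mathcal P^{>}(\mu,\lambda)\setminus\mathcal{RP}^{>}(\mu,\lambda)$ which changes the parity of $l(R)$ by an odd amount, so that all terms with non-regular paths cancel in pairs and only regular paths survive. First I would analyze what goes wrong in a non-regular decreasing path: either some edge corresponds to an ordinary legal move whose start is a non-tail $\times$ of the current diagram but whose end is \emph{strictly shorter} than the end of the cap starting at $s$ (so the move does not go all the way along the cap), or some tail/exceptional move ends at a position that is not free (i.e. it ends at the end of some cap), or a non-tail move has positive degree. I would show that such a path always contains a ``first'' bad edge (say the first edge, reading from $\mu$ toward $\lambda$, which is not a cap-move-of-degree-zero-with-free-ends), and that the intermediate diagram just before this edge still carries a well-defined cap structure to which Definition \ref{legalmove} and Definition \ref{expmove} apply.

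The key combinatorial step is the pairing itself. Near the bad edge one has a diagram $f$ with a $\times$ at some $s$ that either moves a short distance or moves along the cap, and in the decreasing path the next few edges must move $\times$-s at positions $>$ the end of that edge. I would show that there is a canonical way to \emph{merge} the bad edge with the subsequent edge (or conversely \emph{split} one edge into two): because the path is decreasing, the move immediately following the bad short move has its end to the right of $s$, and either it can be absorbed (lengthening the short move to reach the cap end, or vice versa) or — in the tail/exceptional situation — the cap end that blocked freeness is exactly the endpoint of a subsequent short move, and the two can be traded. The essential numerical point, to be checked by the degree formulas $l_f(s,t)$, is that this merge/split operation changes the number of edges by one and changes $l(R)$ by an even amount, so $(-1)^{l(R)}$ changes sign; that it is an involution (applying it twice returns the original path); and that it has no fixed points on non-regular paths, while it never produces a regular path. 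This uses the three bulleted properties of caps stated just above the proposition: no free positions under a cap, caps do not overlap, and the end of a legal move with start $s\ne s_0$ does not exceed the cap end at $s$ — these are precisely what guarantees the merge is forced and unambiguous.

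The main obstacle I expect is bookkeeping the interaction of tail moves and exceptional moves with caps. An exceptional move $(s_0:s,t;w)$ involves moving \emph{two} $\times$-s out of the tail, and its ``start'' position $s$ is required to be free in $f_\mu$; if in a non-regular path such a move ends at a non-free position, or if a tail legal move collides with a cap, the matching partner has to be constructed carefully so that the parity condition $|g|+l_f(s_0,t)$ being odd (in the exceptional case) or the inequalities in Definition \ref{legalmove}(2) are preserved under the involution. I would handle this by treating the tail position and its crosses as a separate ``reservoir'' and checking case-by-case (non-tail/non-tail, non-tail/tail, tail/exceptional) that the degree arithmetic works; in each case the change in $l(R)$ is a sum of $l_f$-increments over a region that is swept twice with opposite orientations, hence even. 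Once the involution is verified to be well-defined, fixed-point-free on non-regular paths, and sign-reversing, summing $(-1)^{l(R)}$ over $\mathcal P^{>}(\mu,\lambda)\setminus\mathcal{RP}^{>}(\mu,\lambda)$ gives zero and the proposition follows.
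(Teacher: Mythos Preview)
Your overall plan---construct a sign-reversing involution on non-regular decreasing paths by a merge/split at a ``bad'' edge---is exactly the paper's approach. But there is a genuine arithmetic error that would make your proof collapse. You assert that the merge/split ``changes $l(R)$ by an even amount, so $(-1)^{l(R)}$ changes sign.'' This is self-contradictory: an even change leaves $(-1)^{l(R)}$ fixed. In fact the degree changes by exactly $\pm 1$. In the paper's version, if the wrong move is $(s,t;w)$ and the cap over $t$ is $(t,u)$, then the two edges $(t,u;0)$ and $(s,t;w)$ (total degree $w$) are replaced by the single edge $(s,u;w+1)$; conversely a wrong move $(s,u;w)$ with $w>0$ ending at the right end of the cap $(t,u)$ is split into $(t,u;0)$ and $(s,t;w-1)$ (total degree $w-1$). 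So $l(R^*)=l(R)\pm 1$, and that is what flips the sign. Your heuristic ``swept twice with opposite orientations, hence even'' is simply wrong here: the merged move crosses one extra $\times$ (the one at $t$), which is exactly what produces the $+1$.

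A second point: your selection rule (``the first bad edge, reading from $\mu$ toward $\lambda$'') is not obviously stable under the involution. After a merge the new single edge is still bad, but after a split the earlier of the two new edges is a cap move of degree zero---so the ``first bad edge'' has shifted, and you must verify that applying the rule again recovers the original path. The paper sidesteps this by anchoring the choice to the cap structure on $f_\mu$ (which is fixed throughout the path): among all caps whose left or right end is hit by a wrong move, take the one with \emph{maximal left end}, and perform the merge/split there. This makes the involutivity immediate, since the same cap is singled out before and after. Finally, you do not need a separate case analysis for exceptional moves versus tail moves: exactly the same merge/split with the adjacent cap edge works, with the same $\pm 1$ degree shift.
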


\begin{proof} -  Define an involution $*$ on the set of all non-regular paths
  in $\mathcal P^{>}(\mu,\lambda)$ as follows. If $R$ is a
  non-regular path, one can find at least one cap whose left or right
  end is the end of some ``wrong'' legal move included in $R$
which is not a move along this
  cap. Among such caps, pick up a cap with maximal left end. There are
  two possibilities:

1. The left end $t$ of this cap is the end of some ``wrong'' legal move
$(s,t;w)$ (resp.  ``wrong'' exceptional move $(s_0:s,t;w)$). 
Then, before the edge $(s,t;w)$ (resp. $(s_0:s,t;w)$) in the path $R$, 
there is an edge
$(t,u;0)$ which, by our conditions, corresponds to a legal move along a cap.
Exchange two edges $(s,t;w)$ (resp. $(s_0:s,t;w)$) and $(t,u;0)$ by one which
corresponds to a legal move $(s,u;w+1)$ (resp. exceptional move 
$(s_0:s,u;w+1)$) and get a new irregular path $R^*$.
One can easily check that $l(R^*)=l(R)+1$.

2. The right end $t$ of the cap is the end of some ``wrong'' legal move
$(s,t;w)$ (resp.  ``wrong'' exceptional move $(s_0:s,t;w)$). Note that in 
this case $w$ must be positive. 
Let $u$ be the left end of the cap. Remove the edge $(s,t;w)$ 
(resp. $(s_0:s,t;w)$) and insert $(u,t;0)$ and $(s,u;w-1)$ (resp. 
$(s_0:s,u;w-1)$) and 
get a new irregular path $R^*$.
One can easily check that $l(R^*)=l(R)-1$.

Obviously, $*$ is an involution. The statement follows from the condition
$(-1)^{l(R^*)}=-(-1)^{l(R)}$.
\end{proof}

\begin{prop}\label{mult1} - If $\goth g=\goth {osp}(2n+1,2n)$ or 
$\goth {osp}(2n+2,2n)$ and $\lambda,\mu$ have the trivial central
character, then $K^{\lambda,\mu}_{G,Q_\lambda}(-1)$ is either zero or
 $\pm 1$. In other words, the entries of
$\mathbb K$ are $0$ or $\pm 1$. 

If $\goth g=\goth {osp}(2n,2n)$  and $\lambda,\mu$ have trivial central
character, then $|K^{\lambda,\mu}_{G,Q_\lambda}(-1)|\leq 2$
\end{prop}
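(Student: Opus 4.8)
The plan is to prove the bound on $K^{\lambda,\mu}_{G,Q_\lambda}(-1)$ by combining the combinatorial description in Theorem~\ref{combth} (formula~(\ref{f2})) with the reduction to regular paths furnished by Proposition~\ref{canc}. By~(\ref{f2}) and Proposition~\ref{canc} we have
$$K^{\lambda,\mu}_{G,Q_\lambda}(-1)=\sum_{R\in\mathcal{RP}^{>}(\mu,\lambda)}(-1)^{l(R)},$$
so it suffices to bound the cardinality of $\mathcal{RP}^{>}(\mu,\lambda)$, the set of regular decreasing paths. The key structural fact is that a regular path is essentially rigid: each edge is forced by the combinatorics of the caps of $f_\mu$ together with the free positions, so the only freedom in building a regular decreasing path lies in how the ``tail'' $\times$'s (those at the position $s_0$) get moved out, and in the presence or absence of exceptional moves.

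First I would recall from Section~13 that the caps on $f_\mu$ determine, for each non-tail $\times$, a unique degree-zero ordinary legal move; along a regular path these caps must be traversed in decreasing order of their left ends, and each non-tail $\times$ is moved along its own cap. Thus the ordinary part of a regular decreasing path is completely determined once we know which diagram $f_\lambda$ we are aiming at. The only remaining choices concern the tail: a $\times$ at $s_0$ can be extracted by a tail legal move ending at a free position, and (recall the Warning after Definition~\ref{legalmove}) when $f_\mu$ has two or more crosses at $s_0$, a single diagram transformation can sometimes be realized by two different tail moves of degrees $0$ and $2$; an exceptional move consumes two tail $\times$'s at once. I would organize the count by how many tail $\times$'s the path removes and in what manner. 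In the orthosymplectic cases $\goth{osp}(2n+1,2n)$ and $\goth{osp}(2n+2,2n)$, the tail position carries a core symbol ($<$ or $>$), which by the parity constraints in Definition~\ref{legalmove}(2) and Definition~\ref{expmove} (and condition (b) of Proposition~\ref{exc}) pins down the parity of $|g|+l_f(s_0,r)$; this rigidity forces the tail extraction to be unique, giving at most one regular decreasing path, hence $K^{\lambda,\mu}_{G,Q_\lambda}(-1)\in\{-1,0,1\}$.

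In the case $\goth g=\goth{osp}(2n,2n)$ there is no core symbol at $s_0$, so $|g|$ equals the doubled number of $\times$'s and the parity obstruction disappears. Here I expect exactly the ``doubling'' phenomenon: a diagram transformation that moves a tail $\times$ to a free position can be realized either by a degree-$0$ tail move or by a degree-$2$ tail move (as in the explicit example ${}^{\times}_{\times},0,\dots\to\times,\times,\dots$), and more generally one may have at most two regular decreasing paths $\mu\to\lambda$, differing only in the degree of one tail move or in trading a pair of ordinary moves for an exceptional one. I would show that these are the only sources of multiplicity, so $|\mathcal{RP}^{>}(\mu,\lambda)|\le 2$ and therefore $|K^{\lambda,\mu}_{G,Q_\lambda}(-1)|\le 2$. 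Concretely one argues by induction on the number of $\times$'s outside $s_0$, peeling off the rightmost non-tail $\times$ via its cap exactly as in the proof of Proposition~\ref{exc} and Lemma~\ref{deg3}, reducing to a smaller orthosymplectic algebra, and checking that at each stage the branching factor introduced is at most one extra path.

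The main obstacle I anticipate is the bookkeeping of the tail moves: one must verify carefully that two \emph{distinct} regular decreasing paths from $\mu$ to $\lambda$ can differ \emph{only} in the single tail step (degree $0$ versus degree $2$, or ordinary-pair versus exceptional), and that no third possibility arises — for instance that two different free positions cannot both serve as legitimate ends of regular tail moves producing the same $f_\lambda$ in a way that survives the cancellation of Proposition~\ref{canc}. This requires a close analysis of the inequalities $|g|+l_f(s_0,r)>0$, $|g|+l_f(s_0,t)\ge 0$ defining legal tail moves, together with the ``no free positions under a cap'' property listed before Proposition~\ref{canc}, to show that the set of admissible ends for a tail extraction is essentially an interval whose endpoints are controlled by $f_\lambda$, leaving at most two combinatorially inequivalent choices. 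Once this is established the bounds follow immediately from~(\ref{f2}) and Proposition~\ref{canc}.
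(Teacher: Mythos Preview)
Your starting point is exactly that of the paper: combine Theorem~\ref{combth} with Proposition~\ref{canc} to reduce to a sum over $\mathcal{RP}^{>}(\mu,\lambda)$. But there is a genuine gap in the next step. You assert that it suffices to bound $|\mathcal{RP}^{>}(\mu,\lambda)|$, and then argue that in the $\goth{osp}(2n+1,2n)$ and $\goth{osp}(2n+2,2n)$ cases this set has at most one element. This is not true. A regular decreasing path may contain an exceptional move $(s_0:t_{i-1},t_i;0)$ ending at a free position, \emph{or} it may instead contain the pair of adjacent tail moves with ends $t_i$ and $t_{i-1}$; both options are regular and lead to the same intermediate diagram, so $|\mathcal{RP}^{>}(\mu,\lambda)|$ can exceed~$1$ even when $K^{\lambda,\mu}_{G,Q_\lambda}(-1)\in\{0,\pm 1\}$.

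The paper closes this gap with a \emph{second} sign-reversing involution on $\mathcal{RP}^{>}(\mu,\lambda)$. One calls a pair of adjacent tail moves (ends $t_i,t_{i-1}$) \emph{vanishing} when the exceptional move $(s_0:t_{i-1},t_i;0)$ exists; one then swaps, in each non--strongly-regular path, the first exceptional move with the corresponding vanishing pair (or conversely). The two paths so matched contribute opposite signs, and what survives are the \emph{strongly regular} paths: regular paths with no exceptional moves and no vanishing adjacent pairs. It is this smaller set that one then counts. Your parity remark is relevant here, but its role is to determine \emph{which} adjacent pairs are vanishing (the parity of $i$ depends on whether there is a core symbol at $s_0$), not to rule out multiple regular paths directly. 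For $\goth{osp}(2n,2n)$ the doubling in the strongly regular count comes specifically from the first free position $t_1$: there are two tail moves with end $t_1$, of degree $0$ and of degree equal to twice the tail size (not always $2$ as in your example), and neither can be cancelled. Your inductive ``peeling off the rightmost cap'' is not needed once the second involution is in place.
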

\begin{proof} - By Proposition \ref{canc} and Theorem \ref{combth} we
  have 
\begin{equation}\label{reg}
K^{\lambda,\mu}_{G,Q_\lambda}(-1)=\sum_{R\in \mathcal {RP}^{>}(\mu,\lambda)}
(-1)^{l(R)}.
\end{equation}

Let $t_1,t_2,...$ be all free positions written in increasing order. 
Let $R$ be some decreasing regular path. We call two tail moves appearing in 
$R$  {\it adjacent} if they have ends $t_i$ and $t_{i-1}$. 
A pair of adjacent tail moves is vanishing if there exists an 
exceptional move with label $(s_0:t_{i-1},t_i;0)$. (In particular, $i$ is odd 
in case $\goth g=\goth {osp}(2n+1,2n)$ or 
$\goth {osp}(2n+2,2n)$ and even in case $\goth g=\goth {osp}(2n,2n)$.)

If $R$ is a regular but not strongly regular decreasing path, we pick up the 
first exceptional move or the first vanishing adjacent pair which appears in 
it, depending on what occurs earlier. Denote by $R'$ the path obtained from 
$R$ by substituting the vanishing pair instead of the first exceptional move 
(or respectively the exceptional move instead of the first vanishing pair). 
Then clearly $R''=R$ and $R$ and $R'$ cancell in the summation of (\ref{reg}).
Hence the sum can be taken only over strongly regular paths.

In case $\goth g=\goth {osp}(2n+1,2n)$ or 
$\goth {osp}(2n+2,2n)$ one can see immediately that there is at most one 
strongly regular path between any two weight diagrams.

If $\goth g=\goth {osp}(2n,2n)$ there are two tail moves with the end $t_1$, 
one of degree 0 and one of degree equal to the double size of the tail. Hence 
there are at most two strongly regular paths between two weight diagrams.
Hence the statement.
\end{proof}

\section{Appendix: index of definitions and notations}

\begin{center}{\sc General setting}
\end{center}

\el

\noi - Integral dominant weight: Section 2.

\el

\noi - $\Phi$: just before Lemma ~\ref{l4}.

\el

\noi - $A(\lambda)$: Definition ~\ref{DAl}.

\el

\noi - Degree of atypicality of a dominant weight (resp. central character): Definition ~\ref{DAl} (resp. just after this definition).

\el

\noi - Admissible parabolic subalgebra for a central character: Definition ~\ref{Dpa}.

\el

\noi - Poincar\'e polynomial $K_{A,B} ^{\lambda , \mu} (z)$: Definition ~\ref{Dpoin}.

\el

\noi - List of simple roots for the chosen Borel subalgebras: Section 5.

\el

\noi - Dominance contitions for weights: Section 5.

\el

\noi - Core of a central character: just before Lemma ~\ref{Lcore}.

\el

\noi - $\goth g _\chi$ (for a central character $\chi$): before Lemma ~\ref{l3}.

\el

\noi - Stable weights: before Lemma ~\ref{l3}.

\el

\noi - $\mathcal F _{\leq \lambda} ^{\chi}$: before Lemma ~\ref{l3}.

\el

\noi - Translation functors $T(V)^{\chi , \tau}$: after the proof of Lemma ~\ref{l3}.

\el

\noi - $E$: Standard $\goth g$-module.

\el

\noi - Case $\goth{osp}(2m,2n)$, positive (resp. negative) weights: Section 6.

\el

\noi - Tail subalgebra of a dominant weight $\lambda$, $\goth g _\lambda$; tail of $\lambda$; algebra $\goth q _{\lambda}$: just before Lemma ~\ref{reduction1}.

\el

\noi - Switch functor $T(E)^{\chi , \chi}$: after the proof of Lemma ~\ref{funsw}.

\el

\noi - Exceptional pair of weights: before Proposition ~\ref{pt2}.

\el

\noi - Pretail weights: Section 9.

\el
\begin{center} {\sc Weight diagrams and algorithm}
\end{center}

\el

\noi - Diagram $f_{\lambda}$ associated to a weight $\lambda$: defined in the beginning of Section 6. We will use the notations introduced there for this part of the appendix.

\el

\noi - Diagram of the core: the diagram one obtains from $f_{\lambda}$ when removing all the symbols $\times$.

\el

\noi - Is it possible to recover the weight from the diagram? Yes if $\goth g = \goth {gl}(m,n)$, no if $\goth g = \goth {osp}(m,2n)$ (see Section 6).

\el

\noi - Number of $\times$-s in the diagram: atypicality degree of the corresponding weight.

\el

\noi - Indicator $\pm$ and sign: see $\goth{osp}(2n+1,2n)$ in Section 6 (depends on the tail's shape).

\el

\noi - Action of translation functor: see section 6.

\el

\noi - Relationship between the number of $\times$-s at $0$ and the length of the tail of $\lambda$: just before Lemma \ref{reduction1}.

\el

\noi - Core symbols: beginning of Section 10.

\el

\noi - Notation $s_0$: tail position.

\el

\noi - Notation $l_f(s,t)$: number of $\times$-s $-$  number of $0$-s strictly between the positions $s$ and $t$. 

\el

\noi - Notation $\vert f \vert$: $2\times$ number of $\times$-s at the tail position + number of core symbols at the tail position.

\el

\noi - Notation $f_t$: If $f$ has a $\times$ at position $t$ , the $f_t$ is the diagram obtained when removing it.

\el

\noi - Notation $f^t$: Diagram obtained adding to $f$ a $\times$ at position $t$.

\el

\noi - Notation $f_s ^t$: diagram obtained from $f$ moving a $\times$ from $s$ to $t$.

\el

\noi - Legal moves, start and end of a legal move, degree of a legal move: Definition 5, Section 11.

\el

\noi - Tail legal move, ordinary legal move: Definition 5, Section 11.

\el

\noi - Exceptional move: Definition 6, Section 11.

\el

\noi - Decreasing, increasing paths in the oriented graph $D_{\goth g}$: just after Corollary ~\ref{legmovcor2}.

\el

\noi - Length of a path in a $D_{\goth g}$: just before Theorem \ref{combth}.
\el

\noi - Cap: Section 13.

\el

\noi - Free position: Section 13.

\el

\noi - Strongly regular: Section 13.

\el

\noi - Regular path: Section 13.

\el

\end{document}